\theoremstyle{definition}
\newtheorem{definition}{Definition}[subsection]
\theoremstyle{plain}
\newtheorem{theorem}[definition]{Theorem}
\newtheorem{proposition}[definition]{Proposition}
\newtheorem{lemma}[definition]{Lemma}
\newtheorem{corollary}[definition]{Corollary}
\newtheorem{fact}[definition]{Fact}
\theoremstyle{remark}
\newtheorem*{remark}{Remark}
\newtheorem*{notation}{Notation}
\newtheorem{example}[definition]{Example}
\newcommand\e{{\boldsymbol e}}
\renewcommand\H{{\mathcal H}}
\newcommand\I{{\mathcal I}}
\renewcommand\L{{\mathcal L}}
\newcommand\K{{\mathcal K}}
\renewcommand\O{{\mathcal O}}
\newcommand\N{{\mathbb N}}
\newcommand\Z{{\mathbb Z}}
\newcommand\C{{\mathbb C}}
\newcommand\X{{\mathbb X}}
\newcommand\id{{\mathrm{id}}}
\newcommand\cspan{\mathop{\overline{\mathrm{span}}}}
\newcommand\asg[1]{\langle{#1}\rangle}
\newcommand\casg[1]{\overline{\asg{#1}}}
\newcommand\supp{\mathop{\mathsf{supp}}}
\newcommand\defiff{\overset{\mathrm{def}}{\iff}}
\newcommand\rel[1]{\llbracket#1\rrbracket}
\newcommand\ent{\mathop{\mathrm{ent}}\nolimits}
\newcommand\set[2]{\mathopen\lbrace#1\mathbin\vert#2\mathclose\rbrace}
\newcommand\inn[2]{\langle#1\vert#2\rangle}
\newcommand\INN[3]{\mathopen{#1\langle}#2\mathbin{#1\vert}#3\mathclose{#1\rangle}}
\newcommand\abs[1]{\vert#1\vert}
\newcommand\norm[1]{\Vert#1\Vert}
\newcommand\NORM[2]{\mathopen{#1\Vert}#2\mathclose{#1\Vert}}
\begin{document}
\title
[Cartan subalgebras of Kajiwara--Watatani algebras]
{Cartan subalgebras of C*-algebras associated\\with iterated function systems}
\author[K. Ito]{Kei Ito}
\address{Graduate School of Mathematical Sciences, The University of Tokyo, 3-8-1 Komaba, Tokyo, 153-8914, Japan}
\email{itokei@ms.u-tokyo.ac.jp}
\keywords{Cartan subalgebra, C*-algebra.}
\subjclass[2020]{46L55}
\maketitle

\begin{abstract}
  Let $X$ be a compact Hausdorff space, and let $\gamma$ be an iterated function system on $X$.
  Kajiwara and Watatani showed that if $\gamma$ is self-similar and satisfies the open set condition and some additional technical conditions, $C(X)$ is a maximal abelian subalgebra of the Kajiwara--Watatani algebra $\O_\gamma$ associated with $\gamma$.
  In this paper, we extend their results by providing sufficient conditions under which $C(X)$ becomes a Cartan subalgebra of $\O_\gamma$.
  Additionally, we present sufficient conditions under which $C(X)$ fails to be a masa of $\O_\gamma$.
\end{abstract}

\section{Introduction}
\subsection{Historical background}

In this paper, we study a Cartan subalgebra in a C*-algebra associated with an iterated function system.

A C*-algebra associated with a self-similar map, which is an example of iterated function systems, was introduced by Kajiwara and Watatani \cite{KW06}. 
Their aim was to improve the framework created by Deaconu and Muhly, which was based on branched coverings.

Deaconu and Muhly \cite{DM} used Renault's groupoid method \cite{R} to define a C*-algebra for a branched covering $\sigma\colon X\to X$. 
This algebra is the groupoid C*-algebra of the topological groupoid $\Gamma(X,T)$, where $T$ is the locally homeomorphic part of $\sigma$. 
Kajiwara and Watatani noticed that this method ignores branch points because it only uses the locally homeomorphic sections. 
To address this, they used the Cuntz--Pimsner construction \cite{P} to create new C*-algebras that include branch point information. 
These algebras apply to both complex dynamical systems \cite{KW05} and self-similar maps \cite{KW06}.

The C*-algebra by Deaconu and Muhly for a branched covering $\sigma\colon X\to X$ naturally has a Cartan subalgebra, which is isomorphic to $C_0(X)$, the algebra of continuous functions on $X$ that vanish at infinity. 
Similarly, Kajiwara and Watatani \cite{KW17} showed that their algebra for a dynamical system on $X$ has a maximal abelian subalgebra isomorphic to $C_0(X)$. 
However, it was still unclear if this subalgebra was a Cartan subalgebra. 
For complex dynamical systems, this problem was resolved in \cite{I}, where the necessary and sufficient conditions for $C_0(X)$ to be a Cartan subalgebra were given: if $R$ is a rational function and $X$ is its Julia set, Fatou set, or the Riemann sphere, then $C_0(X)$ is a Cartan subalgebra of the Kajiwara--Watatani algebra $\O_R(X)$ if and only if $R$ has no branch points on $X$. 

In this paper, we study the conditions under which $C_0(X)$ becomes a Cartan subalgebra for iterated function systems.

The notion of Cartan subalgebras was introduced by Kumjian \cite{Ku} and further developed by Renault \cite{R1980,R}. 
According to Kumjian \cite{Ku}, the Cartan subalgebra was introduced with the belief that understanding how abelian subalgebras embed in a C*-algebra illuminates its structure.
Renault's theory shows that for a separable C*-algebra $A$ with a Cartan subalgebra $B$, there is a topological groupoid $G$ with a unit space $G^{(0)}$ and twist data $\Sigma$ such that $A$ is isomorphic to the groupoid C*-algebra $C^*(G,\Sigma)$, and $C_0(G^{(0)})$ maps onto $B$. 
Moreover, this twisted groupoid $(G,\Sigma)$ is isomorphic to the Wyle twist $(G(B),\Sigma(B))$; and hence unique up to isomorphism.

Let $K$ be a compact metric space, and let $\gamma = \{\gamma_1,\ldots,\gamma_N\}$ be an iterated function system on $K$, i.e., a finite family of proper contractions from $K$ into itself. We say that $\gamma$ is a self-similar map on $K$ if $K = \gamma_1(K)\cup\cdots\cup\gamma_N(K)$. 
Let $\O_\gamma$ denote the Kajiwara--Watatani algebra for an iterated function system $\gamma$. 
The open set condition for an iterated function system $\gamma$ is the condition that there exists an open set $U$ in $K$ such that the images $\gamma_1(U),\ldots,\gamma_N(U)$ are pairwise disjoint and contained in $U$. 
Kajiwara and Watatani showed in \cite{KW17} that $C(K)$ is a maximal abelian subalgebra (masa) of $\O_\gamma$ if $\gamma$ is a self-similar map and satisfies the open set condition and some additional technical conditions. 
However, whether this masa $C(K)$ is a Cartan subalgebra remains an open question.

In this paper, we study the conditions under which $C(K)$ becomes a masa and the conditions under which it becomes a Cartan subalgebra. To do this, we present an explicit universal covariant representation of Kajiwara--Watatani algebras on a Hilbert space, avoiding the use of quotient constructions like the GNS representation. This representation helps us to further analyze Kajiwara--Watatani algebras.

\subsection{Main theorems}

We refer to Section 3 and 4 for the definition of the terminology used below.
Let $X$ be a compact Hausdorff space, $\gamma = \{\gamma_1,\ldots,\gamma_N\}$ be an iterated function system on $X$, and $\X$ be a subset of $X$.
Let $G$ be the graph of $\gamma$ and the pair of $A = C(X)$ and $E = C(G)$ be the Kajiwara--Watatani correspondence for $(X,\gamma)$.
We assume the following conditions:
\begin{itemize}
  \item The set $\X$ is dense in $X$ and completely invariant under $\gamma$.
  \item The images $\gamma_1(\X),\ldots,\gamma_N(\X)$ are pairwise disjoint.
  \item The restriction $\gamma_k|_\X$ is injective for all $k=1,\ldots,N$.
\end{itemize}
Then, we can construct a manageable universal covariant representation $\hat\rho = (\hat\rho_A, \hat\rho_E)$ of the Kajiwara--Watatani correspondence for $(X,\gamma)$ (Theorem~\ref{5.2}).
So we can regard the Kajiwara--Watatani algebra $\O_\gamma$ for $\gamma$ as $C^*(\hat\rho)$.

We assume the following additional conditions:
\begin{itemize}
  \item The maps $\gamma_1,\ldots,\gamma_N$ are embeddings.
%
%

%
  \item The iterated function system $\gamma$ satisfies the open set condition. That is, there exists a dense open set $U$ in $X$ such that the images $\gamma_1(U),\ldots,\gamma_N(U)$ are pairwise disjoint open set in $X$ and contained in $U$.
  \item The set $\X$ is contained in $U$.
  \item The set $\X$ is comeager in $X$.
\end{itemize}
Then, we can construct a masa $\hat D$ in $C^*(\hat\rho)$ that contains $\hat\rho_A(A)$ (Theorem~\ref{5.5.1}).
So $\hat\rho_A(A)$ must coincides with $\hat D$ for $\hat\rho_A(A)\cong C(X)$ to be a masa of $C^*(\hat\rho)\cong\O_\gamma$.
The system $\gamma$ must be essentially free for $\hat\rho_A(A)\cong C(X)$ to be a masa of $C^*(\hat\rho)\cong\O_\gamma$.
%
%
\begin{theorem}[Theorem~\ref{5.5.5}]\label{1.2.6}
  If $\gamma$ is not essentially free, then $\hat\rho_A(A)$ is properly contained in $\hat D$ and hence is not a masa of $C^*(\hat\rho)$.
\end{theorem}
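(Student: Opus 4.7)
The plan is to use the failure of essential freeness to produce two distinct admissible paths terminating at a common point of $X$, and then to construct an explicit projection in $C^*(\hat\rho)$ that is diagonal in the canonical path basis of the Fock representation $\hat\rho$ but which separates the two paths. Such a projection automatically commutes with $\hat\rho_A(A)$ and so belongs to $\hat D$ by maximality; yet it cannot belong to $\hat\rho_A(A)$, because $\hat\rho_A(A)$ treats all path sheets over a given endpoint identically.

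First I would unpack the hypothesis geometrically. The intended conclusion, consistent with the Deaconu--Renault-type groupoid picture behind Kajiwara--Watatani algebras, is the existence of distinct admissible multi-indices $\mu$ and $\nu$ together with a non-empty open set $V\subseteq X$ such that every $y\in V$ lies in $\gamma_\mu(\X)\cap\gamma_\nu(\X)$. Because $\X$ is comeager in $X$, each $\gamma_k|_\X$ is injective, and the images $\gamma_1(\X),\ldots,\gamma_N(\X)$ are pairwise disjoint, every such $y$ has exactly two $\X$-preimages corresponding to the two paths. Should the paper's formulation only supply a non-meager coincidence set, the comeagerness of $\X$ combined with continuity of the $\gamma_k$ should nevertheless promote it to an honest open set, after possibly shrinking.

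Second I would construct the witness. Pick a non-zero $f\in C(X)$ with $\supp f\subseteq V$, and let $p_\mu\in C^*(\hat\rho)$ denote the projection onto the $\mu$-sheet of the Fock representation --- concretely, $p_\mu = S_\mu S_\mu^*$, where $S_\mu$ is the isometry associated under $\hat\rho_E$ to a normalised section of $E^{\otimes|\mu|}$ supported on the single branch $\mu$. Set $T = p_\mu\,\hat\rho_A(f)$. Both $p_\mu$ and $\hat\rho_A(f)$ act diagonally on the canonical basis underlying $\hat\rho$, so $T$ does as well; hence $T$ commutes with $\hat\rho_A(A)$ and lies in $\hat D$ by maximality. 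On the other hand, any element of $\hat\rho_A(A)$ must take the same value on the $\mu$-sheet and on the $\nu$-sheet over any common endpoint $y\in V$, whereas $T$ takes the value $f(y)\neq 0$ on the $\mu$-sheet and $0$ on the $\nu$-sheet. Therefore $T\in\hat D\setminus\hat\rho_A(A)$, and $\hat\rho_A(A)\subsetneq\hat D$.

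The main obstacle I anticipate is the first step: faithfully translating the paper's own notion of \emph{essentially free} --- which is formulated in terms of the graph $G$ of $\gamma$ and the completely invariant set $\X$ --- into the geometric picture above, and in particular extracting a genuine non-empty open coincidence set rather than a merely non-meager one. Once this translation is in hand, the remainder is a direct matrix-coefficient computation in the explicit universal representation of Theorem~\ref{5.2}, which was constructed precisely to support this kind of concrete manipulation without quotient detours.
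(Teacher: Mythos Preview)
Your proposal rests on two misreadings, and together they make the argument break down.

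First, the geometric translation of ``not essentially free'' is off. Definition~3.1.5 concerns the set $\{x:\gamma_{\boldsymbol k}(x)=\gamma_{\boldsymbol l}(x)\}$ for the \emph{same} source point $x$; its failure is a periodicity phenomenon, and indeed Lemma~\ref{5.5.3} reduces it to the existence of a non-empty word $\boldsymbol k$ whose fixed-point set has non-empty interior. Your condition, that some open $V$ lies in $\gamma_\mu(\X)\cap\gamma_\nu(\X)$, is a branching statement about common \emph{targets} from possibly different sources. Under the standing hypotheses the images $\gamma_\mu(\X)$ and $\gamma_\nu(\X)$ are disjoint unless one word is an initial segment of the other, in which case the intersection equals the smaller image regardless of essential freeness; so your condition neither follows from nor captures the hypothesis.

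Second, and more decisively, the representation $\hat\rho$ is not a Fock-type path representation. It acts on $\H_\X\otimes\H_\Z$, where $\H_\X$ has basis $\{\e_x\}_{x\in\X}$ indexed by \emph{points} of $\X$. There is no ``$\mu$-sheet'' over a given endpoint: a point $y\in\X$ corresponds to a single basis vector $\e_y$. Your operator $T=p_\mu\,\hat\rho_A(f)$ is therefore just a diagonal operator in $D_0\otimes\upsilon_0$, and the claim that it distinguishes two sheets over the same $y$ is vacuous.

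The paper's witness lives in a different graded component. Using the open fixed-point set of $\gamma_{\boldsymbol k}$ with $n=|\boldsymbol k|>0$, one builds a non-zero $f\in C(X_{n,0})$ supported where the $\gamma$-chain closes up; then $\rho_{n,0}(f)\in E_{n,0}\subseteq B_n$ is diagonal in $M_\X$ precisely because $\gamma_{\boldsymbol k}(x)=x$ on the support (Lemma~\ref{5.5.4}). This gives $0\neq D_n\otimes\upsilon_n\subseteq\hat D$, while $\hat\rho_A(A)\subseteq D_0\otimes\upsilon_0$. The obstruction is an element of non-zero gauge degree, not a degree-zero projection.
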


\begin{theorem}[Theorem~\ref{5.5.5} and Theorem~\ref{5.5.7}]\label{1.2.7}
  Assume the following conditions:
  \begin{itemize}
    \item the iterated function system $\gamma$ is essentially free; 
    \item the set $\gamma^n(X)$ is clopen in $X$ for all $n\in\N$; 
    \item there exists a continuous map $\sigma$ from $\gamma(X)$ to $X$ such that $\sigma\circ\gamma_k = \id_X$ holds for all $k\in\{1,\ldots,N\}$.
  \end{itemize}
  Then, $\hat\rho_A(A)$ coincides with $\hat D$ and hence is a masa of $C^*(\hat\rho)$.
\end{theorem}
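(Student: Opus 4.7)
The strategy is to prove the reverse inclusion $\hat D \subseteq \hat\rho_A(A)$; combined with Theorem~\ref{5.5.1}, this gives equality and hence the masa property. I would argue in three stages, leveraging each hypothesis in turn.

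\emph{Step 1: reduce to diagonal generators.} The masa $\hat D$ arises from the explicit model of the universal covariant representation $\hat\rho$ built in Section~5. Over $\hat\rho_A(A)$, it should be generated by projections coming from finite words in the letters $\gamma_1,\ldots,\gamma_N$. For each word $\mu=(\mu_1,\ldots,\mu_n)$, set $\gamma_\mu=\gamma_{\mu_1}\circ\cdots\circ\gamma_{\mu_n}$ and let $p_\mu\in\hat D$ denote the canonical diagonal projection corresponding to the image of $E^{\otimes n}$ along this branch. I would first show $\hat D$ is the C*-algebra generated by $\hat\rho_A(A)$, these diagonal projections $p_\mu$, and ``off-diagonal coincidence'' terms arising from pairs $(\mu,\nu)$ with $\mu\neq\nu$ satisfying $\gamma_\mu(x)=\gamma_\nu(x)$ for some $x\in X$.

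\emph{Step 2: essential freeness eliminates off-diagonal terms.} Essential freeness asserts that the coincidence set $\{x\in X:\gamma_\mu(x)=\gamma_\nu(x)\text{ for some }\mu\neq\nu\text{ of equal length}\}$ is meager in $X$. Combined with $\X$ being comeager in $X$ and the density of the $\hat\rho$-image of $\X$ in the universal Hilbert space, this forces the off-diagonal coincidence operators to vanish on a dense subspace, hence to vanish outright. Thus $\hat D$ is generated by $\hat\rho_A(A)$ together with the diagonal projections $p_\mu$.

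\emph{Step 3: clopen-ness and $\sigma$ absorb each $p_\mu$ into $\hat\rho_A(A)$.} Under the universal representation, $p_\mu$ acts as multiplication by the characteristic function $\chi_{\gamma_\mu(X)}$; so it suffices to show $\gamma_\mu(X)$ is clopen in $X$ for each word $\mu$. I would induct on $n=|\mu|$: the hypothesis that $\gamma^n(X)$ is clopen delivers the union $\bigcup_{|\mu|=n}\gamma_\mu(X)$, while the continuous left inverse $\sigma\colon\gamma(X)\to X$ with $\sigma\circ\gamma_k=\id_X$ lets me separate the branches continuously. Specifically, $\sigma^{-1}$ transports clopen subsets of $X$ to clopen subsets of $\gamma(X)$, and intersecting with $\gamma^n(X)$ isolates each $\gamma_\mu(X)$ as a clopen piece, so $\chi_{\gamma_\mu(X)}\in C(X)=A$ and $p_\mu=\hat\rho_A(\chi_{\gamma_\mu(X)})\in\hat\rho_A(A)$.

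I expect Step~2 to be the main technical obstacle. Essential freeness is a topological/Baire-category statement about $X$, whereas ``vanishing in $\hat D$'' is an operator-algebraic statement, and translating between them requires careful use of the explicit model of $\hat\rho$ from Section~5. The key point to pin down is that a bounded operator in $C^*(\hat\rho)$ which is parametrically supported on a meager subset of $X$ must actually be the zero operator; this should follow from the construction of $\hat\rho$ on a Hilbert space built out of $\X$, but making it rigorous will invoke the full strength of the comeagerness, density, and invariance hypotheses on $\X$.
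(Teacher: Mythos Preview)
Your Step~3 contains a genuine error: individual cylinder sets $\gamma_\mu(X)$ need not be clopen under the stated hypotheses, and your projections $p_\mu$ need not exist in $C^*(\hat\rho)$. Example~\ref{6.1.6} (the tent system on $[0,1]$ with $\gamma_1(x)=x/2$, $\gamma_2(x)=1-x/2$) satisfies all three hypotheses of the theorem, yet $\gamma_1(X)=[0,1/2]$ and $\gamma_2(X)=[1/2,1]$ are closed but not open. The map $\sigma$ does not separate branches: $\gamma_k(X)=\{y\in\gamma(X):\gamma_k(\sigma(y))=y\}$ is an equalizer, hence closed, but openness would require the branches to be disjoint, i.e.\ the graph separation condition, which this theorem deliberately does \emph{not} assume. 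For the same reason the branch indicator function on $G$ is discontinuous whenever graph separation fails, so the ``canonical diagonal projection'' $p_\mu$ you posit does not lie in the algebra.

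Your Step~2 also misidentifies which coincidences matter. In the paper's decomposition $\hat D=\bigoplus_{k\in\Z}D_k\otimes\upsilon_k$, essential freeness is used (Lemmas~\ref{5.5.10}--\ref{5.5.2}) to kill $D_k$ for $k\neq 0$; a nonzero diagonal entry in $E_{m,n}$ with $m-n=k\neq 0$ forces $\gamma_{\boldsymbol j}(x)=\gamma_{\boldsymbol k}(x)$ for words of \emph{different} lengths, not equal length as you write. The remaining piece $D_0$ is handled without any branch decomposition (Lemma~\ref{5.5.6} and Theorem~\ref{5.5.7}): the map $\boldsymbol\sigma_n(y)=(y,\sigma(y),\ldots,\sigma^n(y))$ is a homeomorphism from $\gamma^n(X)$ onto $X_n$, and the $(y,y)$-entry of $\rho_{n,n}(f)$ is simply $f(\boldsymbol\sigma_n(y),\boldsymbol\sigma_n(y))$ on $\gamma^n(X)$ and $0$ off it---continuous precisely because $\gamma^n(X)$ (not any individual $\gamma_\mu(X)$) is clopen. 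That is the mechanism you are missing.
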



Assuming the three conditions stated in Theorem~\ref{1.2.7}, consider whether the masa $\hat D = \hat\rho_A(A)$ is a Cartan subalgebra or not in the following theorem:
%

\begin{theorem}[Theorem~\ref{5.9.8} and Theorem~\ref{5.10.2}]\label{1.2.10}
  Assume the three conditions stated in Theorem~\ref{1.2.7} and assume that the space $X$ is first-countable.
  Then, the following are equivalent:
  \begin{enumerate}
    \item $\gamma$ satisfies the graph separation condition, i.e., for all distinct indices $j$ and $k$ and all points $x \in X$, one has $\gamma_j(x) \neq \gamma_k(x)$.
    \item $\hat D$ is a Cartan subalgebra of $C^*(\hat\rho)$.
  \end{enumerate}
\end{theorem}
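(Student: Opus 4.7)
\emph{Plan.} By Theorem~\ref{1.2.7}, under the hypotheses of Theorem~\ref{1.2.10} we already know that $\hat D = \hat\rho_A(A) \cong C(X)$ is a masa of $C^*(\hat\rho)$ and (being unital) contains the unit. To upgrade ``masa'' to ``Cartan subalgebra'' two further properties are needed: the existence of a faithful conditional expectation $P\colon C^*(\hat\rho)\to\hat D$, and regularity, i.e.\ that the normalizer set
\[
  N(\hat D) = \set{u\in C^*(\hat\rho)}{u\hat D u^*\cup u^*\hat D u\subseteq\hat D}
\]
generates $C^*(\hat\rho)$ as a C*-algebra. The expectation $P$ will be constructed from the gauge action of $\T$ on $\O_\gamma\cong C^*(\hat\rho)$ composed with a diagonal expectation on the fixed-point algebra; this is standard for Cuntz--Pimsner algebras and I expect it to present no real difficulty. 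Thus the heart of Theorem~\ref{1.2.10} is the equivalence of graph separation with regularity.

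For (1)$\Rightarrow$(2), I would use graph separation to decompose the graph $G$ into pairwise disjoint clopen pieces $G_1,\dots,G_N$, each homeomorphic to $X$ via the first projection. For $\xi_k\in C(G)$ supported on $G_k$, I expect $\hat\rho_E(\xi_k)$ to be a normalizer of $\hat D$: in the concrete representation of Theorem~\ref{5.2} it acts as a weighted composition operator associated to the embedding $\gamma_k$, and the continuous section $\sigma$ from the hypotheses of Theorem~\ref{1.2.7}, together with the clopenness of $\gamma(X)$, guarantees that conjugation preserves $\hat D$. The dense *-subalgebra of $C^*(\hat\rho)$ generated by $\hat\rho_A(A)$ and products of such $\hat\rho_E(\xi_k)$ and their adjoints then certifies regularity.

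For (2)$\Rightarrow$(1), I would argue by contrapositive. Suppose $\gamma_j(x_0)=\gamma_k(x_0)$ for some $x_0\in X$ and distinct $j,k$; first-countability supplies a sequence $(x_n)\subseteq\X$ approaching $x_0$. The idea is that every $u\in N(\hat D)$ induces, via the Weyl groupoid construction, a partial homeomorphism on $\mathrm{spec}(\hat D)\cong X$, and these partial homeomorphisms, hence all of $C^*(N(\hat D))$, cannot distinguish the two colliding branches at $x_0$. Conversely, a section $\xi\in C(G)$ chosen to detect only the $G_j$-branch near $(x_0,\gamma_j(x_0))$ gives $\hat\rho_E(\xi)\in C^*(\hat\rho)$ that does distinguish them. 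Pairing with a pure state of $\hat D$ localized at $x_0$, built along the approximating sequence, and extending suitably to $C^*(\hat\rho)$, would yield the required norm obstruction showing $\hat\rho_E(\xi)\notin C^*(N(\hat D))$.

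The principal obstacle will be the converse: turning a pointwise failure of graph separation into a global obstruction against the whole C*-algebra generated by normalizers. Controlling how the partial homeomorphisms arising from arbitrary normalizers act at the collision point $x_0$, and using first-countability to make the argument genuinely sequential rather than net-based, is where the delicate work lies.
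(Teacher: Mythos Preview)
Your overall plan is sound and the forward direction matches the paper: under graph separation the graph $G$ splits into disjoint closed pieces $G_1,\dots,G_N$, any $f\in C(G)$ decomposes as $\sum f_k$ with $f_k$ supported on $G_k$, and each $\rho_E(f_k)$ is a normalizer (Lemma~\ref{5.9.7}). The approximate-unit and conditional-expectation parts are also fine; the paper builds the expectation concretely as the restriction of $\delta_\X\otimes\id_\Z$ (Proposition~\ref{5.23}) rather than via gauge-averaging, but the two are closely related.

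For the converse, however, the paper does \emph{not} go through Weyl partial homeomorphisms or pure states. It works entirely in the concrete representation $\rho$ on $\ell^2(\X)$ and proves a sharp characterization (Lemma~\ref{5.9.5}):
\[
  N(D_0,C^*(\rho))\;=\;C^*(\rho)\cap QM_\X,
\]
the quasi-monomial matrices in $C^*(\rho)$. First-countability is used here, in the $\subseteq$ direction, to shrink a sequence of bump functions toward a point. The other crucial device is the \emph{$1$st extended entry function} (Definition~\ref{5.9.3}, Proposition~\ref{5.9.4}): for every $\alpha\in C^*(\rho)$ the map $(x,\gamma_j(x))\mapsto\langle\e_x,\alpha\e_{\gamma_j(x)}\rangle$ on $\X$ extends continuously to all of $X$; this is exactly where the clopenness of $\gamma^n(X)$ and the section $\sigma$ enter. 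If $\gamma_j(x_0)=\gamma_k(x_0)=:y_0$, then any $\alpha$ whose extended entry function is nonzero at $(x_0,y_0)$ has, by continuity, nonzero $(w,\gamma_j(w))$- and $(w,\gamma_k(w))$-entries for $w\in\X$ near $x_0$, hence fails to be quasi-monomial. Thus the evaluation $\alpha\mapsto f_\alpha(x_0,y_0)$ is a bounded linear functional vanishing on $N(D_0,C^*(\rho))$; since the normalizer is a $*$-semigroup, $C^*(N)=\overline{\mathrm{span}}\,N$ and the functional vanishes there too. But $\rho_{1,0}(1)$ evaluates to $1$, so regularity fails (Lemma~\ref{5.9.6}). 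The transfer from $C^*(\rho)$ to $C^*(\hat\rho)$ uses the universality of $\rho$ itself (Theorem~\ref{5.4.12}), available after replacing $\X$ via Lemma~\ref{5.5.10}.

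One point in your sketch is off: you propose ``a section $\xi\in C(G)$ chosen to detect only the $G_j$-branch near $(x_0,\gamma_j(x_0))$.'' At a collision the two graph-pieces meet in a \emph{single} point of $G$, so no continuous function on $G$ separates the branches there. The obstruction runs the other way: a function like $1\in C(G)$ that is nonzero at the collision point \emph{cannot} lie in $C^*(N)$, because every normalizer is forced to vanish there in the extended-entry sense. Your Weyl-groupoid idea may be salvageable, but as written it inverts the role of the distinguished element, and you would still need a concrete linear functional that is continuous on all of $C^*(\hat\rho)$ and annihilates normalizers --- the paper's extended entry function supplies exactly that, and I do not see an obvious replacement coming out of the pure-state picture.
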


\subsection{Structure of the Paper}

The paper is organized as follows:

\begin{itemize}
  \item In \textbf{Section 2}, we introduce the notion of $X$-squared matrices and review Cuntz--Pimsner--Katsura algebras. In this paper, we represent the Kajiwara--Watatani algebra using $X$-squared matrices, and by analyzing the $X$-squared matrices, we analyze the Kajiwara--Watatani algebra.
  \item \textbf{Section 3} provides a review and preparation of iterated function systems. Proposition~\ref{3.15} and Proposition~\ref{3.21} in this section play an important role in clarifying the structure of the Kajiwara--Watatani algebra.
  \item \textbf{Section 4} provides a review of the Kajiwara--Watatani algebra.
  \item \textbf{Section 5} is the main body of the paper. In this section, we analyze the Kajiwara--Watatani algebra and provide conditions under which $C(X)$ becomes the masa or Cartan subalgebra of $\O_\gamma$.
  \item In \textbf{Section 6}, we present examples in which $C(X)$ is not the masa of $\O_\gamma$, in which it is the masa but not the Cartan subalgebra, and in which it is the Cartan subalgebra.
\end{itemize}

\section{Preparations for Operator Algebras}
\subsection{Adjointable operators between right Hilbert modules}

In this subsection, we review adjointable operators between right Hilbert modules. See \cite{B} Part II, Section 7 for more detail.

Let $A$ be a C*-algebra and $E$ and $F$ be right Hilbert $A$-modules.
An \emph{adjoint} of a complex-linear map $T\colon E\to F$ is a complex-linear map $S\colon F\to E$ satisfying the equation $\inn{e}{Sf}_E=\inn{Te}{f}_F$ for any $e\in E$ and $f\in F$.
A complex-linear map is called \emph{adjointable} if it has an adjoint.
An adjointable map is bounded and $A$-linear.
Let $\L(E,F)$ denote the set of all adjointable operators from $E$ to $F$.
Endowed with operator norm, the set $\L(E,F)$ is a Banach space.

Let $\Theta(E,F):=\set{\theta_{f,e}}{e\in E\text{ and }f\in F}\subseteq\L(E,F)$, where $\theta_{f,e}$ is defined as the operator which maps $x\in E$ to $f\inn{e}{x}_E$.
This is closed under complex-scalar multiplication.
Let $\K(E,F)$ denote the closed subspace of $\L(E,F)$ generated by $\Theta(E,F)$.
Then, $\L(E):=\L(E,E)$ is a C*-algebra and $\K(E):=\K(E,E)$ is its closed ideal.

For later use, let $K(E)$ denote the algebraic additive subgroup of $\L(E)$ generated by $\Theta(E):=\Theta(E,E)$.
Since $\Theta(E)$ is closed under complex-scalar multiplication, $K(E)$ is a linear subspace of $\L(E)$.
Therefore, the closure of $K(E)$ is $\K(E)$.

\subsection{$X$-squared matrices}

Let $X$ be a set.
Let $\H_X$ denote the Hilbert space $\bigoplus_X\C$ with the standard basis $\{\e_x\}_{x\in X}$.
Let $M_X$ denote the C*-algebra of all bounded linear operators on $\H_X$.
We call an element of $M_X$ an \emph{$X$-squared matrix}.
For $x,y\in X$, \emph{the $(x,y)$-entry} of $\alpha\in M_X$ means the value $\inn{\e_x}{\alpha\e_y}$.

\begin{definition}\label{2.1}
  Let $R$ be a binary relation on $X$.
  \begin{enumerate}
    \item We say that $R$ is \emph{the identity} if $xRy$ implies $x=y$ for all $x,y\in X$.
    \item We say that $R$ is \emph{one-to-one} if $xRy$ and $xRz$ imply $y=z$ and also $xRz$ and $yRz$ imply $x=y$ for all $x,y,z\in X$.
  \end{enumerate}
\end{definition}

\begin{definition}\label{2.2}
  For an $X$-squared matrix $\alpha$, we define a binary relation $\rel\alpha$ on $X$ by the following: for each $x,y\in X$,
  \begin{equation*}
    x\rel\alpha y\defiff\text{the $(x,y)$-entry of $\alpha$ does not vanish, i.e., $\inn{\e_x}{\alpha\e_y}\neq 0$}.
  \end{equation*}
\end{definition}

\begin{definition}\label{2.3}\ 
  \begin{enumerate}
    \item An $X$-squared matrix $\alpha$ is \emph{diagonal} if the binary relation $\rel\alpha$ is the identity.
    Let $D_X$ denote the set of all diagonal $X$-squared matrices.
    \item An $X$-squared matrix $\alpha$ is \emph{quasi-monomial} if the binary relation $\rel\alpha$ is one-to-one.
    Let $QM_X$ denote the set of all quasi-monomial $X$-squared matrices.
  \end{enumerate}
\end{definition}

\begin{fact}[\cite{I}, Proposition 2.4]\label{2.4}
  Let $\alpha$ and $\beta$ be $X$-squared matrices, $c$ be a complex number, and $x$ and $y$ be points in $X$.
  Then, the following assertions hold:
  \begin{enumerate}
    \item If $x\rel{\alpha+\beta}y$, then $x\rel\alpha y$ or $x\rel\beta y$.
    \item If $x\rel{\alpha c}y$, then $x\rel\alpha y$.
    \item If $x\rel{\alpha\beta}y$, then $x\rel\alpha\rel\beta z$.
    \item If $x\rel{\alpha^*}y$, then $y\rel\alpha x$.
  \end{enumerate}
\end{fact}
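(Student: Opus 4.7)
The plan is to unfold the definition of $\rel{\cdot}$ in each case and reduce to the standard entry-wise identity for the corresponding matrix operation, from which the conclusion follows by contraposition.

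For (1), bilinearity of the inner product yields $\inn{\e_x}{(\alpha+\beta)\e_y} = \inn{\e_x}{\alpha\e_y}+\inn{\e_x}{\beta\e_y}$, so a nonzero left-hand side forces at least one summand on the right to be nonzero, which gives $x\rel{\alpha}y$ or $x\rel{\beta}y$. Part (2) is the analogous one-line observation using $\inn{\e_x}{\alpha c\,\e_y} = c\,\inn{\e_x}{\alpha\e_y}$. For (4), the adjoint identity $\inn{\e_x}{\alpha^*\e_y} = \inn{\alpha\e_x}{\e_y} = \overline{\inn{\e_y}{\alpha\e_x}}$ shows that the $(x,y)$-entry of $\alpha^*$ is nonzero if and only if the $(y,x)$-entry of $\alpha$ is.

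The substantive step is (3). I would use the Parseval-type expansion
\begin{equation*}
  \inn{\e_x}{\alpha\beta\,\e_y} = \inn{\alpha^*\e_x}{\beta\e_y} = \sum_{z\in X}\inn{\e_x}{\alpha\e_z}\inn{\e_z}{\beta\e_y},
\end{equation*}
obtained by expanding the two vectors $\alpha^*\e_x,\beta\e_y\in\H_X$ in the orthonormal basis $\{\e_z\}_{z\in X}$. If the left-hand side is nonzero, then some term on the right must be nonzero, producing an intermediate index $z$ with $x\rel{\alpha}z$ and $z\rel{\beta}y$, which is precisely the content of the assertion.

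The only mild subtlety is the convergence of this basis expansion when $X$ is infinite; it is handled by the observation that $\alpha^*\e_x$ and $\beta\e_y$ are genuine square-summable elements of $\H_X$, so Parseval applies verbatim. Beyond that, the lemma is a direct unpacking of the entry-wise descriptions of sum, scalar multiplication, product, and adjoint, and I anticipate no real obstacle.
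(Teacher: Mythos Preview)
Your proof is correct. Each of the four parts reduces, as you say, to the standard entry-wise identity for the relevant operation, and your handling of part (3) via the orthonormal expansion $\inn{\e_x}{\alpha\beta\e_y}=\sum_{z\in X}\inn{\e_x}{\alpha\e_z}\inn{\e_z}{\beta\e_y}$ is exactly the right move; the square-summability of $\alpha^*\e_x$ and $\beta\e_y$ justifies the convergence as you note.

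There is nothing to compare against here: the paper does not prove this statement itself but records it as a Fact imported from \cite{I}, Proposition~2.4. Your argument is the natural direct verification and would serve as a self-contained proof.
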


\begin{fact}[\cite{I}, Proposition 2.6]\label{2.5}
  The complex-linear operator $\delta\colon M_X\to D_X$ defined below is a faithful conditional expectation:
  \begin{equation*}
    \delta(\alpha)\e_x:=\e_x\inn{\e_x}{\alpha\e_x}\quad\text{for each }\alpha\in M_X\text{ and }x\in X.
  \end{equation*}
\end{fact}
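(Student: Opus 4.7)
The plan is to verify each part of the definition of a faithful conditional expectation directly from the formula, exploiting that $\{\e_x\}_{x\in X}$ is an orthonormal basis of $\H_X$ and that elements of $D_X$ act diagonally.

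First I would check that $\delta$ is well-defined and contractive. The scalar $\inn{\e_x}{\alpha\e_x}$ has modulus at most $\norm{\alpha}$, and the operator $\delta(\alpha)$ is diagonal with these numbers as its diagonal entries, so it lies in $D_X$ and satisfies $\norm{\delta(\alpha)}\le\norm{\alpha}$. Complex linearity is immediate from the formula. Next, if $\alpha\in D_X$ then by definition of ``diagonal'' we have $\alpha\e_x=\e_x\inn{\e_x}{\alpha\e_x}$ for every $x$, whence $\delta(\alpha)=\alpha$; combined with the fact that $\delta(M_X)\subseteq D_X$, this shows that $\delta$ is an idempotent onto $D_X$.

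For the bimodule property over $D_X$, I would take $d_1,d_2\in D_X$ and use $d_i\e_x=\e_x\inn{\e_x}{d_i\e_x}$ together with $d_1^*\e_x=\e_x\overline{\inn{\e_x}{d_1\e_x}}$ to compute
\begin{equation*}
  \delta(d_1\alpha d_2)\e_x
  =\e_x\inn{d_1^*\e_x}{\alpha d_2\e_x}
  =\e_x\inn{\e_x}{d_1\e_x}\inn{\e_x}{\alpha\e_x}\inn{\e_x}{d_2\e_x}
  =(d_1\delta(\alpha)d_2)\e_x.
\end{equation*}
Positivity follows from $\inn{\e_x}{\alpha^*\alpha\e_x}=\norm{\alpha\e_x}^2\ge 0$, which makes $\delta(\alpha^*\alpha)$ a diagonal operator with nonnegative entries. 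At this point $\delta$ is a contractive idempotent onto a C*-subalgebra, and Tomiyama's theorem upgrades it automatically to a completely positive conditional expectation; alternatively, complete positivity can be read off directly by viewing $\delta$ as compression by a family of rank-one projections.

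Faithfulness is the last step and is really where the orthonormal basis assumption is used: if $\delta(\alpha^*\alpha)=0$ then $\norm{\alpha\e_x}^2=\inn{\e_x}{\alpha^*\alpha\e_x}=0$ for every $x\in X$, and since $\{\e_x\}_{x\in X}$ spans a dense subspace of $\H_X$, this forces $\alpha=0$. I do not expect any serious obstacle: the only point that needs a modicum of care is the bimodule identity over $D_X$, since general elements of $D_X$ can have arbitrary (bounded) diagonal entries, but the diagonality of $d_1$ and $d_2$ reduces the verification to the scalar computation above.
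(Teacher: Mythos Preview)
Your verification is correct. Note, however, that the paper does not actually prove this statement: it is recorded as a \emph{Fact} with a citation to \cite{I}, Proposition~2.6, and no proof is given in the present paper. So there is nothing to compare against here beyond observing that your direct argument---checking contractivity, idempotence onto $D_X$, the $D_X$-bimodule identity, positivity, and faithfulness via the orthonormal basis $\{\e_x\}$---is the standard route and supplies what the paper omits. Invoking Tomiyama's theorem is a convenient shortcut for complete positivity, though as you note it is not strictly necessary since the map is visibly a diagonal compression.
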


\subsection{Cuntz--Pimsner--Katsura algebras}

In this subsection, we review Cuntz--Pimsner--Katsura algebras.
See \cite{K} for more details.

A \emph{C*-correspondence} is a pair of a C*-algebra $A$ and a right Hilbert $A$-module $E$ endowed with the left $A$-scalar multiplication induced by a $*$-homomorphism from $A$ to $\L(E)$.

Let $(A,E)$ be a C*-correspondence, $B$ be C*-algebras, and $\rho_A\colon A\to B$ and $\rho_E\colon E\to B$ be maps.
The pair $\rho=(\rho_A,\rho_E)$ is called a \emph{representation} of $(A,E)$ if $\rho_A$ is a $*$-homomorphism, $\rho_E$ is a complex-linear map, and moreover the following two conditions hold.
\begin{enumerate}
  \item The pair $\rho$ preserves inner product, i.e., $\rho_A(\inn{f}{g})=\rho_E(f)^*\rho_E(g)$ for all $f,g\in E$.
  \item The pair $\rho$ preserves left $A$-scalar multiplication, i.e., $\rho_E(af)=\rho_A(a)\rho_E(f)$ for all $a\in A$ and $f\in E$.
\end{enumerate}
Then, the next holds automatically.
See \cite{I}, Fact 2.7 for the proof of this fact.
\begin{enumerate}
  \setcounter{enumi}{2}
  \item The pair $\rho$ preserves right $A$-scalar multiplication, i.e., $\rho_E(fa)=\rho_E(f)\rho_A(a)$ for all $f\in E$ and $a\in A$.
\end{enumerate}
For a representation $\rho=(\rho_A,\rho_E)$, let $C^*(\rho)$ denote the C*-subalgebra of $B$ generated by the set $\rho_A(A)\cup\rho_E(E)$.

A representation $\rho$ is \emph{injective} if $\rho_A$ is injective.
In this case, $\rho_E$ is also injective automatically.

Let $(A,E)$ be a C*-correspondence and $\rho=(\rho_A,\rho_E)$ be a representation of $E$.
We define two $*$-homomorphisms $\varphi$ and $\psi_\rho$ as follows.
\begin{alignat*}{3}
  &\varphi\colon A\to\L(E),
  &\quad&\varphi(a)(f):=af
  &\quad&\text{for $a\in A$, $f\in E$}.
  \\
  &\psi_\rho\colon\K(E)\to C^*(\rho),
  &\quad&\psi_\rho(\theta_{f,g}):=\rho_E(f)\rho_E(g)^*
  &\quad&\text{for $f,g\in E$}.
\end{alignat*}
We call $\varphi$ \emph{the natural left action of $A$ on $E$}, and $\psi_\rho$ \emph{the representation of $\K(E)$ induced by $\rho$}.
Let $\I_E:=\set{a\in A}{\varphi(a)\in\K(E)\text{ and }ab=0\text{ for all }b\in\ker\varphi}$.
This is a closed ideal of $A$ and is called a \emph{covariant ideal} of $(A,E)$.
We call $\rho$ \emph{covariant} if $\rho_A(a)=\psi_\rho(\varphi(a))$ holds for all $a\in\I_E$.
There exist covariant representations, and the universal one exists.
\emph{The Cuntz--Pimsner--Katsura algebra} associated with $(A,E)$ is the C*-algebra induced by the universal covariant representation of $(A,E)$ and denoted by $\O_E$.

\begin{fact}[\cite{I}, Proposition 2.9]\label{2.6}
  The equation $\psi_\rho(\varphi(a))\rho_E(f)=\rho_A(a)\rho_E(f)$ holds for all $a\in\I_E$ and $f\in E$.
\end{fact}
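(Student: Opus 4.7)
The plan is to first establish the stronger identity $\psi_\rho(T)\rho_E(f) = \rho_E(Tf)$ for every $T \in \K(E)$ and $f \in E$, and then specialize to $T = \varphi(a)$. The hypothesis $a \in \I_E$ is used only through its consequence $\varphi(a) \in \K(E)$, which is precisely what guarantees that $\psi_\rho(\varphi(a))$ is defined.

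For a rank-one operator $T = \theta_{g,h} \in \Theta(E)$, the verification reduces to a direct computation. By the definition of $\psi_\rho$,
\begin{equation*}
  \psi_\rho(\theta_{g,h})\rho_E(f) = \rho_E(g)\rho_E(h)^*\rho_E(f).
\end{equation*}
Condition (1) (preservation of inner product) rewrites the tail $\rho_E(h)^*\rho_E(f)$ as $\rho_A(\inn{h}{f})$, and condition (3) (preservation of right $A$-scalar multiplication, which is noted to follow automatically from (1) and (2)) yields $\rho_E(g)\rho_A(\inn{h}{f}) = \rho_E(g\inn{h}{f}) = \rho_E(\theta_{g,h}(f))$. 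By complex-linearity of every factor in $T$, the identity $\psi_\rho(T)\rho_E(f) = \rho_E(Tf)$ extends to all $T \in K(E)$, the algebraic span of $\Theta(E)$.

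To pass from $K(E)$ to its norm-closure $\K(E)$, one argues by continuity: $\psi_\rho$ is a $*$-homomorphism and hence contractive on $\K(E)$, the evaluation map $T \mapsto Tf$ is bounded on $\L(E)$, and $\rho_E$ is itself contractive because $\|\rho_E(f)\|^2 = \|\rho_E(f)^*\rho_E(f)\| = \|\rho_A(\inn{f}{f})\| \le \|f\|^2$. Both sides of the identity are therefore norm-continuous in $T$, so the identity survives passage to the limit. Specializing to $T = \varphi(a)$ for $a \in \I_E$ and invoking condition (2) (preservation of left $A$-scalar multiplication) produces
\begin{equation*}
  \psi_\rho(\varphi(a))\rho_E(f) = \rho_E(\varphi(a)f) = \rho_E(af) = \rho_A(a)\rho_E(f),
\end{equation*}
which is the claim. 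This is essentially routine; no step presents a real obstacle. The only point requiring a nod is the implicit use of the standard fact — established alongside the definition of $\psi_\rho$ — that the rule $\theta_{f,g} \mapsto \rho_E(f)\rho_E(g)^*$ does extend to a well-defined contractive $*$-homomorphism on $\K(E)$, so that $\psi_\rho(\varphi(a))$ genuinely makes sense for each $a \in \I_E$.
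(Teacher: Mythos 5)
Your argument is correct: the rank-one computation, the linear/continuous extension to $\K(E)$, and the final specialization to $T=\varphi(a)$ (using only that $\varphi(a)\in\K(E)$) are exactly the standard proof of this identity. The paper itself imports this statement as a Fact from \cite{I} without reproving it, and your proof is essentially the same routine verification one finds there, so there is nothing further to flag.
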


\begin{proposition}\label{2.7}
  The equation $\psi_\rho(\theta)\rho_A(b)=0$ holds for all $\theta\in\K(E)$ and $b\in\ker\varphi$.
\end{proposition}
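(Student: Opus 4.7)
The plan is to reduce the claim to rank-one compact operators and then exploit the defining algebraic properties of the representation $\rho$, together with the fact that $\ker\varphi$ is a closed $*$-ideal of $A$.

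First, I would note that for fixed $b\in\ker\varphi$, the map $\K(E)\to C^*(\rho)$ given by $\theta\mapsto\psi_\rho(\theta)\rho_A(b)$ is complex-linear and norm-continuous. Since $\K(E)$ is the closed linear span of $\Theta(E)=\set{\theta_{f,g}}{f,g\in E}$, it suffices to verify the identity on these rank-one generators. For such a $\theta_{f,g}$, the definition of $\psi_\rho$ gives
\begin{equation*}
  \psi_\rho(\theta_{f,g})\rho_A(b)=\rho_E(f)\rho_E(g)^*\rho_A(b),
\end{equation*}
so the problem reduces to showing $\rho_E(g)^*\rho_A(b)=0$ for all $g\in E$ and $b\in\ker\varphi$.

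Second, I would take adjoints to move the $\rho_A(b)$ to the left, where property (2) of a representation can be applied. Since $\varphi\colon A\to\L(E)$ is a $*$-homomorphism, $\ker\varphi$ is a closed $*$-ideal, so $b^*\in\ker\varphi$. By property (2),
\begin{equation*}
  \rho_A(b^*)\rho_E(g)=\rho_E(b^*g)=\rho_E(\varphi(b^*)g)=\rho_E(0)=0.
\end{equation*}
Taking adjoints yields $\rho_E(g)^*\rho_A(b)=0$, and combining with the previous display gives $\psi_\rho(\theta_{f,g})\rho_A(b)=0$, which completes the argument by density.

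There is no real obstacle here: the whole argument is an application of the left-action axiom to an element of $\ker\varphi$, plus the density of $\Theta(E)$ in $\K(E)$. The only conceptual point worth flagging is the use of the fact that $\ker\varphi$ is a $*$-ideal (which is automatic because $\varphi$ is a $*$-homomorphism), and the care needed not to invoke Fact~\ref{2.6} here, which is really a dual statement on the other side and would introduce a circularity.
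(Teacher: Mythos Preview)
Your proof is correct and follows essentially the same approach as the paper: reduce to rank-one operators $\theta_{f,g}$, use that $b^*\in\ker\varphi$ to get $\rho_A(b^*)\rho_E(g)=\rho_E(\varphi(b^*)g)=0$, take adjoints, and extend by linearity and continuity. The paper presents the rank-one computation as a single chain of equalities rather than isolating the step $\rho_E(g)^*\rho_A(b)=0$, but the content is identical.
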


\begin{proof}
  The equation holds for all $\theta\in\Theta(E)$ since we have the following for any $f,g\in E$ and $b\in\ker\varphi$:
  \begin{gather*}
    \psi_\rho(\theta_{f,g})\rho_A(b)
    =\rho_E(f)\rho_E(g)^*\rho_A(b)
    =\rho_E(f)(\rho_A(b)^*\rho_E(g))^*\\
    =\rho_E(f)\rho_E(b^*g)^*
    =\rho_E(f)\rho_E(\varphi(b^*)(g))^*
    =\rho_E(f)\rho_E(0)^*
    =0.
  \end{gather*}
  By the linearity, it also holds for all $\theta\in K(E)$.
  Since the linear operator $\theta\mapsto\psi_\rho(\theta)\rho_A(b)$ is bounded, the equation holds for all $\theta\in\K(E)$.
\end{proof}

\begin{fact}[\cite{I}, Proposition 2.12]\label{2.8}
  The equation $\lim_\lambda(f\cdot a_\lambda)=f$ holds for all approximate unit $\{a_\lambda\}$ of $A$ and $f\in E$.
\end{fact}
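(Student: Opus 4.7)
The plan is to reduce convergence in the Hilbert module $E$ to convergence in the C*-algebra $A$ by means of the norm--inner-product identity $\norm{e}^2 = \norm{\inn{e}{e}_E}$, which holds in any right Hilbert $A$-module. Applying this to $e := f - f\cdot a_\lambda$, the task reduces to showing that $\inn{f - f\cdot a_\lambda}{f - f\cdot a_\lambda}_E$ tends to $0$ in the norm of $A$.

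Expanding this inner product by sesquilinearity together with the $A$-bilinearity identity $\inn{e\cdot a}{e'\cdot b}_E = a^*\inn{e}{e'}_E b$, and setting $c := \inn{f}{f}_E \in A$, I obtain
\[
c - c a_\lambda - a_\lambda^* c + a_\lambda^* c a_\lambda = (c - c a_\lambda) - a_\lambda^*(c - c a_\lambda),
\]
whose norm is bounded above by $(1 + \norm{a_\lambda})\cdot\norm{c - c a_\lambda}$. Because an approximate unit is contractive and satisfies $c a_\lambda \to c$ in norm for every $c \in A$, this upper bound tends to $0$, and the claim follows.

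I do not foresee any serious obstacle; the only point requiring mild care is the appeal to a contractive approximate unit, which is a standard feature of any C*-algebra. In particular, no case distinction between unital and non-unital $A$ is needed, and self-adjointness of the $a_\lambda$ is not required for the argument; the factorization above handles the non-self-adjoint case uniformly.
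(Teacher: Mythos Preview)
Your argument is correct. The paper does not prove this fact; it merely cites it from \cite{I}, Proposition~2.12, so there is no in-paper proof to compare against. Your computation via $\norm{f - f a_\lambda}^2 = \norm{\inn{f - f a_\lambda}{f - f a_\lambda}_E}$ and the factorization $(c - c a_\lambda) - a_\lambda^*(c - c a_\lambda)$ is the standard route and is entirely sound.

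One small remark on wording: you write that ``an approximate unit is contractive, which is a standard feature of any C*-algebra.'' Strictly speaking, every C*-algebra \emph{has} a contractive approximate unit, but the statement here is quantified over \emph{all} approximate units. In the reference \cite{B} used by this paper, approximate units are by convention nets of positive contractions, so your appeal is justified in context; and in any case your bound $(1 + \norm{a_\lambda})\norm{c - c a_\lambda}$ only needs $\sup_\lambda \norm{a_\lambda} < \infty$, which is part of essentially every definition of approximate unit. So no change to the mathematics is needed, only perhaps a word of clarification.
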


\begin{definition}[\cite{K}, Definition 5.6]\label{2.9}
  A representation $\rho=(\rho_A,\rho_E)$ of a C*-correspondence $(A,E)$ is said to \emph{admit a gauge action} if for each complex number $c$ with $\abs{c}=1$, there exists a $*$-homomorphism $\Phi_c\colon C^*(\rho)\to C^*(\rho)$ such that $\Phi_c(\rho_A(a))=\rho_A(a)$ and $\Phi_c(\rho_E(f))=c\rho_E(f)$ for all $a\in A$ and $f\in E$.
\end{definition}

\begin{fact}[\cite{K}, Theorem 6.4]\label{2.10}
  For a covariant representation $\rho$ of a C*-correspondence $(A,E)$, the $*$-homomorphism $\O_E\to C^*(\rho)$ is an isomorphism if and only if $\rho$ is injective and admits a gauge action.
\end{fact}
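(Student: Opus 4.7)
The plan is to apply the standard gauge-invariant uniqueness technique: circle-averaging produces faithful conditional expectations onto the fixed-point algebras of the gauge action, and injectivity of the induced $*$-homomorphism is reduced to injectivity on the fixed-point algebra, which is then analysed via an inductive degree filtration.

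For the forward direction, I note that the universal covariant representation $\pi = (\pi_A, \pi_E)$ is injective by construction of $\O_E$, and that for every $c \in \T$ the pair $(\pi_A, c\pi_E)$ is again a covariant representation, so by universality there is a unique $*$-endomorphism of $\O_E$ sending $\pi_A(a) \mapsto \pi_A(a)$ and $\pi_E(f) \mapsto c\pi_E(f)$; the analogous construction with $c^{-1}$ gives a two-sided inverse, so this endomorphism is in fact an automorphism. Both properties then transport across any $*$-isomorphism $\O_E \to C^*(\rho)$.

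For the converse, universality furnishes a surjective $*$-homomorphism $\Phi\colon \O_E \to C^*(\rho)$ with $\Phi\circ\pi_A = \rho_A$ and $\Phi\circ\pi_E = \rho_E$; we must show $\Phi$ is injective. Integrating the two gauge actions of $\T$ against the normalized Haar measure yields faithful conditional expectations $\mathcal{E}\colon \O_E \to \O_E^{\T}$ and $\mathcal{E}'\colon C^*(\rho) \to C^*(\rho)^{\T}$ onto the fixed-point algebras, and $\Phi$ intertwines them: $\Phi\circ\mathcal{E} = \mathcal{E}'\circ\Phi$. Hence if $\Phi|_{\O_E^{\T}}$ is injective, then any $x$ with $\Phi(x) = 0$ satisfies $\Phi(\mathcal{E}(x^*x)) = 0$, forcing $\mathcal{E}(x^*x) = 0$ on the fixed-point algebra, hence $x^*x = 0$ by faithfulness of $\mathcal{E}$, hence $x = 0$.

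It therefore remains to establish injectivity of $\Phi$ on $\O_E^{\T}$. I would describe $\O_E^{\T}$ as the closure of an increasing union $\bigcup_{n\in\N} B_n$, where $B_n$ is the closed linear span of balanced monomials $\pi_E(f_1)\cdots\pi_E(f_k)\pi_E(g_k)^*\cdots\pi_E(g_1)^*$ with $k\leq n$; these correspond via the induced $*$-homomorphisms $\psi_\pi^{(k)}\colon \K(E^{\otimes k}) \to \O_E$ to compact operators on the tensor-power correspondences. The base case $B_0 = \pi_A(A) \cong A$ is immediate from the injectivity of $\rho_A$, and the inductive step uses the covariance relation $\pi_A(a) = \psi_\pi(\varphi(a))$ on $\I_E$, extended to each tensor power, to realize $B_n$ as a C*-algebra extension of $B_{n-1}$ by $\psi_\pi^{(n)}(\K(E^{\otimes n}))$; injectivity of $\Phi$ on both the ideal and the quotient then gives injectivity on $B_n$. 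The main obstacle is precisely this inductive step: tracking the covariant ideals $\I_{E^{\otimes n}}$ of the tensor-power correspondences with care and verifying that $\ker\Phi$ interacts cleanly with the ideals $\psi_\pi^{(n)}(\K(E^{\otimes n}))$ is the technical heart of Katsura's theory, and the reason the proof in \cite{K} requires substantial preparatory work.
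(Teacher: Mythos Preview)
The paper does not provide its own proof of this statement: it is recorded as a \emph{Fact} with a citation to \cite{K}, Theorem~6.4, and no argument is given in the paper itself. There is therefore nothing in the paper to compare your proposal against.

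That said, your sketch is a faithful outline of Katsura's original argument: the forward direction via universality, the reduction to the fixed-point algebra by intertwining the Haar-averaged conditional expectations, and the inductive analysis of the core $\O_E^{\T}$ through the filtration by compacts on tensor powers. You correctly identify the technical heart as the interaction of $\ker\Phi$ with the ideals $\psi_\pi^{(n)}(\K(E^{\otimes n}))$ and the covariant ideals of the tensor-power correspondences, and you are right that this is where Katsura's preparatory machinery (his Propositions~5.9--5.12 and the analysis in Section~6) is essential. As a high-level roadmap your proposal is correct, though of course the honest work lies entirely in the details you defer to~\cite{K}.
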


\subsection{The group C*-algebra $C^*(\Z)$}
\label{subsection 2.4}

For each $n\in\Z$, we define a $\Z$-squared matrix $\upsilon_n$ by $\upsilon_n\e_k:=\e_{n+k}$ for all $k\in\Z$.
The group C*-algebra $C^*(\Z)$ is the C*-subalgebra of $M_\Z$ generated by $\set{\upsilon_n}{n\in\Z}$.

\begin{fact}[\cite{I}, Proposition~2.15]\label{2.11}
  Let $c$ be a complex number with absolute value~$1$.
  Then, there exists a unique $*$-homomorphism $\Phi_c\colon C^*(\Z)\to C^*(\Z)$ satisfying the equation $\Phi_c(\upsilon_n)=\upsilon_n c^n$ for all $n\in\Z$.
\end{fact}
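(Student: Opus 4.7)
My plan is to realize $\Phi_c$ concretely as an inner automorphism of the ambient algebra $M_\Z$ restricted to $C^*(\Z)$. First, I would define a diagonal unitary $W_c \in M_\Z$ by $W_c \e_k := c^k \e_k$ for each $k \in \Z$; since $\abs{c}=1$, this is bounded, with adjoint $W_c^* \e_k = \bar c^k \e_k$ satisfying $W_c W_c^* = W_c^* W_c = \id$, so $W_c$ really is unitary. Next I would compute $W_c \upsilon_n W_c^*$ on a basis vector: using that $\bar c^k c^{n+k} = \abs{c}^{2k} c^n = c^n$, one has $W_c \upsilon_n W_c^* \e_k = W_c \upsilon_n \bar c^{k} \e_k = \bar c^k c^{n+k} \e_{n+k} = c^n \e_{n+k}$, which shows $W_c \upsilon_n W_c^* = \upsilon_n c^n$. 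Setting $\Phi_c(\alpha) := W_c \alpha W_c^*$ then yields a $*$-automorphism of $M_\Z$ that carries each generator $\upsilon_n$ of $C^*(\Z)$ back into $C^*(\Z)$; hence $\Phi_c$ restricts to a $*$-endomorphism of $C^*(\Z)$ satisfying the required identity.

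For uniqueness, I would observe that any $*$-homomorphism on $C^*(\Z)$ is norm-continuous and is therefore determined by its values on any generating set. Since $\{\upsilon_n\}_{n\in\Z}$ generates $C^*(\Z)$ as a C*-algebra by definition, any two $*$-homomorphisms that agree on every $\upsilon_n$ coincide on all of $C^*(\Z)$.

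I do not anticipate any real obstacle: the only substantive step is the conjugation identity $W_c \upsilon_n W_c^* = \upsilon_n c^n$, which is essentially a one-line computation on basis vectors, relying crucially on $\abs{c}=1$ to make $W_c$ bounded. An abstract alternative would be to invoke the universal property of $C^*(\Z)$ as the full group C*-algebra of $\Z$ and verify that $n \mapsto \upsilon_n c^n$ is another unitary representation of $\Z$, but the inner-automorphism approach is more self-contained and has the bonus of exhibiting $\Phi_c$ as a genuine automorphism whose inverse is $\Phi_{\bar c}$.
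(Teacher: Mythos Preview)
Your argument is correct. The paper does not actually prove this statement; it records it as a Fact and cites \cite{I}, Proposition~2.15, so there is no in-paper proof to compare against. Your inner-automorphism construction via the diagonal unitary $W_c\e_k = c^k\e_k$ is clean: the conjugation identity $W_c\upsilon_n W_c^* = c^n\upsilon_n$ is exactly the computation you wrote, and since $\Phi_c$ sends each generator $\upsilon_n$ into $C^*(\Z)$ and is a continuous $*$-homomorphism, it restricts to $C^*(\Z)$ as required. The uniqueness argument via agreement on a generating set is standard and sound. Your observation that this even exhibits $\Phi_c$ as an automorphism with inverse $\Phi_{\bar c}$ is a nice bonus beyond what the statement asks.
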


\begin{proposition}\label{2.12}
  Let $(A,E)$ be a C*-correspondence, $B$ be a C*-algebra, and $\rho=(\rho_A,\rho_E)$ be an injective covariant representation of $(A,E)$ in $B$.
  Define two maps $\hat\rho_A\colon A\to B\otimes C^*(\Z)$ and $\hat\rho_E\colon E\to B\otimes C^*(\Z)$ by 
  \begin{alignat*}{3}
    \hat\rho_A(a)&:=\rho_A(a)\otimes\upsilon_0&&\quad\text{for each $a\in A$; and}\\
    \hat\rho_E(f)&:=\rho_E(f)\otimes\upsilon_1&&\quad\text{for each $f\in E$.}
  \end{alignat*}
  Then, $\hat\rho=(\hat\rho_A,\hat\rho_E)$ is a universal covariant representation of $(A,E)$.
\end{proposition}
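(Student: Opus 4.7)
The plan is to verify the four conditions required by Fact~\ref{2.10}: that $\hat\rho$ is indeed a representation of the correspondence, that it is injective, that it is covariant, and that it admits a gauge action. Once these are established, Fact~\ref{2.10} identifies $C^*(\hat\rho)$ with $\O_E$ canonically, which is exactly the statement that $\hat\rho$ is universal.

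First I would check that $\hat\rho$ is a representation. Since $\upsilon_0$ is the identity operator on $\H_\Z$ and $\upsilon_0\upsilon_0=\upsilon_0$, $\upsilon_1^*\upsilon_1=\upsilon_{-1}\upsilon_1=\upsilon_0$, and $\upsilon_0\upsilon_1=\upsilon_1$, the compatibility of $\hat\rho$ with multiplication, involution, inner product, and left action follows from the corresponding properties of $\rho$ by the elementary arithmetic of elementary tensors in $B\otimes C^*(\Z)$ (using that $C^*(\Z)$ is nuclear so there is no ambiguity in the tensor product). Injectivity of $\hat\rho_A$ is immediate: $\hat\rho_A(a)=\rho_A(a)\otimes\upsilon_0 = \rho_A(a)\otimes I$ vanishes iff $\rho_A(a)=0$ iff $a=0$ by the assumed injectivity of $\rho$.

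Next I would address covariance. A direct computation on rank-one operators gives
\begin{equation*}
  \psi_{\hat\rho}(\theta_{f,g})
  = \hat\rho_E(f)\hat\rho_E(g)^*
  = (\rho_E(f)\rho_E(g)^*)\otimes(\upsilon_1\upsilon_{-1})
  = \psi_\rho(\theta_{f,g})\otimes\upsilon_0,
\end{equation*}
and by linearity and continuity this extends to $\psi_{\hat\rho}(\theta) = \psi_\rho(\theta)\otimes\upsilon_0$ for every $\theta\in\K(E)$. Consequently, for $a\in\I_E$, covariance of $\rho$ yields $\psi_{\hat\rho}(\varphi(a)) = \psi_\rho(\varphi(a))\otimes\upsilon_0 = \rho_A(a)\otimes\upsilon_0 = \hat\rho_A(a)$, as required.

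Finally, for the gauge action, given $c\in\C$ with $\abs c=1$, use Fact~\ref{2.11} to obtain the $*$-homomorphism $\Phi_c\colon C^*(\Z)\to C^*(\Z)$ and consider $\mathrm{id}_B\otimes\Phi_c$ on $B\otimes C^*(\Z)$. It sends $\hat\rho_A(a)=\rho_A(a)\otimes\upsilon_0$ to itself and $\hat\rho_E(f)=\rho_E(f)\otimes\upsilon_1$ to $c\hat\rho_E(f)$. Since its image on the generators of $C^*(\hat\rho)$ lies inside $C^*(\hat\rho)$, it restricts to an endomorphism of $C^*(\hat\rho)$, yielding the required $\Phi_c$ of Definition~\ref{2.9}. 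An application of Fact~\ref{2.10} then finishes the proof.

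The main obstacle I anticipate is the (mildly) nontrivial point that $\mathrm{id}_B\otimes\Phi_c$ genuinely restricts to $C^*(\hat\rho)$; this is handled by observing that the generating set $\hat\rho_A(A)\cup\hat\rho_E(E)$ is stable under this map up to scalars, so the $*$-subalgebra it generates is invariant. Everything else is routine tensor-product bookkeeping and an appeal to the already-cited facts.
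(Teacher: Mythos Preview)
Your proposal is correct and follows essentially the same approach as the paper's proof: verify that $\hat\rho$ is a representation, is injective, is covariant (via $\psi_{\hat\rho}(\theta)=\psi_\rho(\theta)\otimes\upsilon_0$), and admits a gauge action (via $\id_B\otimes\Phi_c$ using Fact~\ref{2.11}), then invoke Fact~\ref{2.10}. The only difference is that you make explicit the restriction of $\id_B\otimes\Phi_c$ to $C^*(\hat\rho)$, a point the paper leaves implicit.
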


\begin{proof}
  We show that $\hat\rho$ is a representation.
  It is easy to verify that $\hat\rho_A$ is a $*$-homomorphism and that $\hat\rho_E$ is a complex-linear map.
  The pair $\hat\rho$ preverves inner product since the following holds for any $f,g\in E$:
  \begin{gather*}
    \hat\rho_E(f)^*\hat\rho_E(g)
    =(\rho_E(f)\otimes\upsilon_1)^*(\rho_E(g)\otimes\upsilon_1)
    =(\rho_E(f)^*\otimes\upsilon_{-1})(\rho_E(g)\otimes\upsilon_1)\\
    =(\rho_E(f)^*\rho_E(g))\otimes (\upsilon_{-1}\upsilon_1)
    =\rho_A(\inn{f}{g})\otimes\upsilon_0
    =\hat\rho_A(\inn{f}{g}).
  \end{gather*}
  The pair $\hat\rho$ preverves left $A$-scalar multiplication since the following holds for any $a\in A$ and $f\in E$:
  \begin{gather*}
    \hat\rho_A(a)\hat\rho_E(f)
    =(\rho_A(a)\otimes\upsilon_0)(\rho_E(f)\otimes\upsilon_1)\\
    =(\rho_A(a)\rho_E(f))\otimes(\upsilon_0\upsilon_1)
    =\rho_E(af)\otimes\upsilon_1
    =\hat\rho_E(af).
  \end{gather*}
  Therefore, $\hat\rho$ is a representation of $(A,E)$.

  The injectivity of $\hat\rho$ follows immediately from the injectivity of $\rho$.

  We show that $\hat\rho$ is covariant.
  For any $f,g\in E$, we have 
  \begin{gather*}
    \psi_{\hat\rho}(\theta_{f,g})
    =\hat\rho_E(f)\hat\rho_E(g)^*
    =(\rho_E(f)\otimes\upsilon_1)(\rho_E(g)\otimes\upsilon_1)^*\\
    =(\rho_E(f)\otimes\upsilon_1)(\rho_E(g)^*\otimes\upsilon_{-1})
    =(\rho_E(f)\rho_E(g)^*)\otimes (\upsilon_1\upsilon_{-1})
    =\psi_\rho(\theta_{f,g})\otimes\upsilon_0.
  \end{gather*}
  So the equation $\psi_{\hat\rho}(\bullet)=\psi_\rho(\bullet)\otimes\upsilon_0$ holds on $\Theta(E)$.
  By the linearity and continuity, it also holds on $\K(E)$.
  For any $a$ in the covariant ideal $\I_E$ of $(A,E)$, we have
  \begin{equation*}
    \hat\rho_A(a)
    =\rho_A(a)\otimes\upsilon_0
    =\psi_\rho(\varphi(a))\otimes\upsilon_0
    =\psi_{\hat\rho}(\varphi(a)).
  \end{equation*}
  Therefore, $\hat\rho$ is covariant.

  We show that $\hat\rho$ admits a gauge action.
  Take any complex number $c$ with absolute value $1$.
  By Fact \ref{2.11}, there exists a $*$-homomorphism $\Phi_c\colon C^*(\Z)\to C^*(\Z)$ satisfying $\Phi_c(\upsilon_0)=\upsilon_0$ and $\Phi_c(\upsilon_1)=\upsilon_1 c$.
  The $*$-homomorphism $\id\otimes\Phi_c\colon B\otimes C^*(\Z)\to B\otimes C^*(\Z)$ satisfies that for any $a\in A$, 
  \begin{gather*}
    (\id\otimes\Phi_c)(\hat\rho_A(a))
    =(\id\otimes\Phi_c)(\rho_A(a)\otimes\upsilon_0)\\
    =\id(\rho_A(a))\otimes\Phi_c(\upsilon_0)
    =\rho_A(a)\otimes\upsilon_0
    =\hat\rho_A(a)
  \end{gather*}
  and that for any $f\in E$,
  \begin{gather*}
    (\id\otimes\Phi_c)(\hat\rho_E(f))
    =(\id\otimes\Phi_c)(\rho_E(f)\otimes\upsilon_1)\\
    =\id(\rho_E(f))\otimes\Phi_c(\upsilon_1)
    =\rho_E(f)\otimes\upsilon_1 c
    =c\hat\rho_E(f).
  \end{gather*}
  Therefore, $\hat\rho$ admits a gauge action.

  By the above discussion and Fact \ref{2.10}, $\hat\rho$ is a universal covariant representation.
\end{proof}

\begin{proposition}\label{2.4.3}
  Let $A$ be a C*-algebra.
  For all $n\in\Z$, the operator ${-}\otimes\upsilon_n$ from $A$ to $A\otimes C^*(\Z)$, defined by $a\mapsto a\otimes\upsilon_n$, is isometric.
\end{proposition}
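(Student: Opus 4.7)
The plan is to establish the two inequalities $\norm{a\otimes\upsilon_n}\leq\norm{a}$ and $\norm{a}\leq\norm{a\otimes\upsilon_n}$ separately. For the upper bound, I would first observe that $\upsilon_n$ is a unitary in $C^*(\Z)$: indeed, $\upsilon_n^*\upsilon_n=\upsilon_{-n}\upsilon_n=\upsilon_0=1$ and $\upsilon_n\upsilon_n^*=1$, so $\norm{\upsilon_n}=1$. The sub-crossnorm property of any C*-tensor norm then yields $\norm{a\otimes\upsilon_n}\leq\norm{a}\cdot\norm{\upsilon_n}=\norm{a}$.

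For the lower bound, I would construct a contractive left inverse to the map $a\mapsto a\otimes\upsilon_n$. Since $\Z$ is abelian, $C^*(\Z)$ is commutative (in fact $C^*(\Z)\cong C(\T)$ via the Fourier transform), and admits a character $\chi\colon C^*(\Z)\to\C$ with $\chi(\upsilon_k)=1$ for all $k\in\Z$. The slice map $\id_A\otimes\chi\colon A\otimes C^*(\Z)\to A\otimes\C\cong A$ is a $*$-homomorphism between C*-algebras, hence contractive, and it sends $a\otimes\upsilon_n$ to $a$. This gives $\norm{a}\leq\norm{a\otimes\upsilon_n}$, and combining with the upper bound yields the desired isometry.

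The only subtle point is ensuring that the slice map $\id_A\otimes\chi$ is well-defined as a bounded $*$-homomorphism on the C*-completion; this follows from the nuclearity of the commutative C*-algebra $C^*(\Z)$, which makes the C*-tensor norm unambiguous and guarantees that $*$-homomorphisms from the algebraic tensor product into a C*-algebra extend continuously. This verification is the only substantive ingredient, and the argument presents no real obstacle.
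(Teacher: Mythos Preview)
Your proof is correct but takes a different route from the paper. The paper first observes that $a\mapsto a\otimes\upsilon_0$ is an injective $*$-homomorphism (since $\upsilon_0$ is the unit of $C^*(\Z)$) and hence isometric; it then reduces the general case to $n=0$ in one line via the C*-identity:
\[
\norm{a\otimes\upsilon_n}^2 = \norm{(a\otimes\upsilon_n)^*(a\otimes\upsilon_n)} = \norm{a^*a\otimes\upsilon_0} = \norm{a^*a} = \norm{a}^2.
\]
Your argument instead treats the two inequalities separately, using the crossnorm bound and unitarity of $\upsilon_n$ for one direction and a slice map via a character of $C^*(\Z)$ for the other. Both approaches are valid; the paper's is shorter and avoids explicitly invoking nuclearity or constructing a character, while yours makes the left-inverse structure explicit and would adapt verbatim to tensoring with any unitary in any unital C*-algebra admitting a character.
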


\begin{proof}
  The operator ${-}\otimes\upsilon_0$ is an injective $*$-homomorphism and hence is isometric.
  For general $n\in\Z$, the operator ${-}\otimes\upsilon_n$ is also isometric, since the following holds for any $a\in A$: 
  \begin{equation*}
    \norm{a\otimes\upsilon_n}^2
    = \norm{(a\otimes\upsilon_n)^*(a\otimes\upsilon_n)}
    = \norm{a^*a\otimes\upsilon_0}
    = \norm{a^*a}
    = \norm{a}^2.
    \qedhere
  \end{equation*}
\end{proof}

\begin{fact}[\cite{I}, Proposition~2.16]\label{2.4.4}
  Let $\delta_\Z\colon M_\Z\to D_\Z$ be the faithful conditional expectation given by Fact~\ref{2.5}.
  For each $n\in\Z$, define a bounded linear functional $\delta_n\colon C^*(\Z)\to\C1_{M_\Z}\cong\C$ by $\delta_n(\alpha):=\delta_\Z(\alpha\upsilon_{-n})$.
  Let $B$ be a C*-algebra and $\xi$ be an element of the tensor product $B\otimes C^*(\Z)$.
  Assume that $(\id\otimes\delta_n)(\xi)=0$ for all $n\in\Z$.
  Then, $\xi = 0$.
\end{fact}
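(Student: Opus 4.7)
My plan is to exploit the $\T$-action on $C^*(\Z)$ provided by Fact~\ref{2.11} and identify the functionals $\delta_n$ with the $n$-th Fourier coefficient maps. Specifically, Fact~\ref{2.11} gives, for each $z$ on the unit circle $\T$, a $*$-homomorphism $\Phi_z\colon C^*(\Z)\to C^*(\Z)$ with $\Phi_z(\upsilon_n)=z^n\upsilon_n$. These are automorphisms (since $\Phi_z\circ\Phi_{\bar z}=\id$ by the uniqueness part of Fact~\ref{2.11}), and the map $z\mapsto\Phi_z(\alpha)$ is continuous for every $\alpha\in C^*(\Z)$: this is immediate on the dense $*$-subalgebra $\aspan\set{\upsilon_n}{n\in\Z}$, and extends globally by a three-$\varepsilon$ argument using $\norm{\Phi_z}\le 1$. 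Tensoring with $\id_B$ yields a strongly continuous action $\tilde\Phi_z:=\id_B\otimes\Phi_z$ on $B\otimes C^*(\Z)$.

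I would then verify the key Fourier-coefficient identity
\[
  \int_\T z^{-n}\tilde\Phi_z(\xi)\,dz
  = (\id\otimes\delta_n)(\xi)\otimes\upsilon_n
  \qquad (\xi\in B\otimes C^*(\Z)),
\]
where $dz$ denotes normalized Haar measure on $\T$. From Fact~\ref{2.5} one has $\delta_\Z(\upsilon_k)=\delta_{k,0}\cdot 1_{M_\Z}$, whence $\delta_n(\upsilon_k)=\delta_{k,n}$. For a finite linear combination $\alpha=\sum_k c_k\upsilon_k$ the identity reduces to the elementary fact $\int_\T z^{k-n}\,dz=\delta_{k,n}$, giving $\int_\T z^{-n}\Phi_z(\alpha)\,dz=c_n\upsilon_n=\delta_n(\alpha)\upsilon_n$. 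By density and continuity, and using Proposition~\ref{2.4.3} to control $\norm{(\id\otimes\delta_n)(\xi)\otimes\upsilon_n}$ in terms of $\norm{\xi}$, the identity extends to all of $C^*(\Z)$, and then by linearity and continuity from elementary tensors $b\otimes\alpha$ to arbitrary $\xi\in B\otimes C^*(\Z)$.

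Finally, I would conclude via Fej\'er's theorem for continuous $B\otimes C^*(\Z)$-valued functions on $\T$. Setting $f(z):=\tilde\Phi_z(\xi)$ produces a continuous function $\T\to B\otimes C^*(\Z)$; under the hypothesis $(\id\otimes\delta_n)(\xi)=0$ for all $n\in\Z$, every Fourier coefficient $\int_\T z^{-n}f(z)\,dz$ of $f$ vanishes, and therefore so does every Ces\`aro mean of its Fourier series. By Fej\'er's theorem, these means converge uniformly to $f$, hence $f\equiv 0$ and in particular $\xi=f(1)=0$. I expect the main obstacle to be the second step: passing from the easy identity on finite shift-combinations to arbitrary elements of $B\otimes C^*(\Z)$ requires care about tensor-product norms and Bochner integrability, but once Proposition~\ref{2.4.3} is invoked to pin down $\norm{a\otimes\upsilon_n}=\norm{a}$, the remainder is standard Fourier analysis.
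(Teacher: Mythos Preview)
Your argument is correct. The paper itself does not prove this statement: it is recorded as a \emph{Fact} with a citation to \cite{I}, Proposition~2.16, so there is no in-paper proof to compare against. Your Fourier-analytic route via the gauge action of Fact~\ref{2.11} and Fej\'er's theorem is the standard and natural approach to results of this type, and the steps you outline (strong continuity of $z\mapsto\tilde\Phi_z(\xi)$ by density and uniform contractivity, identification of $\delta_n$ with the $n$-th Fourier coefficient on the dense span of the $\upsilon_k$, extension by boundedness of the slice map $\id\otimes\delta_n$ and of the Bochner integral) are all sound.

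One small notational point: when you write $(\id\otimes\delta_n)(\xi)\otimes\upsilon_n$, the element $(\id\otimes\delta_n)(\xi)$ already lives in $B\otimes\C1_{M_\Z}\subseteq B\otimes C^*(\Z)$, so the ``$\otimes\,\upsilon_n$'' is really right multiplication by $1\otimes\upsilon_n$ (or, equivalently, you are silently identifying $B\otimes\C1_{M_\Z}$ with $B$ before tensoring again). This is harmless, but worth making explicit, especially since $B$ need not be unital; working in the multiplier algebra or simply phrasing the identity as $\int_\T z^{-n}\tilde\Phi_z(\xi)\,dz=((\id\otimes\delta_n)(\xi))(1\otimes\upsilon_n)$ inside $M(B\otimes C^*(\Z))$ removes any ambiguity.
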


\section{Preparations for Iterated Function Systems}

\subsection{Definitions and Fundamental Properties}

The idea of regarding a fractal as a finite collection of contraction maps was introduced by Hutchinson in \cite{H}.
He showed that for finitely many contraction maps $\gamma_1$, \ldots, $\gamma_N$ on a complete metric space into itself, there exists a unique closed bounded subset $K$ such that $K=\gamma_1(K)\cup\cdots\cup\gamma_N(K)$; and moreover $K$ is compact.
Based on this result of Hutchinson, an iterated function system is often defined as a finite family of contraction maps on a complete metric space.
Furthermore, Kajiwara and Watatani impose the property called ``proper'', which is the property that there exists positive constants $c_1$, \ldots, $c_N$, $c'_1$, \ldots, $c'_N$ with $0<c_k\leq c'_k<1$ satisfying the condition
\begin{equation*}
  {c_k}d(x,y)
  \leq d(\gamma_k(x),\gamma_k(y))
  \leq {c'_k}d(x,y)
\end{equation*}
for any points $x$ and $y$ and index $k\in\{1, \ldots, N\}$.
However, in fact, the definition of Kajiwara–Watatani algebras does not require metric information; only topological information is sufficient.
Thus, in this paper, we define the iterated function system as follows.

\begin{definition}\label{3.1}
  Let $X$ be a topological space.
  An \emph{iterated function system} on $X$ is a finite family of continuous functions from $X$ into itself.
\end{definition}

\begin{remark}
  In this paper, iterated function systems allow repetitions.
\end{remark}

\begin{notation}
  Let $X$ be a topological space and $\gamma=\{\gamma_1, \ldots, \gamma_N\}$ be an iterated function system on $X$.
  For a finite sequence $\boldsymbol k = (k_n, \ldots, k_1)$ of the indices $1$, \ldots, $N$, let $\gamma_{\boldsymbol k}$, or $\gamma_{k_n, \ldots, k_1}$, denote the composition $\gamma_{k_n}\circ\cdots\circ\gamma_{k_1}$.
  We note that if $\boldsymbol k$ is the empty sequence, $\gamma_{\boldsymbol k}$ is the identity map on $X$.
  For a set $S$, let $\gamma(S)$ denote the union of the images of $S$ under the $\gamma_k$, that is, $\gamma(S):=\gamma_1(S)\cup\cdots\cup\gamma_N(S)$.
  $\gamma^n(S)$ is defined recursively by $\gamma^0(S):=S$ and $\gamma^{1+n}(S):=\gamma(\gamma^n(S))$.
  We note that 
  \begin{equation*}
    \gamma^n(S) = \set{\gamma_{\boldsymbol k}(x)}{\text{$x\in S$ and $\boldsymbol k$ is an index sequence of length $n$}}.
  \end{equation*}
\end{notation}

\begin{definition}\label{3.2}
  Let $X$ be a topological space and $\gamma=\{\gamma_1, \ldots, \gamma_N\}$ be an iterated function system on $X$.
  \emph{The graph of $\gamma$} is the subspace $G$ of the product space $X\times X$ defined by
  \begin{equation*}
    G:=\bigcup_{k=1}^{N}\set{(x,y)\in X\times X}{x=\gamma_k(y)}.
  \end{equation*}
\end{definition}

\begin{remark}
  If $X$ is a Hausdorff space, then the graph of $\gamma$ is closed in $X\times X$.
\end{remark}

\begin{definition}\label{3.3}
  Let $X$ be a topological space and $\gamma=\{\gamma_1, \ldots, \gamma_N\}$ be an iterated function system on $X$.
  \emph{The open set condition} for $\gamma$ is the condition that there exists a dense open subset $U$ of $X$ such that the images $\gamma_1(U)$, \ldots, $\gamma_N(U)$ are pairwise disjoint.
\end{definition}

\begin{remark}
  Typically, the open set condition also requires the inclusion 
  \begin{equation*}
    \gamma(U)
    =\gamma_1(U)\cup\cdots\cup\gamma_N(U)
    \subseteq U,
  \end{equation*}
  but in this paper, we do not impose this condition.
\end{remark}

\begin{definition}
  Let $X$ be a topological space and $\gamma=\{\gamma_1, \ldots, \gamma_N\}$ be an iterated function system on $X$.
  \emph{The graph separation condition} for $\gamma$ is the condition that for all distinct indices $j$ and $k$ and all points $x\in X$, one has $\gamma_j(x)\neq\gamma_k(x)$.
\end{definition}

\begin{definition}\label{3.4}
  Let $X$ be a topological space and $\gamma=\{\gamma_1, \ldots, \gamma_N\}$ be an iterated function system on $X$.
  The system $\gamma$ is called \emph{essentially free} if the set $\set{x\in X}{\gamma_{\boldsymbol k}(x)=\gamma_{\boldsymbol l}(x)}$ is nowhere dense for all finite sequence $\boldsymbol k$ and $\boldsymbol l$ of indices with $\boldsymbol k\neq\boldsymbol l$.
\end{definition}

\begin{definition}\label{3.5}
  Let $X$ be a topological space and $\gamma=\{\gamma_1, \ldots, \gamma_N\}$ be an iterated function system on $X$.
  A subset $S$ of $X$ is said to be \emph{completely invariant} under $\gamma$, or \emph{completely $\gamma$-invariant}, if both the image $\gamma_k(S)$ and the preimage $\gamma_k^{-1}(S)$ are contained in $S$ for all $k\in\{1, \ldots, N\}$.
\end{definition}

\begin{proposition}\label{3.6}
  Let $X$ be a topological space and $\gamma=\{\gamma_1, \ldots, \gamma_N\}$ be an iterated function system on $X$.
  Then, the family of all completely $\gamma$-invariant subsets of $X$ is closed under intersections.
\end{proposition}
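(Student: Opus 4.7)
The plan is to verify directly from the definition that an arbitrary intersection of completely $\gamma$-invariant sets is completely $\gamma$-invariant. Let $\{S_\lambda\}_{\lambda\in\Lambda}$ be a family of completely $\gamma$-invariant subsets of $X$ and set $S:=\bigcap_{\lambda\in\Lambda}S_\lambda$. I need to show that for each $k\in\{1,\ldots,N\}$, both $\gamma_k(S)\subseteq S$ and $\gamma_k^{-1}(S)\subseteq S$.

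For the image side, I would use the set-theoretic fact that for any function $f$ and family of sets $\{T_\lambda\}$, one has $f\bigl(\bigcap_\lambda T_\lambda\bigr)\subseteq\bigcap_\lambda f(T_\lambda)$; applying this with $f=\gamma_k$ and $T_\lambda=S_\lambda$, and then using $\gamma_k(S_\lambda)\subseteq S_\lambda$ for every $\lambda$, gives $\gamma_k(S)\subseteq\bigcap_\lambda\gamma_k(S_\lambda)\subseteq\bigcap_\lambda S_\lambda=S$. For the preimage side, the stronger identity $\gamma_k^{-1}\bigl(\bigcap_\lambda S_\lambda\bigr)=\bigcap_\lambda\gamma_k^{-1}(S_\lambda)$ holds, and combining with $\gamma_k^{-1}(S_\lambda)\subseteq S_\lambda$ yields $\gamma_k^{-1}(S)\subseteq S$ at once.

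There is no real obstacle here; the statement is a purely formal consequence of the definition of complete invariance together with the standard behavior of images and preimages under intersection. The only thing to keep in mind is to handle the empty intersection case correctly (where $S=X$, which is trivially completely $\gamma$-invariant), but this is automatic since $X$ itself appears among the intersected sets whenever $\Lambda=\varnothing$ by convention.
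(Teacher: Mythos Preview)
Your proof is correct and follows essentially the same approach as the paper's: both verify the two inclusions $\gamma_k(S)\subseteq S$ and $\gamma_k^{-1}(S)\subseteq S$ via the chain $\gamma_k(S)\subseteq\bigcap_\lambda\gamma_k(S_\lambda)\subseteq\bigcap_\lambda S_\lambda=S$ and the identity $\gamma_k^{-1}(\bigcap_\lambda S_\lambda)=\bigcap_\lambda\gamma_k^{-1}(S_\lambda)$. Your added remark on the empty index set is harmless but not needed.
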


\begin{proof}
  Let $\{S_\lambda\}_{\lambda\in\Lambda}$ be a family of completely $\gamma$-invariant subsets of $X$ and $S$ be its intersection.
  Then, for any $i\in\{1, \ldots, N\}$, we have
  \begin{equation*}
    \gamma_i(S)
    \subseteq\bigcap_{\lambda\in\Lambda}\gamma_i(S_\lambda)
    \subseteq\bigcap_{\lambda\in\Lambda}S_\lambda
    =S
  \end{equation*}
  and
  \begin{equation*}
    \gamma_i^{-1}(S)
    =\bigcap_{\lambda\in\Lambda}\gamma_i^{-1}(S_\lambda)
    \subseteq\bigcap_{\lambda\in\Lambda}S_\lambda
    =S.
  \end{equation*}
  Therefore, $S$ is also completely invariant under $\gamma$.
\end{proof}

\begin{definition}\label{3.7}
  Let $X$ be a topological space, $\gamma$ be an iterated function system on $X$, and $S$ be a subset of $X$.
  \emph{The completely $\gamma$-invariant subset generated by $S$} is the smallest one of completely $\gamma$-invariant subsets of $X$ which contains $S$.
\end{definition}

\begin{lemma}\label{3.8}
  Let $X$ be a topological space, $\gamma$ be a closed embedding of $X$ into itself, and $S$ be a nowhere dense subset of $X$.
  Then, the image $\gamma(S)$ is also nowhere dense in $X$.
\end{lemma}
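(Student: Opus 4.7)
My plan is to unpack what ``closed embedding'' gives us and then argue by contradiction using the standard characterization of nowhere density via empty interior of the closure.

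First, I would record two consequences of the hypothesis that $\gamma$ is a closed embedding of $X$ into itself. On the one hand, $\gamma$ is a closed map, so $\gamma(\overline S)$ is closed; since it also contains $\gamma(S)$, we obtain $\overline{\gamma(S)}\subseteq\gamma(\overline S)$. Conversely, $\gamma^{-1}\bigl(\overline{\gamma(S)}\bigr)$ is closed (by continuity) and contains $S$, hence contains $\overline S$, giving $\gamma(\overline S)\subseteq\overline{\gamma(S)}$. Thus $\overline{\gamma(S)}=\gamma(\overline S)$. On the other hand, $\gamma$ is a homeomorphism onto its image $\gamma(X)$, which is closed in $X$; in particular $\gamma$ is injective, and any subset of $\gamma(X)$ which is open in $X$ pulls back through $\gamma$ to an open subset of $X$.

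With these two facts in hand, the main step is to show that $\overline{\gamma(S)}$ has empty interior. Suppose for contradiction that $V\subseteq\overline{\gamma(S)}$ is a nonempty open set in $X$. Since $\overline{\gamma(S)}=\gamma(\overline S)\subseteq\gamma(X)$, we have $V\subseteq\gamma(X)$, so $V$ is in particular open in the subspace $\gamma(X)$. Because $\gamma$ restricts to a homeomorphism $X\to\gamma(X)$, the preimage $\gamma^{-1}(V)$ is open in $X$, and it is nonempty because $V$ is nonempty and $V\subseteq\gamma(X)$. Using injectivity of $\gamma$,
\begin{equation*}
  \gamma^{-1}(V)\subseteq\gamma^{-1}\bigl(\gamma(\overline S)\bigr)=\overline S.
\end{equation*}
Thus $\overline S$ contains a nonempty open set, contradicting the assumption that $S$ is nowhere dense.

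I do not anticipate any serious obstacle here; the argument is essentially a diagram-chase through the definitions, and every step is routine once the two observations about $\gamma$ (closedness giving $\overline{\gamma(S)}=\gamma(\overline S)$, and embeddedness giving that $\gamma^{-1}$ sends open subsets of $\gamma(X)$ to open subsets of $X$) are written down. The only small subtlety worth flagging is that one needs the hypothesis ``closed embedding'' rather than merely ``embedding'': without closedness we would lose the equality $\overline{\gamma(S)}=\gamma(\overline S)$ and the argument breaks.
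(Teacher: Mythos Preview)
Your proof is correct and follows essentially the same route as the paper's. The paper lets $K=\overline S$, observes that $\gamma(K)$ is closed, and shows its interior $O$ is empty by pulling back via continuity and using injectivity to land inside $K$; you do the same thing, framed as a contradiction and with the extra (harmless) step of establishing the full equality $\overline{\gamma(S)}=\gamma(\overline S)$ rather than just the containment needed.
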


\begin{proof}
  Let $K$ be the closure of $S$, which has empty interior.
  Since $\gamma$ is a closed map, the image $\gamma(K)$ is closed.
  Let $O$ be the interior of $\gamma(K)$.
  By the continuity of $\gamma$, the preimage $\gamma^{-1}(O)$ is open.
  By the injectivity of $\gamma$, $K$ coincides with $\gamma^{-1}(\gamma(K))$, which contains $\gamma^{-1}(O)$.
  So $\gamma^{-1}(O)$ is contained in the interior of $K$; and hence is empty.
  Since $O$ is contained in the range of $\gamma$, $O$ is also empty.
  Therefore, $\gamma(K)$ is a nowhere dense closed subset of $X$.
  Since $\gamma(S)$ is contained in $\gamma(K)$, it is nowhere dense.
\end{proof}

\begin{lemma}\label{3.9}
  Let $X$ be a topological space, $\gamma$ be a continuous map from $X$ into itself, $U$ be a dense open subset of $X$, and $S$ be a nowhere dense subset of $X$.
  Assume that the restriction of $\gamma$ to $U$ is an open map from $U$ into $X$.
  Then, the preimage $\gamma^{-1}(S)$ is also nowhere dense in $X$.
\end{lemma}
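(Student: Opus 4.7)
The plan is to argue by contradiction: suppose $V$ is a nonempty open subset of $\overline{\gamma^{-1}(S)}$, and produce a nonempty open set in which $S$ is dense, contradicting the hypothesis that $S$ is nowhere dense.

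First I would intersect with $U$. Since $U$ is dense and $V$ is nonempty open, $W := V \cap U$ is a nonempty open subset of $X$ contained in $U$. By the hypothesis that $\gamma|_U$ is open, the image $\gamma(W)$ is a nonempty open subset of $X$. The rest of the proof consists in showing that $S$ is dense in the open set $\gamma(W)$, which contradicts the nowhere-density of $S$.

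To prove this density, I would use that $V \subseteq \overline{\gamma^{-1}(S)}$ means $V \cap \gamma^{-1}(S)$ is dense in $V$. Let $O$ be a nonempty open subset of $\gamma(W)$. Pick $y \in O$ and write $y = \gamma(x)$ for some $x \in W$; then $x \in W \cap \gamma^{-1}(O)$, so $W \cap \gamma^{-1}(O)$ is a nonempty open subset of $X$, and in particular a nonempty relatively open subset of $V$. By density of $V \cap \gamma^{-1}(S)$ in $V$, there exists $z \in W \cap \gamma^{-1}(O) \cap \gamma^{-1}(S)$, so that $\gamma(z) \in S \cap O$. Hence every nonempty open subset of $\gamma(W)$ meets $S$, which is the required contradiction.

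The only mildly delicate step is keeping track of the role of $U$: one must make sure to pass from $V$ to $W = V \cap U$ \emph{before} applying $\gamma$, so that the openness hypothesis on $\gamma|_U$ (which is strictly weaker than $\gamma$ being an open map on all of $X$) can be invoked. No further obstacle is anticipated.
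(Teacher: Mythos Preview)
Your proof is correct and takes essentially the same approach as the paper: intersect the offending open set with $U$, push forward under $\gamma|_U$ to obtain a nonempty open set, and contradict the nowhere-density of $S$. The paper streamlines slightly by first replacing $S$ with its closure $K$, so that $\gamma^{-1}(K)$ is already closed and the inclusion $\gamma(U\cap O)\subseteq K$ is immediate, sparing your density argument.
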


\begin{proof}
  Let $K$ be the closure of $S$, which has empty interior.
  By the continuity of $\gamma$, the preimage $\gamma^{-1}(K)$ is closed.
  Let $O$ be the interior of $\gamma^{-1}(K)$.
  The set $\gamma(U\cap O)$ is open in $X$.
  We have $\gamma(U\cap O)\subseteq\gamma(O)\subseteq\gamma(\gamma^{-1}(K))\subseteq K$.
  So $\gamma(U\cap O)$ is contained in the interior of $K$; and hence is empty.
  This implies that $U\cap O$ is also empty.
  Since $U$ is dense in $X$, the open subset $O$ of $X$ is empty.
  Therefore, $\gamma^{-1}(K)$ is a nowhere dense closed subset of $X$.
  Since $\gamma^{-1}(S)$ is contained in $\gamma^{-1}(K)$, it is nowhere dense.
\end{proof}

\begin{lemma}\label{3.10}
  Let $X$ be a topological space, $\gamma$ be a closed embedding of $X$ into itself, and $S$ be a meager subset of $X$.
  Assume that there exists a dense open subset $U$ of $X$ such that $\gamma(U)$ is open in $X$.
  Then, both the image $\gamma(S)$ and the preimage $\gamma^{-1}(S)$ are meager in $X$.
\end{lemma}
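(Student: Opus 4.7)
The plan is to reduce the statement to the two preceding lemmas (Lemma~\ref{3.8} for the image and Lemma~\ref{3.9} for the preimage) by decomposing the meager set into countably many nowhere dense pieces. Write $S=\bigcup_{n\in\N}S_n$ with each $S_n$ nowhere dense in $X$; then both
\begin{equation*}
  \gamma(S)=\bigcup_{n\in\N}\gamma(S_n)\qquad\text{and}\qquad\gamma^{-1}(S)=\bigcup_{n\in\N}\gamma^{-1}(S_n)
\end{equation*}
are countable unions, so it suffices to show that each summand is nowhere dense in $X$.

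For the image half, this is immediate from Lemma~\ref{3.8}: by hypothesis $\gamma$ is a closed embedding of $X$ into itself, so each $\gamma(S_n)$ is nowhere dense, and therefore $\gamma(S)$ is meager.

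For the preimage half, I need to verify the hypothesis of Lemma~\ref{3.9}, namely that the restriction $\gamma|_U\colon U\to X$ is an open map. This is the only step with any content. Given the hypotheses, I would argue as follows: since $\gamma$ is an embedding, the corestriction $\gamma\colon X\to\gamma(X)$ is a homeomorphism onto its image, so for any open $V\subseteq U$ the image $\gamma(V)$ is open in $\gamma(X)$; thus $\gamma(V)=W\cap\gamma(X)$ for some open $W\subseteq X$. Intersecting with the open set $\gamma(U)$ of $X$ (open by assumption) and using $\gamma(V)\subseteq\gamma(U)\subseteq\gamma(X)$, one gets $\gamma(V)=W\cap\gamma(U)$, which is open in $X$. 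Hence $\gamma|_U$ is open, and Lemma~\ref{3.9} yields that each $\gamma^{-1}(S_n)$ is nowhere dense, so $\gamma^{-1}(S)$ is meager.

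The main (minor) obstacle is just the openness verification for $\gamma|_U$; everything else is a routine application of the ``countable union'' structure of meagerness. No new ideas beyond Lemmas~\ref{3.8} and~\ref{3.9} are needed.
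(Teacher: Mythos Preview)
Your proof is correct and follows essentially the same approach as the paper: decompose $S$ into countably many nowhere dense pieces, apply Lemma~\ref{3.8} for the image, verify that $\gamma|_U$ is open and apply Lemma~\ref{3.9} for the preimage. Your openness argument is spelled out in slightly more detail than the paper's (which simply observes that $\gamma\colon X\to\gamma(X)$ is a homeomorphism and $\gamma(U)$ is open in $X$), but the content is identical.
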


\begin{proof}
  The meager set $S$ can be written in the form of the union of countable many nowhere dense subsets $S_1$, $S_2$, \ldots of $X$.
  By Lemma \ref{3.8}, the images $\gamma(S_1)$, $\gamma(S_2)$, \ldots are nowhere dense and hence $\gamma(S)=\bigcup_{i}\gamma(S_i)$ is meager.
  Since $\gamma$ is a homeomorphism as a map from $X$ onto $\gamma(X)$ and the image $\gamma(U)$ is open in $X$, the restriction of $\gamma$ to $U$ is open as a map from $U$ into $X$.
  Thus, we can apply Lemma \ref{3.9}, which implies that the preimages $\gamma^{-1}(S_1)$, $\gamma^{-1}(S_2)$, \ldots are nowhere dense and hence $\gamma^{-1}(S)=\bigcup_{i}\gamma^{-1}(S_i)$ is meager.
\end{proof}

\begin{proposition}\label{3.11}
  Let $X$ be a topological space, $\gamma=\{\gamma_1, \ldots, \gamma_N\}$ be an iterated function system on $X$ into itself, and $S$ be a meager subset of $X$.
  Assume that $\gamma_1$, \ldots, $\gamma_N$ are closed embeddings and that there exists a dense open subset $U$ of $X$ such that all of $\gamma_1(U)$, \ldots, $\gamma_N(U)$ are open in $X$.
  Then, the completely $\gamma$-invariant subset of $X$ generated by $S$ is meager in $X$.
\end{proposition}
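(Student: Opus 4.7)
The plan is to construct explicitly a completely $\gamma$-invariant set $\tilde S$ containing $S$, show it is meager, and then use the fact that the completely $\gamma$-invariant subset generated by $S$ is contained in $\tilde S$ (by Definition~\ref{3.7} and Proposition~\ref{3.6}, or just by minimality).

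First I would set $S_0 := S$ and inductively define
\begin{equation*}
  S_{n+1} := S_n \cup \bigcup_{k=1}^{N}\bigl(\gamma_k(S_n)\cup\gamma_k^{-1}(S_n)\bigr),
\end{equation*}
and put $\tilde S := \bigcup_{n\in\N}S_n$. I would then verify that $\tilde S$ is completely $\gamma$-invariant: any point $x\in\tilde S$ lies in some $S_n$, so $\gamma_k(x)\in S_{n+1}\subseteq\tilde S$ and every preimage of $x$ under $\gamma_k$ lies in $S_{n+1}\subseteq\tilde S$. Since $S\subseteq\tilde S$, the completely $\gamma$-invariant set generated by $S$ is contained in $\tilde S$.

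The meagerness of $\tilde S$ is the main content. This is where I would invoke Lemma~\ref{3.10}: the hypotheses on $\gamma_1,\ldots,\gamma_N$ are exactly that each $\gamma_k$ is a closed embedding such that $\gamma_k(U)$ is open for some dense open $U\subseteq X$, so Lemma~\ref{3.10} applies to each $\gamma_k$ and gives that both $\gamma_k(T)$ and $\gamma_k^{-1}(T)$ are meager whenever $T$ is. By induction on $n$, each $S_n$ is a finite union of meager sets, hence meager; then $\tilde S$ is a countable union of meager sets, hence meager. Finally, the completely $\gamma$-invariant set generated by $S$ is a subset of $\tilde S$, and subsets of meager sets are meager.

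No step is genuinely difficult here: Lemma~\ref{3.10} has packaged the topological work already, and the only thing to be careful about is that the recursion remains within the countable realm so that the final union is still meager. The potential pitfall would be trying to take the smallest completely $\gamma$-invariant set as an uncountable intersection of meager sets (which need not be meager); building it from above as a countable union of iterated images/preimages of $S$ avoids this issue entirely.
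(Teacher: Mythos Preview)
Your proposal is correct and essentially identical to the paper's proof: the paper defines $\Phi(Y):=\bigcup_k(\gamma_k(Y)\cup\gamma_k^{-1}(Y))$ and takes $S':=\bigcup_{n\geq 0}\Phi^n(S)$, which is exactly your $\tilde S$ (your $S_n$ equals $\bigcup_{j\leq n}\Phi^j(S)$), then invokes Lemma~\ref{3.10} in the same way to conclude each iterate is meager. The only cosmetic difference is that you build the sequence cumulatively while the paper iterates $\Phi$ directly; the argument and the key appeal to Lemma~\ref{3.10} are the same.
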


\begin{proof}
  Let $\mathcal P(X)$ denote the power set of $X$.
  Define a map $\Phi$ from $\mathcal P(X)$ to itself by
  \begin{equation*}
    \Phi(Y)=\gamma_1(Y)\cup\cdots\cup\gamma_N(Y)\cup\gamma_1^{-1}(Y)\cup\cdots\cup\gamma_N^{-1}(Y)
  \end{equation*}
  for $Y\in\mathcal P(X)$.
  Let $\Phi^n$ denote the $n$th iteration of $\Phi$.
  Let $S':=\bigcup_{n=0}^{\infty}\Phi^n(S)$.

  The set $S'$ contains $S=\Phi^0(S)$.
  For each $k\in\{1, \ldots, N\}$, we have
  \begin{equation*}
    \gamma_k(S')\cup\gamma_k^{-1}(S')
    \subseteq\Phi(S')
    =\bigcup_{n=0}^{\infty}\Phi(\Phi^n(S))
    =\bigcup_{n=1}^{\infty}\Phi^n(S)
    \subseteq S';
  \end{equation*}
  hence, $S'$ is completely invariant under $\gamma$.
  Therefore, $S'$ contains the completely $\gamma$-invariant subset of $X$ generated by $S$.

  By Lemma \ref{3.10}, the images $\gamma_1(S)$, \ldots, $\gamma_N(S)$ and the preimages $\gamma_1^{-1}(S)$, \ldots, $\gamma_N^{-1}(S)$ are meager; and hence, so is their union $\Phi(S)$.
  Similarly, $\Phi^2(S)$, $\Phi^3(S)$, \ldots are also meager.
  Therefore, $S'=\bigcup_{n=0}^{\infty}\Phi^n(S)$ is meager.
  Since the completely $\gamma$-invariant subset of $X$ generated by $S$ is contained in $S'$, it is also meager.
\end{proof}

\subsection{The binary operator $\boxdot$}

The goal of this subsection is to prove Proposition~\ref{3.15}.

\begin{definition}\label{3.13}
  Let $X$ be a topological space and $\gamma=\{\gamma_1, \ldots, \gamma_N\}$ be an iterated function system on $X$.
  \emph{The binary relation induced by $\gamma$} is the binary relation $\prec$ on $X$ defined by
  \begin{equation*}
    x\prec y
    \defiff x=\gamma_k(y)\text{ for some }k\in\{1, \ldots, N\}.
  \end{equation*}
\end{definition}

\begin{definition}\label{3.14}
  Let $X$ be a topological space and $\gamma$ be an iterated function system on $X$.
  Let $n\in\N_0$.
  A \emph{$\gamma$-chain of length $n$} is a non-empty finite sequence $(x_0, x_1, \ldots, x_n)$ in $X$ satisfying the condition
  \begin{equation*}
    x_0\prec x_1\prec\cdots\prec x_n.
  \end{equation*}
  Here, $\prec$ is the binary relation induced by $\gamma$.
\end{definition}

\begin{remark}
  If $X$ is a Hausdorff space, then the set of all the $\gamma$-chains of length $n$ is closed in the product space $X^{n+1}$.
\end{remark}

\begin{remark}
  The space of all the $\gamma$-chains of length $0$ coincides with $X$ and 
  the space of all the $\gamma$-chains of length $1$ coincides with the graph of $\gamma$.
\end{remark}

\begin{proposition}\label{3.15}
  Let $X$ be a compact Hausdorff space, and 
  let $\gamma=\{\gamma_1,\dots,\gamma_N\}$ be an iterated function system on $X$.
  Assume that all of $\gamma_1$, \ldots, $\gamma_N$ are embeddings.
  For each $n\in\N_0$, let $X_n$ be the space of $\gamma$-chains of length $n$.
  For functions $f$ on $X_m$ and $g$ on $X_n$, define a function $f\boxdot g$ on $X_{m+n}$ by
  \begin{equation*}
    (f\boxdot g)(x_{m+n}, \ldots, x_0):=f(x_{m+n}, \ldots, x_n)g(x_n, \ldots, x_0)
  \end{equation*}
  for $(x_{m+n}, \ldots, x_0)\in X_{m+n}$.
  Then, the norm-closed additive subgroup of $C(X_{m+n})$ generated by 
  \begin{equation*}
    C(X_m)\boxdot C(X_n)
    =\set{f\boxdot g}{f\in C(X_m),\ g\in C(X_n)}
  \end{equation*}
  coincides with the whole of $C(X_{m+n})$.
\end{proposition}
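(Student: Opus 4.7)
The plan is to apply the complex Stone--Weierstrass theorem to the compact Hausdorff space $X_{m+n}$. First I note that each $X_k$ is compact Hausdorff: by the remark following Definition~\ref{3.14}, $X_k$ is closed in the compact Hausdorff product $X^{k+1}$. Next I introduce the two boundary coordinate projections
\begin{equation*}
  \pi\colon X_{m+n}\to X_m,\ (x_{m+n},\ldots,x_0)\mapsto (x_{m+n},\ldots,x_n),\qquad
  \pi'\colon X_{m+n}\to X_n,\ (x_{m+n},\ldots,x_0)\mapsto (x_n,\ldots,x_0),
\end{equation*}
which are well-defined (the images are $\gamma$-chains of the correct lengths, because subchains of a $\gamma$-chain are $\gamma$-chains) and continuous. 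With these in hand, $f\boxdot g=(f\circ\pi)(g\circ\pi')$.

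Observe that $C(X_m)\boxdot C(X_n)$ is already closed under complex scalar multiplication, since $(cf)\boxdot g=c(f\boxdot g)$; therefore the additive subgroup it generates is a full complex linear subspace of $C(X_{m+n})$, and its norm closure $\mathcal{B}$ is a closed complex subspace. I then verify that $\mathcal{B}$ is a unital $*$-subalgebra: it contains $1=1\boxdot1$, is closed under conjugation via $\overline{f\boxdot g}=\bar f\boxdot\bar g$, and is closed under products via $(f_1\boxdot g_1)(f_2\boxdot g_2)=(f_1f_2)\boxdot(g_1g_2)$.

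Finally, I check that $\mathcal{B}$ separates the points of $X_{m+n}$. Take two distinct $\gamma$-chains $(x_{m+n},\ldots,x_0)$ and $(y_{m+n},\ldots,y_0)$ in $X_{m+n}$; then $x_i\neq y_i$ for some index $i\in\{0,\ldots,m+n\}$. If $i\geq n$, then $\pi$ sends the two chains to distinct points of the compact Hausdorff space $X_m$, so Urysohn's lemma yields $f\in C(X_m)$ with different values on them, and $f\boxdot 1$ separates the chains; if $i\leq n$, the symmetric argument with $\pi'$ and $1\boxdot g$ works. By the complex Stone--Weierstrass theorem, $\mathcal{B}=C(X_{m+n})$, as required.

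There is no real obstacle here beyond the standard bookkeeping: the argument is the usual ``product algebra separates points'' application of Stone--Weierstrass. Notably, the embedding hypothesis on $\gamma_1,\ldots,\gamma_N$ is not used in this proof — only the Hausdorffness of $X$ (to guarantee $X_k\subseteq X^{k+1}$ is closed, hence compact) and the continuity of the $\gamma_k$ (to make $\pi,\pi'$ continuous) are needed.
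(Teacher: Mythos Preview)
Your proof is correct and takes a genuinely different, more streamlined route than the paper's own argument. The paper proves Proposition~\ref{3.15} by a backward induction on a filtration $\emptyset = Z_0 \subseteq Z_1 \subseteq \cdots \subseteq Z_{N^m} = X_{m+n}$ built from the closed pieces $Y_{\boldsymbol k}$ (one for each index word $\boldsymbol k \in \{1,\ldots,N\}^m$), using at each step a Urysohn function on $X_m$ together with the explicit section $\mathrm{ex}_{\boldsymbol k}\colon X_n \to Y_{\boldsymbol k}$ to write down a function of the form $g \boxdot h$ that matches a given $f$ on the new stratum $Y_{k(t)}$ while vanishing on the previously handled part $Z_{t-1}$. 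Your argument bypasses this constructive peeling entirely by observing that the linear span of $C(X_m)\boxdot C(X_n)$ is already a unital conjugate-closed subalgebra (via $(f_1\boxdot g_1)(f_2\boxdot g_2)=(f_1f_2)\boxdot(g_1g_2)$) that separates points through the two coordinate projections, so Stone--Weierstrass finishes immediately. Your approach is shorter and, as you correctly note, makes transparent that the embedding hypothesis on the $\gamma_k$ is not actually needed for this particular statement; the paper's proof also does not use that hypothesis in any essential way, but this is less visible amid the stratification machinery. The paper's method has the minor advantage of being more constructive (one sees explicitly how to approximate a given $f$), and its filtration framework is reused verbatim for the companion Proposition~\ref{3.21} on $\boxtimes$, but for the statement at hand your Stone--Weierstrass argument is cleaner.
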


\begin{remark}
  By this proposition, the following equation holds for all $m,n\in\N_0$:
  \begin{equation*}
    \cspan\bigl(C(X_m)\boxdot C(X_n)\bigr)=C(X_{m+n}).
  \end{equation*}
\end{remark}

Consider the setting of Proposition~\ref{3.15}.
Fix $m$ and $n$.
Define two projection maps
\begin{equation*}
  \mathrm{pr}^L\colon X_{m+n}\to X_m
  \quad\text{and}\quad
  \mathrm{pr}^R\colon X_{m+n}\to X_n
\end{equation*}
by the following: for each point $\boldsymbol x=(x_{m+n}, \ldots, x_0)$ in $X_{m+n}$,
\begin{equation*}
  \mathrm{pr}^L(\boldsymbol x)=(x_{m+n}, \ldots, x_n)\quad\text{and}\quad
  \mathrm{pr}^R(\boldsymbol x)=(x_n, \ldots, x_0).
\end{equation*}  
For each tuple $\boldsymbol k=(k_m,\ldots,k_1)$ in $\{1,\ldots,N\}^m$, define a subset $Y_{\boldsymbol k}\subseteq X_{m+n}$ by
\begin{equation*}
  Y_{\boldsymbol k}
  :=\set{(x_{m+n}, \ldots, x_0)\in X_{m+n}}{x_{i+n}=\gamma_{k_i}(x_{i-1+n})\text{ for all }i=1,\ldots,m}.
\end{equation*}
By the continuity of $\gamma_1$, \ldots, $\gamma_N$, 
the set $Y_{\boldsymbol k}$ is closed in $X_{m+n}$ and hence compact.

Let $\mathrm{ex}_{\boldsymbol k}\colon X_n\to Y_{\boldsymbol k}$ be the continuous map defined by the following: for each $(x_n,\ldots,x_0)\in X_n$,
\begin{align*}
  \mathrm{ex}_{\boldsymbol k}(x_n,\ldots,x_0)
  =(\gamma_{k_m}\circ\cdots\circ\gamma_{k_1}(x_n),\ldots,\gamma_{k_1}(x_n),x_n,\ldots,x_0).
\end{align*}
We have $\mathrm{pr}^R\circ\mathrm{ex}_{\boldsymbol k}=\id_{X_n}$ and hence $\mathrm{pr}^R(Y_{\boldsymbol k})=X_n$.
Let $\mathrm{pr}^R_{\boldsymbol k}\colon Y_{\boldsymbol k}\to X_n$ be the restrictions of $\mathrm{pr}^R$ to $Y_{\boldsymbol k}$.
Then, $\mathrm{ex}_{\boldsymbol k}\circ\mathrm{pr}^R_{\boldsymbol k}=\id_{Y_{\boldsymbol k}}$ holds.
So the maps $\mathrm{pr}^R_{\boldsymbol k}$ and $\mathrm{ex}_{\boldsymbol k}$ are inverses of each other and hence homeomorphisms.

Let $Y^L_{\boldsymbol k}:=\mathrm{pr}^L(Y_{\boldsymbol k})\subseteq X_m$.
By the continuity of $\mathrm{pr}^L$, 
the set $Y^L_{\boldsymbol k}$ is compact and hence closed in $X_m$.

\begin{lemma}\label{3.16}
  The following inclusion holds for all $\boldsymbol k=(k_m,\ldots,k_1)$ in $\{1,\ldots,N\}^m$:
  \begin{equation*}
    Y_{\boldsymbol k}^L
    \subseteq\set{(x_m,\ldots,x_0)\in X_m}{x_i=\gamma_{k_i}(x_{i-1})\text{ for all }i=1,\ldots,m}.
  \end{equation*}
\end{lemma}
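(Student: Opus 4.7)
The proof will be a direct unfolding of the definitions of $Y_{\boldsymbol k}^L$, $\mathrm{pr}^L$, and $Y_{\boldsymbol k}$; no deeper tools from earlier in the section are needed, and in particular none of the substantive hypotheses on $\gamma$ (embedding, open set condition, essential freedom) come into play. The statement is in essence a reindexing identity.

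First I would take an arbitrary element $(y_m, \ldots, y_0)$ of $Y_{\boldsymbol k}^L = \mathrm{pr}^L(Y_{\boldsymbol k})$ and, by definition of image, choose a preimage $(x_{m+n}, \ldots, x_0) \in Y_{\boldsymbol k}$ with $\mathrm{pr}^L(x_{m+n}, \ldots, x_0) = (y_m, \ldots, y_0)$. Next I would read off from the definition of $\mathrm{pr}^L$ that $(y_m, \ldots, y_0) = (x_{m+n}, \ldots, x_n)$, that is, $y_i = x_{i+n}$ for every $i \in \{0, 1, \ldots, m\}$; that $(y_m, \ldots, y_0)$ already lies in $X_m$ is automatic since the codomain of $\mathrm{pr}^L$ is $X_m$.

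Finally, because $(x_{m+n}, \ldots, x_0) \in Y_{\boldsymbol k}$, the defining equation $x_{i+n} = \gamma_{k_i}(x_{i-1+n})$ holds for every $i = 1, \ldots, m$; translating this via the relabeling $y_i = x_{i+n}$ gives $y_i = \gamma_{k_i}(y_{i-1})$ for all $i = 1, \ldots, m$, which is exactly the membership condition defining the set on the right-hand side. The only ``obstacle'' is bookkeeping the index shift $i \mapsto i + n$ between the coordinates of $X_{m+n}$ and those of $X_m$ correctly; there is no analytic or topological content to verify.
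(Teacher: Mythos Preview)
Your proof is correct and follows essentially the same approach as the paper's own proof: both take an arbitrary element of $Y_{\boldsymbol k}^L$, pick a preimage in $Y_{\boldsymbol k}$, read off the defining equations $x_{i+n}=\gamma_{k_i}(x_{i-1+n})$, and translate them through the index shift $i\mapsto i+n$ given by $\mathrm{pr}^L$. The only difference is cosmetic (you swap the roles of the letters $x$ and $y$ relative to the paper).
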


\begin{proof}
  Take any $\boldsymbol x=(x_m,\ldots,x_0)$ in $Y_{\boldsymbol k}^L$.
  By the definition of $Y_{\boldsymbol k}^L$, we can take a point $\boldsymbol y=(y_{m+n},\ldots,y_0)$ in $Y_{\boldsymbol k}$ satisfying $\boldsymbol x=\mathrm{pr}^L(\boldsymbol y)$.
  By the definition of $Y_{\boldsymbol k}$, we have $y_{i+n}=\gamma_{k_i}(y_{i-1+n})$ for all $i=1,\ldots,m$.
  Since $\boldsymbol x=\mathrm{pr}^L(\boldsymbol y)$, we have $x_i=\gamma_{k_i}(x_{i-1})$ for all $i=1,\ldots,m$.
\end{proof}

\begin{lemma}\label{3.17}
  Let $S\subseteq X_{m+n}$ and $\boldsymbol k\in\{1,\ldots,N\}^m$.
  If $\mathrm{pr}^L(S)$ intersects $Y_{\boldsymbol k}^L$, then $S$ intersects $Y_{\boldsymbol k}$.
\end{lemma}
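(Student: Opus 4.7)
The plan is to exploit the fact that membership in $Y_{\boldsymbol k}$ is determined entirely by the left part of a chain in $X_{m+n}$; Lemma~\ref{3.16} then provides the direct translation from a point of $Y_{\boldsymbol k}^L$ back to a constraint on any preimage under $\mathrm{pr}^L$. So the proof will reduce to an unwinding of definitions plus one invocation of Lemma~\ref{3.16}, with no topological input needed.

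First I would pick a point $\boldsymbol y$ in the intersection $\mathrm{pr}^L(S) \cap Y_{\boldsymbol k}^L$ and lift it along $\mathrm{pr}^L$ to obtain some $\boldsymbol x \in S$ with $\mathrm{pr}^L(\boldsymbol x) = \boldsymbol y$. The goal is then to verify that this same $\boldsymbol x$ already lies in $Y_{\boldsymbol k}$. Writing $\boldsymbol y = (y_m, \ldots, y_0)$ and $\boldsymbol x = (x_{m+n}, \ldots, x_0)$, the lift gives $x_{i+n} = y_i$ for $i = 0, \ldots, m$. Meanwhile, Lemma~\ref{3.16} applied to $\boldsymbol y \in Y_{\boldsymbol k}^L$ yields $y_i = \gamma_{k_i}(y_{i-1})$ for $i = 1, \ldots, m$. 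Substituting turns these into the relations $x_{i+n} = \gamma_{k_i}(x_{i-1+n})$, which are precisely the defining equations of $Y_{\boldsymbol k}$; hence $\boldsymbol x \in S \cap Y_{\boldsymbol k}$.

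Given that Lemma~\ref{3.16} has already been established, there is no genuine obstacle here: the argument is essentially a diagram chase, and the only subtlety is keeping the indexing conventions aligned between $X_{m+n}$, $X_m$, and the tuple $\boldsymbol k$. All substantive content has been packaged into Lemma~\ref{3.16}, which encodes that $\mathrm{pr}^L$ loses no information relevant to the $Y_{\boldsymbol k}$-condition.
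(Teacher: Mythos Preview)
Your proposal is correct and follows essentially the same argument as the paper: pick a point in $\mathrm{pr}^L(S)\cap Y_{\boldsymbol k}^L$, lift it to $S$ via $\mathrm{pr}^L$, and use Lemma~\ref{3.16} to verify that the lift satisfies the defining equations of $Y_{\boldsymbol k}$. The only difference is cosmetic: the paper names the point in $X_m$ as $\boldsymbol x$ and its lift as $\boldsymbol y$, whereas you use the opposite convention.
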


\begin{proof}
  The tuple $\boldsymbol k$ can be written as $\boldsymbol k=(k_m,\ldots,k_1)$.
  Assume that $\mathrm{pr}^L(S)$ intersects $Y_{\boldsymbol k}^L$.
  Then, we can take a point $\boldsymbol x=(x_{m+n},\ldots,x_n)$ in the intersection of $\mathrm{pr}^L(S)$ and $Y_{\boldsymbol k}^L$.
  Since $\boldsymbol x$ is in $\mathrm{pr}^L(S)$, we can take a point $\boldsymbol y=(y_{m+n},\ldots,y_0)$ in $S$ satisfying $\boldsymbol x=\mathrm{pr}^L(\boldsymbol y)$.
  By $\boldsymbol x\in Y_{\boldsymbol k}^L$ and Lemma \ref{3.16}, we have $x_{i+n}=\gamma_{k_i}(x_{i-1+n})$ and hence $y_{i+n}=\gamma_{k_i}(y_{i-1+n})$ for all $i=1,\ldots,m$.
  Therefore, $\boldsymbol y$ is in $Y_{\boldsymbol k}$.
  Since $\boldsymbol y$ is also in $S$, $S$ intersects $Y_{\boldsymbol k}$.
\end{proof}

Fix a bijection $k\colon\{1,\ldots,N^m\}\to\{1,\ldots,N\}^m$.
We define a subset $Z_t\subseteq X_{m+n}$ for $t=0, 1, \ldots, N^m$ recursively by 
\begin{equation*}
  Z_t:=\begin{cases}
    \emptyset&\text{if}\ t=0,\\
    Z_{t-1}\cup Y_{k(t)}&\text{if}\ t\in\{1, \ldots, N^m\}.
  \end{cases}
\end{equation*}
The sets $Z_t$ are closed in $X_{m+n}$.
The following holds:
\begin{equation*}
  \emptyset
  =Z_0
  \subseteq Z_1
  \subseteq\cdots
  \subseteq Z_{N^m}
  =X_{m+n}.
\end{equation*}

\begin{lemma}\label{3.18}
  Let $S\subseteq X_{m+n}$ and $t\in\{1,\ldots,N^m\}$.
  If $\mathrm{pr}^L(S)$ intersects $\mathrm{pr}^L(Z_t)$, then $S$ intersects $Z_t$.
\end{lemma}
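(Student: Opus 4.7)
The plan is to reduce the statement to Lemma~\ref{3.17} by unpacking the recursive definition of $Z_t$. First, a straightforward induction (or direct expansion) on the recursion $Z_0 = \emptyset$, $Z_s = Z_{s-1} \cup Y_{k(s)}$ yields the closed-form expression
\begin{equation*}
  Z_t = \bigcup_{s=1}^{t} Y_{k(s)}.
\end{equation*}
Since $\mathrm{pr}^L$ is a set-theoretic map and therefore commutes with unions, this gives
\begin{equation*}
  \mathrm{pr}^L(Z_t) = \bigcup_{s=1}^{t} \mathrm{pr}^L(Y_{k(s)}) = \bigcup_{s=1}^{t} Y^L_{k(s)}.
\end{equation*}

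Now suppose $\mathrm{pr}^L(S)$ intersects $\mathrm{pr}^L(Z_t)$. By the displayed identity, there must exist some index $s \in \{1,\ldots,t\}$ such that $\mathrm{pr}^L(S)$ intersects $Y^L_{k(s)}$. Applying Lemma~\ref{3.17} with $\boldsymbol k := k(s)$, I conclude that $S$ itself intersects $Y_{k(s)}$. Since $Y_{k(s)} \subseteq Z_t$ by construction, this intersection is contained in $Z_t$, so $S$ intersects $Z_t$, as required.

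I do not anticipate any real obstacle here: the statement is essentially a routine union-chasing argument built on top of Lemma~\ref{3.17}. The only care needed is bookkeeping — confirming that $\mathrm{pr}^L$ distributes over the finite union defining $Z_t$ and that $Y^L_{k(s)}$ is by definition exactly $\mathrm{pr}^L(Y_{k(s)})$ — after which Lemma~\ref{3.17} does all the substantive work.
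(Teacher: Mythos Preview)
Your proof is correct and follows essentially the same approach as the paper: both expand $Z_t$ as the union $\bigcup_{s=1}^{t} Y_{k(s)}$, distribute $\mathrm{pr}^L$ over this union to obtain $\mathrm{pr}^L(Z_t)=\bigcup_{s=1}^{t} Y_{k(s)}^L$, pick out an index $s$ where the intersection occurs, and then invoke Lemma~\ref{3.17}.
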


\begin{proof}
  Assume that $\mathrm{pr}^L(S)$ intersects $\mathrm{pr}^L(Z_t)$.
  Since $Z_t=Y_{k(1)}\cup\cdots\cup Y_{k(t)}$, we have
  \begin{gather*}
    \mathrm{pr}^L(Z_t)
    =\mathrm{pr}^L(Y_{k(1)}\cup\cdots\cup Y_{k(t)})\\
    =\mathrm{pr}^L(Y_{k(1)})\cup\cdots\cup\mathrm{pr}^L(Y_{k(t)})
    =Y_{k(1)}^L\cup\cdots\cup Y_{k(t)}^L.
  \end{gather*}
  So $\mathrm{pr}^L(S)$ intersects $Y_{k(s)}^L$ for some $s$.
  By Lemma \ref{3.17}, $S$ intersects $Y_{k(s)}$.
  Since $Y_{k(s)}\subseteq Z_t$, $S$ intersects $Z_t$.
\end{proof}

\begin{lemma}\label{3.19}
  Let $S\subseteq X_{m+n}$ and $t\in\{0,1,\ldots,N^m\}$.
  If $S$ is disjoint from $Z_t$, then $\mathrm{pr}^L(S)$ is disjoint from $\mathrm{pr}^L(Z_t)$.
\end{lemma}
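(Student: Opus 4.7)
The plan is to observe that Lemma~\ref{3.19} is essentially the contrapositive of Lemma~\ref{3.18}, after separately dispatching the boundary case $t = 0$. For $t = 0$, I would note that by the recursive definition $Z_0 = \emptyset$, so $\mathrm{pr}^L(Z_0) = \emptyset$ as well, and hence $\mathrm{pr}^L(S) \cap \mathrm{pr}^L(Z_0) = \emptyset$ holds vacuously for any $S \subseteq X_{m+n}$.

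For $t \in \{1, \ldots, N^m\}$, I would argue by contraposition. Suppose, toward a contradiction, that $\mathrm{pr}^L(S)$ is not disjoint from $\mathrm{pr}^L(Z_t)$, i.e., that the two projections intersect. Then Lemma~\ref{3.18} applies directly and gives that $S$ intersects $Z_t$, contradicting the hypothesis that $S$ is disjoint from $Z_t$. Hence $\mathrm{pr}^L(S)$ must be disjoint from $\mathrm{pr}^L(Z_t)$, as claimed.

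I do not expect any real obstacle here: all the substantive work has already been absorbed into Lemmas~\ref{3.16}--\ref{3.18}. Specifically, Lemma~\ref{3.17} supplies the crucial ``lifting'' property through the section $\mathrm{ex}_{\boldsymbol k}$ of $\mathrm{pr}^R_{\boldsymbol k}$, and Lemma~\ref{3.18} propagates this to $Z_t$ by writing $Z_t = Y_{k(1)} \cup \cdots \cup Y_{k(t)}$ and using that $\mathrm{pr}^L$ distributes over finite unions. Lemma~\ref{3.19} is then just the logically equivalent ``preservation of disjointness'' reformulation that will be convenient to invoke in the subsequent proof of Proposition~\ref{3.15}.
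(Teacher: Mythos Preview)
Your proposal is correct and mirrors the paper's own proof exactly: the paper also treats $t=0$ separately via $Z_0=\emptyset$ and then observes that for $t\in\{1,\ldots,N^m\}$ the claim is precisely the contrapositive of Lemma~\ref{3.18}. Your additional commentary about Lemmas~\ref{3.16}--\ref{3.18} is accurate context but not needed for the proof itself.
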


\begin{proof}
  In the case of $t=0$, $\mathrm{pr}^L(S)$ is disjoint from $\mathrm{pr}^L(Z_t)$ since $Z_t=\emptyset$.
  In the other case, the conclusion coincides with the contrapositive of Lemma \ref{3.18}.
\end{proof}

\begin{proof}[Proof of Proposition~\ref{3.15}]
  In this proof, let $\casg{C(X_m)\boxdot C(X_n)}$ denote the norm-closed additive subgroup of $C(X_{m+n})$ generated by $C(X_m)\boxdot C(X_n)$.
  Obviously, the inclusion 
  \begin{equation*}
    \casg{C(X_m)\boxdot C(X_n)}\subseteq C(X_{m+n})
  \end{equation*}
  holds.
  
  It suffices to show the inverse inclusion ``$\supseteq$''.
  To this end, we show the following claim by backward induction on $t$, starting from $t=N^m$ and working downward to $t=0$: for all $t\in\{0, 1, \ldots, N^m\}$, one has the inclusion
  \begin{equation*}
    \set{f\in C(X_{m+n})}{\text{$f$ vanishes on $Z_t$}}
    \subseteq\casg{C(X_m)\boxdot C(X_n)}.
  \end{equation*}
  Since $Z_0=\emptyset$, the left-hand side becomes maximal, and this claim implies
  \begin{equation*}
    C(X_{m+n})
    \subseteq\cspan\casg{C(X_m)\boxdot C(X_n)}.
  \end{equation*}
  The desired inclusion ``$\supseteq$’’ follows.
  \smallskip

  \textbf{The base case.}
  In the case of $t=N^m$, we have $Z_t=X_{m+n}$ and hence
  \begin{equation*}
    \set{f\in C(X_{m+n})}{\text{$f$ vanishes on $Z_t$}}
    =\{0\}
    \subseteq\casg{C(X_m)\boxdot C(X_n)}.
  \end{equation*}

  \textbf{The induction step.}
  Take any $t\in\{1, \ldots, N^m\}$ and assume that
  \begin{equation*}
    \set{f\in C(X_{m+n})}{\text{$f$ vanishes on $Z_t$}}
    \subseteq\casg{C(X_m)\boxdot C(X_n)}.
  \end{equation*}
  Take any $f\in C(X_{m+n})$ satisfying $\supp f\cap Z_{t-1}=\emptyset$.
  By Lemma \ref{3.19}, two compact sets $\mathrm{pr}^L(\supp f)$ and $\mathrm{pr}^L(Z_{t-1})$ are disjoint.
  So we can take a continuous function $g$ on $X_m$ which takes value $1$ on $\mathrm{pr}^L(\supp f)$ and value $0$ on $\mathrm{pr}^L(Z_{t-1})$.
  Let $h:=f\circ\mathrm{ex}_{k(t)}$, which is a continuous function on $X_n$.

  The function $g\boxdot h$ is continuous and coincides with $f$ on $Y_{k(t)}$ and vanishes on $Z_{t-1}$.
  Indeed, for any point $\boldsymbol x$ in $Y_{k(t)}$, we have
  \begin{gather*}
    (g\boxdot h)(\boldsymbol x)
    =g(\mathrm{pr}^L(\boldsymbol x))h(\mathrm{pr}^R(\boldsymbol x))
    =g(\mathrm{pr}^L(\boldsymbol x))f(\mathrm{ex}_{k(t)}(\mathrm{pr}^R_{k(t)}(\boldsymbol x)))\\
    =g(\mathrm{pr}^L(\boldsymbol x))f(\boldsymbol x)
    =\begin{cases}
      1\cdot f(\boldsymbol x)&\text{if }\boldsymbol x\in\supp f\\
      g(\mathrm{pr}^L(\boldsymbol x))\cdot 0&\text{otherwise}
    \end{cases}
    =f(\boldsymbol x);
  \end{gather*}
  and for any $\boldsymbol x\in Z_{t-1}$, we have
  \begin{gather*}
    (g\boxdot h)(\boldsymbol x)
    =g(\mathrm{pr}^L(\boldsymbol x))h(\mathrm{pr}^R(\boldsymbol x))
    =0\cdot h(\mathrm{pr}^R(\boldsymbol x))
    =0.
  \end{gather*}
  So $f-g\boxdot h$ is a continuous function on $X_{m+n}$ vanishing on $Z_{t-1}\cup Y_{k(t)}=Z_t$ and hence by the induction hypothesis, belongs to $\casg{C(X_m)\boxdot C(X_n)}$.
  Since $g\boxdot h$ belongs to $C(X_m)\boxdot C(X_n)$, $f$ also belongs to $\casg{C(X_m)\boxdot C(X_n)}$.

  By the above discussion in the induction step, we have the inclusion
  \begin{equation*}
    \set{f\in C(X_{m+n})}{\supp f\cap Z_{t-1}=\emptyset}
    \subseteq\casg{C(X_m)\boxdot C(X_n)}.
  \end{equation*}
  By taking the closure of both sides, we obtain the inclusion
  \begin{equation*}
    \set{f\in C(X_{m+n})}{\text{$f$ vanishes on $Z_{t-1}$}}
    \subseteq\casg{C(X_m)\boxdot C(X_n)}.
  \end{equation*}
  This is what we aimed to show in the induction step.
\end{proof}

\subsection{The binary operator $\boxtimes$}

The goal of this subsection is to prove Proposition~\ref{3.21}.

\begin{definition}\label{3.20}
  Let $X$ be a topological space and $\gamma=\{\gamma_1, \ldots, \gamma_N\}$ be an iterated function system on $X$.
  Let $m,n\in\N_0$.
  A $\gamma$-bichain of length $(m,n)$ is a pair $(\boldsymbol x, \boldsymbol y)$ of $\gamma$-chains $\boldsymbol x = (x_m, \ldots, x_0)$ of length $m$ and $\boldsymbol y = (y_n, \ldots, y_0)$ of length $n$ satisfying $x_0 = y_0$.
\end{definition}

\begin{remark}
  If $X$ is a Hausdorff space, then the set of all the $\gamma$-bichains of length $(m,n)$ is closed in the product space $X_m\times X_n$.
\end{remark}

\begin{proposition}\label{3.21}
  Let $X$ be a compact Hausdorff space and $\gamma=\{\gamma_1, \ldots, \gamma_N\}$ be an iterated function system on $X$.
  For each $n\in\N_0$, let $X_n$ be the space of all the $\gamma$-chains of length $n$.
  For each $m,n\in\N_0$, let $X_{m,n}$ be the space of all the $\gamma$-bichains of length $(m,n)$.
  For functions $f$ on $X_m$ and $g$ on $X_n$, define a function $f\boxtimes g$ on $X_{m,n}$ by
  \begin{equation*}
    (f\boxtimes g)(\boldsymbol z):=f(\boldsymbol x)\overline{g(\boldsymbol y)}
  \end{equation*}
  for $\boldsymbol z = (\boldsymbol x,\boldsymbol y)\in X_{m,n}$.
  Then, one has
  \begin{equation*}
    \cspan\bigl(C(X_m)\boxtimes C(X_n)\bigr)=C(X_{m,n}).
  \end{equation*}
  Here, the closed linear span on the left-hand side is with respect to the uniform norm; and $C(X_m)\boxtimes C(X_n)$ denotes the set $\set{f\boxtimes g}{f\in C(X_m), g\in C(X_n)}$.
\end{proposition}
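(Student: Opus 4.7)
My plan is to prove Proposition~\ref{3.21} by applying the Stone--Weierstrass theorem to the closed linear span $B := \cspan\bigl(C(X_m)\boxtimes C(X_n)\bigr)$ inside $C(X_{m,n})$. Since $X_m$ and $X_n$ are closed subspaces of the compact Hausdorff powers $X^{m+1}$ and $X^{n+1}$, they are compact Hausdorff; hence the product $X_m\times X_n$ is too, and its closed subset $X_{m,n}$ is compact Hausdorff as well. Stone--Weierstrass will therefore yield $B = C(X_{m,n})$ once I check that $B$ is a unital $*$-subalgebra that separates points of $X_{m,n}$.

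For the algebraic structure of $B$, I would first verify the identities on elementary tensors: $1\boxtimes 1$ is the constant function $1$; the product rule $(f_1\boxtimes g_1)(f_2\boxtimes g_2) = (f_1 f_2)\boxtimes(g_1 g_2)$ holds pointwise; and $\overline{f\boxtimes g} = \overline f\boxtimes\overline g$ holds because the definition of $\boxtimes$ already conjugates the second factor. Joint continuity and bilinearity of the multiplication and conjugation on $C(X_{m,n})$ then propagate these identities from $C(X_m)\boxtimes C(X_n)$ to the closed linear span, making $B$ a unital conjugation-closed subalgebra.

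For point separation, let $\boldsymbol z_1 = (\boldsymbol x_1,\boldsymbol y_1)$ and $\boldsymbol z_2 = (\boldsymbol x_2,\boldsymbol y_2)$ be distinct points of $X_{m,n}$, so that $\boldsymbol x_1\neq\boldsymbol x_2$ or $\boldsymbol y_1\neq\boldsymbol y_2$. Since the compact Hausdorff spaces $X_m$ and $X_n$ are normal, Urysohn's lemma produces an $f\in C(X_m)$ (respectively, a $g\in C(X_n)$) that takes different values at the two relevant chains, and then $f\boxtimes 1$ (respectively, $1\boxtimes\overline g$) lies in $C(X_m)\boxtimes C(X_n)$ and separates $\boldsymbol z_1$ from $\boldsymbol z_2$. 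Stone--Weierstrass then gives $B = C(X_{m,n})$.

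I expect no serious obstacle. In contrast with Proposition~\ref{3.15}, no embedding or open-set hypothesis on $\gamma$ is needed here, because the fibered-product structure of $X_{m,n}$ is built directly from the Hausdorff property of $X$, and the elementary tensors already form a self-conjugate multiplicatively closed set separating points. The only thing to watch carefully is the placement of complex conjugation in the $\boxtimes$ identities, since the conjugation is baked into the definition of $\boxtimes$ and must be threaded correctly through the verification of the $*$-algebra axioms.
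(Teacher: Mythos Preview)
Your Stone--Weierstrass argument is correct and complete. The verifications you sketch all go through: $X_{m,n}$ is a closed subset of the compact Hausdorff space $X_m\times X_n$, the identity $(f_1\boxtimes g_1)(f_2\boxtimes g_2)=(f_1f_2)\boxtimes(g_1g_2)$ holds pointwise, the conjugation identity $\overline{f\boxtimes g}=\overline f\boxtimes\overline g$ is correct once one tracks the built-in conjugation on the second factor, and separation of points is immediate from Urysohn applied to whichever projection distinguishes the two bichains.

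This is a genuinely different route from the paper's. The paper argues by an explicit backward induction: it enumerates the index tuples $\boldsymbol k\in\{1,\ldots,N\}^n$, builds an increasing chain $\emptyset=Z_0\subseteq\cdots\subseteq Z_{N^n}=X_{m,n}$ of closed sets, and shows step by step that any continuous function vanishing on $Z_t$ lies in $\cspan(C(X_m)\boxtimes C(X_n))$, by peeling off one $Y_{k(t)}$ at a time using the homeomorphism $\mathrm{ex}_{k(t)}\colon X_m\to Y_{k(t)}$. Your approach is substantially shorter and more conceptual; it also makes transparent why no hypothesis on the maps $\gamma_k$ is needed here, in contrast to Proposition~\ref{3.15}. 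What the paper's approach buys is a constructive decomposition (one sees explicitly how to approximate a given $f\in C(X_{m,n})$ by sums of elementary tensors) and structural parallelism with the proof of Proposition~\ref{3.15}, where the same inductive machinery is reused.
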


Consider the setting of Proposition~\ref{3.21}.
Fix $m$ and $n$.
Define two projection maps
\begin{equation*}
  \mathrm{pr}^L\colon X_{m,n}\to X_m
  \quad\text{and}\quad
  \mathrm{pr}^R\colon X_{m,n}\to X_n
\end{equation*}
by the following: for each $\boldsymbol z = (\boldsymbol x, \boldsymbol y)\in X_{m,n}$,
\begin{equation*}
  \mathrm{pr}^L(\boldsymbol z)=\boldsymbol x\quad\text{and}\quad
  \mathrm{pr}^R(\boldsymbol z)=\boldsymbol y.
\end{equation*}
For each tuple $\boldsymbol k=(k_n,\ldots,k_1)$ in $\{1,\ldots,N\}^n$, define subsets $Y^R_{\boldsymbol k}\subseteq X_n$ and $Y_{\boldsymbol k}\subseteq X_{m,n}$ by
\begin{align*}
  Y^R_{\boldsymbol k}
  &:=\set{(y_n,\ldots,y_0)\in X_n}{y_i=\gamma_{k_i}(y_{i-1})\text{ for all }i=1,\ldots,n}\ \text{and}\\
  Y_{\boldsymbol k}
  &:=\set{\boldsymbol z\in X_{m,n}}{\mathrm{pr}^R(\boldsymbol z)\in Y^R_{\boldsymbol k}}.
\end{align*}
By continuity of $\gamma_1,\ldots,\gamma_N$,
the set $Y^R_{\boldsymbol k}$ is closed in $X_n$ and hence compact.
By continuity of $\mathrm{pr}^R$,
the set $Y_{\boldsymbol k}$ is closed in $X_{m,n}$ and hence compact.

Let $\mathrm{ex}_{\boldsymbol k}\colon X_m\to Y_{\boldsymbol k}$ be the continuous map defined by the following: for each $\boldsymbol x=(x_m,\ldots,x_0)$ in $X_m$,
\begin{equation*}
  \mathrm{ex}_{\boldsymbol k}(\boldsymbol x)
  :=(\boldsymbol x,(\gamma_{k_n}\circ\cdots\circ\gamma_{k_1}(x_0),\ldots,\gamma_{k_1}(x_0),x_0)).
\end{equation*}
We have $\mathrm{pr}^L\circ\mathrm{ex}_{\boldsymbol k}=\id_{X_m}$ and hence $\mathrm{pr}^L(Y_{\boldsymbol k})=X_m$.
Let $\mathrm{pr}^L_{\boldsymbol k}\colon Y_{\boldsymbol k}\to X_m$ be the restriction of $\mathrm{pr}^L$ to $Y_{\boldsymbol k}$.
Then, $\mathrm{ex}_{\boldsymbol k}\circ\mathrm{pr}^L_{\boldsymbol k}=\id_{Y_{\boldsymbol k}}$ holds.
So the maps $\mathrm{pr}^L_{\boldsymbol k}$ and $\mathrm{ex}_{\boldsymbol k}$ are inverses of each other and hence homeomorphisms.

\begin{lemma}\label{3.22}
  The projection $\mathrm{pr}^R\colon X_{m,n}\to X_n$ is surjective.
\end{lemma}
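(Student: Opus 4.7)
The strategy is to exhibit, mirroring the construction of $\mathrm{ex}_{\boldsymbol k}$ given just above the statement, an explicit right inverse to $\mathrm{pr}^R$. A bichain in $X_{m,n}$ requires only that its two component chains share their common last coordinate $x_0=y_0$; hence, to prove surjectivity of $\mathrm{pr}^R$, it suffices, for each $\boldsymbol y=(y_n,\ldots,y_0)\in X_n$, to exhibit some $\gamma$-chain of length $m$ whose last entry is $y_0$.

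Fix once and for all any tuple $\boldsymbol j=(j_m,\ldots,j_1)\in\{1,\ldots,N\}^m$ (the empty tuple when $m=0$); for concreteness one may take $\boldsymbol j=(1,\ldots,1)$. Given $\boldsymbol y\in X_n$, define
\[
  \boldsymbol x := \bigl(\gamma_{j_m}\circ\cdots\circ\gamma_{j_1}(y_0),\ \ldots,\ \gamma_{j_1}(y_0),\ y_0\bigr).
\]
By construction, $\boldsymbol x$ is a $\gamma$-chain of length $m$ with $x_0=y_0$, so $(\boldsymbol x,\boldsymbol y)$ is a $\gamma$-bichain in $X_{m,n}$ satisfying $\mathrm{pr}^R(\boldsymbol x,\boldsymbol y)=\boldsymbol y$. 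This establishes surjectivity.

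There is essentially no obstacle: the content of the lemma is just the observation that the construction behind $\mathrm{ex}_{\boldsymbol k}$ can equally well be performed on the other side of a bichain, starting from an arbitrary point $y_0\in X$ and iterating a single $\gamma_k$. In fact the assignment $\boldsymbol y\mapsto(\boldsymbol x,\boldsymbol y)$ is continuous, so $\mathrm{pr}^R$ even admits a continuous section, though only set-theoretic surjectivity is claimed here.
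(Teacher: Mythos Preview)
Your proof is correct and essentially identical to the paper's own argument: the paper also constructs, for a given $\boldsymbol y=(y_n,\ldots,y_0)\in X_n$, the chain $\boldsymbol x=(\gamma_1^m(y_0),\ldots,\gamma_1(y_0),y_0)$ and observes that $(\boldsymbol x,\boldsymbol y)\in X_{m,n}$ with $\mathrm{pr}^R(\boldsymbol x,\boldsymbol y)=\boldsymbol y$. Your slightly more general choice of an arbitrary tuple $\boldsymbol j$ (with $(1,\ldots,1)$ as the concrete instance) and your remark on continuity of the section are harmless additions.
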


\begin{proof}
  For all $\boldsymbol y=(y_n,\ldots,y_0)$ in $X_n$,
  the point
  \begin{equation*}
    \boldsymbol x:=(\underbrace{\gamma_1\circ\cdots\circ\gamma_1}_m(y_0),\ldots,\gamma_1(y_0),y_0)
  \end{equation*}
  is in $X_m$ and the pair $\boldsymbol z:=(\boldsymbol x,\boldsymbol y)$ is in $X_{m,n}$.
  Therefore, $\mathrm{pr}^R$ is surjective.
\end{proof}

\begin{lemma}\label{3.23}
  The equation $\mathrm{pr}^R(Y_{\boldsymbol k})=Y^R_{\boldsymbol k}$ holds for all tuple $\boldsymbol k$ in $\{1,\ldots,N\}^n$.
\end{lemma}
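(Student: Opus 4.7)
The statement is essentially a bookkeeping identity and my plan is to deduce it directly from the definition of $Y_{\boldsymbol k}$ together with the surjectivity of $\mathrm{pr}^R$ proved in Lemma~\ref{3.22}. By construction, $Y_{\boldsymbol k}$ is nothing other than the preimage $(\mathrm{pr}^R)^{-1}(Y^R_{\boldsymbol k})$, so one inclusion is tautological and the other requires only that every element of $Y^R_{\boldsymbol k}$ actually arise as $\mathrm{pr}^R(\boldsymbol z)$ for some $\boldsymbol z\in X_{m,n}$.

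Concretely, first I would establish the inclusion $\mathrm{pr}^R(Y_{\boldsymbol k})\subseteq Y^R_{\boldsymbol k}$: any $\boldsymbol z\in Y_{\boldsymbol k}$ satisfies $\mathrm{pr}^R(\boldsymbol z)\in Y^R_{\boldsymbol k}$ by the very definition of $Y_{\boldsymbol k}$, and hence $\mathrm{pr}^R(\boldsymbol z)\in Y^R_{\boldsymbol k}$ as required. Then for the reverse inclusion $Y^R_{\boldsymbol k}\subseteq\mathrm{pr}^R(Y_{\boldsymbol k})$, I would take an arbitrary $\boldsymbol y\in Y^R_{\boldsymbol k}\subseteq X_n$ and invoke Lemma~\ref{3.22} to produce some $\boldsymbol z\in X_{m,n}$ with $\mathrm{pr}^R(\boldsymbol z)=\boldsymbol y$; since $\mathrm{pr}^R(\boldsymbol z)=\boldsymbol y\in Y^R_{\boldsymbol k}$, the definition of $Y_{\boldsymbol k}$ gives $\boldsymbol z\in Y_{\boldsymbol k}$, and therefore $\boldsymbol y\in\mathrm{pr}^R(Y_{\boldsymbol k})$.

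There is no real obstacle here — the lemma is a two-line consequence of definition-chasing once Lemma~\ref{3.22} is in hand — so the only thing to be careful about is that in the second half of the argument one really needs the surjectivity of $\mathrm{pr}^R$ from $X_{m,n}$ onto $X_n$, rather than merely onto some subset. That is exactly what Lemma~\ref{3.22} supplies via the explicit lift $(\gamma_1^m(y_0),\ldots,\gamma_1(y_0),y_0)$ on the left coordinate, so the proof fits in a short paragraph.
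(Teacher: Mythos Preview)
Your proposal is correct and is essentially identical to the paper's proof: both observe that $Y_{\boldsymbol k}$ is by definition the preimage $(\mathrm{pr}^R)^{-1}(Y^R_{\boldsymbol k})$, so the equality $\mathrm{pr}^R(Y_{\boldsymbol k})=Y^R_{\boldsymbol k}$ follows immediately from the surjectivity of $\mathrm{pr}^R$ established in Lemma~\ref{3.22}. The paper states this in a single sentence while you have spelled out both inclusions, but the content is the same.
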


\begin{proof}
  Since $Y_{\boldsymbol k}$ is the preimage of $Y^R_{\boldsymbol k}$ under $\mathrm{pr}^R$, the desired equation follows directly from the surjectivity of $\mathrm{pr}^R$ (Lemma~\ref{3.22}).
\end{proof}

\begin{lemma}\label{3.24}
  Let $S\subseteq X_{m,n}$ and $\boldsymbol k\in\{1,\ldots,N\}^n$.
  If $\mathrm{pr}^R(S)$ intersects $Y_{\boldsymbol k}^R$,
  then $S$ intersects $Y_{\boldsymbol k}$.
\end{lemma}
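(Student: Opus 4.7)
The statement is a purely set-theoretic consequence of the definition of $Y_{\boldsymbol k}$ as the preimage of $Y_{\boldsymbol k}^R$ under the projection $\mathrm{pr}^R\colon X_{m,n}\to X_n$, so I expect a very short direct argument rather than anything requiring the compactness or continuity hypotheses in a nontrivial way.

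My plan is to proceed by witness extraction. Assume $\mathrm{pr}^R(S)\cap Y_{\boldsymbol k}^R\neq\emptyset$ and pick a point $\boldsymbol y$ in this intersection. Since $\boldsymbol y\in\mathrm{pr}^R(S)$, there exists some bichain $\boldsymbol z\in S$ with $\mathrm{pr}^R(\boldsymbol z)=\boldsymbol y$. Because $\boldsymbol y\in Y_{\boldsymbol k}^R$, the defining condition $Y_{\boldsymbol k}=\set{\boldsymbol z\in X_{m,n}}{\mathrm{pr}^R(\boldsymbol z)\in Y_{\boldsymbol k}^R}$ puts $\boldsymbol z$ into $Y_{\boldsymbol k}$. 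Hence $\boldsymbol z\in S\cap Y_{\boldsymbol k}$, which gives $S\cap Y_{\boldsymbol k}\neq\emptyset$ as required.

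Since no genuine obstacle arises, I do not expect to invoke Lemma~\ref{3.22} or Lemma~\ref{3.23} here; these were needed to identify $\mathrm{pr}^R(Y_{\boldsymbol k})$ with $Y_{\boldsymbol k}^R$, but for the present lemma only the inclusion $Y_{\boldsymbol k}\subseteq(\mathrm{pr}^R)^{-1}(Y_{\boldsymbol k}^R)$ (which is by definition an equality) is used. The analogue for Lemma~\ref{3.17} in the $\boxdot$-subsection was harder because $Y_{\boldsymbol k}^L$ was defined as an image rather than as a preimage; here the roles are reversed, so the argument collapses to a one-line unpacking of the definition.
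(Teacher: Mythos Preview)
Your proof is correct and essentially identical to the paper's: both pick a witness $\boldsymbol y\in\mathrm{pr}^R(S)\cap Y_{\boldsymbol k}^R$, lift it to some $\boldsymbol z\in S$ with $\mathrm{pr}^R(\boldsymbol z)=\boldsymbol y$, and conclude $\boldsymbol z\in Y_{\boldsymbol k}$ directly from the definition of $Y_{\boldsymbol k}$ as the $\mathrm{pr}^R$-preimage of $Y_{\boldsymbol k}^R$. Your observation that this is easier than Lemma~\ref{3.17} because $Y_{\boldsymbol k}$ is defined as a preimage rather than via an image is exactly right.
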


\begin{proof}
  Assume that $\mathrm{pr}^R(S)$ intersects $Y_{\boldsymbol k}^R$.
  Then, we can take a point $\boldsymbol y$ in the intersection of $\mathrm{pr}^R(S)$ and $Y_{\boldsymbol k}^R$.
  Since $\boldsymbol y\in\mathrm{pr}^R(S)$, we can take a point $\boldsymbol z$ in $S$ satisfying $\boldsymbol y=\mathrm{pr}^R(\boldsymbol z)$.
  Since $\mathrm{pr}^R(\boldsymbol z)\in Y^R_{\boldsymbol k}$, the point $\boldsymbol z$ is in $Y_{\boldsymbol k}$.
  Therefore, $S$ intersects $Y_{\boldsymbol k}$ at $\boldsymbol z$.
\end{proof}

Fix a bijection $k\colon\{1,\ldots,N^n\}\to\{1,\ldots,N\}^n$.
We define a subset $Z_t\subseteq X_{m,n}$ for $t=0,1,\ldots,N^n$ recursively by
\begin{equation*}
  Z_t:=\begin{cases}
    \emptyset&\text{if}\ t=0,\\
    Z_{t-1}\cup Y_{k(t)}&\text{if}\ t\in\{1,\ldots,N^n\}.
  \end{cases}
\end{equation*}
The sets $Z_t$ are closed in $X_{m,n}$.
The following holds:
\begin{equation*}
  \emptyset
  =Z_0
  \subseteq Z_1
  \subseteq\cdots
  \subseteq Z_{N^n}
  =X_{m,n}.
\end{equation*}

\begin{lemma}\label{3.25}
  Let $S\subseteq X_{m,n}$ and $t\in\{1,\ldots,N^n\}$.
  If $\mathrm{pr}^R(S)$ intersects $\mathrm{pr}^R(Z_t)$,
  then $S$ intersects $Z_t$.
\end{lemma}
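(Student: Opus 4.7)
The plan is to mirror the argument given for the analogous Lemma~\ref{3.18} in the $\boxdot$-subsection, replacing the role of $\mathrm{pr}^L$ and $Y^L_{\boldsymbol k}$ with $\mathrm{pr}^R$ and $Y^R_{\boldsymbol k}$, and invoking Lemma~\ref{3.23} and Lemma~\ref{3.24} in place of Lemma~\ref{3.17}. The argument is a straightforward ``decompose $Z_t$ into the finite union $Y_{k(1)}\cup\cdots\cup Y_{k(t)}$, pick the piece that witnesses the intersection, and lift it through the already-proven single-tuple lemma'' pattern.

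First I would unfold the recursive definition of $Z_t$ to write $Z_t=Y_{k(1)}\cup\cdots\cup Y_{k(t)}$. Applying $\mathrm{pr}^R$ and using Lemma~\ref{3.23}, I get
\begin{equation*}
  \mathrm{pr}^R(Z_t)
  =\mathrm{pr}^R(Y_{k(1)})\cup\cdots\cup\mathrm{pr}^R(Y_{k(t)})
  =Y^R_{k(1)}\cup\cdots\cup Y^R_{k(t)}.
\end{equation*}
Assuming $\mathrm{pr}^R(S)$ meets $\mathrm{pr}^R(Z_t)$, it must then meet some $Y^R_{k(s)}$ with $s\in\{1,\ldots,t\}$. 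Lemma~\ref{3.24} now lifts this: $S$ meets $Y_{k(s)}$, and since $Y_{k(s)}\subseteq Z_t$, we conclude that $S$ meets $Z_t$.

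There is essentially no obstacle here; the only things to keep honest are the two set-theoretic identities used (commutation of $\mathrm{pr}^R$ with finite union, and $\mathrm{pr}^R(Y_{\boldsymbol k})=Y^R_{\boldsymbol k}$, which is exactly Lemma~\ref{3.23}). In particular, note that the surjectivity of $\mathrm{pr}^R$ (Lemma~\ref{3.22}) is what makes the $\boxtimes$-case slightly cleaner than the $\boxdot$-case, where one had to argue via Lemma~\ref{3.16} that $Y_{\boldsymbol k}^L$ sits inside the expected locus. Here the image $Y^R_{\boldsymbol k}=\mathrm{pr}^R(Y_{\boldsymbol k})$ is given to us on the nose, so no extra geometric bookkeeping is needed.
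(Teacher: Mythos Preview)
Your proposal is correct and follows essentially the same argument as the paper: decompose $Z_t$ as $Y_{k(1)}\cup\cdots\cup Y_{k(t)}$, use Lemma~\ref{3.23} to identify $\mathrm{pr}^R(Z_t)$ with $Y^R_{k(1)}\cup\cdots\cup Y^R_{k(t)}$, pick the piece $Y^R_{k(s)}$ that $\mathrm{pr}^R(S)$ meets, and lift via Lemma~\ref{3.24}. Your side remark comparing with the $\boxdot$-case is also accurate.
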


\begin{proof}
  Assume that $\mathrm{pr}^R(S)$ intersects $\mathrm{pr}^R(Z_t)$.
  Since $Z_t=Y_{k(1)}\cup\cdots\cup Y_{k(t)}$, by Lemma~\ref{3.23}, we have
  \begin{gather*}
    \mathrm{pr}^R(Z_t)
    =\mathrm{pr}^R(Y_{k(1)}\cup\cdots\cup Y_{k(t)})\\
    =\mathrm{pr}^R(Y_{k(1)})\cup\cdots\cup\mathrm{pr}^R(Y_{k(t)})
    =Y^R_{k(1)}\cup\cdots\cup Y^R_{k(t)}.
  \end{gather*}
  So $\mathrm{pr}^R(S)$ intersects $Y^R_{k(s)}$ for some $s$.
  By Lemma~\ref{3.24}, $S$ intersects $Y_{k(s)}$.
  Since $Y_{k(s)}\subseteq Z_t$, $S$ intersects $Z_t$.
\end{proof}

\begin{lemma}\label{3.26}
  Let $S\subseteq X_{m,n}$ and $t\in\{0,1,\ldots,N^n\}$.
  If $S$ is disjoint from $Z_t$, then $\mathrm{pr}^R(S)$ is disjoint from $\mathrm{pr}^R(Z_t)$.
\end{lemma}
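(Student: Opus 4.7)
The plan is to follow exactly the pattern already established by Lemma~\ref{3.19}, which handles the analogous assertion on the $\mathrm{pr}^L$ side. The statement splits naturally into two cases depending on whether $t=0$ or $t\geq 1$, and in each case there is essentially nothing to do.

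First, I would dispose of the degenerate case $t=0$. Since $Z_0=\emptyset$ by definition, its image $\mathrm{pr}^R(Z_0)$ is also empty, so $\mathrm{pr}^R(S)$ is trivially disjoint from $\mathrm{pr}^R(Z_0)$ regardless of $S$. No hypothesis about $S$ is needed.

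For $t\in\{1,\ldots,N^n\}$, the conclusion is precisely the contrapositive of Lemma~\ref{3.25}: that lemma states that if $\mathrm{pr}^R(S)$ meets $\mathrm{pr}^R(Z_t)$, then $S$ meets $Z_t$; contrapositively, if $S$ is disjoint from $Z_t$, then $\mathrm{pr}^R(S)$ is disjoint from $\mathrm{pr}^R(Z_t)$. So this case requires only a one-line appeal.

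There is no main obstacle here; the lemma is a packaging step whose substance lives entirely in Lemma~\ref{3.25} (and, upstream, Lemmas~\ref{3.23} and \ref{3.24}). Its purpose is to be cited in the induction step of the proof of Proposition~\ref{3.21}, exactly in the same way Lemma~\ref{3.19} is cited in the induction step of the proof of Proposition~\ref{3.15}, where one needs to separate the compact sets $\mathrm{pr}^R(\supp f)$ and $\mathrm{pr}^R(Z_{t-1})$ by a continuous function on $X_n$. Writing the proof in two short sentences, one per case, matches the style of Lemma~\ref{3.19} and keeps the exposition parallel between the two subsections.
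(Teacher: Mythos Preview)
Your proposal is correct and follows exactly the same approach as the paper's proof: the case $t=0$ is disposed of by noting $Z_0=\emptyset$, and the case $t\geq 1$ is the contrapositive of Lemma~\ref{3.25}. The additional contextual remarks about the parallel with Lemma~\ref{3.19} are accurate and appropriate.
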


\begin{proof}
  In the case of $t=0$, $\mathrm{pr}^R(S)$ is disjoint from $\mathrm{pr}^R(Z_t)$ since $Z_t=\emptyset$.
  In the other case, the conclusion coincides with the contrapositive of Lemma~\ref{3.25}.
\end{proof}

\begin{proof}[Proof of Proposition~\ref{3.21}]
  We show the inclusion ``$\subseteq$’’, i.e.,
  \begin{equation*}
    \cspan\bigl(C(X_m)\boxtimes C(X_n)\bigr)\subseteq C(X_{m,n}).
  \end{equation*}
  For any functions $f$ on $X_m$ and $g$ on $X_n$, if $f$ and $g$ are continuous, then so is $f\boxtimes g$.
  Therefore, $C(X_m)\boxtimes C(X_n)$ is contained in $C(X_{m,n})$ and so is its closed linear span.

  We show the inverse inclusion ``$\supseteq$''.
  To this end, we show the following claim by backward induction on $t$, starting from $t=N^n$ and working downward to $t=0$: for all $t\in\{0,1,\ldots,N^n\}$, one has the inclusion
  \begin{equation*}
    \set{f\in C(X_{m,n})}{\text{$f$ vanishes on $Z_t$}}
    \subseteq\cspan\bigl(C(X_m)\boxtimes C(X_n)\bigr).
  \end{equation*}
  Since $Z_0=\emptyset$, the left-hand side becomes maximal, and this claim implies
  \begin{equation*}
    C(X_{m,n})
    \subseteq\cspan\bigl(C(X_m)\boxtimes C(X_n)\bigr).
  \end{equation*}
  The desired inclusion ``$\supseteq$'' follows.

  \textbf{The base case.}
  In the case of $t=N^n$, we have $Z_t=X_{m,n}$ and hence
  \begin{equation*}
    \set{f\in C(X_{m,n})}{\text{$f$ vanishes on $Z_t$}}
    =\{0\}
    \subseteq\cspan\bigl(C(X_m)\boxtimes C(X_n)\bigr).
  \end{equation*}

  \textbf{The induction step.}
  Take any $t\in\{1,\ldots,N^n\}$ and assume that
  \begin{equation*}
    \set{f\in C(X_{m,n})}{\text{$f$ vanishes on $Z_t$}}
    \subseteq\cspan\bigl(C(X_m)\boxtimes C(X_n)\bigr).
  \end{equation*}
  Take any $f\in C(X_{m,n})$ satisfying $\supp f\cap Z_{t-1}=\emptyset$.
  Let $g:=f\circ\mathrm{ex}_{k(t)}$, which is a continuous function on $X_m$.
  By Lemma~\ref{3.26}, two compact sets $\mathrm{pr}^R(\supp f)$ and $\mathrm{pr}^R(Z_{t-1})$ are disjoint.
  So we can take a continuous function $h$ on $X_m$ which takes value $1$ on $\mathrm{pr}^R(\supp f)$ and value $0$ on $\mathrm{pr}^R(Z_{t-1})$.

  The function $g\boxtimes h$ is continuous and coincides with $f$ on $Y_{k(t)}$ and vanishes on $Z_{t-1}$.
  Indeed, for any $\boldsymbol z\in Y_{k(t)}$, we have
  \begin{gather*}
    (g\boxtimes h)(\boldsymbol z)
    =g(\mathrm{pr}^L(\boldsymbol z))\overline{h(\mathrm{pr}^R(\boldsymbol z))}
    =f(\mathrm{ex}_{k(t)}(\mathrm{pr}^L_{k(t)}(\boldsymbol z)))\overline{h(\mathrm{pr}^R(\boldsymbol z))}\\
    =f(\boldsymbol z)\overline{h(\mathrm{pr}^R(\boldsymbol z))}
    =\begin{cases}
      f(\boldsymbol z)\cdot 1&\text{if }\boldsymbol z\in\supp f\\
      0\cdot\overline{h(\mathrm{pr}^R(\boldsymbol z))}&\text{otherwise}
    \end{cases}
    =f(\boldsymbol z);
  \end{gather*}
  and for any $\boldsymbol z\in Z_{t-1}$, we have
  \begin{gather*}
    (g\boxtimes h)(\boldsymbol z)
    =g(\mathrm{pr}^L(\boldsymbol z))\overline{h(\mathrm{pr}^R(\boldsymbol z))}
    =g(\mathrm{pr}^L(\boldsymbol z))\cdot\overline{0}
    =0.
  \end{gather*}
  So $f-g\boxtimes h$ is a continuous function on $X_{m,n}$ vanishing on $Z_{t-1}\cup Y_{k(t)}=Z_t$ and hence by the induction hypothesis, belongs to $\cspan\bigl(C(X_m)\boxtimes C(X_n)\bigr)$.
  Since $g\boxtimes h$ belongs to $C(X_m)\boxtimes C(X_n)$, $f$ also belongs to $\cspan\bigl(C(X_m)\boxtimes C(X_n)\bigr)$.

  By the above discussion in the induction step, we have the inclusion
  \begin{equation*}
    \set{f\in C(X_{m,n})}{\supp f\cap Z_{t-1}=\emptyset}
    \subseteq\cspan\bigl(C(X_m)\boxtimes C(X_n)\bigr).
  \end{equation*}
  By taking the closure of both sides, we obtain the inclusion
  \begin{equation*}
    \set{f\in C(X_{m,n})}{\text{$f$ vanishes on $Z_{t-1}$}}
    \subseteq\cspan\bigl(C(X_m)\boxtimes C(X_n)\bigr).
  \end{equation*}
  This is what we aimed to show in the induction step.
\end{proof}

\section{Review of Kajiwara–Watatani Algebras}

In this section, we briefly review the definition of C*-algebras associated with iterated function systems introduced in \cite{KW06}.
We present a slightly more generalized version than the original.

Let $X$ be a compact Hausdorff space, $\gamma=\{\gamma_1, \ldots, \gamma_N\}$ be an iterated function system on $X$, and $G$ be the graph of $\gamma$.
Let $A:=C(X)$ as a C*-algebra and let $E$ be the completion of the pre-Hilbert $A$-$A$-bimodule defined as the complex-linear space $C(G)$ endowed with the left and right $A$-scalar multiplication and $A$-valued inner product defined by the following.
\begin{equation*}
  \begin{array}{l}
    (a\cdot f)(x,y):=a(x)f(x,y)\\
    (f\cdot a)(x,y):=f(x,y)a(y)
  \end{array}
  \quad\text{for each $a\in A$, $f\in C_c(G)$ and $(x,y)\in G$}.
\end{equation*}
\begin{equation*}
  \inn{f}{g}(y):=\sum_{k=1}^{N}\overline{f(\gamma_k(y),y)}g(\gamma_k(y),y)\quad\text{for each $f,g\in C_c(G)$ and $y\in X$}.
\end{equation*}
Then, $E$ is a C*-correspondence over~$A$.
This is called \emph{the Kajiwara--Watatani correspondence for $(X,\gamma)$}; and the Cuntz--Pimsner--Katsura algebra associated with this C*-correspondence is called \emph{the Kajiwara--Watatani algebra for $\gamma$} and denoted by $\O_\gamma$.

\begin{remark}
  For any $f\in C(G)$, the inequality $\norm{f}_{C(G)}\leq\norm{f}_E\leq\sqrt{N}\cdot\norm{f}_{C(G)}$ holds and hence the two norms on $C(G)$ are equivalent.
  Therefore, $E$ coincides with $C(G)$ as a complex-linear space.
\end{remark}

\begin{proposition}\label{4.1}
  Let $\varphi\colon A\to\L(E)$ be the natural left action of $A$ on $E$.
  Then, the following equation holds:
  \begin{equation*}
    \ker\varphi=\set{b\in A}{b\text{ vanishes on }\gamma(X)}.
  \end{equation*}
\end{proposition}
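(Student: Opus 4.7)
The plan is to unwind the definitions and exploit the fact that the constant function $1$ belongs to $E$.

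First I would observe, using the remark immediately preceding the proposition, that the $E$-norm and the uniform norm on $C(G)$ are equivalent, so an element of $E$ is zero if and only if it vanishes pointwise on $G$. By definition of the natural left action, $\varphi(a)f$ is the continuous function on $G$ given by $(x,y)\mapsto a(x)f(x,y)$. Hence $a\in\ker\varphi$ exactly when $a(x)f(x,y)=0$ for every $f\in C(G)$ and every $(x,y)\in G$. The second ingredient is the identification of the first-coordinate projection of $G$: by Definition~\ref{3.2}, this projection is precisely $\gamma_1(X)\cup\cdots\cup\gamma_N(X)=\gamma(X)$.

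For the inclusion ``$\supseteq$'', I would take $b\in A$ vanishing on $\gamma(X)$ and any $f\in C(G)$, then note that for every $(x,y)\in G$ the first coordinate $x$ lies in $\gamma(X)$, so $b(x)f(x,y)=0$; this shows $\varphi(b)f=0$ as an element of $C(G)$ and hence of $E$. For the reverse inclusion ``$\subseteq$'', I would take $b\in\ker\varphi$ and test $\varphi(b)$ against the constant function $\mathbf{1}\in C(G)=E$, obtaining $b(x)=b(x)\cdot 1=0$ for every $(x,y)\in G$; since the first-coordinate projection of $G$ equals $\gamma(X)$, this forces $b$ to vanish on $\gamma(X)$.

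There is essentially no obstacle here: the whole argument reduces to observing that the norm on $E$ is equivalent to the sup norm (so vanishing in $E$ is pointwise vanishing), and that $\mathbf{1}\in C(G)$ separates the question on a point-by-point basis. The only thing worth being careful about is not conflating the two norms on $C(G)$ when asserting $\varphi(b)\mathbf{1}=0$ implies pointwise vanishing, which is handled by the aforementioned remark.
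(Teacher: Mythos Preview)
Your proof is correct and follows essentially the same approach as the paper. The only cosmetic difference is that for the inclusion ``$\subseteq$'' you test against the single global function $\mathbf{1}\in C(G)$, whereas the paper works point-by-point, fixing $x\in\gamma_k(X)$ and invoking the existence of some $f\in C(G)$ with $f(x,y)=1$; your choice is slightly cleaner but the underlying idea is identical.
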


\begin{proof}
  First, we show the inclusion ``$\subseteq$''.
  Take any $b\in\ker\varphi$.
  Take any $k\in\{1, \ldots, N\}$ and $x\in\gamma_k(X)$.
  We can take a point $y\in X$ satisfying $x=\gamma_k(y)$.
  Then, the point $(x,y)$ belongs to $G$.
  There exists a function $f\in E=C(G)$ taking value $1$ at the point $(x,y)$.
  We have
  \begin{equation*}
    b(x)
    =b(x)f(x,y)
    =(bf)(x,y)
    =0.
  \end{equation*}
  Therefore, $b$ vanishes on $\gamma(X)$.

  Next, we show the inverse inclusion ``$\supseteq$''.
  Take any $b\in A$ which vanishes on $\gamma(X)$.
  Take any $f\in E$ and $(x,y)\in G$.
  By the definition of $G$, $x=\gamma_k(y)$ holds for some $k$; and hence we have $x\in\gamma(X)$.
  So we have
  \begin{equation*}
    \varphi(b)(f)(x,y)
    =(bf)(x,y)
    =b(x)f(x,y)
    =0\cdot f(x,y)
    =0.
  \end{equation*}
  Therefore, $\varphi(b)=0$ and hence $b\in\ker\varphi$.
\end{proof}

\section{Certain Representations of Kajiwara--Watatani Correspondences}

In this section, we consider the following setting.
Let $X$ be a compact Hausdorff space, 
    $\gamma=\{\gamma_1,\ldots,\gamma_N\}$ be an iterated function system, and 
    $\X$ be a subset of $X$.
Let $G$ be the graph of $\gamma$ and 
    $(A,E)$ the Kajiwara--Watatani correspondence for $(X,\gamma)$.
We assume the following conditions (I) -- (III):
\begin{itemize}
  \item[(I)] The set $\X$ is dense in $X$ and completely invariant under $\gamma$.
  \item[(II)] The images $\gamma_1(\X),\ldots,\gamma_N(\X)$ are pairwise disjoint.
  \item[(III)] The restriction $\gamma_k|_\X$ is injective for all $k=1,\ldots,N$.
\end{itemize}
Under these conditions, the following statements hold:
\begin{itemize}
  \item The map $\gamma_{\boldsymbol k}$ is injective on $\X$ for all finite sequences $\boldsymbol k$ of indices.
  \item Let $\boldsymbol k = (k_n, \ldots, k_0)$ and $\boldsymbol k’ = (k'_{n'}, \ldots, k'_0)$ be finite sequences of indices.
  If $\boldsymbol k$ is an initial segment of $\boldsymbol k'$, i.e., $n \leq n'$ and $k_n = k'_{n'}$, \dots, $k_0 = k'_{n' - n}$ hold, one has $\gamma_{\boldsymbol k}(\X)\supseteq\gamma_{\boldsymbol k’}(\X)$.
  If neither $\boldsymbol k$ is an initial segment of $\boldsymbol k’$ nor $\boldsymbol k’$ is an initial segment of $\boldsymbol k$, $\gamma_{\boldsymbol k}(\X)$ and $\gamma_{\boldsymbol k’}(\X)$ are disjoint.
\end{itemize}
Let $\rho_A\colon A\to M_\X$ and $\rho_E\colon E\to M_\X$ be the maps defined below:
\begin{alignat*}{3}
  &\rho_A(a)\e_x
  &&:=\e_xa(x)
  &\quad&\text{for each $a\in A=C(X)$ and $x\in\X$; and}\\
  &\rho_E(f)\e_y
  &&:=\sum_{k=1}^{N}\e_{\gamma_k(y)}f(\gamma_k(y),y)
  &\quad&\text{for each $f\in E=C(G)$ and $y\in\X$.}
\end{alignat*}
We denote the element of the group C*-algebra $C^*(\Z)$ corresponding to $n\in\Z$ by $\upsilon_n$, as in subsection \ref{subsection 2.4}.
Let $\hat\rho_A\colon A\to M_\X\otimes C^*(\Z)$ and $\hat\rho_E\colon E\to M_\X\otimes C^*(\Z)$ be the maps defined below:
\begin{alignat*}{3}
  \hat\rho_A(a)&:=\rho_A(a)\otimes\upsilon_0&&\quad\text{for each $a\in A$; and}\\
  \hat\rho_E(f)&:=\rho_E(f)\otimes\upsilon_1&&\quad\text{for each $f\in E$.}
\end{alignat*}
In subsection \ref{subsection 5.2} and subsequent subsections, we assume the following additional condition:
\begin{itemize}
  \item All of $\gamma_1$, \ldots, $\gamma_N$ are embeddings.
\end{itemize}
In subsection \ref{subsection 5.5} and subsequent subsections, we assume the following additional conditions:
\begin{itemize}
  \item The iterated function system $\gamma$ satisfies the open set condition. That is, there exists a dense open set $U$ in $X$ such that the images $\gamma_1(U),\ldots,\gamma_N(U)$ are pairwise disjoint open set in $X$ and contained in $U$.
  \item The set $\X$ is contained in $U$.
  \item The set $\X$ is comeager in $X$.
\end{itemize}

\subsection{The pair $\rho$ is an injective covariant representation}

The goal of this subsection is to prove the following two theorems.

\begin{theorem}\label{5.1}
  The pair $\rho=(\rho_A,\rho_E)$ is an injective covariant representation of $(A,E)$.
\end{theorem}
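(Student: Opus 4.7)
The plan is to verify in turn that $\rho$ is a representation, that it is injective, and that it is covariant.

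For the representation axioms, $\rho_A$ is the multiplication representation of $C(X)$ on $\H_\X$, hence a $*$-homomorphism, and $\rho_E$ is manifestly complex-linear. Boundedness of $\rho_E$ uses conditions (II) and (III): for any $v=\sum_{y}c_y\e_y\in\H_\X$ the basis vectors $\e_{\gamma_k(y)}$ appearing in $\rho_E(f)v$ are pairwise distinct, which yields $\|\rho_E(f)\|\le\sqrt{N}\|f\|_\infty$. The inner-product and left-action identities then follow by direct evaluation on basis vectors. Injectivity is immediate from the density of $\X$ in $X$ (condition (I)): if $\rho_A(a)=0$ then $a|_\X=0$, hence $a=0$ by continuity.

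The substantive step is covariance. The key preparation is a closed-form for the adjoint: conditions (II) and (III) make the pair $(k,y)\in\{1,\ldots,N\}\times\X$ with $x=\gamma_k(y)$ unique when it exists, and complete invariance of $\X$ gives the identity $\X\cap\gamma(X)=\gamma(\X)$; hence
\begin{equation*}
  \rho_E(g)^*\e_x=\begin{cases}\e_y\,\overline{g(x,y)}&\text{if }x=\gamma_k(y)\in\gamma(\X),\\0&\text{if }x\in\X\setminus\gamma(\X).\end{cases}
\end{equation*}
By Proposition~\ref{4.1} and a Urysohn argument, every $a\in\I_E$ vanishes on $X\setminus\gamma(X)$, so when $x\in\X\setminus\gamma(\X)$ both $\rho_A(a)\e_x$ and $\psi_\rho(\varphi(a))\e_x$ vanish (the latter by choosing finite-rank approximants $\theta_n\to\varphi(a)$ in $K(E)$ and applying the above adjoint formula together with continuity of $\psi_\rho$).

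It remains to handle $x=\gamma_k(y)$ with $y\in\X$. Pick $\theta_n=\sum_i\theta_{f_i^n,g_i^n}\to\varphi(a)$ in $\L(E)$; then $\psi_\rho(\theta_n)\e_x\to\psi_\rho(\varphi(a))\e_x$ in $\H_\X$, while the adjoint formula gives $\psi_\rho(\theta_n)\e_x=\sum_{j=1}^{N}c_j^n\,\e_{\gamma_j(y)}$ with $c_j^n:=\sum_i f_i^n(\gamma_j(y),y)\overline{g_i^n(\gamma_k(y),y)}$. To identify $\lim c_j^n$, I would test $\theta_n$ against a function $h\in C(G)$ chosen by Urysohn so that $h(\gamma_l(y),y)=\delta_{lk}$ (using that the $\gamma_l(y)$ are distinct by (II)). Since $\theta_n(h)\to\varphi(a)h=ah$ in the $E$-norm and the $E$-norm dominates the $C(G)$-norm (by the remark after the definition of the Kajiwara--Watatani correspondence), the convergence is uniform on $G$; evaluating at the finitely many points $(\gamma_j(y),y)$ yields $c_j^n\to a(\gamma_j(y))\delta_{jk}$, so $\psi_\rho(\theta_n)\e_x\to a(x)\e_x$, completing the proof of covariance. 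The main obstacle is precisely this last step: extracting pointwise coefficient limits from norm convergence in $\L(E)$, which succeeds thanks to the tailored test function $h$ together with the built-in comparison of the $C(G)$- and $E$-norms.
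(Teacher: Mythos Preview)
Your argument is correct, and for the representation and injectivity parts it matches the paper. For covariance, however, you take a genuinely different route. The paper isolates an abstract cancellation lemma (Lemma~\ref{5.5}): any $\alpha\in M_\X$ satisfying $\alpha\rho_E(f)=0$ for all $f\in E$ and $\alpha\rho_A(b)=0$ for all $b\in\ker\varphi$ must vanish. Covariance then follows by applying this to $\alpha:=\psi_\rho(\varphi(a))-\rho_A(a)$, since the two required hypotheses are supplied by the general identities $\psi_\rho(\varphi(a))\rho_E(f)=\rho_A(a)\rho_E(f)$ (Fact~\ref{2.6}) and $\psi_\rho(\theta)\rho_A(b)=0$ for $b\in\ker\varphi$ (Proposition~\ref{2.7}). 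By contrast, you compute $\psi_\rho(\varphi(a))\e_x$ directly: via the explicit adjoint formula for $\rho_E(g)^*$, then via finite-rank approximants $\theta_n\to\varphi(a)$, extracting the limiting coefficients $c_j^n$ by testing against a Urysohn function $h$ with $h(\gamma_l(y),y)=\delta_{lk}$ and invoking the norm comparison $\norm{\cdot}_{C(G)}\le\norm{\cdot}_E$. The paper's approach is more modular and reusable (the same Lemma~\ref{5.5} pattern recurs in the commented-out general case), while yours is self-contained and avoids the separate lemma at the cost of a slightly more delicate limiting argument.
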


\begin{theorem}\label{5.2}
  The pair $\hat\rho=(\hat\rho_A,\hat\rho_E)$ is universal as a covariant representation of $(A,E)$ and $C^*(\hat\rho)$ is isomorphic to $\O_\gamma$.
\end{theorem}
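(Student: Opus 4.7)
The plan is to derive Theorem~\ref{5.2} as an essentially immediate consequence of Theorem~\ref{5.1} together with Proposition~\ref{2.12}. By Theorem~\ref{5.1}, the pair $\rho=(\rho_A,\rho_E)$ is an injective covariant representation of the Kajiwara--Watatani correspondence $(A,E)$ into the C*-algebra $B:=M_\X$ of bounded linear operators on $\H_\X$. Proposition~\ref{2.12} takes any injective covariant representation $\rho$ of a C*-correspondence in a C*-algebra $B$ and produces a universal covariant representation on $B\otimes C^*(\Z)$ by the formulas $a\mapsto\rho_A(a)\otimes\upsilon_0$ and $f\mapsto\rho_E(f)\otimes\upsilon_1$. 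These are exactly the formulas defining $\hat\rho_A$ and $\hat\rho_E$ at the beginning of this section, so the pair constructed by Proposition~\ref{2.12} from $\rho$ is literally our $\hat\rho$.

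Hence the first assertion of the theorem, that $\hat\rho$ is universal as a covariant representation of $(A,E)$, follows at once. For the second assertion, recall that by definition the Kajiwara--Watatani algebra $\O_\gamma$ is the Cuntz--Pimsner--Katsura algebra $\O_E$ of the Kajiwara--Watatani correspondence $(A,E)$, which is by definition the C*-algebra induced by the universal covariant representation of $(A,E)$. Since $\hat\rho$ is such a universal representation, $C^*(\hat\rho)$ is canonically isomorphic to $\O_\gamma$ via the map sending $\hat\rho_A(a)$ and $\hat\rho_E(f)$ to the images of $a$ and $f$ under the universal representation.

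Since both parts reduce cleanly to invocations of previously established results, there is no real obstacle; the substantive work lies entirely in Theorem~\ref{5.1}, which is proved separately. The only small point worth mentioning explicitly in the write-up is that Proposition~\ref{2.12} is stated for an arbitrary C*-algebra $B$ containing the range of $\rho$, and we apply it with $B=M_\X$; the injectivity hypothesis needed for Proposition~\ref{2.12} is precisely the injectivity asserted in Theorem~\ref{5.1}, so no additional verification is required.
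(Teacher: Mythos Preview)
Your proposal is correct and takes essentially the same approach as the paper: the paper's proof likewise invokes Theorem~\ref{5.1} to obtain that $\rho$ is an injective covariant representation and then applies Proposition~\ref{2.12} to conclude that $\hat\rho$ is universal. Your write-up is in fact more explicit than the paper's about why universality yields the isomorphism $C^*(\hat\rho)\cong\O_\gamma$, but the argument is the same.
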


\begin{remark}
  By Theorem \ref{5.2}, we can regard the Kajiwara--Watatani algebra $\O_E$ as $C^*(\hat\rho)$, which is a closed $*$-subalgebra of $M_\X\otimes C^*(\Z)$.
\end{remark}

\begin{proposition}\label{5.3}
  The pair $\rho$ is a representation of the C*-correspondence $(A,E)$.
\end{proposition}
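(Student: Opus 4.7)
The plan is to verify the four defining conditions of a representation of the C*-correspondence $(A,E)$: $\rho_A$ is a $*$-homomorphism, $\rho_E$ is complex-linear, $\rho_A(\inn{f}{g}) = \rho_E(f)^*\rho_E(g)$, and $\rho_E(af) = \rho_A(a)\rho_E(f)$. The first two are essentially immediate from the definitions: $\rho_A(a)$ is the diagonal $\X$-squared matrix with $(x,x)$-entry $a(x)$, so it visibly lies in $M_\X$ and $\rho_A$ is the standard multiplication $*$-representation of $C(X)$ on $\H_\X$ (via evaluation at points of $\X$); and $\rho_E$ is defined by a pointwise linear formula in $f$. The first genuine task is to verify that each $\rho_E(f)$ actually defines a \emph{bounded} operator on $\H_\X$, so that $\rho_E(f) \in M_\X$.

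For boundedness of $\rho_E(f)$, I would compute $\|\rho_E(f)(\sum_y c_y \e_y)\|^2$ on a finitely supported vector. Conditions (II) and (III) combine to guarantee that whenever $\gamma_k(y) = \gamma_j(y')$ for $y, y' \in \X$, one has $k=j$ (by (II)) and then $y=y'$ (by (III)). Consequently the basis vectors $\e_{\gamma_k(y)}$ appearing in the expansion
\begin{equation*}
  \rho_E(f)\Bigl(\sum_y c_y \e_y\Bigr)
  = \sum_{y,k} \e_{\gamma_k(y)}\, c_y f(\gamma_k(y), y)
\end{equation*}
are pairwise distinct as the pair $(y,k)$ varies, so the norm squared telescopes to
\begin{equation*}
  \sum_{y,k} |c_y|^2 \abs{f(\gamma_k(y), y)}^2
  = \sum_y |c_y|^2\, \inn{f}{f}(y)
  \leq \|\inn{f}{f}\|_\infty \sum_y |c_y|^2
  = \|f\|_E^2\, \bigl\|{\textstyle\sum_y} c_y \e_y\bigr\|^2,
\end{equation*}
and $\rho_E(f)$ extends uniquely to a bounded operator with $\|\rho_E(f)\|\leq\|f\|_E$.

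The remaining two compatibilities are then direct matrix-entry computations exploiting the same cancellations. For the left-action identity, applying both sides to $\e_y$ reduces the claim to the trivial pointwise identity $(af)(\gamma_k(y), y) = a(\gamma_k(y))f(\gamma_k(y), y)$. For the inner-product identity, the $(x,y)$-entry of $\rho_E(f)^*\rho_E(g)$ is $\inn{\rho_E(f)\e_x}{\rho_E(g)\e_y}$, which expands into the double sum $\sum_{k,j}\overline{f(\gamma_k(x),x)}\,g(\gamma_j(y),y)\,\delta_{\gamma_k(x),\gamma_j(y)}$; by (II) and (III) the Kronecker delta forces $k=j$ and $x=y$, so the sum collapses to $\delta_{xy}\inn{f}{g}(x)$, which is exactly the $(x,y)$-entry of $\rho_A(\inn{f}{g})$. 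There is no serious obstacle in the whole argument; the only mild subtlety is the boundedness step, and this is precisely where both structural assumptions (II) and (III) are used simultaneously.
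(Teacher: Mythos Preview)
Your proof is correct and follows essentially the same approach as the paper: both verify the inner-product and left-action identities by expanding against basis vectors $\e_y$ and using conditions (II) and (III) to collapse the double sum. The one addition in your version is the explicit boundedness check for $\rho_E(f)$; the paper omits this here and instead establishes it later (Lemma~\ref{5.2.1}) in the more general form $\|\rho_n(f)\|\leq N^{n/2}\sup|f|$.
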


\begin{proof}
  It is easy to verify that $\rho_A$ is a $*$-homomorphism and that $\rho_E$ is a complex linear map.

  We show that $\rho$ preserves the inner product.
  Take any $f,g\in E$ and $y,y'\in\X$.
  We want to show the equation $\inn{\e_y}{\rho_E(f)^*\rho_E(g)\e_{y'}}=\inn{\e_y}{\rho_A(\inn{f}{g})\e_{y'}}$.
  We have
  \begin{align*}
    (\text{LHS})
    &=\inn{\rho_E(f)\e_y}{\rho_E(g)\e_{y'}}\\
    &=\INN\bigg{\sum_{j=1}^{N}\e_{\gamma_j(y)}f(\gamma_j(y),y)}{\sum_{k=1}^{N}\e_{\gamma_k(y')}g(\gamma_k(y'),y')}\\
    &=\sum_{j=1}^{N}\sum_{k=1}^{N}\overline{f(\gamma_j(y),y)}\inn{\e_{\gamma_j(y)}}{\e_{\gamma_k(y')}}g(\gamma_k(y'),y').\\
    \intertext{Since the sets $\gamma_1(\X)$, \ldots, $\gamma_N(\X)$ are pairwise disjoint, we have}
    &=\sum_{k=1}^{N}\overline{f(\gamma_k(y),y)}\inn{\e_{\gamma_k(y)}}{\e_{\gamma_k(y')}}g(\gamma_k(y'),y').\\
    \intertext{Since the restrictions $\gamma_k|_\X$ are injective, we have}
    &=\sum_{k=1}^{N}\overline{f(\gamma_k(y),y)}\inn{\e_y}{\e_{y'}}g(\gamma_k(y'),y')\\
    &=\inn{\e_y}{\e_{y'}}\sum_{k=1}^{N}\overline{f(\gamma_k(y'),y')}g(\gamma_k(y'),y')\\
    &=\inn{\e_y}{\e_{y'}}\inn{f}{g}(y')\\
    &=(\text{RHS}).
  \end{align*}
  So $(\text{LHS})=(\text{RHS})$ holds.
  Since it holds for any $y,y'\in\X$, we have $\rho_E(f)^*\rho_E(g)=\rho_A(\inn{f}{g})$.
  Therefore, $\rho$ preserves the inner product.

  We show that $\rho$ preserves left $A$-scalar multiplication.
  Take any $f,g\in E$.
  For any $y\in\X$, we have
  \begin{gather*}
    \rho_E(af)\e_y
    =\sum_{k=1}^{N}\e_{\gamma_k(y)}(af)(\gamma_k(y),y)
    =\sum_{k=1}^{N}\e_{\gamma_k(y)}a(\gamma_k(y))f(\gamma_k(y),y)\\
    =\sum_{k=1}^{N}\rho_A(a)\e_{\gamma_k(y)}f(\gamma_k(y),y)
    =\rho_A(a)\sum_{k=1}^{N}\e_{\gamma_k(y)}f(\gamma_k(y),y)
    =\rho_A(a)\rho_E(f)\e_y.
  \end{gather*}
  So we obtain that $\rho_E(af)=\rho_A(a)\rho_E(f)$.
  Therefore, $\rho$ preserves left $A$-scalar multiplication.

  By the above discussion, $\rho$ is a representation of $E$.
\end{proof}

\begin{proposition}\label{5.4}
  The representation $\rho$ is injective.
\end{proposition}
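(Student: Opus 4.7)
The plan is to show that the $*$-homomorphism $\rho_A\colon A\to M_\X$ is injective; by the general fact recorded in the paragraph preceding Fact~\ref{2.6}, injectivity of $\rho_E$ then follows automatically, so this is enough.

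First, I would unwind the definition of $\rho_A$. For any $a\in A$ and any $x\in\X$, the definition $\rho_A(a)\e_x=\e_x a(x)$ together with orthonormality of $\{\e_x\}_{x\in\X}$ gives $\inn{\e_x}{\rho_A(a)\e_x}=a(x)$. Hence if $\rho_A(a)=0$, then $a(x)=0$ for every $x\in\X$.

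The second (and only substantive) step uses condition (I): $\X$ is dense in $X$. Since $a\in A=C(X)$ is continuous and vanishes on the dense set $\X$, it vanishes on all of $X$, so $a=0$ in $A$. This gives $\ker\rho_A=0$, i.e.\ $\rho_A$ is injective, hence $\rho$ is an injective representation.

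There is no real obstacle here; the argument is essentially a one-line consequence of density of $\X$ together with the diagonal form of $\rho_A$ on the basis $\{\e_x\}$. The only assumption of the setting that is used is condition~(I), specifically the density part.
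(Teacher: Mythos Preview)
Your proof is correct and essentially identical to the paper's: both show $\rho_A$ is injective by computing $a(x)=\inn{\e_x}{\rho_A(a)\e_x}$ for $x\in\X$ and then invoking the density of $\X$ in $X$ from condition~(I). The only cosmetic difference is that you spell out why injectivity of $\rho_A$ suffices, while the paper simply writes ``We only show the injectivity of $\rho_A$.''
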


\begin{proof}
  We only show the injectivity of $\rho_A$.
  Take any $a\in\ker\rho_A$.
  For any point $x\in\X$, we have
  \begin{equation*}
    a(x)
    =\inn{\e_x}{\rho_A(a)\e_x}
    =\inn{\e_x}{0\e_x}
    =0.
  \end{equation*}
  Since $\X$ is dense in $X$, we have $a=0$.
  Thus, $\rho_A$ is injective.
\end{proof}

\begin{lemma}\label{5.5}
  Let $\varphi$ be the natural left action of $A$ on $E$ and $\alpha$ be an $\X$-squared matrix.
  Assume the following two conditions:
  \begin{itemize}
    \item $\alpha\rho_E(f)=0$ holds for all $f\in E$; and
    \item $\alpha\rho_A(b)=0$ holds for all $b\in\ker\varphi$.
  \end{itemize}
  Then, $\alpha=0$ holds.
\end{lemma}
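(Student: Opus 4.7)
The plan is to show $\alpha \e_y = 0$ for every $y \in \X$, which suffices since $\{\e_y\}_{y \in \X}$ is an orthonormal basis of $\H_\X$. I would split into two cases according to whether $y$ lies in the set $\gamma(X) = \gamma_1(X) \cup \cdots \cup \gamma_N(X)$, which is closed in $X$ as a finite union of compact sets in a compact Hausdorff space.

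For $y \notin \gamma(X)$, I would exploit the second hypothesis. Since $\gamma(X)$ is closed and disjoint from $\{y\}$, Urysohn's lemma produces $b \in C(X) = A$ with $b(y) = 1$ that vanishes on $\gamma(X)$. Proposition~\ref{4.1} tells us $b \in \ker \varphi$, so the hypothesis gives $\alpha \rho_A(b) = 0$. Applying this to $\e_y$ and using $\rho_A(b)\e_y = b(y)\e_y = \e_y$ yields $\alpha \e_y = 0$.

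For $y \in \gamma(X)$, I would instead use the first hypothesis. Write $y = \gamma_k(x)$ for some $k$ and some $x \in X$. Complete $\gamma$-invariance of $\X$ combined with $y \in \X$ forces $x \in \inv{\gamma_k}(\{y\}) \subseteq \inv{\gamma_k}(\X) \subseteq \X$. Condition (II) then ensures that the points $\gamma_1(x), \ldots, \gamma_N(x)$ are pairwise distinct, and hence the $N$ points $(\gamma_j(x), x)$ for $j = 1, \ldots, N$ are pairwise distinct in the compact Hausdorff space $G$. By Urysohn's lemma applied in $G$, I can choose $f \in C(G) = E$ with $f(\gamma_k(x), x) = 1$ and $f(\gamma_j(x), x) = 0$ for $j \neq k$. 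Then $\rho_E(f)\e_x = \sum_{j=1}^N \e_{\gamma_j(x)} f(\gamma_j(x), x) = \e_y$, and the hypothesis $\alpha \rho_E(f) = 0$ gives $\alpha \e_y = 0$.

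No step poses a serious obstacle; the key insight is recognizing that the two hypotheses are tailored precisely to the two cases of the dichotomy. The $\ker \varphi$ hypothesis, via Proposition~\ref{4.1}, supplies scalar-multiplication operators that single out basis vectors \emph{outside} $\gamma(X)$, while the $\rho_E(E)$ hypothesis supplies module-action vectors that realize basis vectors \emph{inside} $\gamma(X)$ as images of other basis vectors. The case split on $y$ lets us invoke exactly one of them for each basis element.
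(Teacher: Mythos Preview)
Your proof is correct and follows essentially the same approach as the paper's: the same two-case split on whether the basis point lies in $\gamma(X)$, the same use of Proposition~\ref{4.1} together with a Urysohn function for the complement case, and the same construction of an $f\in C(G)$ isolating a single branch for the $\gamma(X)$ case. Your write-up is in fact slightly more explicit than the paper's in invoking condition~(II) to justify that the points $(\gamma_j(x),x)$ are pairwise distinct before applying Urysohn.
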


\begin{proof}
  Take any $k\in\{1, \ldots, N\}$ and $x\in\X\cap\gamma_k(X)$.
  Let $y:=(\gamma_k|_\X)^{-1}(x)\in\X$.
  There exists a function $f\in C(G)$ with the following conditions:
  \begin{equation*}
    f(x,y)=f(\gamma_k(y),y)=1\text{ and }
    f(\gamma_j(y),y)=0\text{ for all }j\in\{1, \ldots, N\}\setminus\{k\}.
  \end{equation*}
  We have
  \begin{equation*}
    \alpha\e_x
    =\alpha\sum_{j=1}^{N}\e_{\gamma_j(y)}f(\gamma_j(y),y)
    =\alpha\rho_E(f)\e_y
    =0\e_y
    =\boldsymbol 0.
  \end{equation*}
  Therefore, the equation $\alpha\e_x=\boldsymbol 0$ holds for all $x\in\X\cap\gamma(X)$.

  Take any point $x\in\X\setminus\gamma(X)$.
  The set $\gamma(X)$ is compact, since it is the finite union of the images of the compact set $X$ under the continuous maps $\gamma_k$.
  So there exists a function $b\in A$ which takes value $1$ at $x$ and value $0$ on $\gamma(X)$.
  By Proposition~\ref{4.1}, we have $b\in\ker\varphi$.
  We obtain
  \begin{equation*}
    \alpha\e_x
    =\alpha\e_xb(x)
    =\alpha\rho_A(b)\e_x
    =0\e_x
    =\boldsymbol 0.
  \end{equation*}
  Therefore, the equation $\alpha\e_x=\boldsymbol 0$ holds for all $x\in\X\setminus\gamma(X)$.

  By the above discussion, $\alpha\e_x=\boldsymbol 0$ holds for all $x\in\X$ and hence $\alpha=0$.
\end{proof}

\begin{proposition}\label{5.6}
  The representation $\rho$ is covariant.
\end{proposition}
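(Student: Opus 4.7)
The plan is to apply Lemma~\ref{5.5} to the $\X$-squared matrix
\[
  \alpha := \rho_A(a) - \psi_\rho(\varphi(a))
\]
for an arbitrary $a \in \I_E$. Covariance asks exactly that this $\alpha$ vanish, and Lemma~\ref{5.5} reduces the task to checking that $\alpha\rho_E(f) = 0$ for every $f \in E$ and that $\alpha\rho_A(b) = 0$ for every $b \in \ker\varphi$. Note that $\psi_\rho(\varphi(a))$ is well-defined because, by the definition of the covariant ideal $\I_E$, membership in $\I_E$ forces $\varphi(a) \in \K(E)$.

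For the first condition, I would invoke Fact~\ref{2.6}, which gives $\psi_\rho(\varphi(a))\rho_E(f) = \rho_A(a)\rho_E(f)$ for all $a \in \I_E$ and all $f \in E$; subtracting gives $\alpha\rho_E(f) = 0$ immediately. For the second condition, I would split $\alpha\rho_A(b)$ into two pieces. On the one hand, $\rho_A(a)\rho_A(b) = \rho_A(ab) = \rho_A(0) = 0$, because the definition of $\I_E$ also demands $ab = 0$ for every $b \in \ker\varphi$. On the other hand, $\psi_\rho(\varphi(a))\rho_A(b) = 0$ by Proposition~\ref{2.7}, since $\varphi(a)$ lies in $\K(E)$ and $b$ lies in $\ker\varphi$. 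Combining, $\alpha\rho_A(b) = 0$.

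Having verified both hypotheses of Lemma~\ref{5.5}, the lemma yields $\alpha = 0$, i.e.\ $\rho_A(a) = \psi_\rho(\varphi(a))$ for every $a \in \I_E$, which is exactly the definition of covariance. There is no real obstacle here: the proof is essentially bookkeeping, and all the work has been absorbed into Lemma~\ref{5.5}, Fact~\ref{2.6}, and Proposition~\ref{2.7}. The only point that deserves a moment of care is making sure both defining clauses of $\I_E$ (namely $\varphi(a) \in \K(E)$ and $ab = 0$ for $b \in \ker\varphi$) are used where needed.
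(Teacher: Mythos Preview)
Your proof is correct and follows essentially the same approach as the paper: both apply Lemma~\ref{5.5} to the difference $\psi_\rho(\varphi(a)) - \rho_A(a)$ (you use the opposite sign, which is immaterial), verifying its two hypotheses via Fact~\ref{2.6} and via Proposition~\ref{2.7} combined with the annihilation condition in the definition of $\I_E$.
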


\begin{proof}
  In this proof, let 
    $\varphi$ be the natural left action of $A$ on $E$, 
    $\psi_\rho$ be the representation of $\K(E)$ induced by $\rho$, and 
    $\I_E$ be the covariant ideal of $E$.
  Take any $a\in\I_E$.
  By Fact~\ref{2.6}, for any $f\in E$ we have the equation
  \begin{equation*}
    \psi_\rho(\varphi(a))\rho_E(f)=\rho_A(a)\rho_E(f)
  \end{equation*}
  and hence
  \begin{equation*}
    \bigl(\psi_\rho(\varphi(a))-\rho_A(a)\bigr)\rho_E(f)=0.
  \end{equation*}
  By Proposition~\ref{2.7} and the definition of $\I_E$, for any $b\in\ker\varphi$ we have
  \begin{equation*}
    \psi_\rho(\varphi(a))\rho_A(b)
    =0
    =\rho_A(0)
    =\rho_A(ab)
    =\rho_A(a)\rho_A(b)
  \end{equation*}
  and hence
  \begin{equation*}
    \bigl(\psi_\rho(\varphi(a))-\rho_A(a)\bigr)\rho_A(b)=0.
  \end{equation*}
  Therefore, By Lemma~\ref{5.5} we have $\psi_\rho(\varphi(a))-\rho_A(a)=0$, i.e., $\psi_\rho(\varphi(a))=\rho_A(a)$.
  Since this holds for any $a\in\I_E$, the representation $\rho$ is covariant.
\end{proof}

\begin{proof}[Proof of Theorem \ref{5.1}]
  Proposition \ref{5.3} asserts that $\rho$ is a representation of $(A,E)$.
  Proposition \ref{5.4} asserts that $\rho$ is injective as a representation of a C*-correspondence.
  Proposition \ref{5.6} asserts that $\rho$ is covariant.
\end{proof}

\begin{proof}[Proof of Theorem \ref{5.2}]
  By Theorem \ref{5.1}, $\rho$ is an injective covariant representation.
  Therefore, by Proposition \ref{2.12}, $\hat\rho$ is universal as a covariant representation.
\end{proof}

\subsection{Construction of $\rho_n$ and $E_n$}\label{subsection 5.2}

In this subsection and subsequent subsections, we assume the following additional condition (IV):
\begin{itemize}
  \item[(IV)] All of $\gamma_1$, \ldots, $\gamma_N$ are embeddings.
\end{itemize}

Let $I := \{1, \ldots, N\}$ be the index set.
For $\boldsymbol k = (k_n, \ldots, k_1)\in I^n$ and a point $x$ in $X$, let $\boldsymbol\gamma_{\boldsymbol k}(x)$ denote the $\gamma$-chain $(\gamma_{k_n,\ldots,k_1}(x), \ldots, \gamma_{k_1}(x), x)$.
We define $X_n$, $\rho_n$ and $E_n$ as below for each $n\in\N_0$.
\begin{itemize}
  \item Let $X_n$ be the topological space consisting of all $\gamma$-chains of length $n$. This space is Hausdorff and compact.
  \item Let $\rho_n\colon C(X_n)\to M_\X$ be the map defined by 
  \begin{equation*}
    \rho_n(f)\e_x:=\sum_{\boldsymbol k\in I^n}\e_{\gamma_{\boldsymbol k}(x)}f(\boldsymbol\gamma_{\boldsymbol k}(x))
  \end{equation*}
  for $f\in C(X_n)$ and $x\in\X$. This is complex-linear.
  \item Let $E_n$ be the range of $\rho_n$.
\end{itemize}

\begin{remark}
  The map $\rho_0$ coincides with $\rho_A$, and $\rho_1$ coincides with $\rho_E$.
\end{remark}

\begin{lemma}\label{5.2.1}
  For $n\in\N_0$, the map $\rho_n$ is bi-Lipschitz as a linear operator from $C(X_n)$ with the supremum norm to $M_\X$ with the operator norm.
\end{lemma}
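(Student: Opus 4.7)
The plan is to prove the two inequalities separately, aiming at the concrete constants $\|f\|_\infty\le\|\rho_n(f)\|\le N^{n/2}\|f\|_\infty$.

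For the upper bound, I would exploit the facts recorded after conditions~(I)--(III): each $\gamma_{\boldsymbol k}|_\X$ is injective, and for distinct $\boldsymbol k,\boldsymbol k'\in I^n$ (both having the same length $n$), neither is an initial segment of the other, so $\gamma_{\boldsymbol k}(\X)$ and $\gamma_{\boldsymbol k'}(\X)$ are disjoint. Together these say that the map $I^n\times\X\to\X$, $(\boldsymbol k,x)\mapsto\gamma_{\boldsymbol k}(x)$, is injective. Hence for any finitely supported $\xi=\sum_{x\in\X}c_x\e_x\in\H_\X$, the expression
\begin{equation*}
  \rho_n(f)\xi=\sum_{x\in\X}\sum_{\boldsymbol k\in I^n}c_x f(\boldsymbol\gamma_{\boldsymbol k}(x))\e_{\gamma_{\boldsymbol k}(x)}
\end{equation*}
is a sum over \emph{distinct} basis vectors. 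Pythagoras then gives
\begin{equation*}
  \|\rho_n(f)\xi\|^2=\sum_{x\in\X}\sum_{\boldsymbol k\in I^n}|c_x|^2|f(\boldsymbol\gamma_{\boldsymbol k}(x))|^2\le N^n\|f\|_\infty^2\|\xi\|^2,
\end{equation*}
so $\|\rho_n(f)\|\le N^{n/2}\|f\|_\infty$.

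For the lower bound, fix $x\in\X$ and $\boldsymbol k\in I^n$. Using the same disjointness/injectivity to collapse the sum defining $\rho_n(f)\e_x$, only the $\boldsymbol k'=\boldsymbol k$ term contributes to the inner product with $\e_{\gamma_{\boldsymbol k}(x)}$, yielding
\begin{equation*}
  f(\boldsymbol\gamma_{\boldsymbol k}(x))=\inn{\e_{\gamma_{\boldsymbol k}(x)}}{\rho_n(f)\e_x},
\end{equation*}
and since both $\e_{\gamma_{\boldsymbol k}(x)}$ and $\e_x$ are unit vectors, Cauchy--Schwarz gives $|f(\boldsymbol\gamma_{\boldsymbol k}(x))|\le\|\rho_n(f)\|$. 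To upgrade this pointwise bound to the supremum norm, I would show that the set $\set{\boldsymbol\gamma_{\boldsymbol k}(x)}{\boldsymbol k\in I^n,\ x\in\X}$ is dense in $X_n$: every $\gamma$-chain $(z_n,\ldots,z_0)$ equals $\boldsymbol\gamma_{\boldsymbol k}(z_0)$ for some $\boldsymbol k\in I^n$; since $\X$ is dense in $X$ and each $\gamma_{\boldsymbol k}$ is continuous, approximating $z_0$ by points of $\X$ yields an approximation of the chain inside $X_n$. Continuity of $f$ then gives $\|f\|_\infty\le\|\rho_n(f)\|$.

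The argument has no analytic obstacle; the only delicate point is recognizing that the disjointness and injectivity hypotheses combine into injectivity of $(\boldsymbol k,x)\mapsto\gamma_{\boldsymbol k}(x)$ on $I^n\times\X$, which makes the sums defining $\rho_n(f)\xi$ cleanly Pythagorean and simultaneously collapses $\rho_n(f)\e_x$ to a single term when pairing with $\e_{\gamma_{\boldsymbol k}(x)}$. Once that observation is in place, both bounds are one-line computations.
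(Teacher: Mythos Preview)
Your proof is correct and follows essentially the same approach as the paper: both the upper bound (via the injectivity of $(\boldsymbol k,x)\mapsto\gamma_{\boldsymbol k}(x)$ on $I^n\times\X$) and the lower bound (via density of $\{\boldsymbol\gamma_{\boldsymbol k}(x):x\in\X,\ \boldsymbol k\in I^n\}$ in $X_n$) match the paper's argument, with the same constants $\|f\|_\infty\le\|\rho_n(f)\|\le N^{n/2}\|f\|_\infty$. Your upper-bound computation is in fact slightly cleaner---you apply Pythagoras directly to $\|\rho_n(f)\xi\|^2$, whereas the paper bounds $|\langle\boldsymbol v\mid\rho_n(f)\boldsymbol w\rangle|$ via Cauchy--Schwarz---but the key injectivity fact and the resulting estimate are identical.
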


\begin{proof}
  We show that $\rho_n$ is Lipschitz, i.e. bounded.
  Take any $f\in C(X_n)$ and $\boldsymbol v=\sum_{x\in\X}\e_x v_x, \boldsymbol w=\sum_{x\in\X}\e_x w_x\in\H_\X$.
  We have
  \begin{equation*}
    \rho_n(f)\boldsymbol w
    =\rho_n(f)\sum_{x\in\X}\e_x w_x
    =\sum_{x\in\X}\rho_n(f)\e_x w_x
    =\sum_{x\in\X}\sum_{\boldsymbol k\in I^n}\e_{\gamma_{\boldsymbol k}(x)}f(\boldsymbol\gamma_{\boldsymbol k}(x))w_x;
  \end{equation*}
  and hence
  \begin{align*}
    &\abs{\inn{\boldsymbol v}{\rho_n(f)\boldsymbol w}}\\
    &\leq\sum_{x\in\X}\sum_{\boldsymbol k\in I^n}\abs{v_{\gamma_{\boldsymbol k}(x)}}\abs{f(\boldsymbol\gamma_{\boldsymbol k}(x))}\abs{w_x}\\
    &\leq\biggl(\sum_{x\in\X}\sum_{\boldsymbol k\in I^n}\abs{v_{\gamma_{\boldsymbol k}(x)}}^2\abs{f(\boldsymbol\gamma_{\boldsymbol k}(x))}^2\biggr)^{1/2}\biggl(\sum_{x\in\X}\sum_{\boldsymbol k\in I^n}\abs{w_x}^2\biggr)^{1/2}\\
    &\leq\biggl(\sum_{x\in\X}\sum_{\boldsymbol k\in I^n}\abs{v_{\gamma_{\boldsymbol k}(x)}}^2\sup\abs{f}^2\biggr)^{1/2}\biggl(\sum_{x\in\X}\sum_{\boldsymbol k\in I^n}\abs{w_x}^2\biggr)^{1/2}\\
    &=\biggl(\sum_{x\in\X}\sum_{\boldsymbol k\in I^n}\abs{v_{\gamma_{\boldsymbol k}(x)}}^2\biggr)^{1/2}\cdot\sup\abs{f}\cdot N^{n/2}\cdot\biggl(\sum_{x\in\X}\abs{w_x}^2\biggr)^{1/2}\\
    &\leq\norm{\boldsymbol v}\cdot\sup\abs{f}\cdot N^{n/2}\cdot\norm{\boldsymbol w}.
  \end{align*}
  Therefore, the inequality $\norm{\rho_n(f)}\leq\sup\abs{f}\cdot N^{n/2}$ holds for all $f\in C(X_n)$ and hence $\rho_n$ is bounded.

  We show that $\rho_n$ is bi-Lipschitz.
  For any $x\in\X$ and $\boldsymbol k\in I^n$, we have
  \begin{align*}
    \norm{\rho_n(f)}
    &=\norm{\rho_n(f)}\norm{\e_x}
    \geq\norm{\rho_n(f)\e_x}
    =\NORM\Big{\sum_{\boldsymbol j\in I^n}\e_{\gamma_{\boldsymbol j}(x)}f(\boldsymbol\gamma_{\boldsymbol j}(x))}\\
    &\geq\norm{\e_{\gamma_{\boldsymbol k}(x)}f(\boldsymbol\gamma_{\boldsymbol k}(x))}
    =\abs{f(\boldsymbol\gamma_{\boldsymbol k}(x))}.
  \end{align*}
  Since the set $\set{\boldsymbol\gamma_{\boldsymbol k}(x)}{x\in\X\text{ and }\boldsymbol k\in I^n}$ is dense in $X_n$, we have $\norm{\rho_n(f)}\geq\sup\abs{f}$.
  Therefore, $\rho_n$ is bi-Lipschitz.
\end{proof}

\begin{corollary}\label{5.2.2}
  The sets $E_n$ are norm-closed linear subspaces of $M_\X$.
\end{corollary}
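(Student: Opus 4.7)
The plan is to deduce the corollary directly from Lemma~\ref{5.2.1}, which says that $\rho_n$ is bi-Lipschitz from $C(X_n)$ (with the supremum norm) to $M_\X$ (with the operator norm). Linearity of $E_n$ is immediate: $\rho_n$ is complex-linear, so its range is closed under addition and scalar multiplication. The only real content is closedness.

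For closedness, I would proceed as follows. Take a sequence $(\rho_n(f_k))_k$ in $E_n$ that converges in norm to some $\alpha\in M_\X$; I want to show $\alpha\in E_n$. Since convergent sequences are Cauchy, $(\rho_n(f_k))_k$ is Cauchy in $M_\X$. By the lower bi-Lipschitz estimate $\sup\abs{f}\leq\norm{\rho_n(f)}$ established in the proof of Lemma~\ref{5.2.1} (applied to differences $f_k-f_\ell$), the sequence $(f_k)_k$ is then Cauchy in $C(X_n)$ with respect to the supremum norm. Since $X_n$ is compact Hausdorff, $C(X_n)$ is a Banach space, so $(f_k)_k$ converges uniformly to some $f\in C(X_n)$. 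By the upper bi-Lipschitz estimate (continuity of $\rho_n$), $\rho_n(f_k)\to\rho_n(f)$ in $M_\X$. By uniqueness of limits, $\alpha=\rho_n(f)\in E_n$, which gives the desired closedness.

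There is no real obstacle here; the argument is the standard fact that the image of a Banach space under a bounded linear operator with bounded left inverse (here provided by Lemma~\ref{5.2.1}) is closed. I would keep the write-up short, citing Lemma~\ref{5.2.1} for both inequalities and completeness of $C(X_n)$ for the existence of $f$.
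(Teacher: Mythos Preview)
Your proposal is correct and takes essentially the same approach as the paper: the paper's proof simply states that since $\rho_n$ is bi-Lipschitz (Lemma~\ref{5.2.1}) and $C(X_n)$ is norm-complete, the range $E_n$ is norm-closed, whereas you unpack this standard fact via the Cauchy-sequence argument. The content is identical.
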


\begin{proof}
  Fix $n\in\N_0$.
  Since the set $E_n$ is defined as the range of the complex-linear map $\rho_n$, 
  it is a complex-linear subspace of $M_\X$.
  Moreover, since the domain $C(X_n)$ of $\rho_n$ is norm-complete and $\rho_n$ is bi-Lipschitz (Lemma~\ref{5.2.1}), the range $E_n$ of $\rho_n$ is norm-closed.
\end{proof}

\begin{lemma}\label{5.2.3}
  The equation 
  \begin{equation*}
    \rho_m(f)\rho_n(g)=\rho_{m+n}(f\boxdot g)
  \end{equation*}
  holds for all $m,n\in\N_0$, $f\in C(X_m)$ and $g\in C(X_n)$.
\end{lemma}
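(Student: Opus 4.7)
The plan is to prove the identity $\rho_m(f)\rho_n(g) = \rho_{m+n}(f \boxdot g)$ by a direct computation on the basis vectors $\e_x$ for $x \in \X$, since by Corollary~\ref{5.2.2} the relevant operators are bounded and the set $\{\e_x\}_{x\in\X}$ spans a dense subspace of $\H_\X$.

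First I would unwind $\rho_n(g)\e_x$ using its definition as $\sum_{\boldsymbol k \in I^n} \e_{\gamma_{\boldsymbol k}(x)} g(\boldsymbol\gamma_{\boldsymbol k}(x))$. Applying $\rho_m(f)$ to each basis vector $\e_{\gamma_{\boldsymbol k}(x)}$ in this sum produces
\begin{equation*}
  \rho_m(f)\rho_n(g)\e_x
  = \sum_{\boldsymbol k \in I^n} \sum_{\boldsymbol j \in I^m} \e_{\gamma_{\boldsymbol j}(\gamma_{\boldsymbol k}(x))} \, f(\boldsymbol\gamma_{\boldsymbol j}(\gamma_{\boldsymbol k}(x))) \, g(\boldsymbol\gamma_{\boldsymbol k}(x)).
\end{equation*}

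Next I would introduce the concatenation notation: for $\boldsymbol j = (j_m, \ldots, j_1) \in I^m$ and $\boldsymbol k = (k_n, \ldots, k_1) \in I^n$, set $\boldsymbol{jk} := (j_m, \ldots, j_1, k_n, \ldots, k_1) \in I^{m+n}$. Then $\gamma_{\boldsymbol j} \circ \gamma_{\boldsymbol k} = \gamma_{\boldsymbol{jk}}$, and the $\gamma$-chain $\boldsymbol\gamma_{\boldsymbol{jk}}(x)$ of length $m+n$ decomposes as the concatenation of $\boldsymbol\gamma_{\boldsymbol j}(\gamma_{\boldsymbol k}(x))$ (its leftmost $m+1$ entries) and $\boldsymbol\gamma_{\boldsymbol k}(x)$ (its rightmost $n+1$ entries), with $\gamma_{\boldsymbol k}(x)$ being the shared middle entry. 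By the very definition of $f \boxdot g$ given in Proposition~\ref{3.15}, this means
\begin{equation*}
  (f \boxdot g)(\boldsymbol\gamma_{\boldsymbol{jk}}(x)) = f(\boldsymbol\gamma_{\boldsymbol j}(\gamma_{\boldsymbol k}(x))) \, g(\boldsymbol\gamma_{\boldsymbol k}(x)).
\end{equation*}

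Finally, since $(\boldsymbol j, \boldsymbol k) \mapsto \boldsymbol{jk}$ is a bijection from $I^m \times I^n$ onto $I^{m+n}$, the double sum rewrites as $\sum_{\boldsymbol l \in I^{m+n}} \e_{\gamma_{\boldsymbol l}(x)} (f \boxdot g)(\boldsymbol\gamma_{\boldsymbol l}(x)) = \rho_{m+n}(f \boxdot g)\e_x$, completing the computation. The argument is purely a bookkeeping exercise: there is no genuine obstacle, only the need to clearly identify the decomposition of the concatenated $\gamma$-chain as matching the left/right split prescribed by $\boxdot$. It is worth noting that the computation does not invoke hypotheses (II) or (III) — it works for arbitrary iterated function systems — so no care about disjointness of the images $\gamma_k(\X)$ is required here.
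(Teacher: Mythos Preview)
Your proposal is correct and follows essentially the same approach as the paper's own proof: both compute directly on basis vectors $\e_x$, expand $\rho_m(f)\rho_n(g)\e_x$ into the double sum over $I^m \times I^n$, and then reindex via the concatenation bijection $(\boldsymbol j,\boldsymbol k)\mapsto\boldsymbol{jk}$ to recognize the result as $\rho_{m+n}(f\boxdot g)\e_x$. Your write-up is slightly more explicit about the concatenation and the matching of the $\boxdot$-split, but the argument is the same.
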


\begin{proof}
  For any $x\in\X$, we have
  \begin{align*}
    \rho_m(f)\rho_n(g)\e_x
    &=\rho_m(f)\sum_{\boldsymbol k\in I^n}\e_{\gamma_{\boldsymbol k}(x)}g(\boldsymbol\gamma_{\boldsymbol k}(x))\\
    &=\sum_{\boldsymbol k\in I^n}\rho_m(f)\e_{\gamma_{\boldsymbol k}(x)}g(\boldsymbol\gamma_{\boldsymbol k}(x))\\
    &=\sum_{\boldsymbol k\in I^n}\sum_{\boldsymbol j\in I^m}\e_{\gamma_{\boldsymbol j}(\gamma_{\boldsymbol k}(x))} f(\boldsymbol\gamma_{\boldsymbol j}(\gamma_{\boldsymbol k}(x)))g(\boldsymbol\gamma_{\boldsymbol k}(x))\\
    &=\sum_{\boldsymbol l\in I^{m+n}}\e_{\gamma_{\boldsymbol l}(x)}(f\boxdot g)(\boldsymbol\gamma_{\boldsymbol l}(x))\\
    &=\rho_{m+n}(f\boxdot g)\e_x.
  \end{align*}
  Therefore, the equation $\rho_m(f)\rho_n(g)=\rho_{m+n}(f\boxdot g)$ holds.
\end{proof}

\begin{proposition}\label{5.2.4}
  Let $m,n\in\N_0$.
  The norm-closed additive subgroup of $M_\X$ generated by ${E_m}{E_n}$ coincides with $E_{m+n}$.
\end{proposition}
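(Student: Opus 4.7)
The plan is to chain together Lemma~\ref{5.2.3} (which converts products $\rho_m(f)\rho_n(g)$ into single $\rho_{m+n}(f\boxdot g)$), Proposition~\ref{3.15} (which describes $C(X_{m+n})$ as the closed additive hull of $C(X_m)\boxdot C(X_n)$), and Lemma~\ref{5.2.1}/Corollary~\ref{5.2.2} (which tell us that $\rho_{m+n}$ is bi-Lipschitz and $E_{m+n}$ is norm-closed). Let $\casg{E_m E_n}$ denote the norm-closed additive subgroup of $M_\X$ generated by the product set $E_m E_n$.

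For the inclusion $\casg{E_m E_n}\subseteq E_{m+n}$, I would first observe that every element of $E_m E_n$ lies in $E_{m+n}$: a typical product $\rho_m(f)\rho_n(g)$ equals $\rho_{m+n}(f\boxdot g)$ by Lemma~\ref{5.2.3}, hence is in the range of $\rho_{m+n}$. Since $E_{m+n}$ is a norm-closed linear subspace of $M_\X$ (Corollary~\ref{5.2.2}), it is in particular a norm-closed additive subgroup containing $E_m E_n$, so it contains the closed additive hull $\casg{E_m E_n}$.

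For the reverse inclusion $E_{m+n}\subseteq\casg{E_m E_n}$, I would take an arbitrary element $h\in E_{m+n}$ and write $h=\rho_{m+n}(F)$ for some $F\in C(X_{m+n})$. By Proposition~\ref{3.15}, the function $F$ lies in the norm-closed additive subgroup of $C(X_{m+n})$ generated by $C(X_m)\boxdot C(X_n)$, so there exist finite integer-linear combinations $F_i=\sum_j f_{i,j}\boxdot g_{i,j}$ (with $f_{i,j}\in C(X_m)$, $g_{i,j}\in C(X_n)$) such that $F_i\to F$ uniformly. Applying the bounded linear map $\rho_{m+n}$ and using Lemma~\ref{5.2.3} term by term, we get
\begin{equation*}
  \rho_{m+n}(F_i)=\sum_j\rho_{m+n}(f_{i,j}\boxdot g_{i,j})=\sum_j\rho_m(f_{i,j})\rho_n(g_{i,j})\in\asg{E_m E_n},
\end{equation*}
where $\asg{E_m E_n}$ denotes the (non-closed) additive subgroup generated by $E_m E_n$. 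By continuity of $\rho_{m+n}$ (Lemma~\ref{5.2.1}), $\rho_{m+n}(F_i)\to\rho_{m+n}(F)=h$ in norm, so $h\in\casg{E_m E_n}$.

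There is no real obstacle here; the only point to be mindful of is that Proposition~\ref{3.15} gives the statement at the level of \emph{additive subgroups} (integer combinations and norm limits), not complex-linear spans, so I must be careful to apply $\rho_{m+n}$ to integer combinations and conclude membership in the closed \emph{additive} subgroup generated by $E_m E_n$. The bi-Lipschitz property of $\rho_{m+n}$ (though only Lipschitz continuity is strictly needed for this direction) makes the norm-limit transfer immediate.
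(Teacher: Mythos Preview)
Your proof is correct and uses the same ingredients as the paper (Lemma~\ref{5.2.3}, Proposition~\ref{3.15}, and Lemma~\ref{5.2.1}/Corollary~\ref{5.2.2}); the only difference is organizational, as the paper packages both inclusions into a single chain of equalities $\casg{E_mE_n}=\rho_{m+n}(\casg{C(X_m)\boxdot C(X_n)})=\rho_{m+n}(C(X_{m+n}))=E_{m+n}$ by observing that a bi-Lipschitz linear map preserves both additive subgroups and norm-closures, whereas you unpack the two inclusions explicitly.
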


\begin{remark}
  By this proposition, the following equation is satisfied for all $m,n\in\N_0$:
  \begin{equation*}
    \cspan({E_m}{E_n})={E_{m+n}}.
  \end{equation*}
\end{remark}

\begin{proof}
  By Lemma~\ref{5.2.3}, we have 
  \begin{equation*}
    E_m E_n
    =\rho_{m+n}(C(X_m)\boxdot C(X_n)).
  \end{equation*}
  Since the linearity of $\rho_{m+n}$ implies preservation of additive subgroups, we have 
  \begin{equation*}
    \asg{E_m E_n}
    =\rho_{m+n}(\asg{C(X_m)\boxdot C(X_n)}).
  \end{equation*}
  Furthermore, as the map $\rho_{m+n}$ is bi-Lipschitz (Lemma~\ref{5.2.1}), it also preserves norm-closures: 
  \begin{equation*}
    \casg{E_m E_n}
    =\rho_{m+n}(\casg{C(X_m)\boxdot C(X_n)}).
  \end{equation*}
  By the assumption (IV) and Proposition~\ref{3.15}, we know that $\casg{C(X_m)\boxdot C(X_n)}=C(X_{m+n})$.
  Therefore, 
  \begin{equation*}
    \casg{E_m E_n}
    =E_{m+n}.
  \end{equation*}
  Hence, we conclude that the norm-closed additive subgroup generated by $E_m E_n$ coincides with $E_{m+n}$.
\end{proof}

\begin{corollary}\label{5.2.5}
  All of the $E_n$ are contained in $C^*(\rho)$.
\end{corollary}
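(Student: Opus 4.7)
The plan is to proceed by induction on $n$, using Proposition~\ref{5.2.4} as the engine of the induction step.

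For the base case, note that $E_0$ is the range of $\rho_0 = \rho_A$ and $E_1$ is the range of $\rho_1 = \rho_E$, so $E_0 = \rho_A(A) \subseteq C^*(\rho)$ and $E_1 = \rho_E(E) \subseteq C^*(\rho)$ directly from the definition of $C^*(\rho)$ as the C*-subalgebra of $M_\X \otimes C^*(\Z)$ generated by $\rho_A(A) \cup \rho_E(E)$.

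For the induction step, suppose $E_n \subseteq C^*(\rho)$ for some $n \geq 1$. Since $C^*(\rho)$ is a C*-subalgebra, in particular it is closed under multiplication, so $E_1 E_n \subseteq C^*(\rho)$. Since $C^*(\rho)$ is also norm-closed and closed under finite sums, the closed linear span $\cspan(E_1 E_n)$ is contained in $C^*(\rho)$. By Proposition~\ref{5.2.4} (with $m=1$), this closed span equals $E_{n+1}$, so $E_{n+1} \subseteq C^*(\rho)$.

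There is no real obstacle here; this corollary is a direct packaging of Proposition~\ref{5.2.4} together with the observation that the generators $E_0$ and $E_1$ lie in $C^*(\rho)$ by construction.
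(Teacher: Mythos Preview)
Your proof is correct and follows essentially the same induction argument as the paper's own proof. One minor slip: $C^*(\rho)$ is the C*-subalgebra of $M_\X$ (not $M_\X \otimes C^*(\Z)$) generated by $\rho_A(A)\cup\rho_E(E)$; it is $C^*(\hat\rho)$ that lives in $M_\X \otimes C^*(\Z)$.
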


\begin{proof}
  $E_0=\rho_A(A)$ and $E_1=\rho_E(E)$ are contained in $C^*(\rho)$.
  If $E_n$ is contained in $C^*(\rho)$, 
  then $E_{n+1}$ is also contained, 
  since $E_{n+1}=\cspan({E_1}{E_n})$ holds by Proposition~\ref{5.2.4}.
  Therefore, by induction, all of $E_n$ are contained in $C^*(\rho)$.
\end{proof}

\begin{proposition}\label{5.2.6}
  The inclusion $E_n^*E_n\subseteq E_0$ holds for all $n\in\N_0$.
\end{proposition}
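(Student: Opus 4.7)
The plan is to take arbitrary elements $\alpha,\beta\in E_n$, write them as $\alpha=\rho_n(f)$ and $\beta=\rho_n(g)$ for some $f,g\in C(X_n)$, compute $\alpha^*\beta$ explicitly on the basis $\{\e_y\}_{y\in\X}$, and exhibit the result as $\rho_0(a)$ for a specific continuous $a\in A=C(X)$.

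First I would derive a formula for $\rho_n(f)^*$ on the basis. Since $\inn{\e_x}{\rho_n(f)\e_y}=\sum_{\boldsymbol k\in I^n}\inn{\e_x}{\e_{\gamma_{\boldsymbol k}(y)}}f(\boldsymbol\gamma_{\boldsymbol k}(y))$, the conditions (I)--(III) (in particular, the consequences listed after them: the images $\gamma_{\boldsymbol k}(\X)$ for $\boldsymbol k\in I^n$ are pairwise disjoint and each $\gamma_{\boldsymbol k}|_\X$ is injective) guarantee that, for $x\in\X$, there is at most one pair $(y,\boldsymbol k)\in\X\times I^n$ with $x=\gamma_{\boldsymbol k}(y)$. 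Hence
\begin{equation*}
  \rho_n(f)^*\e_x
  =\begin{cases}
    \e_y\,\overline{f(\boldsymbol\gamma_{\boldsymbol k}(y))}&\text{if $x=\gamma_{\boldsymbol k}(y)$ for the unique $(y,\boldsymbol k)\in\X\times I^n$,}\\
    0&\text{otherwise.}
  \end{cases}
\end{equation*}

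Next I would apply this to $\rho_n(g)\e_y=\sum_{\boldsymbol k\in I^n}\e_{\gamma_{\boldsymbol k}(y)}g(\boldsymbol\gamma_{\boldsymbol k}(y))$, which gives
\begin{equation*}
  \rho_n(f)^*\rho_n(g)\e_y
  =\sum_{\boldsymbol k\in I^n}\rho_n(f)^*\e_{\gamma_{\boldsymbol k}(y)}\,g(\boldsymbol\gamma_{\boldsymbol k}(y))
  =\e_y\sum_{\boldsymbol k\in I^n}\overline{f(\boldsymbol\gamma_{\boldsymbol k}(y))}\,g(\boldsymbol\gamma_{\boldsymbol k}(y)).
\end{equation*}

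Finally I would define $a\colon X\to\C$ by $a(y):=\sum_{\boldsymbol k\in I^n}\overline{f(\boldsymbol\gamma_{\boldsymbol k}(y))}\,g(\boldsymbol\gamma_{\boldsymbol k}(y))$ for \emph{all} $y\in X$. Since each map $y\mapsto\boldsymbol\gamma_{\boldsymbol k}(y)$ is continuous from $X$ into $X_n$ (by continuity of the $\gamma_{k_i}$), and $f,g\in C(X_n)$, the function $a$ is continuous on $X$, hence $a\in A$. The computation above shows $\rho_A(a)\e_y=\rho_n(f)^*\rho_n(g)\e_y$ for every $y\in\X$, so $\rho_n(f)^*\rho_n(g)=\rho_0(a)\in E_0$. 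Since $\alpha,\beta\in E_n$ were arbitrary, this yields $E_n^*E_n\subseteq E_0$.

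The only subtle point is the uniqueness of the decomposition $x=\gamma_{\boldsymbol k}(y)$, which is precisely what the extended forms of (II) and (III) — explicitly spelled out just after the hypotheses in Section~5 — provide; once that is in hand, the rest is a direct computation.
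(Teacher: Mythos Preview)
Your proof is correct, and it takes a genuinely different route from the paper's. The paper argues by induction on $n$: the cases $n=0,1$ are handled directly (the latter via the inner-product identity $\rho_E(f)^*\rho_E(g)=\rho_A(\inn{f}{g})$), and the induction step uses the structural result $E_{1+n}=\casg{E_1 E_n}$ of Proposition~\ref{5.2.4} together with $E_1^*E_1\subseteq E_0$ to compress $E_{1+n}^*E_{1+n}$ down to $E_0$. Your argument instead computes $\rho_n(f)^*\rho_n(g)$ directly on the basis and identifies the result as $\rho_0(a)$ for the explicit function $a(y)=\sum_{\boldsymbol k\in I^n}\overline{f(\boldsymbol\gamma_{\boldsymbol k}(y))}\,g(\boldsymbol\gamma_{\boldsymbol k}(y))$, which is nothing but an $n$-fold version of the Kajiwara--Watatani inner product.

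What this buys you: your proof is more self-contained and, notably, does not invoke Proposition~\ref{5.2.4}, which in turn rests on Proposition~\ref{3.15} and hence on assumption~(IV) that the $\gamma_k$ are embeddings. Your computation needs only (I)--(III), so it establishes the inclusion in slightly greater generality. The paper's inductive proof, on the other hand, emphasizes the multiplicative filtration structure of the $E_n$ and avoids having to write out the adjoint formula explicitly.
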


\begin{proof}
  We prove the statement by induction on $n$.

  \textbf{The case of $n = 0$.} 
  Since $\rho_0 = \rho_A$ is a $*$-homomorphism, its range $E_0$ is closed under multiplication and involution.
  Hence, $E_0^*E_0\subseteq E_0$ holds.

  \textbf{The case of $n = 1$.} 
  For any $a, b\in C(X_1) = E$, we have 
  \begin{equation*}
    \rho_1(a)^*\rho_1(b)
    = \rho_E(a)^*\rho_E(b)
    = \rho_A(\inn a b)
    = \rho_0(\inn a b)
    \in E_0.
  \end{equation*}
  Thus, $E_1^*E_1\subseteq E_0$ follows.

  \textbf{Induction step.} 
  Let $n\in\N$.
  Assume that $E_n^*E_n\subseteq E_0$.
  By Proposition~\ref{5.2.4}, we have $E_{1+n} = \casg{E_1 E_n}$ and $E_0 E_n \subseteq E_n$.
  So we have
  \begin{gather*}
    E_{1+n}^*E_{1+n}
    = \Bigl(\casg{E_1 E_n}\Bigr)^*\cdot\casg{E_1 E_n}
    = \casg{E_n^* E_1^* E_1 E_n}\\
    \subseteq \casg{E_n^* E_0 E_n}
    \subseteq \casg{E_n^* E_n}
    \subseteq \casg{E_0}
    = E_0.
  \end{gather*}
  Thus, $E_{1+n}^*E_{1+n}\subseteq E_0$ follows, completing the induction.
\end{proof}

\subsection{Construction of $\rho_{m,n}$ and $E_{m,n}$}

We define $X_{m,n}$, $\rho_{m,n}$ and $E_{m,n}$ as follows for each $m,n\in\N_0$.
\begin{itemize}
  \item Let $X_{m,n}$ be the topological space consisting of all $\gamma$-bichains of length $(m,n)$. This is Hausdorff and compact.
  \item The map $\rho_{m,n}\colon C(X_{m,n})\to M_\X$ is defined as follows:
    for $f\in C(X_{m,n})$ and $y\in\X$, if $y\in\gamma^n(\X)$, 
    there exists a unique index sequence $\boldsymbol k$ of length $n$ and $x\in\X$ such that $y=\gamma_{\boldsymbol k}(x)$, and we define 
    \begin{equation*}
      \rho_{m,n}(f)\e_y
      :=\sum_{\boldsymbol j\in I^m}\e_{\gamma_{\boldsymbol j}(x)}
      f(\boldsymbol\gamma_{\boldsymbol j}(x), \boldsymbol\gamma_{\boldsymbol k}(x)).
    \end{equation*}
    If $y\not\in\gamma^n(\X)$, we define $\rho_{m,n}(f)\e_y = 0$.
  \item Let $E_{m,n}$ be the range of $\rho_{m,n}$.
\end{itemize}

\begin{fact}[\cite{I}, Lemma 5.15]\label{5.3.1}
  Let $m,n\in\N$ with $m\leq n$ and $\alpha$ be a complex $m\times n$ matrix.
  Then, the inequality
  \begin{equation*}
    (\text{the operator norm of $\alpha$})\leq n\times (\text{the max norm of $\alpha$})
  \end{equation*}
  holds.
  Here, the max norm of a matrix is the maximum of the absolute values of entries of the matrix.
\end{fact}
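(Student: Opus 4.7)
The plan is to establish the slightly sharper bound $\norm{\alpha}_{\mathrm{op}} \leq \sqrt{mn}\,\norm{\alpha}_{\max}$, from which the stated inequality follows at once by invoking the hypothesis $m \leq n$, which gives $\sqrt{mn}\leq n$.

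First I would fix an arbitrary vector $v\in\C^n$ and estimate each coordinate of $\alpha v$ by applying the Cauchy--Schwarz inequality to the row sum $(\alpha v)_i = \sum_{j=1}^n \alpha_{ij} v_j$, combined with the trivial entry-wise bound $\abs{\alpha_{ij}}\leq\norm{\alpha}_{\max}$. This produces the row-wise estimate $\abs{(\alpha v)_i}^2 \leq n\,\norm{\alpha}_{\max}^2\,\norm{v}^2$.

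Next I would sum the $m$ resulting squared bounds to control $\norm{\alpha v}^2$, obtaining $\norm{\alpha v}\leq\sqrt{mn}\,\norm{\alpha}_{\max}\,\norm{v}$, and take the supremum over unit vectors $v$ to conclude. Equivalently, the same estimate drops out in one line from the standard Frobenius comparison $\norm{\alpha}_{\mathrm{op}}\leq\norm{\alpha}_F$ together with the entry-wise bound $\norm{\alpha}_F^2=\sum_{i,j}\abs{\alpha_{ij}}^2\leq mn\,\norm{\alpha}_{\max}^2$.

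There is no genuine obstacle here: the estimate is a routine application of Cauchy--Schwarz, and the factor $n$ appearing in the statement is a convenient but non-sharp weakening of the bound $\sqrt{mn}$ that either approach actually delivers. The only thing to keep track of is the bookkeeping of exponents in the Cauchy--Schwarz step and the role of the hypothesis $m\leq n$ in converting $\sqrt{mn}$ to $n$.
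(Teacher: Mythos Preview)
Your argument is correct: the Cauchy--Schwarz row estimate (equivalently, the Frobenius-norm comparison) yields $\norm{\alpha}_{\mathrm{op}}\leq\sqrt{mn}\,\norm{\alpha}_{\max}\leq n\,\norm{\alpha}_{\max}$ exactly as you describe. Note, however, that the paper does not supply its own proof of this statement; it is recorded as a Fact cited from \cite{I}, Lemma~5.15, so there is no in-paper argument to compare against.
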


\begin{lemma}\label{5.3.2}
  For $m,n\in\N_{0}$, the map $\rho_{m,n}$ is bi-Lipschitz as a linear operator from $C(X_{m,n})$ with supremum norm into $M_\X$ with the operator norm.
\end{lemma}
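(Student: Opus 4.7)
The proof runs in parallel to Lemma~\ref{5.2.1}: I would establish the two Lipschitz bounds separately, using the formula defining $\rho_{m,n}(f)$ together with the parametrization of $\X\cap\gamma^n(\X)$ by pairs $(x,\boldsymbol k)\in\X\times I^n$ afforded by conditions (I)--(III).

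For the upper (Lipschitz) bound, I would take arbitrary $\boldsymbol v,\boldsymbol w\in\H_\X$ and expand $\inn{\boldsymbol v}{\rho_{m,n}(f)\boldsymbol w}$ as a triple sum indexed by $(\boldsymbol k,x,\boldsymbol j)\in I^n\times\X\times I^m$, replacing each summation over $y\in\X\cap\gamma^n(\X)$ by the equivalent sum over $(x,\boldsymbol k)$ with $y=\gamma_{\boldsymbol k}(x)$. Applying Cauchy--Schwarz exactly as in the proof of Lemma~\ref{5.2.1} reduces matters to bounding inner sums of the form $\sum_{x\in\X}\abs{v_{\gamma_{\boldsymbol j}(x)}}^2$, which by the injectivity of $\gamma_{\boldsymbol j}|_\X$ (a consequence of condition (III), extended to all index sequences by the first bullet after conditions (I)--(III) of Section~5) is at most $\norm{\boldsymbol v}^2$; the factor $f$ is absorbed by the trivial estimate $\abs{f}\leq\sup\abs{f}$. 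Summing over the $N^m$ choices of $\boldsymbol j$ and the $N^n$ choices of $\boldsymbol k$ then yields $\norm{\rho_{m,n}(f)}\leq C_{m,n}\sup\abs{f}$ with a constant depending only on $m$, $n$, and $N$.

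For the lower bound, I would fix any $x\in\X$, $\boldsymbol j\in I^m$, $\boldsymbol k\in I^n$ and observe that, because the sets $\gamma_{\boldsymbol j'}(\X)$ for distinct $\boldsymbol j'\in I^m$ are pairwise disjoint (recalled at the opening of Section~5), the expression
\begin{equation*}
  \rho_{m,n}(f)\e_{\gamma_{\boldsymbol k}(x)}
  =\sum_{\boldsymbol j'\in I^m}\e_{\gamma_{\boldsymbol j'}(x)}f(\boldsymbol\gamma_{\boldsymbol j'}(x),\boldsymbol\gamma_{\boldsymbol k}(x))
\end{equation*}
is an orthogonal expansion whose $\gamma_{\boldsymbol j}(x)$-component equals $f(\boldsymbol\gamma_{\boldsymbol j}(x),\boldsymbol\gamma_{\boldsymbol k}(x))$. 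Chaining with $\norm{\e_{\gamma_{\boldsymbol k}(x)}}=1$ gives $\norm{\rho_{m,n}(f)}\geq\abs{f(\boldsymbol\gamma_{\boldsymbol j}(x),\boldsymbol\gamma_{\boldsymbol k}(x))}$.

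The remaining step, and the main technical point, is to show that $\set{(\boldsymbol\gamma_{\boldsymbol j}(x),\boldsymbol\gamma_{\boldsymbol k}(x))}{x\in\X,\,\boldsymbol j\in I^m,\,\boldsymbol k\in I^n}$ is dense in $X_{m,n}$, which will upgrade the pointwise estimate to $\norm{\rho_{m,n}(f)}\geq\sup\abs{f}$. Every $\gamma$-bichain $(\boldsymbol x,\boldsymbol y)$ of length $(m,n)$ arises by choosing a common base point $x_0=x_0(\boldsymbol x,\boldsymbol y)\in X$ and index sequences $\boldsymbol j,\boldsymbol k$ so that $\boldsymbol x=\boldsymbol\gamma_{\boldsymbol j}(x_0)$ and $\boldsymbol y=\boldsymbol\gamma_{\boldsymbol k}(x_0)$; hence $X_{m,n}$ is the finite union, over pairs $(\boldsymbol j,\boldsymbol k)$, of continuous images of $X$ under the maps $x_0\mapsto(\boldsymbol\gamma_{\boldsymbol j}(x_0),\boldsymbol\gamma_{\boldsymbol k}(x_0))$. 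Continuity of these maps together with density of $\X$ in $X$ transfers density to $X_{m,n}$, completing the proof.
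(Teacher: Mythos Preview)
Your argument is correct and the lower-bound half (including the density step) matches the paper essentially verbatim. The upper bound, however, is organized differently: you run the Cauchy--Schwarz computation of Lemma~\ref{5.2.1} directly on the triple sum over $(x,\boldsymbol j,\boldsymbol k)$, whereas the paper repackages the sum by defining, for each $x\in\X$, vectors $\boldsymbol v_x\in\C^{N^m}$, $\boldsymbol w_x\in\C^{N^n}$ and an $N^m\times N^n$ matrix $\alpha_x$ with entries $f(\boldsymbol\gamma_{\boldsymbol j}(x),\boldsymbol\gamma_{\boldsymbol k}(x))$, so that $\inn{\boldsymbol v}{\rho_{m,n}(f)\boldsymbol w}=\sum_{x\in\X}\inn{\boldsymbol v_x}{\alpha_x\boldsymbol w_x}$, and then invokes Fact~\ref{5.3.1} to bound $\norm{\alpha_x}\leq N^{\max(m,n)}\sup\abs f$. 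The paper's route yields the sharper explicit constant $N^{\max(m,n)}$ rather than something of order $N^{(m+n)/2}$, but since only bi-Lipschitz continuity is needed here, your more elementary approach is perfectly adequate and avoids the auxiliary Fact~\ref{5.3.1}.
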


\begin{proof}
  \textbf{Step 1: Lipschitz property.}
  Take any $f\in C(X_{m,n})$ and $\boldsymbol v,\boldsymbol w\in\H_\X$.
  For each $x\in\X$, define vectors $\boldsymbol v_x\in\C^{N^m}$ and $\boldsymbol w_x\in\C^{N^n}$ as follows:
  \begin{equation*}
    \boldsymbol v_x
    :=\sum_{\boldsymbol j\in I^m}\e_{\boldsymbol j}\inn{\e_{\gamma_{\boldsymbol j}(x)}}{\boldsymbol v},\quad
    \boldsymbol w_x
    :=\sum_{\boldsymbol k\in I^n}\e_{\boldsymbol k}\inn{\e_{\gamma_{\boldsymbol k}(x)}}{\boldsymbol w}.
  \end{equation*}
  Let $\alpha_x\colon\C^{N^n}\to\C^{N^m}$ be the $N^m\times N^n$ matrix defined by 
  \begin{equation*}
    \inn{\e_{\boldsymbol j}}{\alpha_x\e_{\boldsymbol k}}
    :=f(\boldsymbol\gamma_{\boldsymbol j}(x), \boldsymbol\gamma_{\boldsymbol k}(x)).
  \end{equation*}
  By Fact~\ref{5.3.1}, we have $\norm{\alpha_x}\leq N^{\max(m,n)}\sup\abs{f}$.
  So we have
  \begin{align*}
    \inn{\boldsymbol v}{\rho_{m,n}(f)\boldsymbol w}
    &=\inn{\boldsymbol v}{\rho_{m,n}(f)\sum_{y\in\X}\e_y\inn{\e_y}{\boldsymbol w}}\\
    &=\sum_{y\in\X}\inn{\boldsymbol v}{\rho_{m,n}(f)\e_y}\inn{\e_y}{\boldsymbol w}\\
    &=\sum_{\boldsymbol k\in I^n}\sum_{x\in\X}\inn{\boldsymbol v}{\rho_{m,n}(f)\e_{\gamma_{\boldsymbol k}(x)}}\inn{\e_{\gamma_{\boldsymbol k}(x)}}{\boldsymbol w}\\
    &=\sum_{\boldsymbol k\in I^n}\sum_{x\in\X}\sum_{\boldsymbol j\in I^m}\inn{\boldsymbol v}{\e_{\gamma_{\boldsymbol j}(x)}}f(\boldsymbol\gamma_{\boldsymbol j}(x), \boldsymbol\gamma_{\boldsymbol k}(x))\inn{\e_{\gamma_{\boldsymbol k}(x)}}{\boldsymbol w}\\
    &=\sum_{x\in\X}\inn{\boldsymbol v_x}{\alpha_x\boldsymbol w_x};
  \end{align*}
  and hence
  \begin{align*}
    \abs{\inn{\boldsymbol v}{\rho_{m,n}(f)\boldsymbol w}}
    &\leq\sum_{x\in\X}\abs{\inn{\boldsymbol v_x}{\alpha_x\boldsymbol w_x}}\\
    &\leq\sum_{x\in\X}\norm{\boldsymbol v_x}\norm{\alpha_x}\norm{\boldsymbol w_x}\\
    &\leq\sum_{x\in\X}\norm{\boldsymbol v_x}\cdot N^{\max(m,n)}\sup\abs{f}\cdot\norm{\boldsymbol w_x}\\
    &=N^{\max(m,n)}\sup\abs{f}\sum_{x\in\X}\norm{\boldsymbol v_x}\norm{\boldsymbol w_x}\\
    &\leq N^{\max(m,n)}\sup\abs{f}\biggl(\sum_{x\in\X}\norm{\boldsymbol v_x}^2\biggr)^{1/2}\biggl(\sum_{x\in\X}\norm{\boldsymbol w_x}^2\biggr)^{1/2}\\
    &\leq N^{\max(m,n)}\sup\abs{f}\norm{\boldsymbol v}\norm{\boldsymbol w}.
  \end{align*}
  Therefore, 
  \begin{equation*}
    \norm{\rho_{m,n}(f)}\leq N^{\max(m,n)}\sup\abs{f}
  \end{equation*}
  holds for all $f\in C(X_{m,n})$.
  Thus, we have shown the Lipschitz property for $\rho_{m,n}$.

  \textbf{Step 2: reverse Lipschitz property.}
  Take any $f\in C(X_{m,n})$.
  For any $x\in\X$ and $\boldsymbol j\in I^m$, $\boldsymbol k\in I^n$, we have
  \begin{gather*}
    \norm{\rho_{m,n}(f)}
    =\norm{\rho_{m,n}(f)}\norm{\e_{\gamma_{\boldsymbol k}(x)}}
    \geq\norm{\rho_{m,n}(f)\e_{\gamma_{\boldsymbol k}(x)}}\\
    =\sum_{\boldsymbol l\in I^m}\norm{\e_{\gamma_{\boldsymbol l}(x)}}\abs{f(\boldsymbol\gamma_{\boldsymbol l}(x), \boldsymbol\gamma_{\boldsymbol k}(x))}
    \geq\abs{f(\boldsymbol\gamma_{\boldsymbol j}(x), \boldsymbol\gamma_{\boldsymbol k}(x))}.
  \end{gather*}
  Hence, letting 
  \begin{equation*}
    \X_{m,n}
    :=\set
    {(\boldsymbol\gamma_{\boldsymbol j}(x), \boldsymbol\gamma_{\boldsymbol k}(x))}
    {x\in\X,\ \boldsymbol j\in I^m,\ \boldsymbol k\in I^n},
  \end{equation*}
  we have the inequality 
  \begin{equation*}
    \norm{\rho_{m,n}(f)}
    \geq\sup_{\boldsymbol z\in\X_{m,n}}\abs{f(\boldsymbol z)}.
  \end{equation*}
  We note that 
  \begin{equation*}
    X_{m,n}
    =\set
    {(\boldsymbol\gamma_{\boldsymbol j}(x), \boldsymbol\gamma_{\boldsymbol k}(x))}
    {x\in X,\ \boldsymbol j\in I^m,\ \boldsymbol k\in I^n}.
  \end{equation*}
  Since the maps $\gamma_1,\ldots,\gamma_N$ are embeddings and $\X$ is dense in $X$, the set $\X_{m,n}$ is dense in $X_{m,n}$.
  So we conclude $\norm{\rho_{m,n}(f)}\geq\sup\abs{f}$.
  Thus, we have shown the reverse Lipschitz property for $\rho_{m,n}$.
\end{proof}

\begin{corollary}\label{5.3.3}
  The sets $E_{m,n}$ are norm-closed linear subspaces of $M_\X$.
\end{corollary}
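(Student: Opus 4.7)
The plan is to mirror the short argument used for Corollary~\ref{5.2.2}. Fix $m,n\in\N_0$. By construction, $E_{m,n}$ is the range of the map $\rho_{m,n}\colon C(X_{m,n})\to M_\X$, so the task reduces to checking two things: that $\rho_{m,n}$ is complex-linear, and that its range is norm-closed in $M_\X$.

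For the first point I would simply observe that $\rho_{m,n}$ is defined by a formula that is clearly linear in $f$ on each basis vector $\e_y$ (both in the case $y\in\gamma^n(\X)$, where the value is a finite linear combination of basis vectors with coefficients linear in $f$, and trivially in the case $y\notin\gamma^n(\X)$, where the value is~$0$). Hence $\rho_{m,n}$ is a complex-linear operator and $E_{m,n}$ is a complex-linear subspace of $M_\X$.

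For the second point, I would invoke Lemma~\ref{5.3.2}, which states that $\rho_{m,n}$ is bi-Lipschitz as an operator from $C(X_{m,n})$ with the supremum norm to $M_\X$ with the operator norm. Since $X_{m,n}$ is compact Hausdorff, $C(X_{m,n})$ with the supremum norm is a Banach space; and the image of a complete normed space under a bi-Lipschitz linear map is norm-complete, hence norm-closed in the target. Therefore $E_{m,n}$ is norm-closed in $M_\X$.

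I do not anticipate any real obstacle: the argument is a direct transcription of the proof of Corollary~\ref{5.2.2}, with $\rho_n$ replaced by $\rho_{m,n}$ and the appeal to Lemma~\ref{5.2.1} replaced by an appeal to Lemma~\ref{5.3.2}. All the substantive content is already packaged into Lemma~\ref{5.3.2}, so this corollary is essentially a formal consequence.
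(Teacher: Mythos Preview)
Your proposal is correct and matches the paper's own proof essentially line for line: the paper also fixes $m,n$, notes that $E_{m,n}$ is the range of the complex-linear map $\rho_{m,n}$ and hence a linear subspace, and then uses the bi-Lipschitz property from Lemma~\ref{5.3.2} together with completeness of $C(X_{m,n})$ to conclude norm-closedness. Your observation that this is a direct transcription of the proof of Corollary~\ref{5.2.2} is exactly right.
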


\begin{proof}
  Fix $m,n\in\N_0$.
  Since the set $E_{m,n}$ is defined as the range of the complex-linear map $\rho_{m,n}$, it is a complex-linear subspace of $M_\X$.
  Moreover, since the domain $C(X_{m,n})$ of $\rho_{m,n}$ is norm-complete and $\rho_{m,n}$ is bi-Lipschitz (Lemma~\ref{5.3.2}), the range $E_{m,n}$ of $\rho_{m,n}$ is norm-closed.
\end{proof}

\begin{lemma}\label{5.3.4}
  The equation 
  \begin{equation*}
    \rho_m(f)\rho_n(g)^*=\rho_{m,n}(f\boxtimes g)
  \end{equation*}
  holds for all $m,n\in\N_0$, $f\in C(X_m)$ and $g\in C(X_n)$.
\end{lemma}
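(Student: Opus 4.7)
The plan is to verify the operator identity $\rho_m(f)\rho_n(g)^* = \rho_{m,n}(f\boxtimes g)$ by comparing the action of both sides on each standard basis vector $\e_y$ with $y\in\X$, and then invoking the density of $\{\e_y\}_{y\in\X}$ in $\H_\X$.

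The first step I would take is to compute $\rho_n(g)^*\e_y$ explicitly. From the identity $\inn{\e_{x'}}{\rho_n(g)^*\e_y} = \overline{\inn{\e_y}{\rho_n(g)\e_{x'}}}$ and the defining formula for $\rho_n(g)\e_{x'}$, the right-hand side is a sum whose $\boldsymbol l$-th term is $\inn{\e_y}{\e_{\gamma_{\boldsymbol l}(x')}}\,\overline{g(\boldsymbol\gamma_{\boldsymbol l}(x'))}$. Here the crucial input is the \emph{uniqueness of representation} $y = \gamma_{\boldsymbol l}(x')$ with $x'\in\X$: condition (II) of the standing setup (together with its iterated consequence that $\gamma_{\boldsymbol k}(\X)$ and $\gamma_{\boldsymbol k'}(\X)$ are disjoint for distinct $\boldsymbol k,\boldsymbol k'\in I^n$, already listed as a consequence in the standing bullet points) forces uniqueness of $\boldsymbol l$, and condition (III) (injectivity of each $\gamma_{\boldsymbol l}|_\X$) then forces uniqueness of $x'$. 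Consequently, if $y\in\gamma^n(\X)$ with unique decomposition $y=\gamma_{\boldsymbol k}(x)$, we obtain $\rho_n(g)^*\e_y = \e_x\,\overline{g(\boldsymbol\gamma_{\boldsymbol k}(x))}$, while if $y\notin\gamma^n(\X)$, we obtain $\rho_n(g)^*\e_y = 0$.

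The second step is to apply $\rho_m(f)$ and compare with the right-hand side. In the case $y\in\gamma^n(\X)$, linearity and the definition of $\rho_m$ yield
\begin{equation*}
  \rho_m(f)\rho_n(g)^*\e_y
  =\sum_{\boldsymbol j\in I^m}\e_{\gamma_{\boldsymbol j}(x)}\,f(\boldsymbol\gamma_{\boldsymbol j}(x))\,\overline{g(\boldsymbol\gamma_{\boldsymbol k}(x))},
\end{equation*}
which coincides with $\rho_{m,n}(f\boxtimes g)\e_y$ by the defining formula for $\rho_{m,n}$ and the identity $(f\boxtimes g)(\boldsymbol\gamma_{\boldsymbol j}(x),\boldsymbol\gamma_{\boldsymbol k}(x)) = f(\boldsymbol\gamma_{\boldsymbol j}(x))\,\overline{g(\boldsymbol\gamma_{\boldsymbol k}(x))}$. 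In the case $y\notin\gamma^n(\X)$, both sides vanish: the right-hand side by definition of $\rho_{m,n}$, and the left-hand side because $\rho_n(g)^*\e_y = 0$ already.

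I do not anticipate any substantial obstacle; the argument is a direct unwinding of definitions. The only ingredient beyond bookkeeping is the uniqueness of the factorization $y = \gamma_{\boldsymbol k}(x)$ on $\gamma^n(\X)$, which is furnished by the standing assumptions (II) and (III) and has already been exploited in the preceding subsection (notably in the proof of Proposition~\ref{5.3}).
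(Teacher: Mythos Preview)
Your proof is correct and follows essentially the same approach as the paper: both verify the identity by direct computation on basis vectors, using the uniqueness of the decomposition $y=\gamma_{\boldsymbol k}(x)$ guaranteed by assumptions (II) and (III). The only cosmetic difference is that the paper computes the matrix entries $\inn{\e_x}{\rho_m(f)\rho_n(g)^*\e_y}$ via a resolution of the identity over $z\in\X$, whereas you first compute the vector $\rho_n(g)^*\e_y$ and then apply $\rho_m(f)$; the underlying calculation is the same. (Minor wording: you want $\{\e_y\}_{y\in\X}$ to be an orthonormal basis, not merely ``dense'', to conclude operator equality.)
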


\begin{proof}
  Take any points $x,y\in\X$.
  We have
  \begin{align*}
    &\inn{\e_x}{\rho_m(f)\rho_n(g)^*\e_y}\\
    &=\inn{\e_x}{\rho_m(f)\sum_{z\in\X}\e_z\inn{\e_z}{\rho_n(g)^*\e_y}}\\
    &=\sum_{z\in\X}\inn{\e_x}{\rho_m(f)\e_z}\overline{\inn{\e_y}{\rho_n(g)\e_z}}\\
    &=\sum_{z\in\X}
    \inn{\e_x}{\sum_{\boldsymbol j\in I^m}\e_{\gamma_{\boldsymbol j}(z)}f(\boldsymbol\gamma_{\boldsymbol j}(z))}
    \overline{\inn{\e_y}{\sum_{\boldsymbol k\in I^n}\e_{\gamma_{\boldsymbol k}(z)}g(\boldsymbol\gamma_{\boldsymbol k}(z))}}\\
    &=\sum_{z\in\X}\sum_{\boldsymbol j\in I^m}\sum_{\boldsymbol k\in I^n}
    \inn{\e_x}{\e_{\gamma_{\boldsymbol j}(z)}}
    f(\boldsymbol\gamma_{\boldsymbol j}(z))
    \overline{g(\boldsymbol\gamma_{\boldsymbol k}(z))}
    \inn{\e_{\gamma_{\boldsymbol k}(z)}}{\e_y}.
  \end{align*}
  If there exist $\boldsymbol j\in I^m$, $\boldsymbol k\in I^n$ and $z\in\X$ satisfying $x=\gamma_{\boldsymbol j}(z)$ and $y=\gamma_{\boldsymbol k}(z)$, the indices $\boldsymbol j$ and $\boldsymbol k$ are uniquely determined and we have 
  \begin{equation*}
    \inn{\e_x}{\rho_m(f)\rho_n(g)^*\e_y}=f(\boldsymbol\gamma_{\boldsymbol j}(z))\overline{g(\boldsymbol\gamma_{\boldsymbol k}(z))};
  \end{equation*}
  and otherwise we have $\inn{\e_x}{\rho_m(f)\rho_n(g)^*\e_y}=0$.
  Therefore, we have 
  \begin{equation*}
    \inn{\e_x}{\rho_m(f)\rho_n(g)^*\e_y}
    =\inn{\e_x}{\rho_{m,n}(f\boxtimes g)\e_y}.
  \end{equation*}
  Since this equality holds for the orthonormal system $\{\e_x\}_{x\in\X}$ of $\H_\X$, it follows that
  \begin{equation*}
    \rho_m(f)\rho_n(g)^*=\rho_{m,n}(f\boxtimes g).\qedhere
  \end{equation*}
\end{proof}

\begin{proposition}\label{5.3.5}
  Let $m,n\in\N_0$.
  The norm-closed additive subgroup of $M_\X$ generated by $E_m E_n^*$ coincides with $E_{m,n}$.
\end{proposition}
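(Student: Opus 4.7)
The plan is to adapt, nearly verbatim, the argument used for Proposition~\ref{5.2.4}: replace the bilinear form $\boxdot$ by its bichain analogue $\boxtimes$, replace Lemma~\ref{5.2.3} by Lemma~\ref{5.3.4}, and replace Proposition~\ref{3.15} by Proposition~\ref{3.21}.

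First I would invoke Lemma~\ref{5.3.4} to identify the set-level product: as $f$ ranges over $C(X_m)$ and $g$ over $C(X_n)$, the products $\rho_m(f)\rho_n(g)^*$ trace out exactly $\rho_{m,n}(C(X_m) \boxtimes C(X_n))$, so $E_m E_n^* = \rho_{m,n}(C(X_m) \boxtimes C(X_n))$.

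Next, since $\rho_{m,n}$ is complex-linear by construction and bi-Lipschitz by Lemma~\ref{5.3.2}, it maps additive subgroups to additive subgroups and preserves norm-closures. Passing to closed additive subgroups on both sides then yields $\casg{E_m E_n^*} = \rho_{m,n}(\casg{C(X_m) \boxtimes C(X_n)})$.

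Finally, I apply Proposition~\ref{3.21}, which gives $\cspan(C(X_m) \boxtimes C(X_n)) = C(X_{m,n})$. Since $c(f \boxtimes g) = (cf) \boxtimes g$ for every $c \in \C$, the generating set $C(X_m) \boxtimes C(X_n)$ is stable under complex scalar multiplication, so its closed additive subgroup already coincides with its closed linear span, namely $C(X_{m,n})$. Substituting gives $\casg{E_m E_n^*} = \rho_{m,n}(C(X_{m,n})) = E_{m,n}$, which is the desired conclusion. As every ingredient is in place, I do not expect any real obstacle; the only subtle point is that Proposition~\ref{3.21} is stated with closed linear spans rather than closed additive subgroups, and the scalar-stability remark above bridges the two formulations.
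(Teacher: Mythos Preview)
Your proposal is correct and follows exactly the paper's own argument: Lemma~\ref{5.3.4} for the set-level identification, linearity and the bi-Lipschitz property (Lemma~\ref{5.3.2}) to pass to closed additive subgroups, and Proposition~\ref{3.21} to identify the result with $C(X_{m,n})$. Your explicit remark that scalar-stability of $C(X_m)\boxtimes C(X_n)$ reconciles the $\cspan$ in Proposition~\ref{3.21} with the $\casg$ needed here is a point the paper glosses over, so your write-up is in fact slightly more careful.
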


\begin{remark}
  By this proposition, the following equation is satisfied for all $m,n\in\N_0$:
  \begin{equation*}
    \cspan(E_m E_n^*)=E_{m,n}.
  \end{equation*}
\end{remark}

\begin{proof}
  By Lemma~\ref{5.3.4}, we have 
  \begin{equation*}
    E_m E_n^*
    =\rho_{m,n}(C(X_m)\boxtimes C(X_n)).
  \end{equation*}
  Since the linearity of $\rho_{m,n}$ implies preservation of additive subgroups, we have 
  \begin{equation*}
    \asg{E_m E_n^*}
    =\rho_{m,n}(\asg{C(X_m)\boxtimes C(X_n)}).
  \end{equation*}
  Furthermore, as the map $\rho_{m,n}$ is bi-Lipschitz (Lemma~\ref{5.3.2}), it also preserves norm-closures:
  \begin{equation*}
    \casg{E_m E_n^*}
    =\rho_{m,n}(\casg{C(X_m)\boxtimes C(X_n)}).
  \end{equation*}
  By Proposition~\ref{3.21}, we know that $\casg{C(X_m)\boxtimes C(X_n)}=C(X_{m,n})$.
  Therefore, 
  \begin{equation*}
    \casg{E_m E_n^*}
    =E_{m,n}.
  \end{equation*}
  Hence, we conclude that the norm-closed additive subgroup generated by $E_m E_n^*$ coincides with $E_{m,n}$.
\end{proof}

\begin{corollary}\label{5.3.6}
  All of the $E_{m,n}$ are contained in $C^*(\rho)$.
\end{corollary}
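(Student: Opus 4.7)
The plan is to deduce this directly from Proposition~\ref{5.3.5} and Corollary~\ref{5.2.5}, without any induction or further analysis. By Corollary~\ref{5.2.5}, both $E_m$ and $E_n$ lie inside $C^*(\rho)$. Since $C^*(\rho)$ is a C*-subalgebra of $M_\X$, it is closed under adjoints and multiplication, so $E_n^*\subseteq C^*(\rho)$ and consequently $E_m E_n^*\subseteq C^*(\rho)$.

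Next, because $C^*(\rho)$ is a norm-closed linear subspace of $M_\X$, it also contains the closed linear span $\cspan(E_m E_n^*)$. By Proposition~\ref{5.3.5} this closed linear span equals $E_{m,n}$, which therefore lies in $C^*(\rho)$.

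There is no real obstacle here; the work has already been done in Proposition~\ref{5.3.5}. This corollary simply packages the identification $E_{m,n}=\cspan(E_m E_n^*)$ together with the fact that $C^*(\rho)$ is a norm-closed $*$-algebra containing every $E_n$.
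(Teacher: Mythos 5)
Your proposal is correct and follows exactly the paper's own argument: apply Corollary~\ref{5.2.5} to place $E_m$ and $E_n$ (hence $E_m E_n^*$) inside the norm-closed $*$-algebra $C^*(\rho)$, then invoke the identification $\casg{E_m E_n^*}=E_{m,n}$ from Proposition~\ref{5.3.5}. Nothing is missing.
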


\begin{proof}
  Let $m,n\in\N_0$.
  By Corollary~\ref{5.2.5}, both $E_m$ and $E_n$ are contained in $C^*(\rho)$.
  By Proposition~\ref{5.3.5}, we have the equation $\casg{E_m E_n^*}=E_{m,n}$, which implies that $E_{m,n}$ is also contained in $C^*(\rho)$.
\end{proof}

\begin{proposition}\label{5.3.7}
  Let $m,n,p,q\in\N_0$.
  \begin{itemize}
    \item If $n\geq p$, then $E_{m,n}E_{p,q}\subseteq E_{m,n-p+q}$.
    \item If $n\leq p$, then $E_{m,n}E_{p,q}\subseteq E_{m-n+p,q}$.
  \end{itemize}
\end{proposition}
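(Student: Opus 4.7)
The plan is to expand both $E_{m,n}$ and $E_{p,q}$ via Proposition~\ref{5.3.5} as $\cspan(E_m E_n^*)$ and $\cspan(E_p E_q^*)$, reducing the claim to controlling the product $E_m E_n^* E_p E_q^*$. By continuity of multiplication, this product is contained in $\cspan(E_m E_n^* E_p E_q^*)$, so the whole question reduces to simplifying the middle factor $E_n^* E_p$ and then recombining on the left with $E_m$ and on the right with $E_q^*$.

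The central step is the simplification of $E_n^* E_p$, which takes one of two shapes depending on the ordering of $n$ and $p$. In the case $n \geq p$, the idea is to invoke Proposition~\ref{5.2.4} to factor $E_n = \cspan(E_p E_{n-p})$, take adjoints to obtain $E_n^* = \cspan(E_{n-p}^* E_p^*)$, and then use $E_p^* E_p \subseteq E_0$ (Proposition~\ref{5.2.6}) together with the absorption $E_{n-p}^* E_0 \subseteq E_{n-p}^*$ (obtained by taking adjoints of $E_0 E_{n-p} \subseteq E_{n-p}$, which is itself the $m=0$ case of Proposition~\ref{5.2.4}, using that $E_0 = \rho_A(A)$ is self-adjoint). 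This contracts the middle down to $E_{n-p}^*$. In the opposite case $n \leq p$, the symmetric move is to factor $E_p = \cspan(E_n E_{p-n})$ and apply $E_n^* E_n \subseteq E_0$ to contract the middle down to $E_{p-n}$.

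Having done that, the recombination step is immediate from Proposition~\ref{5.2.4} again: in the first case, $E_m E_{n-p}^* E_q^* = E_m (E_q E_{n-p})^*$ sits inside $\cspan(E_m E_{n-p+q}^*) = E_{m,\,n-p+q}$; in the second case, $E_m E_{p-n} E_q^*$ sits inside $\cspan(E_{m-n+p} E_q^*) = E_{m-n+p,\,q}$. There is no substantial obstacle in this argument — every step is a mechanical application of Propositions~\ref{5.2.4}, \ref{5.2.6}, and \ref{5.3.5}. The only point requiring mild care is keeping track of the indices and choosing, at each occurrence, the factorization of $E_{\max(n,p)}$ that places the cancellable pair $E_s^* E_s$ adjacent, rather than the wrong one which would leave a stray factor between them.
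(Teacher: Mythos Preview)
Your proposal is correct and follows essentially the same approach as the paper: reduce via Proposition~\ref{5.3.5} to controlling $E_m E_n^* E_p E_q^*$, simplify the middle factor $E_n^* E_p$ by factoring $E_{\max(n,p)}$ through Proposition~\ref{5.2.4} and applying Proposition~\ref{5.2.6}, then recombine. The paper carries out only the case $n\geq p$ explicitly and notes the other is analogous, exactly as you describe.
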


\begin{proof}
  We assume that $n\geq p$.
  By Proposition~\ref{5.2.6}, we have $E_p^* E_p\subseteq E_0 = E_0^*$.
  Hence, by Proposition~\ref{5.2.4} and Corollary~\ref{5.2.2}, we obtain 
  \begin{gather*}
    E_n^* E_p 
    = \Bigl(\casg{E_p E_{n-p}}\Bigr)^*\cdot E_p 
    = \casg{E_{n-p}^* E_p^*}\cdot E_p
    \subseteq \casg{E_{n-p}^* E_p^* E_p}\\
    \subseteq \casg{E_{n-p}^* E_0^*}
    = \Bigl(\casg{E_0 E_{n-p}}\Bigr)^*
    \subseteq \Bigl(\casg{E_{n-p}}\Bigr)^*
    = E_{n-p}^*.
  \end{gather*}
  Therefore, we have 
  \begin{equation*}
    \asg{E_m E_n^*}\asg{E_p E_q^*}
    \subseteq\asg{E_m E_n^* E_p E_q^*}
    \subseteq\asg{E_m E_{n-p}^* E_q^*}
    \subseteq\asg{E_m E_{n-p+q}^*}.
  \end{equation*}
  By Proposition~\ref{5.3.5}, it follows that 
  \begin{equation*}
    E_{m,n}E_{p,q}
    =\casg{E_m E_n^*}\cdot\casg{E_p E_q^*}
    \subseteq\casg{E_m E_{n-p+q}^*}
    =E_{m,n-p+q}.
  \end{equation*}
  The case $n\leq p$ follows by a similar argument.
\end{proof}

\begin{proposition}\label{5.3.8}
  The equation $E_{m,n}^*=E_{n,m}$ holds for all $m,n\in\N_0$.
\end{proposition}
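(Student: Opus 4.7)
The plan is to reduce the claim to a direct application of Proposition~\ref{5.3.5}, since that proposition gives $E_{m,n}$ and $E_{n,m}$ concrete descriptions that are already related to each other by the involution.

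First, I would recall from Proposition~\ref{5.3.5} that
\begin{equation*}
  E_{m,n}=\cspan(E_m E_n^*)
  \quad\text{and}\quad
  E_{n,m}=\cspan(E_n E_m^*).
\end{equation*}
So the task reduces to showing $(\cspan(E_m E_n^*))^* = \cspan(E_n E_m^*)$.

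Next, I would observe the set-theoretic identity
\begin{equation*}
  (E_m E_n^*)^* = E_n E_m^*,
\end{equation*}
which follows directly from $(fg^*)^* = g f^{**} = g f^*$ for $f\in E_m$ and $g\in E_n$, together with the fact that $E_m$ and $E_n$ are already closed under taking their own adjoints only in the trivial direction (we are just reversing the order and swapping $*$). Then, since the involution on any C*-algebra is a conjugate-linear isometric map, it sends complex-linear subspaces to complex-linear subspaces and commutes with norm-closure, giving
\begin{equation*}
  \bigl(\cspan(E_m E_n^*)\bigr)^* = \cspan\bigl((E_m E_n^*)^*\bigr) = \cspan(E_n E_m^*).
\end{equation*}
Chaining these equalities yields $E_{m,n}^* = E_{n,m}$.

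There is essentially no obstacle here; the only minor point to state carefully is that involution being conjugate-linear (rather than linear) still preserves complex-linear spans, because a complex-linear subspace is closed under multiplication by all complex scalars, in particular by conjugates. Given how cleanly Proposition~\ref{5.3.5} packages $E_{m,n}$ as the closed linear span of $E_m E_n^*$, the proof should take just a few lines.
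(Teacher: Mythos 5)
Your proof is correct and follows essentially the same route as the paper: apply Proposition~\ref{5.3.5} to write $E_{m,n}$ as the closed (additive/linear) span of $E_m E_n^*$, observe $(E_m E_n^*)^*=E_n E_m^*$, and use that the involution is an isometric conjugate-linear bijection, hence commutes with taking closed spans. The only blemish is the garbled parenthetical about $E_m$, $E_n$ being ``closed under adjoints in the trivial direction''—the identity $(fg^*)^*=gf^*$ needs no such hypothesis—but this does not affect the validity of the argument.
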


\begin{proof}
  By Proposition~\ref{5.3.5}, we have 
  \begin{equation*}
    E_{m,n}^*
    =\Bigl(\casg{E_m E_n^*}\Bigr)^*
    =\casg{E_n E_m^*}
    =E_{n,m}.
    \qedhere
  \end{equation*}
\end{proof}

\subsection{The subspace $B_k$}

For each $k\in\Z$, we define closed subspaces $B_k$ of $C^*(\rho)$ as follows:
\begin{equation*}
  B_k:=\casg{\bigcup_{\substack{m,n\in\N_0\\m-n=k}}E_{m,n}}.
\end{equation*}

\begin{remark}
  The range $\rho_A(A)=E_{0,0}$ is contained in $B_0$.
\end{remark}

\begin{proposition}\label{5.4.1}
  The inclusion ${B_k}{B_l}\subseteq B_{k+l}$ holds for all $k,l\in\Z$.
\end{proposition}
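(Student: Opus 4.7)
The plan is to reduce the inclusion to Proposition~\ref{5.3.7} by a density argument. First I would unwind the definition of $B_k$: it is the norm closure of $\bigcup_{m-n=k} E_{m,n}$ as an additive (equivalently, linear) subspace, since each $E_{m,n}$ is already a complex linear subspace by Corollary~\ref{5.3.3}. Hence any element of $B_k$ can be approximated in norm by finite sums of elements drawn from the various $E_{m,n}$ with $m-n=k$, and similarly for $B_l$. Since multiplication in $C^*(\rho)$ is jointly norm-continuous and $B_{k+l}$ is norm-closed, it suffices to prove the ``building-block'' inclusion
\begin{equation*}
  E_{m,n}\cdot E_{p,q}\subseteq B_{k+l}
  \quad\text{whenever}\quad m-n=k\text{ and }p-q=l.
\end{equation*}

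The building-block inclusion is then handled by a case split based on the comparison of $n$ and $p$, directly invoking Proposition~\ref{5.3.7}. If $n\geq p$, then $E_{m,n}E_{p,q}\subseteq E_{m,n-p+q}$, and
\begin{equation*}
  m-(n-p+q)=(m-n)+(p-q)=k+l,
\end{equation*}
so $E_{m,n-p+q}\subseteq B_{k+l}$. If $n\leq p$, then $E_{m,n}E_{p,q}\subseteq E_{m-n+p,q}$, and
\begin{equation*}
  (m-n+p)-q=(m-n)+(p-q)=k+l,
\end{equation*}
so again $E_{m-n+p,q}\subseteq B_{k+l}$. Either way the product sits in $B_{k+l}$.

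There is essentially no main obstacle here once Proposition~\ref{5.3.7} is available: the entire argument is bookkeeping with the index ``$m-n$''. The only mild care needed is the density/continuity step passing from the dense collection of finite sums back to arbitrary elements of $B_k$ and $B_l$, which uses submultiplicativity of the norm in $C^*(\rho)$ together with the fact that $B_{k+l}$ is norm-closed by construction.
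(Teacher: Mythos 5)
Your proposal is correct and follows essentially the same route as the paper: both reduce the claim to the building-block inclusion $E_{m,n}E_{p,q}\subseteq E_{s,t}$ with $s-t=k+l$ via the case split in Proposition~\ref{5.3.7}, and then pass to the generated additive subgroups and their norm closures. The only difference is that you spell out the density/continuity step explicitly, which the paper leaves implicit.
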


\begin{proof}
  By Proposition~\ref{5.3.7}, we have 
  \begin{equation*}
    E_{m,n}E_{p,q}
    \subseteq\bigcup_{\substack{s,t\in\N_0\\s-t=k+l}}E_{s,t}
  \end{equation*}
  for all $m,n,p,q\in\N_0$ with $m-n=k$ and $p-q=l$.
  Therefore, we obtain 
  \begin{equation*}
    \asg{\bigcup_{\substack{m,n\in\N_0\\m-n=k}}E_{m,n}}
    \asg{\bigcup_{\substack{p,q\in\N_0\\p-q=l}}E_{p,q}}
    \subseteq\asg{\bigcup_{\substack{s,t\in\N_0\\s-t=k+l}}E_{s,t}}
  \end{equation*}
  Hence, we conclude that 
  \begin{equation*}
    B_k B_l \subseteq B_{k+l}.
    \qedhere
  \end{equation*}
\end{proof}

\begin{proposition}\label{5.4.2}
  The equation $B_k^*=B_{-k}$ holds for all $k\in\Z$.
\end{proposition}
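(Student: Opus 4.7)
The plan is to reduce the identity $B_k^*=B_{-k}$ to Proposition~\ref{5.3.8}, which already gives $E_{m,n}^*=E_{n,m}$, and to the general fact that the involution on a C*-algebra is an isometric, antilinear, involutive bijection. The point is that taking adjoints therefore commutes with the formation of the norm-closed additive subgroup generated by a given set.

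More precisely, I would first observe that the involution $x\mapsto x^*$ on $C^*(\rho)$ is a real-linear isometry, so for any subset $S$ of $C^*(\rho)$ we have the equality $\casg{S}^{*}=\casg{S^{*}}$, where $S^{*}=\set{s^{*}}{s\in S}$. Applying this with
\begin{equation*}
  S=\bigcup_{\substack{m,n\in\N_{0}\\ m-n=k}}E_{m,n},
\end{equation*}
and using Proposition~\ref{5.3.8} entry by entry, we obtain
\begin{equation*}
  B_{k}^{*}
  =\casg{\bigcup_{\substack{m,n\in\N_{0}\\ m-n=k}}E_{m,n}^{*}}
  =\casg{\bigcup_{\substack{m,n\in\N_{0}\\ m-n=k}}E_{n,m}}.
\end{equation*}

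Finally, I would reindex the union on the right by $(m',n'):=(n,m)$, so that the condition $m-n=k$ becomes $m'-n'=-k$; this shows that the union coincides with $\bigcup_{\substack{m',n'\in\N_{0}\\ m'-n'=-k}}E_{m',n'}$, and hence its closed additive subgroup equals $B_{-k}$.

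I do not anticipate any real obstacle: the argument is a direct application of Proposition~\ref{5.3.8} together with the standard continuity and additivity of the involution. The only mild point worth being careful about is that the closure commutes with the involution, which follows because $\norm{x^{*}}=\norm{x}$ and the involution is additive, so it maps every Cauchy net in $S$ bijectively to a Cauchy net in $S^{*}$ with the same limit (up to the involution).
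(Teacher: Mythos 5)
Your proof is correct and follows essentially the same route as the paper: both apply Proposition~\ref{5.3.8} to the generators $E_{m,n}$, use the fact that the involution commutes with the norm-closed additive subgroup construction, and reindex the union. Your additional remarks justifying the exchange of closure and involution are a welcome but minor elaboration of the same argument.
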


\begin{proof}
  By Proposition~\ref{5.3.8}, we have 
  \begin{equation*}
    B_k^*
    =\biggl(\casg{\bigcup_{\substack{m,n\in\N_0\\m-n=k}}E_{m,n}}\biggr)^*
    =\casg{\bigcup_{\substack{m,n\in\N_0\\m-n=k}}E_{m,n}^*}
    =\casg{\bigcup_{\substack{m,n\in\N_0\\m-n=k}}E_{n,m}}
    =B_{-k}.
    \qedhere
  \end{equation*}
\end{proof}

\begin{corollary}\label{5.4.3}
  The following equation holds: 
  \begin{equation*}
    C^*(\rho)=\casg{\bigcup_{k\in\Z}B_k}.
  \end{equation*}
\end{corollary}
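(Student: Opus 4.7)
The plan is to prove equality of the two closed subsets by establishing both inclusions, using the previously proven propositions about the behavior of the $B_k$ under multiplication and involution.

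For the inclusion $\casg{\bigcup_{k\in\Z}B_k}\subseteq C^*(\rho)$, I would simply invoke Corollary~\ref{5.3.6}: every $E_{m,n}$ lies in $C^*(\rho)$, hence so does each $B_k$ (being the closed span of such $E_{m,n}$), and since $C^*(\rho)$ is norm-closed the closed linear span $\casg{\bigcup_{k\in\Z}B_k}$ lies in $C^*(\rho)$ as well.

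For the reverse inclusion, let $\mathcal B:=\casg{\bigcup_{k\in\Z}B_k}$. The plan is to show that $\mathcal B$ is a closed $*$-subalgebra of $C^*(\rho)$ containing the generating set $\rho_A(A)\cup\rho_E(E)$, which forces $C^*(\rho)\subseteq\mathcal B$. First, $\rho_A(A)=E_0=E_{0,0}\subseteq B_0\subseteq\mathcal B$. Second, to see that $\rho_E(E)=E_1\subseteq B_1$, I would apply Proposition~\ref{5.2.4} with $m=1$, $n=0$ to get $\cspan(E_1 E_0)=E_1$, then observe that $E_0=\rho_A(A)=E_0^*$ since $\rho_A$ is a $*$-homomorphism, yielding $E_1=\cspan(E_1 E_0^*)=E_{1,0}\subseteq B_1\subseteq\mathcal B$.

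It remains to verify that $\mathcal B$ is a $*$-subalgebra. The set $\mathcal B$ is a closed linear subspace by construction. For closure under multiplication, given $a,b\in\mathcal B$, approximate them by finite linear combinations $a_\nu=\sum_i x_i^{(\nu)}$ and $b_\nu=\sum_j y_j^{(\nu)}$ with $x_i^{(\nu)}\in B_{k_i}$ and $y_j^{(\nu)}\in B_{l_j}$; then each product $x_i^{(\nu)} y_j^{(\nu)}$ lies in $B_{k_i+l_j}\subseteq\mathcal B$ by Proposition~\ref{5.4.1}, so $a_\nu b_\nu\in\mathcal B$, and continuity of multiplication gives $ab=\lim a_\nu b_\nu\in\mathcal B$. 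For closure under involution, Proposition~\ref{5.4.2} gives $B_k^*=B_{-k}\subseteq\mathcal B$, so $a_\nu^*\in\mathcal B$ and $a^*=\lim a_\nu^*\in\mathcal B$.

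There is no substantive obstacle here; the argument is a routine assembly of the structural facts $B_k B_l\subseteq B_{k+l}$ and $B_k^*=B_{-k}$, together with the mild verification that the generators $\rho_A(A)$ and $\rho_E(E)$ of $C^*(\rho)$ are already captured by $B_0$ and $B_1$ respectively.
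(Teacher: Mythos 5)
Your proof is correct and follows essentially the same route as the paper: both directions are established by noting that the $B_k$ lie in $C^*(\rho)$ and that $\casg{\bigcup_k B_k}$ is a closed $*$-subalgebra (via Propositions~\ref{5.4.1} and \ref{5.4.2}) containing $\rho_A(A)\subseteq B_0$ and $\rho_E(E)\subseteq B_1$. Your explicit check that $E_1=\cspan(E_1E_0^*)=E_{1,0}$ is a small detail the paper states without justification, but otherwise the arguments coincide.
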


\begin{proof}
  Since the $B_k$ are defined as closed subspaces of $C^*(\rho)$, the inclusion $\supseteq$ holds.
  By Proposition~\ref{5.4.1} and Proposition~\ref{5.4.2}, $\asg{\bigcup_{k\in\Z}B_k}$ forms a $*$-subalgebra of $C^*(\rho)$.
  Note that $\rho_A(A)=E_{0,0}\subseteq B_0$ and $\rho_E(E)=E_{1,0}\subseteq B_1$.
  Since $C^*(\rho)$ is defined as the C*-subalgebra of $M_\X$ generated by $\rho_A(A)\cup\rho_E(E)$, the inclusion $\subseteq$ also holds.
\end{proof}

\begin{theorem}\label{5.4.4}
  The following equation holds: 
  \begin{equation*}
    C^*(\hat\rho) = \bigoplus_{k\in\Z} B_k \otimes \upsilon_k.
  \end{equation*}
  Here, $\upsilon_k$ denotes the element of the group C*-algebra $C^*(\Z)$ corresponding to $k\in\Z$.
\end{theorem}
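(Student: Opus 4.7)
The plan is to establish the equation by proving two inclusions together with a direct-sum property: first that $\bigcup_{k\in\Z} B_k \otimes \upsilon_k \subseteq C^*(\hat\rho)$; second that $C^*(\hat\rho) \subseteq \casg{\bigcup_{k\in\Z} B_k \otimes \upsilon_k}$; and third that the algebraic sum $\sum_k B_k \otimes \upsilon_k$ is internally direct. The already-constructed $\Z$-grading $\{B_k\}$ of $C^*(\rho)$ from Corollary~\ref{5.4.3} will combine with the shift elements $\upsilon_k \in C^*(\Z)$ to give a $\Z$-graded decomposition of $C^*(\hat\rho)$ in the tensor product $M_\X \otimes C^*(\Z)$.

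For the first inclusion, I would invoke Proposition~\ref{2.4.3} to see that $b\mapsto b\otimes\upsilon_k$ is isometric, so $B_k\otimes\upsilon_k$ is itself norm-closed and equals the norm-closure of $\bigcup_{m-n=k} E_{m,n}\otimes\upsilon_k$. It therefore suffices to verify $E_{m,n}\otimes\upsilon_{m-n}\subseteq C^*(\hat\rho)$ for each $m,n\in\N_0$. By Proposition~\ref{5.3.5} together with iterated use of Proposition~\ref{5.2.4}, every element of $E_{m,n}$ is a norm-limit of finite sums of products $\rho_E(f_1)\cdots\rho_E(f_m)\rho_E(g_1)^*\cdots\rho_E(g_n)^*$; tensoring such a product with $\upsilon_{m-n}=\upsilon_1^m\upsilon_{-1}^n$ reproduces exactly $\hat\rho_E(f_1)\cdots\hat\rho_E(f_m)\hat\rho_E(g_1)^*\cdots\hat\rho_E(g_n)^*\in C^*(\hat\rho)$, and continuity of the tensor multiplication completes the step.

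For the second inclusion, set $C:=\casg{\bigcup_k B_k\otimes\upsilon_k}$. Because $\hat\rho_A(a)=\rho_A(a)\otimes\upsilon_0\in B_0\otimes\upsilon_0$ and $\hat\rho_E(f)=\rho_E(f)\otimes\upsilon_1\in B_1\otimes\upsilon_1$, the set $C$ contains every generator of $C^*(\hat\rho)$. Propositions~\ref{5.4.1} and \ref{5.4.2}, combined with the relations $\upsilon_k^*=\upsilon_{-k}$ and $\upsilon_k\upsilon_l=\upsilon_{k+l}$ in $C^*(\Z)$, show that $\sum_k B_k\otimes\upsilon_k$ is a $*$-subalgebra of $M_\X\otimes C^*(\Z)$; hence $C$ is a closed $*$-subalgebra and contains $C^*(\hat\rho)$. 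Combining the two inclusions yields $C^*(\hat\rho)=\casg{\sum_k B_k\otimes\upsilon_k}$.

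To upgrade this into the internal direct sum in the theorem, I would invoke Fact~\ref{2.4.4}: the family $\{\id\otimes\delta_n\}_{n\in\Z}$ separates points of $M_\X\otimes C^*(\Z)$, and a direct calculation of $\delta_n(\upsilon_k)=\delta_\Z(\upsilon_{k-n})$ gives $(\id\otimes\delta_n)(b\otimes\upsilon_k)=b$ when $k=n$ and $0$ otherwise. Applied to any finite decomposition $\sum_{k\in F}b_k\otimes\upsilon_k=0$, this forces $b_n=0$ for each $n\in F$, so the sum is algebraically direct. I do not anticipate a serious obstacle here: the structural work has already been carried out in the preceding subsections, and the argument reduces to carefully tracking which tensor component belongs to which graded piece, with Fact~\ref{2.4.4} furnishing the separation needed for the direct-sum conclusion.
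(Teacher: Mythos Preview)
Your proposal is correct and follows essentially the same route as the paper: the paper likewise establishes the two inclusions by showing (i) each $E_{m,n}\otimes\upsilon_{m-n}$ lies in $C^*(\hat\rho)$ (via an induction on $E_n\otimes\upsilon_n$ and then Proposition~\ref{5.3.5}, which is equivalent to your product description) and (ii) the graded sum is a closed $*$-subalgebra containing the generators $\hat\rho_A(A)$ and $\hat\rho_E(E)$, using Propositions~\ref{5.4.1} and~\ref{5.4.2}. Your explicit verification of directness via Fact~\ref{2.4.4} is a welcome addition that the paper leaves implicit; just take a little care in the $m=0$ or $n=0$ cases of your product description, where a factor $\rho_A(a)$ rather than an empty product is needed.
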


\begin{proof}
  We show the inclusion $\subseteq$.
  Since $\rho_0 = \rho_A$, their ranges $E_0$ and $\rho_A(A)$ are equal.
  So we have 
  \begin{equation*}
    B_0 \otimes \upsilon_0
    \supseteq E_0 \otimes \upsilon_0
    = \rho_A(A) \otimes \upsilon_0
    = \hat\rho_A(A).
  \end{equation*}
  Similarly, since $\rho_1 = \rho_E$, their ranges $E_1$ and $\rho_E(E)$ are equal.
  So we have 
  \begin{equation*}
    B_1 \otimes \upsilon_1
    \supseteq E_1 \otimes \upsilon_1
    = \rho_E(E) \otimes \upsilon_1
    = \hat\rho_E(E).
  \end{equation*}
  Hence, the direct sum $\bigoplus_{k\in\Z}B_k\otimes\upsilon_k$ contains both $\hat\rho_A(A)$ and $\hat\rho_E(E)$.
  By Proposition~\ref{5.4.1} and Proposition~\ref{5.4.2}, this direct sum forms a closed $*$-subalgebra of $M_\X$.
  Since $C^*(\hat\rho)$ is defined as the C*-subalgebra of $M_\X\otimes C^*(\Z)$ generated by $\hat\rho_A(A)\cup\hat\rho_E(E)$, the inclusion $\subseteq$ holds.
  
  \medskip
  
  We show the inverse inclusion $\supseteq$.
  
  First, we show that $E_n\otimes\upsilon_n\subseteq C^*(\hat\rho)$ for all $n\in\N_0$.
  The sets $E_0\otimes\upsilon_0=\hat\rho_A(A)$ and $E_1\otimes\upsilon_1=\hat\rho_E(E)$ are contained in $C^*(\hat\rho)$.
  By Proposition~\ref{2.4.3} and Proposition~\ref{5.2.4}, we deduce 
  \begin{equation*}
    E_{n+1}\otimes\upsilon_{n+1}
    =\casg{E_n E_1}\otimes\upsilon_n\upsilon_1
    =\casg{(E_n\otimes\upsilon_n)(E_1\otimes\upsilon_1)}.
  \end{equation*}
  By induction, $E_n\otimes\upsilon_n$ is contained in $C^*(\hat\rho)$ for all $n\in\N_0$.

  Next, we show that $E_{m,n}\otimes\upsilon_{m-n}\subseteq C^*(\hat\rho)$ for all $m,n\in\N_0$.
  By Proposition~\ref{2.4.3} and Proposition~\ref{5.3.5}, we have 
  \begin{gather*}
    E_{m,n}\otimes\upsilon_{m-n}
    =\casg{E_m E_n^*}\otimes\upsilon_m\upsilon_n^*
    =\casg{(E_m\otimes\upsilon_m)(E_n\otimes\upsilon_n)^*}.
  \end{gather*}
  Thus, $E_{m,n}\otimes\upsilon_{m-n}$ is contained in $C^*(\hat\rho)$ for all $m,n\in\N_0$.
  
  By the definition of $B_k$ and Proposition~\ref{2.4.3}, we have 
  \begin{equation*}
    B_k \otimes \upsilon_k
    = \casg{\bigcup_{\substack{m,n\in\N_0\\m-n=k}} E_{m,n}} \otimes \upsilon_k
    = \casg{\bigcup_{\substack{m,n\in\N_0\\m-n=k}} E_{m,n} \otimes \upsilon_k}.
  \end{equation*}
  Thus, $B_k \otimes \upsilon_k$ is contained in $C^*(\hat\rho)$ for all $k \in \Z$; and so is their direct sum, that is, the inclusion $\supseteq$ holds.
\end{proof}

\subsection{Universality of $\rho$}

In this subsection, we assume the following condition (V):
\begin{itemize}
  \item[(V)] The set $\X$ is disjoint from the union 
    \begin{equation*}
      \bigcup_{\boldsymbol j\neq\boldsymbol k}\set{x\in X}{\gamma_{\boldsymbol j}(x)=\gamma_{\boldsymbol k}(x)},
    \end{equation*}
    where $\boldsymbol j$ and $\boldsymbol k$ range over all finite sequences of indices in $\{1,\ldots,N\}$.
\end{itemize}

By the universality of $\hat\rho$ (Theorem~\ref{5.2}), we obtain the natural $*$-homomorphism from $C^*(\hat\rho)$ to $C^*(\rho)$, denoted by $\nu$.
We define the subset $\X_{m,n}$ of $X\times X$ by 
\begin{equation*}
  \X_{m,n} := \set{(\gamma_{\boldsymbol j}(x),\gamma_{\boldsymbol k}(x))}{x\in\X,\ \boldsymbol j\in I^m,\ \boldsymbol k\in I^n},
\end{equation*}
for $m,n\in\N_0$.
For each $k\in\Z$, we define $\delta_k$, $\mathcal X_k$, $F_b(\mathcal X_k)$, and $\ent_k$ as follows:
\begin{itemize}
  \item Define a bounded linear functional $\delta_k\colon C^*(\Z)\to\C1_{M_\Z}\cong\C$ by $\delta_k(\alpha):=\delta_\Z(\alpha\upsilon_{-k})$.
  Here, $\delta_\Z$ is the faithful conditional expectation given by Fact~\ref{2.5}.
  \item Let $\mathcal X_k$ be the union of the sets $\X_{m,n}$ where the indices $m$ and $n$ range over $\N_0$ such that $m - n = k$.
  \item Let $F_b(\mathcal X_k)$ be the Banach space of all bounded functions on $\mathcal X_k$ with supremum norm.
  \item Let $\ent_k$ be the map from $C^*(\rho)$ to $F_b(\mathcal X_k)$ which maps $\alpha$ to the function on $\mathcal X_k$ whose value at $(x,y)\in\mathcal X_k$ is the $(x,y)$-entry $\inn{\e_x}{\alpha\e_y}$ of $\alpha$ at $(x,y)\in\mathcal X_k$.
\end{itemize}

\begin{lemma}\label{5.4.5}
  Let $m,n,p,q\in\N_0$ with $m-n\neq p-q$.
  Then, the sets $\X_{m,n}$ and $\X_{p,q}$ are disjoint.
\end{lemma}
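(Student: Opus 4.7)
The plan is to show that if some pair $(a,b)$ lies in $\X_{m,n}\cap\X_{p,q}$, then necessarily $m-n = p-q$, contradicting the hypothesis. Unpacking the definition of $\X_{m,n}$ and $\X_{p,q}$, I can write $a = \gamma_{\boldsymbol j}(x) = \gamma_{\boldsymbol j'}(x')$ and $b = \gamma_{\boldsymbol k}(x) = \gamma_{\boldsymbol k'}(x')$ for some $x,x'\in\X$ and index sequences $\boldsymbol j\in I^m$, $\boldsymbol k\in I^n$, $\boldsymbol j'\in I^p$, $\boldsymbol k'\in I^q$. By swapping the roles of the two bichains if needed, I may assume $m\leq p$.

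First, since $a\in\gamma_{\boldsymbol j}(\X)\cap\gamma_{\boldsymbol j'}(\X)$, the tree-like bullet points recorded just after conditions (I)--(III) force $\boldsymbol j$ and $\boldsymbol j'$ to be comparable; combined with $m\leq p$, this means $\boldsymbol j$ is an initial segment of $\boldsymbol j'$, so there is a sequence $\tilde{\boldsymbol j}$ of length $p-m$ with $\gamma_{\boldsymbol j'} = \gamma_{\boldsymbol j}\circ\gamma_{\tilde{\boldsymbol j}}$. Then $\gamma_{\boldsymbol j}(x) = \gamma_{\boldsymbol j}(\gamma_{\tilde{\boldsymbol j}}(x'))$, and by complete $\gamma$-invariance of $\X$ (condition (I)) together with injectivity of $\gamma_{\boldsymbol j}|_\X$ (condition (III)), I obtain $x = \gamma_{\tilde{\boldsymbol j}}(x')$.

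Next I apply the same reasoning to $\boldsymbol k, \boldsymbol k'$ using $b$; their comparability produces two sub-cases. If $\boldsymbol k$ is an initial segment of $\boldsymbol k'$ (so $n\leq q$), analogous reasoning yields a sequence $\tilde{\boldsymbol k}$ of length $q-n$ with $x = \gamma_{\tilde{\boldsymbol k}}(x')$, hence $\gamma_{\tilde{\boldsymbol j}}(x') = \gamma_{\tilde{\boldsymbol k}}(x')$; condition (V) applied at $x'\in\X$ then forces $\tilde{\boldsymbol j} = \tilde{\boldsymbol k}$, giving in particular $p-m = q-n$. If instead $\boldsymbol k'$ is an initial segment of $\boldsymbol k$ (so $q\leq n$), a symmetric argument produces $\tilde{\boldsymbol k}$ of length $n-q$ with $x' = \gamma_{\tilde{\boldsymbol k}}(x)$; substituting into $x = \gamma_{\tilde{\boldsymbol j}}(x')$ yields $x = \gamma_{\tilde{\boldsymbol j}}(\gamma_{\tilde{\boldsymbol k}}(x))$, and condition (V) at $x\in\X$ forces the resulting concatenated sequence (of length $(p-m)+(n-q)$) to be empty, whence $\tilde{\boldsymbol j}$ and $\tilde{\boldsymbol k}$ are both empty and $m=p$, $n=q$. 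In either sub-case one obtains $m-n = p-q$, contradicting the hypothesis.

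I do not anticipate a serious obstacle. The lemma is essentially a combinatorial unpacking: the tree-like disjointness of the iterates $\gamma_{\boldsymbol k}(\X)$ (a consequence of (I)--(III), spelled out in the bullets just before the definition of $\rho_A$ and $\rho_E$) reduces the problem to comparing two initial-segment relations, and a single appeal to the freeness condition (V) closes the argument. The only fiddly point worth care is keeping the conventions for ``initial segment'' and the composition order $\gamma_{\boldsymbol j'} = \gamma_{\boldsymbol j}\circ\gamma_{\tilde{\boldsymbol j}}$ consistent throughout.
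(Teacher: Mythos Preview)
Your proof is correct and follows essentially the same approach as the paper: use the tree structure of the images $\gamma_{\boldsymbol k}(\X)$ to deduce that one index sequence is an initial segment of the other at the first coordinate, relate $x$ and $x'$ via the tail, and then invoke condition~(V) at the second coordinate to force $m-n=p-q$. The only difference is cosmetic: where you split into two cases depending on which of $\boldsymbol k,\boldsymbol k'$ is an initial segment of the other, the paper avoids this by substituting $x=\gamma_{\tilde{\boldsymbol j}}(x')$ directly into $b=\gamma_{\boldsymbol k}(x)$ to obtain $\gamma_{\boldsymbol k\tilde{\boldsymbol j}}(x')=\gamma_{\boldsymbol k'}(x')$, and then applies~(V) once at $x'$ (the lengths $n+(p-m)$ and $q$ differ precisely because $m-n\neq p-q$).
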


\begin{proof}
  Assume that $\X_{m,n}\cap\X_{p,q}\neq\emptyset$. 
  Then, there exist $\boldsymbol j\in I^m$, $\boldsymbol k = (k_1,\ldots,k_n)\in I^n$, $\boldsymbol s = (s_1,\ldots,s_p)\in I^p$, $\boldsymbol t\in I^q$, and $x,y\in\X$ such that $(\gamma_{\boldsymbol j}(x),\gamma_{\boldsymbol k}(x)) = (\gamma_{\boldsymbol s}(y),\gamma_{\boldsymbol t}(y))$.
  Without loss of generality, we may assume $m\leq p$.
  Since $\X$ is completely $\gamma$-invariant and $\gamma_1(\X),\ldots,\gamma_N(\X)$ are pairwise disjoint, $\boldsymbol j$ must be an initial segment of $\boldsymbol s$.
  Let $\boldsymbol r := (s_{m+1}, \ldots, s_p)$.
  Since $\gamma_1,\ldots,\gamma_N$ are injective on $\X$, we have $x = \gamma_{\boldsymbol r}(y)$.
  Define $\boldsymbol l := (k_1, \ldots, k_n, s_{m+1}, \ldots, s_p)$.
  Then, 
  \begin{equation*}
    \gamma_{\boldsymbol l}(y) 
    = \gamma_{\boldsymbol k}(\gamma_{\boldsymbol r}(y)) 
    = \gamma_{\boldsymbol k}(x) 
    = \gamma_{\boldsymbol t}(y).
  \end{equation*}
  The length of $\boldsymbol l$ is $n - m + p$ and the length of $\boldsymbol t$ is $q$.
  Since these lengths are distinct, we must have $\boldsymbol l\neq\boldsymbol t$.
  Since $y\in\X$, by the assumption (V), it follows that  $\gamma_{\boldsymbol l}(y)\neq\gamma_{\boldsymbol t}(y)$, which is a contradiction.
\end{proof}

\begin{corollary}\label{5.4.6}
  The sets $\mathcal X_k$ are pairwise disjoint.
\end{corollary}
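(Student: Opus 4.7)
The plan is to reduce Corollary~\ref{5.4.6} directly to Lemma~\ref{5.4.5}, since the corollary is essentially a repackaging of that lemma at the level of unions. First I would fix two distinct integers $k,l\in\Z$ and argue by contradiction: suppose a point $(x,y)$ lies in $\mathcal X_k\cap\mathcal X_l$. By the definition of $\mathcal X_k$ and $\mathcal X_l$ as unions of $\X_{m,n}$ over $m-n=k$ and $p-q=l$ respectively, there must exist $m,n,p,q\in\N_0$ with $m-n=k$, $p-q=l$, and $(x,y)\in\X_{m,n}\cap\X_{p,q}$.

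Next I would observe that since $k\neq l$, we have $m-n\neq p-q$, so Lemma~\ref{5.4.5} applies and yields $\X_{m,n}\cap\X_{p,q}=\emptyset$, contradicting the existence of $(x,y)$. This establishes $\mathcal X_k\cap\mathcal X_l=\emptyset$, as desired.

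There is no real obstacle here: the heavy lifting — using condition~(V), the complete $\gamma$-invariance of $\X$, pairwise disjointness of the $\gamma_k(\X)$, and injectivity of the $\gamma_k|_\X$ to rule out coincidences of $\gamma$-images with sequences of different lengths — has already been done in Lemma~\ref{5.4.5}. The corollary is a one-line consequence once the indexing is unwound, so the only thing to be careful about is correctly matching the parameters $(m,n,p,q)$ to the level sets $\mathcal X_k$ and $\mathcal X_l$ and verifying that $k\neq l$ forces $m-n\neq p-q$.
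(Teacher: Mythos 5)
Your proposal is correct and matches the paper's proof, which simply states that the corollary follows immediately from Lemma~\ref{5.4.5}; you have just spelled out the same one-line reduction, correctly noting that $k\neq l$ forces $m-n\neq p-q$ for the relevant indices.
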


\begin{proof}
  This corollary immediately follows from Lemma~\ref{5.4.5}.
\end{proof}

\begin{lemma}\label{5.4.7}
  Let $m,n\in\N_0$, $\alpha\in E_{m,n}$, and $x,y\in\X$.
  If $x\rel\alpha y$, i.e., $\inn{\e_x}{\alpha\e_y}\neq 0$, then $(x,y)\in\X_{m,n}$.
\end{lemma}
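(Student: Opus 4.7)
The plan is to exploit the definition of $E_{m,n}$ as the range of the linear map $\rho_{m,n}$ and then to compute the $(x,y)$-entry of $\alpha$ directly from the explicit formula for $\rho_{m,n}$. Since $\alpha \in E_{m,n}$, there exists $f \in C(X_{m,n})$ with $\alpha = \rho_{m,n}(f)$. The $(x,y)$-entry $\inn{\e_x}{\alpha\e_y}$ is then determined entirely by the action of $\alpha$ on $\e_y$, so the structure of $y$ — more precisely, whether $y$ lies in $\gamma^n(\X)$ — will control everything.

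I would split into cases. If $y \notin \gamma^n(\X)$, the definition of $\rho_{m,n}$ gives $\rho_{m,n}(f)\e_y = 0$, so $\inn{\e_x}{\alpha\e_y} = 0$, contradicting $x \rel\alpha y$. Hence we must be in the other case, where $y \in \gamma^n(\X)$. Under the standing assumptions (I)--(III) — $\X$ completely $\gamma$-invariant, the images $\gamma_1(\X), \ldots, \gamma_N(\X)$ pairwise disjoint, and each $\gamma_k|_\X$ injective — one obtains by induction on $n$ the uniqueness statement used in the definition of $\rho_{m,n}$: there exist a unique $\boldsymbol k \in I^n$ and a unique $z \in \X$ with $y = \gamma_{\boldsymbol k}(z)$. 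Then
\begin{equation*}
  \rho_{m,n}(f)\e_y
  = \sum_{\boldsymbol j \in I^m} \e_{\gamma_{\boldsymbol j}(z)}\, f(\boldsymbol\gamma_{\boldsymbol j}(z), \boldsymbol\gamma_{\boldsymbol k}(z)).
\end{equation*}

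For $\inn{\e_x}{\rho_{m,n}(f)\e_y}$ to be nonzero, at least one summand must contribute an $\e_x$-component, i.e., there must exist $\boldsymbol j \in I^m$ with $x = \gamma_{\boldsymbol j}(z)$ (and $f(\boldsymbol\gamma_{\boldsymbol j}(z), \boldsymbol\gamma_{\boldsymbol k}(z)) \neq 0$). Taking such a $\boldsymbol j$ exhibits $(x,y) = (\gamma_{\boldsymbol j}(z), \gamma_{\boldsymbol k}(z))$ as an element of $\X_{m,n}$ by its very definition, and we are done. There is no genuine obstacle here: the proof is essentially just unpacking definitions, with the only substantive point being the uniqueness of the decomposition $y = \gamma_{\boldsymbol k}(z)$, which has already been observed as a consequence of (I)--(III) in the paragraph immediately following the statement of those conditions.
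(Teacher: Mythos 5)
Your proof is correct and follows essentially the same route as the paper's: write $\alpha=\rho_{m,n}(f)$, use the unique decomposition $y=\gamma_{\boldsymbol k}(z)$ with $z\in\X$ guaranteed by conditions (I)--(III), and read off from the explicit formula for $\rho_{m,n}(f)\e_y$ that a nonzero $(x,y)$-entry forces $x=\gamma_{\boldsymbol j}(z)$ for some $\boldsymbol j\in I^m$. The explicit case split on $y\in\gamma^n(\X)$ is a minor elaboration of what the paper leaves implicit.
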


\begin{proof}
  By the definition of $E_{m,n}$, there exists $f\in C(X_{m,n})$ such that $\alpha = \rho_{m,n}(f)$.
  Since $\alpha\e_y\neq\boldsymbol 0$, there exists a unique index sequence $\boldsymbol k$ of length $n$ and $z\in\X$ such that $y = \gamma_{\boldsymbol k}(z)$.
  Since $\inn{\e_x}{\alpha\e_y}\neq 0$, there exists a unique index sequence $\boldsymbol j$ of length $m$ such that $x = \gamma_{\boldsymbol j}(z)$.
  Thus, the pair $(x,y) = (\gamma_{\boldsymbol j}(z), \gamma_{\boldsymbol k}(z))$ is in $\X_{m,n}$.
\end{proof}

\begin{corollary}\label{5.4.8}
  Let $k\in\Z$, $\alpha\in B_k$, and $x,y\in\X$.
  If $x\rel\alpha y$, then $(x,y)\in\mathcal X_k$.
\end{corollary}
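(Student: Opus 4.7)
The plan is to extend Lemma~\ref{5.4.7} from a single $E_{m,n}$ to the closed additive subgroup $B_k = \casg{\bigcup_{m-n=k} E_{m,n}}$ in two stages: first to the algebraic additive subgroup $\asg{\bigcup_{m-n=k} E_{m,n}}$, and then to its norm closure. Throughout I would use Fact~\ref{2.4}(1), which says that the relation $\rel\cdot$ behaves well under sums.

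First, for $\alpha$ in the algebraic additive subgroup, I would write $\alpha = \alpha_1 + \cdots + \alpha_r$ where each $\alpha_i$ belongs to some $E_{m_i,n_i}$ with $m_i - n_i = k$. Assuming $x \rel\alpha y$, iterated application of Fact~\ref{2.4}(1) yields some index $i$ with $x \rel{\alpha_i} y$. Lemma~\ref{5.4.7} then gives $(x,y) \in \X_{m_i,n_i}$, which is contained in $\mathcal X_k$ by the definition of $\mathcal X_k$.

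Second, I would pass from the algebraic subgroup to its norm closure $B_k$. Take $\alpha \in B_k$ with $x \rel\alpha y$, i.e.\ $\inn{\e_x}{\alpha\e_y} \neq 0$. Choose a sequence $\alpha_n$ in the algebraic subgroup converging to $\alpha$ in operator norm. Since the entry functional $\beta \mapsto \inn{\e_x}{\beta\e_y}$ is bounded by $1$ on $M_\X$ (by Cauchy--Schwarz, as $\norm{\e_x} = \norm{\e_y} = 1$), we have $\inn{\e_x}{\alpha_n\e_y} \to \inn{\e_x}{\alpha\e_y} \neq 0$, so $x \rel{\alpha_n} y$ for all sufficiently large $n$. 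The first step then delivers $(x,y) \in \mathcal X_k$.

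There is no real obstacle here; the only subtle point is verifying the closure step, which reduces to the continuity of the entry functional. Note that one does not need $\mathcal X_k$ itself to be topologically closed in $X \times X$ --- the argument only needs that membership in $\mathcal X_k$ persists along the specific approximants chosen, which is automatic once the entry functional is continuous.
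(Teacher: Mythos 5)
Your proof is correct and follows the paper's intended route: the paper simply states that the corollary ``immediately follows from Lemma~\ref{5.4.7}'', and your two-stage argument (finite sums via Fact~\ref{2.4}(1), then norm limits via continuity of the entry functional $\beta\mapsto\inn{\e_x}{\beta\e_y}$) is exactly the routine extension being left implicit.
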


\begin{proof}
  This corollary immediately follows from Lemma~\ref{5.4.7}.
\end{proof}

\begin{lemma}\label{5.4.9}
  Let $k\in\Z$.
  The restriction of $\ent_k$ to $B_k$ is injective.
\end{lemma}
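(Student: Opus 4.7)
The plan is to prove injectivity directly from the ``support control'' given by Corollary~\ref{5.4.8}. Concretely, I want to show that for $\alpha \in B_k$, all nonzero entries of $\alpha$ necessarily lie on positions $(x,y) \in \mathcal X_k$, so restricting the entry map to $\mathcal X_k$ loses no information.

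First I would take $\alpha \in B_k$ with $\ent_k(\alpha) = 0$, meaning $\inn{\e_x}{\alpha\e_y} = 0$ for every $(x,y) \in \mathcal X_k$. To conclude $\alpha = 0$, it suffices to show that $\inn{\e_x}{\alpha\e_y} = 0$ for every pair $(x,y) \in \X \times \X$, since $\{\e_x\}_{x \in \X}$ is an orthonormal basis of $\H_\X$.

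Fix arbitrary $x,y \in \X$ and suppose for contradiction that $\inn{\e_x}{\alpha\e_y} \neq 0$. By Definition~\ref{2.2}, this means $x \rel{\alpha} y$. Since $\alpha \in B_k$, Corollary~\ref{5.4.8} then forces $(x,y) \in \mathcal X_k$. But the hypothesis $\ent_k(\alpha) = 0$ says exactly that $\inn{\e_x}{\alpha\e_y} = 0$ on $\mathcal X_k$, a contradiction. Hence $\inn{\e_x}{\alpha\e_y} = 0$ for all $x,y \in \X$, and therefore $\alpha = 0$.

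There is essentially no obstacle here: the heavy lifting was done in Lemma~\ref{5.4.7} and Corollary~\ref{5.4.8}, which pin down the locations of possible nonzero entries for elements of $E_{m,n}$ and hence of $B_k$. The only subtle point is confirming that this support-localisation passes from the algebraic sums $\asg{\bigcup_{m-n=k}E_{m,n}}$ to the norm closure $B_k$, but this is already built into Corollary~\ref{5.4.8} as stated (it is phrased for all $\alpha \in B_k$, not just for elements of finite sums), so no further approximation argument is needed.
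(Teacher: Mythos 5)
Your proof is correct and is essentially identical to the paper's: both reduce injectivity to the observation that, by Corollary~\ref{5.4.8}, every nonzero entry of an element of $B_k$ sits at a position in $\mathcal X_k$, so vanishing of $\ent_k(\alpha)$ forces all entries of $\alpha$ to vanish. Your remark that the passage to the norm closure is already absorbed into the statement of Corollary~\ref{5.4.8} is also how the paper handles it.
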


\begin{proof}
  Take any $\alpha\in\ker(\ent_k)\cap B_k$.
  Since $\ent_k(\alpha)=0$, the $(x,y)$-entry of $\alpha$ vanishes for any $(x,y)\in\mathcal X_k$.
  By Corollary~\ref{5.4.8}, the other entries also vanish.
  Therefore, all entries of $\alpha$ are zero; hence, $\alpha = 0$.
\end{proof}

\begin{lemma}\label{5.4.10}
  Let $k\in\Z$.
  The following diagram is commutative.
  \begin{equation*}
    \xymatrix@M=8pt{
      C^*(\hat\rho)
        \ar[r]^{\nu}
        \ar[d]_{\id\otimes\delta_k}
      & C^*(\rho)
        \ar[d]^{\mathrm{ent}_k}
      \\
      B_k
        \ar@{>->}[r]_{\mathrm{ent}_k}
      & F_b(\mathcal{X}_k)
    }
  \end{equation*}
\end{lemma}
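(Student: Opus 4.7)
The plan is to reduce the commutativity of the square to the case of elementary tensors $\alpha_l\otimes\upsilon_l$ with $\alpha_l\in B_l$ and $l\in\Z$, by invoking the decomposition $C^*(\hat\rho)=\bigoplus_{k\in\Z}B_k\otimes\upsilon_k$ from Theorem~\ref{5.4.4} together with the boundedness of every arrow in the diagram. First I would check continuity: $\nu$ is automatically contractive; $\id\otimes\delta_k$ is bounded because $\delta_\Z$ is and the translation $\upsilon_{-k}$ is isometric; and each $\ent_k$ satisfies $\norm{\ent_k(\alpha)}_\infty\leq\norm\alpha$ since $\abs{\inn{\e_x}{\alpha\e_y}}\leq\norm\alpha$ for all $x,y\in\X$. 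Hence both composites $\ent_k\circ\nu$ and $\ent_k\circ(\id\otimes\delta_k)$ are continuous, and by Theorem~\ref{5.4.4} it is enough to verify their agreement on tensors $\alpha_l\otimes\upsilon_l$.

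Next I would compute both routes on such a tensor. For the left-hand route, $\delta_k(\upsilon_l)=\delta_\Z(\upsilon_{l-k})$ vanishes unless $l=k$ (when it equals $1$), so
\[
  (\id\otimes\delta_k)(\alpha_l\otimes\upsilon_l)=\begin{cases}\alpha_l&\text{if }l=k,\\0&\text{if }l\neq k.\end{cases}
\]
For the top route, the universal property of $\hat\rho$ forces $\nu(\hat\rho_A(a))=\rho_A(a)$ and $\nu(\hat\rho_E(f))=\rho_E(f)$; chasing this through the construction in the proof of Theorem~\ref{5.4.4}, which exhibits $E_{m,n}\otimes\upsilon_{m-n}$ as the closed span of products $\hat\rho_E(f_1)\cdots\hat\rho_E(f_m)\hat\rho_E(g_n)^*\cdots\hat\rho_E(g_1)^*$, continuity and linearity yield $\nu(\alpha_l\otimes\upsilon_l)=\alpha_l$ for every $\alpha_l\in B_l$. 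Thus the top route always gives $\ent_k(\alpha_l)$.

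The two values coincide when $l=k$, and the substantive content is that $\ent_k(\alpha_l)=0$ whenever $l\neq k$. This is where Corollaries~\ref{5.4.6} and~\ref{5.4.8} combine: given $(x,y)\in\mathcal X_k$, if the $(x,y)$-entry of $\alpha_l$ were nonzero then $x\rel{\alpha_l}y$, whence $(x,y)\in\mathcal X_l$ by Corollary~\ref{5.4.8}; but Corollary~\ref{5.4.6} says $\mathcal X_k$ and $\mathcal X_l$ are disjoint, a contradiction. Hence $\ent_k(\alpha_l)$ vanishes identically on $\mathcal X_k$, which is exactly what is needed.

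The main obstacle, I expect, is the clean derivation of the identity $\nu(\alpha_l\otimes\upsilon_l)=\alpha_l$ for a general $\alpha_l\in B_l$, which requires tracing how $B_l\otimes\upsilon_l$ is assembled inside $C^*(\hat\rho)$ from $\hat\rho_A(A)\cup\hat\rho_E(E)$ along the lines of the proof of Theorem~\ref{5.4.4}, and then passing to closures via continuity of $\nu$. Once this identity is in hand, the remaining work is the bookkeeping sketched above, resting on the already-established disjointness of the $\mathcal X_k$'s.
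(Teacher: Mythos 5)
Your proposal is correct and follows essentially the same route as the paper: decompose $C^*(\hat\rho)$ via Theorem~\ref{5.4.4}, verify the two composites agree on each $B_l\otimes\upsilon_l$ by computing $\delta_k(\upsilon_l)$ and using $\nu(\alpha\otimes\upsilon_l)=\alpha$, and invoke Corollary~\ref{5.4.8} (with the disjointness of the $\mathcal X_k$) to see that $\ent_k$ kills $B_l$ for $l\neq k$. The only difference is that you spell out the identity $\nu(\alpha\otimes\upsilon_l)=\alpha$ and the continuity checks, which the paper leaves implicit.
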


\begin{proof}
  Let $l\in\Z$ and $\alpha\in B_l$.
  For any $(x,y)\in\mathcal X_k$, we have the following:
  \begin{equation*}
    \ent_k(\nu(\alpha\otimes\upsilon_l))(x,y)
    = \ent_k(\alpha)(x,y).
  \end{equation*}
  Next, we calculate $\ent_k(\id\otimes\delta_k(\alpha\otimes\upsilon_l))$:
  \begin{align*}
    \ent_k(\id\otimes\delta_k(\alpha\otimes\upsilon_l))(x,y)
    &= \ent_k(\id(\alpha)\cdot\delta_k(\upsilon_l))(x,y)\\
    &= \begin{cases}
      \ent_k(\alpha)(x,y) & \text{if}\ l = k\\
      0 & \text{otherwise}.
    \end{cases}
  \end{align*}
  By Corollary~\ref{5.4.8}, the $(x,y)$-entry of $\alpha$ is zero if $l \neq k$.
  Therefore, we have the equation
  \begin{equation*}
    \ent_k(\nu(\alpha\otimes\upsilon_l))(x,y)
    = \ent_k(\id\otimes\delta_k(\alpha\otimes\upsilon_l))(x,y).
  \end{equation*}
  This discussion shows that for any $l\in\Z$, the equation $\ent_k\circ\nu = \ent_k\circ (\id\otimes\delta_k)$ holds on $B_l\otimes\upsilon_l$.
  Thus, by Theorem~\ref{5.4.4}, the equation holds on the whole of $C^*(\hat\rho)$.
\end{proof}

\begin{lemma}\label{5.4.11}
  The $*$-homomorphism $\nu$ is an isometry.
\end{lemma}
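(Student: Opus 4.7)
The plan is to reduce the claim ``$\nu$ is an isometry'' to the claim ``$\nu$ is injective'', invoking the standard fact that every injective $*$-homomorphism between C*-algebras is automatically isometric. Therefore the real work is to show $\ker\nu=\{0\}$.

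To prove injectivity, I would take an arbitrary $\xi\in C^*(\hat\rho)$ with $\nu(\xi)=0$ and apply the commutative diagram of Lemma~\ref{5.4.10} one level $k\in\Z$ at a time. The identity $\ent_k\circ\nu=\ent_k\circ(\id\otimes\delta_k)$ together with $\nu(\xi)=0$ forces $\ent_k\bigl((\id\otimes\delta_k)(\xi)\bigr)=0$ for each $k$. The same diagram also records that $(\id\otimes\delta_k)(\xi)$ lies in $B_k$, so combining this with Lemma~\ref{5.4.9} (injectivity of $\ent_k$ restricted to $B_k$) gives $(\id\otimes\delta_k)(\xi)=0$ for every $k\in\Z$.

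At this stage Fact~\ref{2.4.4}, applied to $\xi$ regarded as an element of $M_\X\otimes C^*(\Z)$, concludes $\xi=0$, which establishes injectivity and hence isometricity of $\nu$.

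I do not expect any serious obstacle: the substantive steps have already been carried out in Lemma~\ref{5.4.9} (entrywise separation on each $B_k$), Lemma~\ref{5.4.10} (identification of the ``$k$-th Fourier-type coefficient'' $\id\otimes\delta_k$ with the $\ent_k$-readout of $\nu$), and Fact~\ref{2.4.4} (the $\delta_k$'s jointly separate elements of $M_\X\otimes C^*(\Z)$). The present lemma simply assembles these three ingredients.
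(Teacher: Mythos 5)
Your proposal is correct and follows exactly the same route as the paper's own proof: reduce isometry to injectivity, combine the commutative diagram of Lemma~\ref{5.4.10} with the injectivity of $\ent_k$ on $B_k$ (Lemma~\ref{5.4.9}) to get $(\id\otimes\delta_k)(\xi)=0$ for all $k$, and finish with Fact~\ref{2.4.4}. You are even slightly more careful than the paper in explicitly noting that $(\id\otimes\delta_k)(\xi)$ lands in $B_k$, which is what licenses the appeal to Lemma~\ref{5.4.9}.
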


\begin{proof}
  It suffices to show the injectivity of $\nu$.
  Take any $\xi\in\ker\nu$.
  By Lemma~\ref{5.4.10}, we have 
  \begin{equation*}
    \ent_k(\id\otimes\delta_n(\xi))
    = \ent_k(\nu(\xi))
    = \ent_k(0)
    = 0.
  \end{equation*}
  By Lemma~\ref{5.4.9}, we conclude that $\id\otimes\delta_n(\xi)=0$.
  This holds for all $k\in\Z$.
  Therefore, by applying Fact~\ref{2.4.4}, we conclude that $\xi = 0$.
\end{proof}

\begin{theorem}\label{5.4.12}
  The representation $\rho$ is universal as a covariant representation and $C^*(\rho)$ is isomorphic to $\O_\gamma$.
\end{theorem}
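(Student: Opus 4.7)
The plan is to deduce the theorem as a formal consequence of Lemma~\ref{5.4.11} together with Theorem~\ref{5.2}. The key observation is that the $*$-homomorphism $\nu\colon C^*(\hat\rho)\to C^*(\rho)$ arising from the universality of $\hat\rho$ satisfies $\nu\circ\hat\rho_A=\rho_A$ and $\nu\circ\hat\rho_E=\rho_E$ by construction, so its range contains the generating set $\rho_A(A)\cup\rho_E(E)$ of $C^*(\rho)$. Since the range of a $*$-homomorphism between C*-algebras is automatically closed, $\nu$ is surjective onto $C^*(\rho)$.

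Next, I would invoke Lemma~\ref{5.4.11}, which shows that $\nu$ is isometric, hence injective. Combining surjectivity and injectivity, $\nu$ is a $*$-isomorphism from $C^*(\hat\rho)$ onto $C^*(\rho)$. By Theorem~\ref{5.2}, $\hat\rho$ is a universal covariant representation, and $C^*(\hat\rho)\cong\O_\gamma$. Composing this isomorphism with $\nu$ yields $C^*(\rho)\cong\O_\gamma$.

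Finally, the universal property transports along $\nu$: given any covariant representation $\rho'$ of $(A,E)$ in a C*-algebra $B'$, the universality of $\hat\rho$ produces a unique $*$-homomorphism $\pi\colon C^*(\hat\rho)\to B'$ with $\pi\circ\hat\rho_A=\rho'_A$ and $\pi\circ\hat\rho_E=\rho'_E$, and then $\pi\circ\nu^{-1}\colon C^*(\rho)\to B'$ is the unique $*$-homomorphism sending $\rho_A$ to $\rho'_A$ and $\rho_E$ to $\rho'_E$. Hence $\rho$ is itself universal.

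The real work has already been done: the serious obstacle was Lemma~\ref{5.4.11}, which required the full machinery of the $\ent_k$ maps, the decomposition $C^*(\hat\rho)=\bigoplus_{k\in\Z}B_k\otimes\upsilon_k$ from Theorem~\ref{5.4.4}, and the pairwise disjointness of the sets $\mathcal X_k$ (Corollary~\ref{5.4.6}) which relied on condition~(V). Given all of this, the present theorem is essentially a bookkeeping step.
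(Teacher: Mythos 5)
Your proposal is correct and follows the same route as the paper: the paper's proof of Theorem~\ref{5.4.12} is precisely the one-line deduction from Theorem~\ref{5.2} and Lemma~\ref{5.4.11}, and your write-up just makes explicit the standard bookkeeping (surjectivity of $\nu$ from the generators, injectivity from the isometry lemma, and transport of the universal property along the resulting isomorphism). Nothing is missing.
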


\begin{proof}
  It follows from Theorem~\ref{5.2} and Lemma~\ref{5.4.11}.
\end{proof}

\subsection{Masa in $C^*(\hat\rho)$}\label{subsection 5.5}

In this subsection and subsequent subsections, we assume the following additional conditions (VI) -- (VIII):
\begin{itemize}
  \item[(VI)] The iterated function system $\gamma$ satisfies the open set condition. That is, there exists a dense open set $U$ in $X$ such that the images $\gamma_1(U),\ldots,\gamma_N(U)$ are pairwise disjoint open set in $X$ and contained in $U$.
  \item[(VII)] The set $\X$ is contained in $U$.
  \item[(VIII)] The set $\X$ is comeager in $X$. (By the Baire category theorem, $\X$ is dense in $X$.)
\end{itemize}

Recall that $D_\X$ denotes the set of all diagonal $\X$-squared matrices.
For each $k\in\Z$, let $D_k:=B_k\cap D_\X$.
This is a closed subspace of $C^*(\rho)$.
Note that $D_0$ contains $\rho_A(A)$ and $D_0\otimes\upsilon_0$ contains $\hat\rho_A(A)$.
By Proposition~\ref{5.4.1} and Proposition~\ref{5.4.2}, the following properties hold for all $k,l\in\Z$:
\begin{equation*}
  {D_k}{D_l}\subseteq D_{k+l},\quad
  D_k^*=D_{-k}.
\end{equation*}

We define a closed subspace $\hat D$ of $C^*(\hat\rho)$ as follows:
\begin{equation*}
  \hat D:=\bigoplus_{k\in\Z}D_k\otimes\upsilon_k.
\end{equation*}
This $\hat D$ is a closed commutative $*$-subalgebra of $C^*(\hat\rho)$.

\begin{theorem}\label{5.5.1}
  $\hat D$ is a masa in $C^*(\hat\rho)$ that contains $\hat\rho_A(A)$.
\end{theorem}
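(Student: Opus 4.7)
The containment $\hat\rho_A(A)\subseteq\hat D$ is immediate, since the matrices $\rho_A(a)$ are diagonal so $\rho_A(A)=E_0\subseteq D_0$; commutativity of $\hat D$ has already been noted. The substance of the theorem is therefore the maximality assertion, and my plan is to establish the stronger equality $\hat\rho_A(A)'\cap C^*(\hat\rho)=\hat D$, which in particular implies that any $\xi\in C^*(\hat\rho)$ commuting with the larger algebra $\hat D$ must already lie in $\hat D$.

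Take any $\xi\in C^*(\hat\rho)$ commuting with $\hat\rho_A(A)$, and set $\alpha_k:=(\id\otimes\delta_k)(\xi)\in B_k$ for every $k\in\Z$, using the functionals $\delta_k$ from Fact~\ref{2.4.4} and the observation (implicit in Lemma~\ref{5.4.10}) that $\id\otimes\delta_k$ maps $C^*(\hat\rho)$ into $B_k$. A routine check on elementary tensors $\beta\otimes\upsilon_l$ shows
\begin{equation*}
(\id\otimes\delta_k)\bigl(\xi\cdot(\rho_A(a)\otimes\upsilon_0)\bigr)=\alpha_k\,\rho_A(a),
\end{equation*}
and symmetrically for left multiplication; the identity extends by linearity and continuity to all of $C^*(\hat\rho)$. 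Plugging this into $\xi\hat\rho_A(a)=\hat\rho_A(a)\xi$ yields $\alpha_k\rho_A(a)=\rho_A(a)\alpha_k$ for every $a\in A$ and $k\in\Z$. Computing the $(x,y)$-entry of this commutator gives $(a(y)-a(x))\inn{\e_x}{\alpha_k\e_y}=0$ for all $x,y\in\X$ and $a\in C(X)$; since $X$ is compact Hausdorff, Urysohn's lemma supplies, for any distinct $x,y\in\X\subseteq X$, a function $a\in C(X)$ with $a(x)\neq a(y)$. Hence $\inn{\e_x}{\alpha_k\e_y}=0$ whenever $x\neq y$, and so $\alpha_k\in D_\X\cap B_k=D_k$.

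Each Fourier component $\alpha_k\otimes\upsilon_k$ therefore lies in $D_k\otimes\upsilon_k\subseteq\hat D$, and it remains to reconstruct $\xi$ from them. The gauge action $\Phi'_c:=\id\otimes\Phi_c$ on $C^*(\hat\rho)$ provided by Proposition~\ref{2.12} is strongly continuous in $c\in\T$ (obvious on the dense algebraic span of the $B_k\otimes\upsilon_k$, and extending to the whole algebra by a standard $\varepsilon/3$ argument). The continuous Banach-space-valued map $c\mapsto\Phi'_c(\xi)$ has $k$-th Fourier coefficient $\int_\T\bar c^{k}\,\Phi'_c(\xi)\,dc=\alpha_k\otimes\upsilon_k$, and the Fej\'er theorem for Banach-valued continuous functions on $\T$ yields that the Ces\`aro means
\begin{equation*}
\sigma_N(\xi):=\sum_{\abs k\leq N}\Bigl(1-\tfrac{\abs k}{N+1}\Bigr)\alpha_k\otimes\upsilon_k
\end{equation*}
converge in norm to $\Phi'_1(\xi)=\xi$. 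Since each $\sigma_N(\xi)$ is a finite linear combination of elements of $\hat D$ and $\hat D$ is norm-closed, we conclude $\xi\in\hat D$.

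The main obstacle I anticipate is precisely this final reconstruction step, which requires some comfort with Ces\`aro/Fej\'er convergence for a strongly continuous circle action on a C*-algebra. The earlier separation-of-points computation forcing each $\alpha_k$ to be diagonal is essentially routine once the Fourier decomposition is in place. An alternative bypass is to construct a candidate $\eta\in\hat D$ with the same Fourier coefficients as $\xi$ and then apply Fact~\ref{2.4.4} to $\xi-\eta$, but building such an $\eta$ from the prescribed coefficients reduces to the same convergence question.
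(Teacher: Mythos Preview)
Your proof is correct, and the core idea---using Urysohn's lemma to show that each Fourier component $\alpha_k$ of an element commuting with $\hat\rho_A(A)$ must be diagonal---matches the paper's exactly. The difference is organizational: the paper argues by contradiction, taking $\alpha\in C^*(\hat\rho)\setminus\hat D$, writing $\alpha=\sum_k\alpha_k\otimes\upsilon_k$, deducing that some $\alpha_k\notin D_k$, and then exhibiting $f\in A$ with $\alpha_k\rho_A(f)\neq\rho_A(f)\alpha_k$ so that $\alpha$ fails to commute with $\hat\rho_A(f)$. You instead prove directly the stronger equality $\hat\rho_A(A)'\cap C^*(\hat\rho)=\hat D$, and your Fej\'er/Ces\`aro reconstruction makes explicit precisely the step the paper leaves implicit when it asserts that $\alpha\notin\hat D$ forces some $\alpha_k\notin D_k$ (equivalently, that an element all of whose Fourier coefficients lie in the $D_k$ already belongs to $\hat D$). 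Your route buys a cleaner logical structure and a marginally stronger conclusion at the cost of invoking the Fej\'er machinery; the paper's route buys brevity by leaning on the direct-sum notation of Theorem~\ref{5.4.4}.
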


\begin{proof}
  It suffices to prove that $\hat D$ is maximal as a commutative $*$-subalgebra.
  Assume that $\hat D$ is not maximal.
  Then, there exists a commutative $*$-subalgebra $\mathcal D$ of $C^*(\hat\rho)$ such that $\hat D\subsetneq\mathcal D$.
  Let $\alpha\in\mathcal D\setminus\hat D$, which can be decomposed in the form $\alpha=\sum_{k\in\Z}\alpha_k\otimes\upsilon_k$ for some $\alpha_k\in B_k$.
  Since $\alpha$ is not in $\hat D$, $\alpha_k$ is not in $D_k$ for some $k$.

  There exist two distinct points $x$ and $y$ in $\X$ such that $\inn{\e_x}{\alpha_k\e_y}$ is not zero.
  We can take a real-valued continuous function $f$ on $X$ such that $f(x)=0$ and $f(y)=1$.
  The $\X$-squared matrix $\rho_A(f)$ is self-adjoint and diagonal.
  It follows that 
  \begin{gather*}
    \inn{\e_x}{\alpha_k\rho_A(f)\e_y}
    =\inn{\e_x}{\alpha_k\e_yf(y)}
    =\inn{\e_x}{\alpha_k\e_y1}
    =\inn{\e_x}{\alpha_k\e_y}
    \neq 0;\\
    \intertext{and}
    \inn{\e_x}{\rho_A(f)\alpha_k\e_y}
    =\inn{\rho_A(f)\e_x}{\alpha_k\e_y}
    =\inn{\e_xf(x)}{\alpha_k\e_y}
    =\inn{\e_x0}{\alpha_k\e_y}
    =0.
  \end{gather*}
  Hence, $\alpha_k\rho_A(f)\neq\rho_A(f)\alpha_k$ holds.

  On the other hand, we have 
  \begin{align*}
    \alpha\hat\rho_A(f)
    &=\bigl(\ \sum_{k\in\Z}\alpha_k\otimes\upsilon_k\ \bigr)\bigl(\rho_A(f)\otimes\upsilon_0\bigr)
    =\sum_{k\in\Z}(\alpha_k\rho_A(f))\otimes\upsilon_k;\\
    \hat\rho_A(f)\alpha
    &=\bigl(\rho_A(f)\otimes\upsilon_0\bigr)\bigl(\ \sum_{k\in\Z}\alpha_k\otimes\upsilon_k\ \bigr)
    =\sum_{k\in\Z}(\rho_A(f)\alpha_k)\otimes\upsilon_k.
  \end{align*}
  Since $\hat\rho_A(f)\in D_0\otimes\upsilon_0\subseteq\hat D\subsetneq\mathcal D$, the operators $\alpha$ and $\hat\rho_A(f)$ commute.
  Thus, the equation $\alpha_k\rho_A(f)=\rho_A(f)\alpha_k$ holds.

  By the above discussion, a contradiction follows.
  Therefore, $\hat D$ is necessarily maximal.
\end{proof}

\begin{lemma}\label{5.5.10}
  Assume that $\gamma$ is essentially free.
  Let $\X'$ be the complement in $X$ of the completely $\gamma$-invariant subset of $X$ generated by the set 
  \begin{equation*}
    S := (X\setminus\X)\cup\bigcup_{\boldsymbol k\neq\boldsymbol l}\set{x\in X}{\gamma_{\boldsymbol k}(x)=\gamma_{\boldsymbol l}(x)},
  \end{equation*}
  where $\boldsymbol k$ and $\boldsymbol l$ range over all finite sequences of indices in $\{1,\ldots,N\}$.
  Then, $\X'$ satisfies the following conditions:
  \begin{itemize}
    \item[(I')] The set $\X'$ is dense in $X$ and completely invariant under $\gamma$.
    \item[(II')] The images $\gamma_1(\X'),\ldots,\gamma_N(\X')$ are pairwise disjoint.
    \item[(III')] The restriction $\gamma_k|_{\X'}$ is injective for all $k=1,\ldots,N$.
    \item[(V')] The set $\X$ is disjoint from the union 
    \begin{equation*}
      \bigcup_{\boldsymbol j\neq\boldsymbol k}\set{x\in X}{\gamma_{\boldsymbol j}(x)=\gamma_{\boldsymbol k}(x)},
    \end{equation*}
    where $\boldsymbol j$ and $\boldsymbol k$ range over all finite sequences of indices in $\{1,\ldots,N\}$.
  \end{itemize}
\end{lemma}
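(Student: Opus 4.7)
The plan is to let $T$ denote the completely $\gamma$-invariant subset of $X$ generated by $S$, so $\X' = X\setminus T$, and to deduce the four listed properties essentially by showing that $T$ is meager via Proposition~\ref{3.11} and then using the inclusions $T\supseteq S\supseteq X\setminus\X$ and $T\supseteq\{x : \gamma_{\boldsymbol k}(x)=\gamma_{\boldsymbol l}(x)\}$ for all $\boldsymbol k\neq\boldsymbol l$.

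The first main step is to verify that $S$ itself is meager. The piece $X\setminus\X$ is meager by condition (VIII). The remaining piece is a union of the sets $\set{x\in X}{\gamma_{\boldsymbol k}(x)=\gamma_{\boldsymbol l}(x)}$ over pairs $\boldsymbol k\neq\boldsymbol l$ of finite index sequences; each such set is nowhere dense by the hypothesis that $\gamma$ is essentially free (Definition~\ref{3.4}), and the collection of such pairs is countable, so the union is meager. Hence $S$ is meager.

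The second main step is to apply Proposition~\ref{3.11} to conclude that $T$ is meager. For this I need to check the hypotheses of that proposition: the $\gamma_k$ are closed embeddings, and there is a dense open $U\subseteq X$ with each $\gamma_k(U)$ open. The embedding part is condition (IV); closedness follows from the fact that $X$ is compact Hausdorff (continuous images of closed subsets of a compact space are compact, hence closed). The dense open $U$ with each $\gamma_k(U)$ open is supplied by the open set condition (VI). Proposition~\ref{3.11} then gives that $T$ is meager, so $\X'=X\setminus T$ is comeager and in particular dense.

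The remaining properties then follow routinely. For (I'), density is established; for complete invariance, a direct check shows that the complement in $X$ of any completely $\gamma$-invariant set is again completely $\gamma$-invariant (using both $\gamma_k(T)\subseteq T$ and $\gamma_k^{-1}(T)\subseteq T$). Since $T\supseteq X\setminus\X$, we have $\X'\subseteq\X$, so conditions (II') and (III') follow by restriction from the corresponding hypotheses (II) and (III) on $\X$. For (V'), since $T$ contains every set of the form $\set{x\in X}{\gamma_{\boldsymbol j}(x)=\gamma_{\boldsymbol k}(x)}$ with $\boldsymbol j\neq\boldsymbol k$, its complement $\X'$ is disjoint from their union. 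The only subtlety in the whole argument is verifying the hypotheses of Proposition~\ref{3.11} and correctly counting the countability of the union appearing in $S$; once that is done, the rest is bookkeeping.
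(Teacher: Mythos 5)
Your proof is correct and follows essentially the same route as the paper: show $S$ is meager (using (VIII) for $X\setminus\X$, essential freeness plus countability of index-sequence pairs for the coincidence sets), invoke Proposition~\ref{3.11} via (IV), compactness, and (VI) to get that the generated invariant set is meager, and then deduce the remaining conditions from $\X'\subseteq\X$ and the fact that complements of completely invariant sets are completely invariant. Your direct justification of (V') --- that $\X'$ avoids the union of coincidence sets because that union is contained in $S$ --- is in fact slightly cleaner than the paper's, which derives (V') from condition (V) even though (V) is not among the standing assumptions at this point.
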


\begin{proof}
  Since the set $\X'$ is contained in $\X$, the conditions (II'), (III'), and (V') follow from the conditions (II), (III), and (V), respectively.
  Since $\X'$ is the complement of a completely $\gamma$-invariant set, $\X'$ is also completely $\gamma$-invariant.
  Since $\gamma$ is essentially free, by the assumption (VIII), $S$ is meager.
  By the assumption (VI) and Proposition~\ref{3.11}, $\X'$ is comeager in $X$.
  By the Baire Category Theorem, $\X'$ is dense in $X$.
  Therefore, the condition (I') holds.
\end{proof}

By Lemma~\ref{5.5.10}, if $\gamma$ is essentially free, we may assume that condition (V) holds by replacing $\X$ with $\X'$ if necessary. 
Therefore, from now on, whenever $\gamma$ is assumed to be essentially free, we will implicitly assume that condition (V) is satisfied.

\begin{lemma}\label{5.5.11}
  Assume that $\gamma$ is essentially free.
  If $k\in\Z\setminus\{0\}$, then all the diagonal entries of $\X$-squared matrices in $B_k$ vanish.
\end{lemma}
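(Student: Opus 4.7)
The plan is to prove the statement first at the level of a single generator $\alpha\in E_{m,n}$ with $m-n=k\neq 0$, show that $\delta(\alpha)=0$ (where $\delta$ is the conditional expectation from Fact~\ref{2.5}), and then extend to $B_k$ by continuity of $\delta$. Throughout, we invoke Lemma~\ref{5.5.10} to silently assume condition (V): if $z\in\X$ and $\gamma_{\boldsymbol j}(z)=\gamma_{\boldsymbol k}(z)$ for finite index sequences $\boldsymbol j,\boldsymbol k$, then $\boldsymbol j=\boldsymbol k$.

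First I would fix $m,n\in\N_0$ with $m-n=k\neq 0$, pick $f\in C(X_{m,n})$, and set $\alpha:=\rho_{m,n}(f)$. To compute the diagonal entry $\inn{\e_x}{\alpha\e_x}$ for $x\in\X$, I split into two cases based on whether $x\in\gamma^n(\X)$. If $x\notin\gamma^n(\X)$, then by the definition of $\rho_{m,n}$ one has $\alpha\e_x=\boldsymbol 0$, so the diagonal entry vanishes. If $x\in\gamma^n(\X)$, then there exist a unique $\boldsymbol k\in I^n$ and $z\in\X$ with $x=\gamma_{\boldsymbol k}(z)$, and the formula gives
\begin{equation*}
\inn{\e_x}{\alpha\e_x}
=\sum_{\substack{\boldsymbol j\in I^m\\ \gamma_{\boldsymbol j}(z)=x}}f(\boldsymbol\gamma_{\boldsymbol j}(z),\boldsymbol\gamma_{\boldsymbol k}(z)).
\end{equation*}
For any $\boldsymbol j$ in the index set of this sum, the equation $\gamma_{\boldsymbol j}(z)=x=\gamma_{\boldsymbol k}(z)$ holds with $z\in\X$, so by condition (V) we must have $\boldsymbol j=\boldsymbol k$. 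But the length of $\boldsymbol j$ is $m$ and the length of $\boldsymbol k$ is $n$, and $m\neq n$, so no such $\boldsymbol j$ exists. Hence the sum is empty and $\inn{\e_x}{\alpha\e_x}=0$.

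Having established $\delta(\alpha)=0$ for every $\alpha\in E_{m,n}$ with $m-n=k$, I would extend by linearity to the algebraic span $\asg{\bigcup_{m-n=k}E_{m,n}}$, and then by continuity of $\delta$ (it is a bounded linear map of norm one into $D_\X$) to its closure, which is $B_k$ by definition. Thus $\delta(\beta)=0$ for every $\beta\in B_k$, i.e., every diagonal entry of $\beta$ vanishes.

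I do not expect any serious obstacle here: the key step is simply using condition (V) to collapse the diagonal sum to an empty sum, which critically requires $m\neq n$. The only subtle bookkeeping is invoking Lemma~\ref{5.5.10} at the outset so that condition (V) is available—after that the argument is mechanical, since the whole statement reduces to checking a single entry formula on the generators $\rho_{m,n}(f)$ and then passing to closed spans via the bounded map $\delta$.
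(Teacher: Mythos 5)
Your proof is correct and follows essentially the same route as the paper: reduce to a single generator of $E_{m,n}$ with $m\neq n$, use condition (V) (available via Lemma~\ref{5.5.10}) to rule out an index sequence $\boldsymbol j$ of length $m$ with $\gamma_{\boldsymbol j}(z)=\gamma_{\boldsymbol k}(z)$, and then pass to $B_k$ by linearity and boundedness of the diagonal-entry map. The only cosmetic difference is that the paper argues by contradiction (deriving $y\notin\X$ from complete invariance) for an arbitrary $\alpha\in E_{m,n}$, whereas you read the vanishing directly off the defining formula for $\rho_{m,n}(f)$; both are fine.
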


\begin{proof}
  We show that all the diagonal entries of $\X$-squared matrices in $E_{m,n}$ vanish for all distinct $m,n\in\N_0$.
  Let $m,n\in\N_0$ be distinct and let $\alpha\in E_{m,n}$.
  Let $y\in\X$.
  Assume that the $(y,y)$-entry of $\alpha$ does not vanish, i.e., $\inn{\e_y}{\alpha\e_y}\neq 0$.
  Since $\alpha\e_y$ is not zero, $y$ is in $\gamma^n(\X)$.
  Hence, there exists a unique index sequence $\boldsymbol k$ of length $n$ such that $y = \gamma_{\boldsymbol k}(x)$.
  Since $\inn{\e_y}{\alpha\e_y}\neq 0$, we have $y = \gamma_{\boldsymbol j}(x)$.
  Thus, we have $\gamma_{\boldsymbol j}(x) = \gamma_{\boldsymbol k}(x)$.
  By the condition (V), we have $x\not\in\X$.
  Since $\X$ is completely $\gamma$-invariant, we must have $y\not\in\X$, which leads to a contradiction.
  Therefore, the $(y,y)$-entry of $\alpha$ vanishes for all $y\in\X$.

  Let $k\in\Z\setminus\{0\}$.
  Since the function $\alpha\mapsto\inn{\e_y}{\alpha\e_y}$ is a bounded linear functional on $M_{\X}$, all the diagonal entries of $\X$-squared matrices in $B_k$ vanish.
\end{proof}

\begin{lemma}\label{5.5.2}
  Assume that $\gamma$ is essentially free.
  Then, $D_k=\{0\}$ holds for all $k\in\Z$ with $k\neq 0$.
\end{lemma}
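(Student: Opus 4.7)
The plan is to combine the definition of $D_k$ with Lemma~\ref{5.5.11} in a completely direct fashion. By definition, $D_k = B_k \cap D_\X$, so any element $\alpha \in D_k$ is simultaneously a member of $B_k$ and a diagonal $\X$-squared matrix. Being diagonal means that every off-diagonal entry $\inn{\e_x}{\alpha\e_y}$ with $x\neq y$ vanishes. So the only entries that could possibly be nonzero are the diagonal entries $\inn{\e_x}{\alpha\e_x}$.

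Now I would invoke Lemma~\ref{5.5.11}, which under the essential freeness assumption asserts that for $k\in\Z\setminus\{0\}$, every $\X$-squared matrix in $B_k$ has vanishing diagonal entries. Applying this to $\alpha\in B_k$ shows that all the diagonal entries of $\alpha$ are zero as well. Thus every entry of $\alpha$ is zero, forcing $\alpha=0$. Since $\alpha\in D_k$ was arbitrary, we conclude $D_k=\{0\}$.

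There is essentially no obstacle here; the lemma is a short combination of two earlier facts. The only thing to be careful about is that Lemma~\ref{5.5.11} requires the condition (V) implicitly assumed (via the convention introduced after Lemma~\ref{5.5.10} that, once $\gamma$ is essentially free, we replace $\X$ by $\X'$ to ensure (V) holds). Since the present lemma also assumes $\gamma$ is essentially free, this convention is in force and Lemma~\ref{5.5.11} applies directly.
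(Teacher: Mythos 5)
Your proof is correct and follows exactly the paper's own argument: Lemma~\ref{5.5.11} kills the diagonal entries of any element of $B_k$ for $k\neq 0$, while membership in $D_\X$ kills the off-diagonal entries, so $D_k=B_k\cap D_\X=\{0\}$. Your remark about condition (V) being secured by the convention following Lemma~\ref{5.5.10} is also consistent with how the paper handles it.
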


\begin{proof}
  Let $k\in\Z\setminus\{0\}$.
  By Lemma~\ref{5.5.11}, all the diagonal entries of $\X$-squared matrices in $B_k$ vanish.
  Thus, $D_k = \{0\}$ holds.
\end{proof}

\begin{lemma}\label{5.5.3}
  Assume that $\gamma$ is not essentially free.
  Then, there exists a non-empty finite sequence $\boldsymbol k$ of indices $\{1,\ldots,N\}$ such that the set $\set{x\in X}{x=\gamma_{\boldsymbol k}(x)}$ has non-empty interior.
\end{lemma}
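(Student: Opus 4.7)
The plan is to leverage the open set condition~(VI) together with the embedding assumption~(IV) to convert a nontrivial coincidence between two iterates into an honest fixed-point relation. First, unpacking Definition~\ref{3.4}, the failure of essential freeness supplies two distinct finite index sequences $\boldsymbol k$ and $\boldsymbol l$ for which $S := \set{x \in X}{\gamma_{\boldsymbol k}(x) = \gamma_{\boldsymbol l}(x)}$ is not nowhere dense. Because $X$ is Hausdorff and $\gamma_{\boldsymbol k}, \gamma_{\boldsymbol l}$ are continuous, the set $S$ is closed, so $S$ itself already has non-empty interior. Combined with the density of the open set $U$ from~(VI), the set $W := \mathrm{int}(S) \cap U$ is a non-empty open subset of $U$ on which the equation $\gamma_{\boldsymbol k}(x) = \gamma_{\boldsymbol l}(x)$ holds identically.

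Next I would argue that one of $\boldsymbol k$, $\boldsymbol l$ must be an initial segment of the other. If not, a routine induction on $\min(|\boldsymbol k|, |\boldsymbol l|)$---peeling off the outermost index at each step by using the injectivity of each $\gamma_i$ from (IV), the pairwise disjointness $\gamma_i(U) \cap \gamma_j(U) = \emptyset$ for $i \neq j$, and the inclusion $\gamma_i(U) \subseteq U$ from (VI)---shows that $\gamma_{\boldsymbol k}(U) \cap \gamma_{\boldsymbol l}(U) = \emptyset$; but any $x \in W \subseteq U$ supplies a common point $\gamma_{\boldsymbol k}(x) = \gamma_{\boldsymbol l}(x)$ in the intersection, a contradiction. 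This is essentially the initial-segment dichotomy already recorded for $\X$ at the top of Section~5, now applied with $U$ in place of $\X$.

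Without loss of generality, assume $\boldsymbol l$ is an initial segment of $\boldsymbol k$; then $\boldsymbol k$ decomposes so that $\gamma_{\boldsymbol k} = \gamma_{\boldsymbol l} \circ \gamma_{\boldsymbol m}$ for some non-empty finite index sequence $\boldsymbol m$ (non-emptiness uses $\boldsymbol k \neq \boldsymbol l$). For every $x \in W$ the equation $\gamma_{\boldsymbol k}(x) = \gamma_{\boldsymbol l}(x)$ rewrites as $\gamma_{\boldsymbol l}(\gamma_{\boldsymbol m}(x)) = \gamma_{\boldsymbol l}(x)$, and since $\gamma_{\boldsymbol l}$ is injective on all of $X$ by~(IV), we conclude $\gamma_{\boldsymbol m}(x) = x$ throughout $W$. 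Hence $W$ is a non-empty open subset of $\set{x \in X}{x = \gamma_{\boldsymbol m}(x)}$, so the latter set has non-empty interior, as desired. The main obstacle I expect is the careful bookkeeping in the initial-segment dichotomy for $U$: while morally the same as the version stated for $\X$, one must spell out the outermost-index peeling using both injectivity of each $\gamma_i$ and disjointness of the $\gamma_i(U)$, which requires attention to the base cases when one of the sequences is exhausted; the remaining steps are short.
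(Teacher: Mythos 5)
Your proposal is correct and follows essentially the same route as the paper's proof: pass from "not nowhere dense" to non-empty interior via closedness, intersect with the dense open set $U$, use the disjointness and invariance of the $\gamma_i(U)$ to force one index sequence to be an initial segment of the other, and then cancel the common outer composition by injectivity to obtain a fixed-point set with non-empty interior. The only difference is that you spell out the initial-segment dichotomy by peeling outermost indices, whereas the paper asserts it directly from condition (VI).
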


\begin{proof}
  Since $\gamma$ is not essentially free, there exist distinct two finite sequences $\boldsymbol j = (j_1, \ldots, j_m)$ and $\boldsymbol l = (l_1, \ldots, l_n)$ of indices in $\{1,\ldots,N\}$ such that the set $Y:=\set{x\in X}{\gamma_{\boldsymbol j}(x)=\gamma_{\boldsymbol l}(x)}$ has non-empty interior.
  Without loss of generality, we may assume $m \leq n$.
  Since $U$ is a dense open subset of $X$, the intersection $U\cap Y$ has also non-empty interior.
  Since the images $\gamma_1(U),\ldots,\gamma_N(U)$ are pairwise disjoint and contained in $U$, $\boldsymbol j$ is an initial segment of $\boldsymbol l$, i.e., $j_1 = l_1$, \dots, $j_m = l_m$.
  Since $\boldsymbol j$ and $\boldsymbol l$ are distinct, we have $m < n$.
  Hence, $\boldsymbol k := (l_{m+1}, \ldots, l_n)$ is not empty.
  By the injectivity of $\gamma_1,\ldots,\gamma_N$, the map $\gamma_{\boldsymbol j}$ is injective and $Y = \set{x\in X}{x = \gamma_{\boldsymbol k}(x)}$ holds.
\end{proof}

\begin{lemma}\label{5.5.4}
  Assume that $\gamma$ is not essentially free.
  Then, there exists a non-zero integer $k$ such that $D_k\neq\{0\}$.
\end{lemma}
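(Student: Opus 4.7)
The plan is to exhibit a non-zero element of $D_{-n}$ where $n \geq 1$ is the length of the sequence $\boldsymbol k = (k_n,\ldots,k_1)$ supplied by Lemma~\ref{5.5.3}. The set $Y := \set{x\in X}{x = \gamma_{\boldsymbol k}(x)}$ then has non-empty interior $V$, and since $\X$ is comeager in the Baire space $X$ by assumption~(VIII) while $V$ is a non-empty open set, I can pick a point $x_0 \in V \cap \X$. The strategy is to choose a bump function $f \in C(X_n)$ supported near the ``fixed-point chain'' $\boldsymbol\gamma_{\boldsymbol k}(x_0) \in X_n$, whose source and image coincide at $x_0$, so that $\rho_{0,n}(f) \in E_{0,n} \subseteq B_{-n}$ becomes a non-zero diagonal $\X$-squared matrix and hence lies in $D_{-n}$.

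The crux is to construct an open neighborhood $W \subseteq X_n$ of $\boldsymbol\gamma_{\boldsymbol k}(x_0)$ such that every $\gamma$-chain in $W$ whose source lies in $\X$ is of the form $\boldsymbol\gamma_{\boldsymbol k}(y_0)$ for some $y_0 \in V$, and in particular has its image equal to its source. I will take $W$ as the trace on $X_n$ of a product of open neighborhoods of the coordinates of $\boldsymbol\gamma_{\boldsymbol k}(x_0)$: a neighborhood $V_0 \subseteq V$ of $x_0$ at the source end, and, for each other coordinate $\gamma_{k_i,\ldots,k_1}(x_0)$, a neighborhood $V_i$ contained in the open set $\gamma_{k_i}(U)$. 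Such a $V_i$ exists because iterating $\gamma(U)\subseteq U$ from assumption~(VI) places $\gamma_{k_{i-1},\ldots,k_1}(x_0) \in U$, hence $\gamma_{k_i,\ldots,k_1}(x_0)\in\gamma_{k_i}(U)$. For a chain in $W$ whose source $y_0$ is in $\X\subseteq U$, complete $\gamma$-invariance of $\X$ forces each subsequent coordinate $y_i$ to lie both in $V_i\subseteq\gamma_{k_i}(U)$ and in $\gamma_{l_i}(U)$ for some uniquely determined index $l_i$; the pairwise disjointness of $\gamma_1(U),\ldots,\gamma_N(U)$ forces $l_i=k_i$ for every $i$, while $y_0\in V\subseteq Y$ forces the image coordinate to equal $\gamma_{\boldsymbol k}(y_0)=y_0$.

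Once $W$ is built, Urysohn's lemma on the normal space $X_n$ provides $f\in C(X_n)$ with $f(\boldsymbol\gamma_{\boldsymbol k}(x_0))=1$ and $\supp f\subseteq W$. Under the canonical identification $X_{0,n}\cong X_n$, the $(x,y)$-entry of $\rho_{0,n}(f)$ for $x,y\in\X$ equals $f(\boldsymbol\gamma_{\boldsymbol l}(x))$ when $y=\gamma_{\boldsymbol l}(x)$ for the unique $\boldsymbol l\in I^n$, and vanishes otherwise; since $f$ vanishes outside $W$, the defining property of $W$ forces $y=x$ whenever this entry is non-zero. Hence $\rho_{0,n}(f)\in D_\X\cap E_{0,n}\subseteq D_\X\cap B_{-n}=D_{-n}$, and its $(x_0,x_0)$-entry equals $1$. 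Since $-n$ is a non-zero integer, this yields $D_{-n}\neq\{0\}$ and proves the lemma. The main technical obstacle is the construction of $W$: the open set condition has to be leveraged in two distinct ways, once to pin the chain of indices to $\boldsymbol k$ coordinate by coordinate, and once (via the fixed-point condition on $V$) to force the image and source of the chain to agree.
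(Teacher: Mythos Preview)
Your proof is correct and follows essentially the same approach as the paper's: both invoke Lemma~\ref{5.5.3} to obtain a fixed-point index sequence $\boldsymbol k$, then exploit the open set condition~(VI) to show that chains near $\boldsymbol\gamma_{\boldsymbol k}(x_0)$ with source in $U$ (or $\X$) must have index sequence exactly $\boldsymbol k$, and finally take a bump function supported on such chains to produce a nonzero diagonal element of some $E_{m,n}$ with $m\neq n$. The only superficial differences are that the paper works with $\rho_{n,0}$ (landing in $D_n$) rather than your $\rho_{0,n}$ (landing in $D_{-n}$), and it asserts the open-embedding property of $x\mapsto(\boldsymbol\gamma_{\boldsymbol k}(x),x)$ on $U$ directly, whereas you carry out the equivalent verification explicitly via the product neighborhood $W$.
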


\begin{proof}
  Since $\gamma$ is not essentially free, by Lemma~\ref{5.5.3}, there exists a non-empty finite sequence $\boldsymbol k$ of indices in $\{1,\ldots,N\}$ such that the set $Y:=\set{x\in X}{\gamma_{\boldsymbol k}(x) = x}$ has non-empty interior.
  Let $n$ be the length of $\boldsymbol k$.
  Since $U$ is a dense open subset of $X$, the intersection $U\cap Y$ has non-empty interior.
  The set $\boldsymbol Y := \set{(\boldsymbol\gamma_{\boldsymbol k}(x),x)}{x\in U\cap Y}$ is contained in $X_{n,0}$.
  Since the map $x\mapsto (\boldsymbol\gamma_{\boldsymbol k}(x),x)$ is an open embedding of $U$ into $X_{n,0}$, $\boldsymbol Y$ has non-empty interior in $X_{n,0}$.
  There exists a continuous function $f$ on $X_{n,0}$ that does not vanish on $\boldsymbol Y$ and whose support is contained in $\boldsymbol Y$.
  By the assumptions (VII) and (VIII), the $\X$-squared matrix $\rho_{n,0}(f)$ is non-zero.
  Since $\rho_{n,0}(f)$ is non-zero and diagonal, and belongs to $E_{n,0}$.
  Thus, $D_n \neq \{0\}$ holds.
\end{proof}

\begin{theorem}\label{5.5.5}
  The following three conditions are equivalent:
  \begin{enumerate}
    \item $\gamma$ is essentially free.
    \item $D_k = \{0\}$ holds for all $k\in\Z$ with $k\neq 0$.
    \item $\hat D = D_0\otimes\upsilon_0$.
  \end{enumerate}
\end{theorem}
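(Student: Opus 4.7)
The plan is to stitch together the two preceding lemmas with Proposition~\ref{2.4.3}; no new work is required beyond recognizing that the theorem is essentially a bookkeeping statement.

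First I would handle the equivalence (1) $\Leftrightarrow$ (2). The direction (1) $\Rightarrow$ (2) is exactly the content of Lemma~\ref{5.5.2}. For the converse, I would argue by contraposition: if $\gamma$ is not essentially free, then Lemma~\ref{5.5.4} provides a non-zero integer $k$ with $D_k \neq \{0\}$, which violates (2). Note that invoking Lemma~\ref{5.5.2} requires the condition (V) on $\X$, but by the convention established just after Lemma~\ref{5.5.10}, condition (V) is assumed to hold whenever $\gamma$ is essentially free, so this invocation is legitimate.

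Next I would establish (2) $\Leftrightarrow$ (3). By the definition of $\hat D$, one has
\begin{equation*}
  \hat D = \bigoplus_{k\in\Z} D_k \otimes \upsilon_k,
\end{equation*}
and the summand $D_0 \otimes \upsilon_0$ is always contained in $\hat D$. So the equation $\hat D = D_0 \otimes \upsilon_0$ is equivalent to $D_k \otimes \upsilon_k = \{0\}$ for every $k \neq 0$. By Proposition~\ref{2.4.3}, the linear map $-\otimes \upsilon_k$ from $M_\X$ to $M_\X \otimes C^*(\Z)$ is isometric, hence injective; therefore $D_k \otimes \upsilon_k = \{0\}$ if and only if $D_k = \{0\}$. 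Combining these observations gives the equivalence of (2) and (3).

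The main obstacle is non-existent at this level, since the substantive content of the theorem has already been packaged into Lemmas~\ref{5.5.2} and~\ref{5.5.4}. The only conceptual subtlety worth flagging explicitly is the legitimacy of invoking Lemma~\ref{5.5.2} without a separate hypothesis that $\X$ satisfies condition (V); this is secured by Lemma~\ref{5.5.10} and the convention following it, which allow us to replace $\X$ by a smaller comeager completely $\gamma$-invariant set on which (V) holds without changing any of the algebras under consideration.
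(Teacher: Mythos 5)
Your proof is correct and takes essentially the same route as the paper: the equivalence of (1) and (2) is obtained from Lemma~\ref{5.5.2} and Lemma~\ref{5.5.4}, and the equivalence of (2) and (3) follows from the definition of $\hat D$ as the direct sum $\bigoplus_{k\in\Z}D_k\otimes\upsilon_k$. Your additional remarks on the injectivity of $-\otimes\upsilon_k$ and on the legitimacy of assuming condition (V) merely make explicit what the paper leaves implicit.
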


\begin{remark}
  Since $\hat\rho_A(A) = \rho_A(A)\otimes\upsilon_0$ is contained in $D_0\otimes\upsilon_0$, $\gamma$ must be essentially free for $\hat\rho_A(A)\cong C(X)$ to be a masa of $C^*(\hat\rho)\cong\O_\gamma$.
\end{remark}

\begin{proof}
  The equivalence of (1) and (2) follows from Lemma~\ref{5.5.2} and Lemma~\ref{5.5.4}.
  The equivalence of (2) and (3) is clear.
\end{proof}

\begin{lemma}\label{5.5.6}
  Assume that the set $\gamma^n(X)$ is clopen in $X$ for all $n\in\N$ and there exists a continuous map $\sigma$ from $\gamma(X)$ to $X$ such that $\sigma\circ\gamma_k = \id_X$ holds for all $k\in\{1,\ldots,N\}$.
  Then, for all $\alpha\in B_0$, the function $x\mapsto\inn{\e_x}{\alpha\e_x}$ on $\X$ has a unique continuous extension to a function on $X$.
\end{lemma}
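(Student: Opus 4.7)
My plan is to reduce the claim to the dense subset $\bigcup_{m\in\N_0} E_{m,m}$ of $B_0$ and then handle a generator $\alpha = \rho_{m,m}(f)$ explicitly using the backward section $\sigma$ together with the clopen hypothesis. Recall that $B_0 = \casg{\bigcup_m E_{m,m}}$, and consider the linear contraction $R\colon M_\X \to \ell^\infty(\X)$ defined by $R(\alpha)(y) := \inn{\e_y}{\alpha\e_y}$. Since $\X$ is dense in the compact Hausdorff space $X$, the restriction map $C(X) \to \ell^\infty(\X)$ is isometric, so its image $C(X)|_\X$ is a norm-closed subspace of $\ell^\infty(\X)$. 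As $R$ is continuous and linear, it suffices to prove $R(E_{m,m}) \subseteq C(X)|_\X$ for every $m \in \N_0$; uniqueness of the continuous extension then follows from the density of $\X$.

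Fix $m \in \N_0$ and $f \in C(X_{m,m})$ and set $\alpha := \rho_{m,m}(f)$. For $y \in \X$ I first compute the diagonal entry explicitly. If $y \in \gamma^m(\X)$, write $y = \gamma_{\boldsymbol k}(x)$ uniquely with $\boldsymbol k \in I^m$ and $x \in \X$; the pairwise disjointness of the images $\gamma_{\boldsymbol j}(\X)$, $\boldsymbol j \in I^m$, forces the only contribution to $\inn{\e_y}{\alpha\e_y}$ to come from $\boldsymbol j = \boldsymbol k$, yielding $\inn{\e_y}{\alpha\e_y} = f(\boldsymbol\gamma_{\boldsymbol k}(x),\boldsymbol\gamma_{\boldsymbol k}(x))$. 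The crucial observation is that the section $\sigma$ allows me to reparametrize this intrinsically in terms of $y$: since $\sigma \circ \gamma_j = \id_X$ for all $j$, the chain $\boldsymbol\gamma_{\boldsymbol k}(x)$ coincides with $(y,\sigma(y),\sigma^2(y),\ldots,\sigma^m(y))$. Meanwhile, if $y \notin \gamma^m(\X)$, then the very definition of $\rho_{m,n}$ gives $\rho_{m,m}(f)\e_y = 0$, so the diagonal entry is $0$.

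This motivates the candidate extension. Each iterate $\sigma^j$ is well-defined and continuous on $\gamma^m(X)$ for $j = 0,1,\ldots,m$, since $\sigma^j(y) \in \gamma^{m-j}(X) \subseteq \gamma(X)$ for $j < m$. Define $\tilde g\colon X \to \C$ by
\begin{equation*}
  \tilde g(y) := \begin{cases}
    f\bigl((y,\sigma(y),\ldots,\sigma^m(y)),\,(y,\sigma(y),\ldots,\sigma^m(y))\bigr) & \text{if } y \in \gamma^m(X), \\
    0 & \text{otherwise.}
  \end{cases}
\end{equation*}
The hypothesis that $\gamma^m(X)$ is clopen is exactly what guarantees that $\tilde g$, being continuous on each piece of a clopen partition of $X$, is continuous on all of $X$. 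Using $\X \cap \gamma^m(X) = \gamma^m(\X)$ (which follows from the complete $\gamma$-invariance of $\X$ together with the pairwise disjointness and injectivity on $\X$), one sees $\tilde g|_\X = R(\alpha)$, so $R(\alpha) \in C(X)|_\X$.

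The main obstacle, if any, is simply the clean identification of the chain $\boldsymbol\gamma_{\boldsymbol k}(x)$ with the backward orbit $(y,\sigma(y),\ldots,\sigma^m(y))$; the rest is routine. The hypotheses enter decisively in exactly two places: the section $\sigma$ upgrades the diagonal entry into a continuous function on $\gamma^m(X)$, and the clopen property of $\gamma^m(X)$ permits extension by zero continuously to its complement.
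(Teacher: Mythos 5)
Your proof is correct and follows essentially the same route as the paper: both compute the diagonal entry of $\rho_{m,m}(f)$ as $f(\boldsymbol\sigma_m(y),\boldsymbol\sigma_m(y))$ on $\gamma^m(X)$ (via the section $\sigma$) and $0$ elsewhere, invoke the clopenness of $\gamma^m(X)$ for continuity of the extension, and pass to all of $B_0$ by linearity and boundedness of the diagonal-entry map. The only cosmetic imprecision is calling $\bigcup_m E_{m,m}$ itself dense in $B_0$ (it is the additive subgroup it generates that is dense), but your argument already accommodates this since $C(X)|_\X$ is a closed linear subspace of $\ell^\infty(\X)$.
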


\begin{proof}
  For $n\in\N_0$, define a map $\boldsymbol\sigma_n\colon\gamma^n(X)\to X_n$ by 
  \begin{equation*}
    \boldsymbol\sigma_n(x) := (x,\sigma(x),\sigma^2(x),\ldots,\sigma^n(x)).
  \end{equation*}
  This is a homeomorphism.
  The following two statements for $x,y\in X$ are equivalent:
  \begin{itemize}
    \item There exists an index $k\in\{1,\ldots,N\}$ such that $x = \gamma_k(y)$.
    \item $x\in\gamma(X)$ and $\sigma(x) = y$.
  \end{itemize}
  Hence, the following equation holds for any $f\in C(X_{n,n})$ and $x\in\X$:
  \begin{equation*}
    \inn{\e_x}{\rho_{n,n}(f)\e_x}
    = \begin{cases}
      f(\boldsymbol\sigma_n(x),\boldsymbol\sigma_n(x)) & \text{if}\ x\in \gamma^n(X),\\
      0 & \text{otherwise}.
    \end{cases}
  \end{equation*}
  Since $\gamma^n(X)$ are clopen, the function $x\mapsto\inn{\e_x}{\rho_{n,n}(f)\e_x}$ on $\X$ has a unique continuous extension to a function on $X$.
  Therefore, for all $\alpha\in E_{n,n}$, the function $x\mapsto\inn{\e_x}{\alpha\e_x}$ on $\X$ has a unique continuous extension to a function on $X$.
  Define an operator $\Phi\colon\asg{\bigcup_{n\in\N_0}E_{n,n}}\to C(X)$ by $\Phi(\alpha)(x) := \inn{\e_x}{\alpha\e_x}$ for $f\in\asg{\bigcup_{n\in\N_0}E_{n,n}}$ and $x\in\X$.
  This $\Phi$ is bounded; hence has a unique continuous extension to $B_0$.
  For all $\alpha\in B_0$, the function $x\mapsto\inn{\e_x}{\alpha\e_x}$ on $\X$ has a unique continuous extension to a function on $X$.
\end{proof}

\begin{theorem}\label{5.5.7}
  Assume that the set $\gamma^n(X)$ is clopen in $X$ for all $n\in\N$ and there exists a continuous map $\sigma$ from $\gamma(X)$ to $X$ such that $\sigma\circ\gamma_k = \id_X$ holds for all $k\in\{1,\ldots,N\}$.
  Then, one has $\rho_A(A) = D_0$.
\end{theorem}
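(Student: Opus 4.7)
The plan is to derive this theorem directly from Lemma~\ref{5.5.6}, which is the substantive result: once one knows that every diagonal entry function of an element of $B_0$ extends continuously to $X$, the identification of $D_0$ with $\rho_A(A)$ is essentially bookkeeping.

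First I would dispense with the easy inclusion $\rho_A(A)\subseteq D_0$. The containment $\rho_A(A)=E_0=E_{0,0}\subseteq B_0$ is immediate from the definitions (and was already noted when $D_0$ was introduced), and every $\rho_A(a)$ is diagonal by the very definition of $\rho_A$, so $\rho_A(A)\subseteq B_0\cap D_\X=D_0$.

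For the reverse inclusion $D_0\subseteq\rho_A(A)$, take an arbitrary $\alpha\in D_0$. By Lemma~\ref{5.5.6}, the function $x\mapsto\inn{\e_x}{\alpha\e_x}$ defined on $\X$ admits a (unique) continuous extension $\tilde\alpha\in C(X)=A$; note that the hypotheses on $\gamma^n(X)$ being clopen and on the existence of $\sigma$ are exactly what Lemma~\ref{5.5.6} needs, so this step is legitimate. Now I would verify $\alpha=\rho_A(\tilde\alpha)$ by checking equality on the basis vectors $\e_x$, $x\in\X$: because $\alpha$ is diagonal, one has
\begin{equation*}
  \alpha\e_x
  =\e_x\inn{\e_x}{\alpha\e_x}
  =\e_x\tilde\alpha(x)
  =\rho_A(\tilde\alpha)\e_x
\end{equation*}
for every $x\in\X$, using the definition of $\rho_A$ in the last step. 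Since $\{\e_x\}_{x\in\X}$ spans a dense subspace of $\H_\X$, this forces $\alpha=\rho_A(\tilde\alpha)\in\rho_A(A)$, completing the proof.

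There is essentially no obstacle beyond invoking Lemma~\ref{5.5.6}; the content of the theorem lies entirely in that lemma, which produces the continuous extension from $\X$ to $X$. If anything, the only minor care needed is to observe that we are using Lemma~\ref{5.5.6} for an element of $D_0\subseteq B_0$ and that the diagonality of $\alpha$ reduces the identification to a pointwise one. Once the extension $\tilde\alpha$ is in hand, the argument is a one-line computation.
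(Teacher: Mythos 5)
Your proof is correct and follows essentially the same route as the paper: both reduce the theorem to Lemma~\ref{5.5.6}, take the continuous extension of the diagonal entry function of $\alpha\in D_0$ as the preimage in $A$, and note that the reverse inclusion is immediate. Your version merely spells out the pointwise verification $\alpha\e_x=\rho_A(\tilde\alpha)\e_x$ that the paper leaves implicit.
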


\begin{proof}
  By Lemma~\ref{5.5.6}, for any $\alpha\in D_0$, defining $a\in C(X)$ by $a(x):=\inn{\e_x}{\alpha\e_x}$ for $x\in\X$, we have $\alpha = \rho_A(a)$.
  Thus, $D_0\subseteq\rho_A(A)$ holds.
  The inverse inclusion is clear.
\end{proof}

\subsection{Approximate unit}

\begin{lemma}\label{5.19}
  The equation $\lim_\lambda(a_\lambda\cdot f)=f$ holds for all approximate unit $\{a_\lambda\}$ of $A$ and $f\in E$.
\end{lemma}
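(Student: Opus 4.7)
The plan is to exploit the fact that $A = C(X)$ is unital, since $X$ is compact Hausdorff. In a unital C*-algebra, any approximate unit $\{a_\lambda\}$ converges in norm to the identity: indeed, $a_\lambda \cdot 1_A \to 1_A$ by the defining property of an approximate unit, and $a_\lambda \cdot 1_A = a_\lambda$. So the first step is simply to note that $a_\lambda \to 1_A$ in the norm of $A$.

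Next, I would observe that the natural left action $\varphi\colon A \to \L(E)$ satisfies $\varphi(1_A) = \id_E$. This is immediate from the definition of the left $A$-scalar multiplication on $E = C(G)$: for any $f \in C(G)$ and any $(x,y) \in G$, one has $(1_A \cdot f)(x,y) = 1_A(x) f(x,y) = f(x,y)$.

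The conclusion then follows from the contractivity of $\varphi$ (as a $*$-homomorphism between C*-algebras, $\varphi$ has norm at most $1$): for every $f \in E$,
\begin{equation*}
  \norm{a_\lambda \cdot f - f}_E
  = \norm{\varphi(a_\lambda - 1_A)(f)}_E
  \leq \norm{a_\lambda - 1_A}_A \cdot \norm{f}_E
  \xrightarrow{\lambda} 0.
\end{equation*}

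There is no real obstacle here; the lemma is essentially a convenience statement recording the left-action analogue of Fact~\ref{2.8}, and it holds immediately because the unitality of $C(X)$ makes the left action of $A$ on $E$ non-degenerate in the strongest possible way.
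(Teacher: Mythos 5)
Your proof is correct, and it takes a genuinely different and shorter route than the paper's. The paper restricts $f$ to the graph of each $\gamma_k$ (using that the $\gamma_k$ are injective embeddings to write $g_k(x)=f(x,\gamma_k^{-1}(x))$), extends each $g_k$ to $X$ by Tietze, and then bounds $\norm{a_\lambda f-f}_{C(G)}$ by $\max_k\norm{a_\lambda f_k-f_k}_{C(X)}$, invoking the approximate-unit property for the finitely many functions $f_k$. You instead use that $A=C(X)$ is unital (as $X$ is compact), so $a_\lambda\to 1_A$ in norm, that $\varphi(1_A)=\id_E$, and that the $*$-homomorphism $\varphi\colon A\to\L(E)$ is contractive; this gives $\norm{a_\lambda\cdot f-f}_E\leq\norm{a_\lambda-1_A}_A\norm{f}_E\to 0$. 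All three ingredients are valid here: the norm convergence of an approximate unit to the unit is standard in a unital C*-algebra, and $\varphi(1_A)$ is the identity on the dense subspace $C(G)$, hence on $E$. Your argument is cleaner and does not use the embedding hypothesis on the $\gamma_k$ at all; the paper's argument is the one that would survive a passage to a locally compact, non-unital setting (where $a_\lambda\to 1$ in norm is unavailable and the left action may be degenerate), which is presumably why it is phrased the way it is.
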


\begin{proof}
  Let $G$ be the graph of $\gamma$.
  For each $k \in I$, since the map $\gamma_k$ is an embedding and therefore injective, we define the continuous function $g_k$ on $\gamma_k(X)$ by $g_k(x) := f(x, \gamma_k^{-1}(x))$ for $x \in \gamma_k(X)$.
  By the Tietze extension theorem, we can extend $g_k$ to a continuous function on $X$, and we denote this extension by $f_k$.
  Recall that $E$ coincides with $C(G)$ as a set and the norm of $E$ is equivalent to the uniform norm.
  We have 
  \begin{align*}
    \norm{a_\lambda f - f}_{C(G)}
    &=\sup_{(x,y)\in G}\abs{a_\lambda(x)f(x,y)-f(x,y)}\\
    &=\max_{k\in I}\sup_{x\in\gamma_k(X)}\abs{a_\lambda(x)f(x,\gamma_k^{-1}(x))-f(x,\gamma_k^{-1}(x))}\\
    &=\max_{k\in I}\sup_{x\in\gamma_k(X)}\abs{a_\lambda(x)f_k(x)-f_k(x)}\\
    &\leq\max_{k\in I}\sup_{x\in X}\abs{a_\lambda(x)f_k(x)-f_k(x)}\\
    &=\max_{k\in I}\norm{a_\lambda f_k - f_k}_{C(X)}.
  \end{align*}
  Since $\lim_\lambda a_\lambda f_k = f_k$ for all $k\in I$, we have $\lim_\lambda\norm{a_\lambda f - f}_{C(G)}=0$ and hence $\lim_\lambda\norm{a_\lambda f - f}_E=0$.
\end{proof}

\begin{proposition}\label{5.20}
  An approximate unit of $\hat\rho_A(A)$ is also an approximate unit of $C^*(\hat\rho)$.
\end{proposition}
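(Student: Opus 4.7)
The plan is to fix an approximate unit $\{u_\lambda\}$ of $\hat\rho_A(A)$ and prove that the set
\begin{equation*}
  S := \set{x\in C^*(\hat\rho)}{u_\lambda x\to x\ \text{and}\ xu_\lambda\to x}
\end{equation*}
coincides with all of $C^*(\hat\rho)$. Since $\hat\rho_A$ is injective (Theorem~\ref{5.2}), the net $\{u_\lambda\}$ pulls back to a self-adjoint approximate unit $\{a_\lambda\}$ of $A$ with $u_\lambda=\hat\rho_A(a_\lambda)$.

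First I would verify that $S$ is a norm-closed $*$-subalgebra of $C^*(\hat\rho)$. Norm-closedness is a standard $\epsilon/3$-argument using the uniform boundedness of $\{u_\lambda\}$. Closure under sums and scalars is immediate. If $x,y\in S$, then $u_\lambda(xy)=(u_\lambda x)y\to xy$ by norm-continuity of right-multiplication by $y$, and symmetrically $(xy)u_\lambda\to xy$, so $xy\in S$. Closure under adjoints uses self-adjointness of $u_\lambda$: if $x\in S$, then $u_\lambda x^*=(xu_\lambda)^*\to x^*$ and $x^*u_\lambda=(u_\lambda x)^*\to x^*$.

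Next I would show $\hat\rho_A(A)\cup\hat\rho_E(E)\subseteq S$. Containment of $\hat\rho_A(A)$ is simply the definition of $\{u_\lambda\}$ being an approximate unit of $\hat\rho_A(A)$. For $f\in E$, the identity
\begin{equation*}
  u_\lambda\hat\rho_E(f)
  =\hat\rho_A(a_\lambda)\hat\rho_E(f)
  =\hat\rho_E(a_\lambda f),
\end{equation*}
combined with Lemma~\ref{5.19} and the boundedness of $\hat\rho_E$, gives $u_\lambda\hat\rho_E(f)\to\hat\rho_E(f)$; the symmetric identity $\hat\rho_E(f)u_\lambda=\hat\rho_E(f\cdot a_\lambda)$ together with Fact~\ref{2.8} gives $\hat\rho_E(f)u_\lambda\to\hat\rho_E(f)$. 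Hence $\hat\rho_E(E)\subseteq S$ as well.

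Finally, since $C^*(\hat\rho)$ is generated as a C*-algebra by $\hat\rho_A(A)\cup\hat\rho_E(E)$, the norm-closed $*$-subalgebra $S$ must equal $C^*(\hat\rho)$, which is the assertion of the proposition. No serious obstacle is anticipated; the essential content of the argument is that Lemma~\ref{5.19} and Fact~\ref{2.8} extend the approximate-unit property from $\hat\rho_A(A)$ to the generating set $\hat\rho_E(E)$, after which density and continuity finish the proof.
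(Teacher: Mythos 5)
Your proof is correct and follows essentially the same route as the paper: pull the approximate unit back to $A$, use Lemma~\ref{5.19} and Fact~\ref{2.8} to handle the generators $\hat\rho_E(E)$, and then extend by algebra and density to all of $C^*(\hat\rho)$. Packaging the density step as ``the set $S$ of elements on which $\{u_\lambda\}$ acts as an approximate unit is a closed $*$-subalgebra containing the generators'' is just a tidier organization of the paper's explicit $\varepsilon$-argument.
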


\begin{proof}
  Let $\{\alpha_\lambda\}$ be an approximate unit of $\hat\rho_A(A)$.
  Since $\hat\rho_A$ is an isomorphism between $A$ and $\hat\rho_A(A)$, letting $a_\lambda:=\hat\rho_A^{-1}(\alpha_\lambda)$ for each $\lambda$, the family $\{a_\lambda\}$ is also an approximate unit for $A$.
  
  We show the equation $\lim_\lambda\beta\alpha_\lambda=\lim_\lambda\alpha_\lambda\beta=\beta$ for all $\beta\in\hat E_1=\hat\rho_E(E)$.
  Take any $\beta\in\hat E_1$, which can be written in the form $\beta=\hat\rho_E(f)$ for some $f\in E$.
  By Fact~\ref{2.8}, $\lim_\lambda f a_\lambda = f$ holds; and by Lemma \ref{5.19}, $\lim_\lambda a_\lambda f = f$ holds.
  Since $\hat\rho_E$ is isometry, we have 
  \begin{equation*}
    \lim_\lambda\beta\alpha_\lambda
    =\lim_\lambda\hat\rho_E(f)\hat\rho_A(a_\lambda)
    =\lim_\lambda\hat\rho_E(fa_\lambda)
    =\hat\rho_E(\lim_\lambda fa_\lambda)
    =\hat\rho_E(f)
    =\beta
  \end{equation*}
  and
  \begin{equation*}
    \lim_\lambda\alpha_\lambda\beta
    =\lim_\lambda\hat\rho_A(a_\lambda)\hat\rho_E(f)
    =\lim_\lambda\hat\rho_E(a_\lambda f)
    =\hat\rho_E(\lim_\lambda a_\lambda f)
    =\hat\rho_E(f)
    =\beta.
  \end{equation*}
  Therefore, the equation $\lim_\lambda\beta\alpha_\lambda=\lim_\lambda\alpha_\lambda\beta=\beta$ holds for all $\beta\in\hat E_1$.

  The equation $\lim_\lambda(\bullet\cdot\alpha_\lambda)=\lim_\lambda(\alpha_\lambda\cdot\bullet)=\bullet$ holds on $\hat\rho_A(A)=\hat E_0$ and on $\hat E_1$; and hence on the $*$-subalgebra $B$ of $C^*(\hat\rho)$ generated by $\hat E_0\cup\hat E_1$.
  The $*$-subalgebra $B$ is dense in $C^*(\hat\rho)$.

  Take any $\beta\in C^*(\hat\rho)$.
  Let $\varepsilon$ be a positive real number.
  We can take $\beta'\in B$ satisfying $\norm{\beta'-\beta}<\varepsilon$.
  Then, we have 
  \begin{align*}
    \norm{\beta\alpha_\lambda-\beta}
    &\leq\norm{(\beta-\beta')(\alpha_\lambda-1)}+\norm{\beta'\alpha_\lambda-\beta'}\\
    &=\norm{\beta-\beta'}\bigl(\norm{\alpha_\lambda}+\norm{1}\bigr)+\norm{\beta'\alpha_\lambda-\beta'}\\
    &<2\varepsilon+\norm{\beta'\alpha_\lambda-\beta'}.
  \end{align*}
  Taking limits on $\lambda$, we get $\lim_\lambda\norm{\beta\alpha_\lambda-\beta}<2\varepsilon$.
  Since this holds for any $\varepsilon>0$, we obtain that $\lim_\lambda\norm{\beta\alpha_\lambda-\beta}=0$.
  In other words, $\lim_\lambda(\beta\alpha_\lambda)=\beta$ holds.

  Therefore, $\{\alpha_\lambda\}$ is an approximate unit for $C^*(\hat\rho)$.
\end{proof}

\begin{corollary}\label{5.21}
  $\hat D$ contains an approximate unit of $C^*(\rho)$.
\end{corollary}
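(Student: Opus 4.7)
The plan is to read this off directly from Proposition~\ref{5.20}, the only real content being to verify that an approximate unit of $\hat\rho_A(A)$ can be viewed as sitting inside $\hat D$. First I would start with any approximate unit $\{a_\lambda\}$ of the C*-algebra $A$, which exists because $A = C(X)$. Setting $\alpha_\lambda := \hat\rho_A(a_\lambda)$ and using that $\hat\rho_A$ is an injective (hence isometric) $*$-homomorphism (Theorem~\ref{5.2} gives injectivity of $\hat\rho$), the net $\{\alpha_\lambda\}$ is an approximate unit of $\hat\rho_A(A)$. Proposition~\ref{5.20} then promotes it to an approximate unit of $C^*(\hat\rho)$.

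The only thing to check is the containment $\{\alpha_\lambda\} \subseteq \hat D$. For this, observe that each $\rho_A(f)$ is diagonal by construction: $\rho_A(f)\e_x = \e_x f(x)$, so $\rho_A(A) \subseteq D_\X$. Combined with $\rho_A(A) = E_0 \subseteq B_0$, this gives $\rho_A(A) \subseteq B_0 \cap D_\X = D_0$. Consequently
\begin{equation*}
  \hat\rho_A(A) = \rho_A(A) \otimes \upsilon_0 \subseteq D_0 \otimes \upsilon_0 \subseteq \hat D,
\end{equation*}
and in particular $\alpha_\lambda \in \hat D$ for every $\lambda$.

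There is no genuine obstacle here; the statement is essentially a two-line corollary of Proposition~\ref{5.20} plus the trivial observation that $\rho_A(A)$ consists of diagonal matrices and hence lies in $D_0$. (I note that the statement of the corollary, as printed, reads ``approximate unit of $C^*(\rho)$'', but since $\hat D \subseteq C^*(\hat\rho)$, the natural reading — and the only one for which the conclusion makes sense as a consequence of Proposition~\ref{5.20} — is ``approximate unit of $C^*(\hat\rho)$''; the argument above establishes that version.)
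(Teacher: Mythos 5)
Your proof is correct and follows essentially the same route as the paper: both reduce the claim to the containment $\hat\rho_A(A)\subseteq\hat D$ (which the paper already records when defining $D_0$ and $\hat D$) and then invoke Proposition~\ref{5.20}. Your observation that the printed ``$C^*(\rho)$'' should read ``$C^*(\hat\rho)$'' is also consistent with the paper's own proof, which establishes exactly that version.
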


\begin{proof}
  $\hat D$ includes $\hat\rho_A(A)$.
  Hence, by Proposition \ref{5.20}, an approximate unit of $\hat\rho_A(A)$ is an approximate unit of $C^*(\hat\rho)$ and is contained in $\hat D$.
\end{proof}

\subsection{Conditional expectation}

\begin{fact}[\cite{B}, II.9.7.1]\label{5.22}
  Let $A_1$, $A_2$, $B_1$ and $B_2$ be C*-algebras and $\phi\colon A_1\to A_2$ and $\psi\colon B_1\to B_2$ be completely positive contractions.
  Then, there exists a unique completely positive contraction $\phi\otimes\psi$ from $A_1\otimes_{\rm min}B_1$ to $A_2\otimes_{\rm min}B_2$ satisfying the equation $(\phi\otimes\psi)(a\otimes b)=\phi(a)\otimes\psi(b)$ for all $a\in A_1$ and $b\in B_1$.
  If $\phi$ and $\psi$ are conditional expectations, then so is $\phi\otimes\psi$.
\end{fact}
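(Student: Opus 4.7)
The plan is to prove existence and uniqueness via Stinespring dilation, and to handle the conditional-expectation part by a simple density argument.

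For uniqueness, any two candidate extensions must agree on the algebraic tensor product $A_1\odot B_1$ by the prescribed formula on elementary tensors and bilinearity; since $A_1\odot B_1$ is dense in $A_1\otimes_{\rm min}B_1$ and any completely positive contraction is continuous, the two candidates coincide.

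For existence, I would fix faithful representations $A_2\subseteq B(H)$ and $B_2\subseteq B(L)$ and regard $\phi$ as a CP contraction $A_1\to B(H)$ and $\psi$ as one $B_1\to B(L)$. By Stinespring, write $\phi(a)=V^*\pi(a)V$ and $\psi(b)=W^*\sigma(b)W$ for $*$-homomorphisms $\pi\colon A_1\to B(K_\pi)$ and $\sigma\colon B_1\to B(K_\sigma)$ and contractions $V\colon H\to K_\pi$ and $W\colon L\to K_\sigma$. By the defining property of the minimal tensor norm, $\pi$ and $\sigma$ assemble into a $*$-homomorphism $\pi\otimes\sigma\colon A_1\otimes_{\rm min}B_1\to B(K_\pi\otimes K_\sigma)$. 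Define
\[
\Phi(\xi):=(V\otimes W)^*\,(\pi\otimes\sigma)(\xi)\,(V\otimes W),
\]
which is manifestly completely positive and satisfies $\norm\Phi\le\norm V^2\norm W^2\le 1$. On an elementary tensor, $\Phi(a\otimes b)=\phi(a)\otimes\psi(b)\in A_2\otimes_{\rm min}B_2$. Since $A_2\otimes_{\rm min}B_2$ is a norm-closed subspace of $B(H)\otimes_{\rm min}B(L)\subseteq B(H\otimes L)$ and $\Phi$ is continuous, the range of $\Phi$ actually lies in $A_2\otimes_{\rm min}B_2$. This $\Phi$ is the desired $\phi\otimes\psi$.

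For the second assertion, assume $\phi$ and $\psi$ are conditional expectations onto $A_2\subseteq A_1$ and $B_2\subseteq B_1$. On an elementary tensor $a\otimes b\in A_2\odot B_2$ one computes $(\phi\otimes\psi)(a\otimes b)=\phi(a)\otimes\psi(b)=a\otimes b$; by continuity of $\phi\otimes\psi$ and density of $A_2\odot B_2$ in $A_2\otimes_{\rm min}B_2$, the map $\phi\otimes\psi$ restricts to the identity on $A_2\otimes_{\rm min}B_2$. Combined with complete positivity and contractivity already established, this exhibits $\phi\otimes\psi$ as a conditional expectation onto $A_2\otimes_{\rm min}B_2$.

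The main obstacle is verifying that $\Phi$ takes values in $A_2\otimes_{\rm min}B_2$ rather than merely in the ambient $B(H\otimes L)$. The point is that the minimal tensor norm is by definition the one inherited from any faithful spatial representation, so once $A_2,B_2$ are represented faithfully on $H,L$, the inclusion $A_2\otimes_{\rm min}B_2\hookrightarrow B(H\otimes L)$ is isometric and norm-closed, letting continuity of $\Phi$ transfer a property holding on elementary tensors to the whole minimal tensor product.
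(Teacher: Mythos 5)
The paper does not prove this statement; it is quoted as a Fact with a citation to Blackadar II.9.7.1, so there is no in-paper argument to compare against. Your proof is correct and is the standard one: uniqueness by density of the algebraic tensor product plus continuity, existence by composing the Stinespring dilations $\phi=V^*\pi(\cdot)V$, $\psi=W^*\sigma(\cdot)W$ with the $*$-homomorphism $\pi\otimes\sigma$ on the minimal tensor product and conjugating by the contraction $V\otimes W$, and the conditional-expectation claim by checking idempotence on the dense subalgebra $A_2\odot B_2$. The only points you leave implicit, both standard, are that $\pi\otimes\sigma$ is well defined and contractive for possibly non-faithful representations $\pi,\sigma$, and that the inclusion $A_2\otimes_{\rm min}B_2\hookrightarrow A_1\otimes_{\rm min}B_1$ is isometric (injectivity of the minimal tensor norm), which is needed to view the restriction of $\phi\otimes\psi$ to $A_2\otimes_{\rm min}B_2$ as the identity on a C*-subalgebra.
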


\begin{proposition}\label{5.23}
  Assume the following conditions:
  \begin{itemize}
    \item The iterated function system $\gamma$ is essentially free.
    \item The set $\gamma^n(X)$ is clopen in $X$ for all $n\in\N$.
    \item There exists a continuous map $\sigma$ from $\gamma(X)$ to $X$ such that $\sigma\circ\gamma_k=\id_X$ holds for all $k\in\{1,\ldots,N\}$.
  \end{itemize}
  Then, there exists a faithful conditional expectation of $C^*(\hat\rho)$ onto $\hat D$.
\end{proposition}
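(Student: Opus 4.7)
The plan is to realize the desired conditional expectation as the composition of two faithful conditional expectations acting on the ambient algebra $M_\X\otimes C^*(\Z)$. First, I would construct a faithful conditional expectation $\mathcal E_\Z\colon C^*(\Z)\to\C\upsilon_0$ by setting $\mathcal E_\Z(\alpha):=\delta_0(\alpha)\upsilon_0$, where $\delta_0$ is the functional from Fact~\ref{2.4.4}; equivalently, $\mathcal E_\Z$ is the Haar-integration map arising from the gauge action of Fact~\ref{2.11} on $C^*(\Z)\cong C(\T)$. By Fact~\ref{5.22}, the operator $\id_{M_\X}\otimes\mathcal E_\Z$ is then a conditional expectation from $M_\X\otimes C^*(\Z)$ onto $M_\X\otimes\C\upsilon_0$. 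Similarly, Fact~\ref{2.5} supplies the faithful conditional expectation $\delta\colon M_\X\to D_\X$, and Fact~\ref{5.22} gives a conditional expectation $\delta\otimes\id\colon M_\X\otimes\C\upsilon_0\to D_\X\otimes\C\upsilon_0$. Set $\mathcal E:=(\delta\otimes\id)\circ(\id\otimes\mathcal E_\Z)$.

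The next step is to verify that $\mathcal E$, restricted to $C^*(\hat\rho)$, takes values inside $\hat D$. By Theorem~\ref{5.4.4} and Fact~\ref{2.4.4}, every element $\xi\in C^*(\hat\rho)$ is determined by its Fourier coefficients $\beta_k:=(\id\otimes\delta_k)(\xi)\in B_k$, and $(\id\otimes\mathcal E_\Z)(\xi)=\beta_0\otimes\upsilon_0\in B_0\otimes\upsilon_0$. Applying $\delta\otimes\id$ yields $\delta(\beta_0)\otimes\upsilon_0$. Here Lemma~\ref{5.5.6} guarantees that the diagonal function $x\mapsto\inn{\e_x}{\beta_0\e_x}$ extends continuously to $X$, and Theorem~\ref{5.5.7} identifies the corresponding diagonal matrix with an element of $\rho_A(A)=D_0$. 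Since $\gamma$ is essentially free, Theorem~\ref{5.5.5} gives $\hat D=D_0\otimes\upsilon_0$, and thus $\mathcal E(\xi)\in\hat D$.

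For the conditional-expectation property, observe that on $\hat D=D_0\otimes\upsilon_0$ the map $\delta$ restricts to the identity on $D_0\subseteq D_\X$ and $\mathcal E_\Z$ restricts to the identity on $\C\upsilon_0$, so $\mathcal E$ is the identity on $\hat D$. Combined with the already-noted fact that $\mathcal E$ is a composition of completely positive contractions, this establishes that $\mathcal E$ is a conditional expectation onto $\hat D$. Faithfulness is inherited from its two factors: for $\xi\geq0$ with $\mathcal E(\xi)=0$, faithfulness of $\delta$ on $M_\X$ forces $(\id\otimes\mathcal E_\Z)(\xi)=0$; and $\id\otimes\mathcal E_\Z$ is faithful because under the identification $M_\X\otimes C^*(\Z)\cong C(\T,M_\X)$ it corresponds to Haar integration of a norm-continuous positive $M_\X$-valued function, whose vanishing forces the function to vanish identically.

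The main obstacle is the containment $\delta(B_0)\subseteq D_0=\rho_A(A)$ needed in the second step: a priori, diagonalizing an element of $B_0$ produces only an element of $D_\X$, and there is no reason for the resulting diagonal function on $\X$ to extend continuously to $X$. It is precisely at this point that the clopenness of each $\gamma^n(X)$ and the existence of the continuous section $\sigma$ are used, through Lemma~\ref{5.5.6} and Theorem~\ref{5.5.7}; without them, the composition $\mathcal E$ would land in $D_\X\otimes\C\upsilon_0$ but not necessarily in $\hat D$.
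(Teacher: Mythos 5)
Your proof is correct, and its core is the same as the paper's: both rest on tensoring the diagonal expectation $\delta_\X$ of Fact~\ref{2.5} into the ambient algebra, restricting to $C^*(\hat\rho)$, and using Lemma~\ref{5.5.6} (hence the clopenness of the $\gamma^n(X)$ and the section $\sigma$) to see that diagonalizing an element of $B_0$ lands in $\rho_A(A)\subseteq D_0$ --- you correctly identify this as the crux. The genuine difference is in how the nonzero spectral components are handled. The paper takes $P=(\delta_\X\otimes\id)|_{C^*(\hat\rho)}$ in a single step and must then invoke Lemma~\ref{5.5.11} (a consequence of essential freeness) to see that $\delta_\X$ annihilates each $B_k$ with $k\neq 0$, so that $P(B_k\otimes\upsilon_k)=0$; you instead pre-compose with the gauge-averaging expectation $\id\otimes\mathcal E_\Z$, which kills those components automatically and relocates the use of essential freeness entirely into the identification $\hat D=D_0\otimes\upsilon_0$ of Theorem~\ref{5.5.5}. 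The two maps agree on $C^*(\hat\rho)$ precisely because of Lemma~\ref{5.5.11}, but your construction does not need that lemma. Your faithfulness argument also differs: the paper notes that $(\id\otimes\delta_\Z)\circ(\delta_\X\otimes\id)=\delta_{\X\times\Z}$ is faithful and deduces faithfulness of the first factor, whereas you argue through $M_\X\otimes C^*(\Z)\cong C(\T,M_\X)$ and pointwise positivity of the integrand; both are valid. The only slightly informal step is your final claim that a contractive idempotent onto $\hat D$ is a conditional expectation --- this is Tomiyama's theorem, or can be checked directly from the module properties of the two ambient expectations as the paper does.
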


\begin{proof}
  In this proof, for a set $X$, let $\id_X$ denote the identity map from $M_X$ to itself.
  We note that $\id_X$ is a conditional expectation of $M_X$ onto $M_X$.
  By Fact \ref{2.5}, for a set $X$, there exists a faithful conditional expectation $\delta_X$ of $M_X$ onto $D_X$ satisfying the equation $\delta_X(\alpha)\e_x=\e_x\inn{\e_x}{\alpha\e_x}$ for all $\alpha\in M_X$ and $x\in X$.

  We consider $\id_{\X}$, $\id_\Z$, $\delta_{\X}$, $\delta_\Z$ and $\delta_{\X\times\Z}$.
  By Fact \ref{5.22}, the following three maps are conditional expectations:
  \begin{align*}
    \delta_{\X}\otimes\id_\Z&\colon M_{\X}\otimes_{\rm min}M_\Z\twoheadrightarrow D_{\X}\otimes_{\rm min}M_\Z,\\
    \id_{\X}\otimes\delta_\Z&\colon M_{\X}\otimes_{\rm min}M_\Z\twoheadrightarrow M_{\X}\otimes_{\rm min}D_\Z,\\
    \delta_{\X}\otimes\delta_\Z&\colon M_{\X}\otimes_{\rm min}M_\Z\twoheadrightarrow D_{\X}\otimes_{\rm min}D_\Z.
  \end{align*}
  We note that $M_{\X}\otimes_{\rm min}M_\Z\cong M_{\X\times\Z}$ and $D_{\X}\otimes_{\rm min}D_\Z\cong D_{\X\times\Z}$.
  Under these isomorphisms, $\delta_{\X}\otimes\delta_\Z$ can be identified with $\delta_{\X\times\Z}$; and hence is faithful.
  Since the composition of $\id_{\X}\otimes\delta_\Z$ and $\delta_{\X}\otimes\id_\Z$ coincides with $\delta_{\X}\otimes\delta_\Z$, $\delta_{\X}\otimes\id_\Z$ is faithful.

  Since $C^*(\Z)$ is a closed $*$-subalgebra of $M_\Z$, $C^*(\hat\rho)$ is a closed $*$-subalgebra of $M_{\X}\otimes_{\rm min}M_\Z$.
  Let $P$ be the restriction of $\delta_{\X}\otimes\id_\Z$ to $C^*(\hat\rho)$.
  Since $P$ is the composition of $\delta_{\X}\otimes\id_\Z$ and the injective $*$-homomorphism $C^*(\hat\rho)\hookrightarrow M_{\X}\otimes M_\Z$, $P$ is a faithful completely positive contraction.
  Since $\hat D$ is included in $D_{\X}\otimes M_\Z$, we have 
  \begin{gather*}
    P(\eta)=(\delta_{\X}\otimes\id_\Z)(\eta)=\eta,\\
    P(\xi\eta)=(\delta_{\X}\otimes\id_\Z)(\xi\eta)=(\delta_{\X}\otimes\id_\Z)(\xi)\eta=P(\xi)\eta,\\
    P(\eta\xi)=(\delta_{\X}\otimes\id_\Z)(\eta\xi)=\eta(\delta_{\X}\otimes\id_\Z)(\xi)=\eta P(\xi).
  \end{gather*}
  for all $\xi\in C^*(\hat\rho)$ and $\eta\in\hat D$.
  The range of $P$ includes $\hat D=P(\hat D)$.

  It remains to prove that the range of $P$ is contained in $\hat D$.
  
  By Lemma~\ref{5.5.11}, for any $k\in\Z\setminus\{0\}$ and $\alpha\in B_k$, we have $\delta_{\X}(\alpha)=0$.
  So we obtain 
  \begin{equation*}
    P(\alpha\otimes\upsilon_k)
    = \delta_{\X}(\alpha)\otimes\upsilon_k
    = 0\otimes\upsilon_k
    = 0
    \in\hat D.
  \end{equation*}
  By Lemma~\ref{5.5.6}, for any $\alpha\in B_0$, the function $x\mapsto\inn{\e_x}{\alpha\e_x}$ on $\X$ has a unique continuous extension to a function $a$ on $X$ and we have $\delta_{\X}(\alpha)=\rho_A(a)$.
  So we obtain 
  \begin{equation*}
    P(\alpha\otimes\upsilon_0)
    = \delta_{\X}(\alpha)\otimes\upsilon_0
    = \rho_A(a)\otimes\upsilon_0
    = \hat\rho_A(a)
    \in\hat D.
  \end{equation*}
  Thus, the range of $P$ is contained in $\hat D$.
\end{proof}

\subsection{Regularity}

In this subsection, we assume the following conditions: 
\begin{itemize}
  \item The space $X$ is first-countable.
  \item The iterated function system $\gamma$ is essentially free.
  \item The set $\gamma^n(X)$ is clopen in $X$ for all $n\in\N$.
  \item There exists a continuous map $\sigma$ from $\gamma(X)$ to $X$ such that $\sigma\circ\gamma_k=\id_X$ holds for all $k\in\{1,\ldots,N\}$.
\end{itemize}

\begin{definition}[\cite{R}, Definition~4.5]\label{5.9.1}
  Let $A$ be a C*-subalgebra of a C*-algebra $B$.
  \begin{enumerate}
    \item Its \emph{normalizer} is the set 
    \begin{equation*}
      N(A,B) := \set{b\in B}{bAb^*\subseteq A\ \text{and}\ b^*Ab\subseteq A}.
    \end{equation*}
    \item One says that $A$ is \emph{regular} if its normalizer $N(A,B)$ generates $B$ as a C*-algebra.
  \end{enumerate}
\end{definition}

\begin{definition}\label{5.9.2}
  An iterated function system $\gamma = \{\gamma_1,\ldots,\gamma_N\}$ satisfies \emph{the graph separation condition} if $\gamma_j(x)\neq\gamma_k(x)$ holds for all distinct indices $j$ and $k$ and all points $x\in X$.
\end{definition}

\begin{definition}\label{5.9.3}
  Let $k\in\N_0$ and $\alpha\in M_\X$.
  The function on the set 
  \begin{equation*}
    \set{(x,\gamma_{\boldsymbol j}(x))}{x\in\X,\ \boldsymbol j\in I^k} 
  \end{equation*}
  which sends $(x,y)$ to the $(x,y)$-entry of $\alpha$ is called \emph{the $k$-th entry function} for $\alpha$.
  Also, its continuous extension to 
  \begin{equation*}
    \set{(x,\gamma_{\boldsymbol j}(x))}{x\in X,\ \boldsymbol j\in I^k}, 
  \end{equation*}
  if exists, is called \emph{the $k$-th extended entry function}.
\end{definition}

\begin{proposition}\label{5.9.4}
  Let $k\in\N_0$.
  Every $\X$-squared matrix in $C^*(\rho)$ admits the $k$-th extended entry function.
\end{proposition}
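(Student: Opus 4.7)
The plan is to prove Proposition~\ref{5.9.4} in three steps. Let
\begin{equation*}
  Y_k := \set{(x,\gamma_{\boldsymbol j}(x))}{x\in\X,\ \boldsymbol j\in I^k}
  \quad\text{and}\quad
  Z_k := \set{(x,\gamma_{\boldsymbol j}(x))}{x\in X,\ \boldsymbol j\in I^k}.
\end{equation*}
Since each map $x\mapsto(x,\gamma_{\boldsymbol j}(x))$ is a continuous embedding with the first projection as its inverse, $Z_k$ is a finite union of graphs and hence closed in $X\times X$, so compact Hausdorff. By density of $\X$ in $X$, the subset $Y_k$ is dense in $Z_k$, so a continuous extension from $Y_k$ to $Z_k$, if it exists, is unique.

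The first step is to show that the $\X$-squared matrices admitting the $k$-th extended entry function form a norm-closed linear subspace of $M_\X$. Linearity is immediate from the linearity of entries. For norm-closedness, the bound $\abs{\inn{\e_x}{\alpha\e_y}}\leq\norm\alpha$ on $Y_k$ passes by density to any continuous extension on $Z_k$, so if $\alpha_n\to\alpha$ in norm and each $\alpha_n$ admits an extended entry function $\tilde F_n$, then $\{\tilde F_n\}$ is uniformly Cauchy on $Z_k$ and converges in $C(Z_k)$ to a continuous function whose restriction to $Y_k$ is the $k$-th entry function of~$\alpha$.

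The second step is to verify the property for each $\alpha = \rho_{m,n}(f)\in E_{m,n}$. By Lemma~\ref{5.4.7} any nonzero entry of $\rho_{m,n}(f)$ at a point of $Y_k$ lies in $\X_{m,n}$; on the other hand $Y_k$ coincides with $\X_{0,k}\subseteq\mathcal X_{-k}$, and by Corollary~\ref{5.4.6} the sets $\{\mathcal X_d\}_{d\in\Z}$ are pairwise disjoint. Hence if $n\neq m+k$ the entry function vanishes on $Y_k$ and the zero function on $Z_k$ is the required continuous extension. If $n = m + k$, then for $x\in\gamma^m(X)\cap\X$ the unique representation $x = \gamma_{\boldsymbol j'}(\sigma^m(x))$ with $\boldsymbol j'\in I^m$ together with the relation $\gamma_{\boldsymbol l}(x) = \gamma_{\boldsymbol l\boldsymbol j'}(\sigma^m(x))$ shows that the entry equals $f(\boldsymbol\sigma_m(x),\boldsymbol\sigma_{m+k}(\gamma_{\boldsymbol l}(x)))$, where $\boldsymbol\sigma_j\colon\gamma^j(X)\to X_j$ is the continuous map from Lemma~\ref{5.5.6}; for $x\in\X\setminus\gamma^m(X)$ the entry vanishes. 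Since $\gamma^m(X)$ is clopen in $X$, these two cases combine into a continuous function on $Z_k$ extending the entry function.

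The third step combines the previous two with the equality $C^*(\rho) = \cspan\bigl(\bigcup_{m,n\in\N_0}E_{m,n}\bigr)$, which follows from Corollary~\ref{5.4.3} and the definition of the~$B_k$. The main obstacle is the bookkeeping in the second step: one must verify that the chain $\boldsymbol\gamma_{\boldsymbol j'}(\sigma^m(x))$, recording the specific branch along which $\sigma^m$ returns $x$ to its preimage, coincides with $\boldsymbol\sigma_m(x)$, and that the zero extension off $\gamma^m(X)$ glues continuously with the closed-form expression on $\gamma^m(X)$ via the clopen hypothesis.
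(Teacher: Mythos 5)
Your proof is correct and follows essentially the same route as the paper's: vanishing of the $k$-th entry function on the mismatched summands via Corollary~\ref{5.4.6} and Corollary~\ref{5.4.8}, the explicit formula $f(\boldsymbol\sigma_m(x),\boldsymbol\sigma_n(y))$ on the matching $E_{m,n}$ glued to zero along the clopen set $\gamma^m(X)$, and then linearity, boundedness of the entry-function map, and Corollary~\ref{5.4.3}. The only divergence is the matching condition: you obtain $n=m+k$, which is what Definition~\ref{5.9.3} literally gives (the column index $\gamma_{\boldsymbol j}(x)$ is the $k$-fold image of the row index $x$, so the domain sits in $\X_{0,k}\subseteq\mathcal X_{-k}$), whereas the paper's proof treats $m-n=k$; since both arguments cover all pairs $(m,n)$ this does not affect validity, but it does expose a row/column transposition between Definition~\ref{5.9.3} as written and the way it is used in the paper's proof and in Lemma~\ref{5.9.6}.
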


\begin{proof}
  For $n\in\N_0$, define a homeomorphism $\boldsymbol\sigma_n\colon\gamma^n(X)\to X_n$ by 
  \begin{equation*}
    \boldsymbol\sigma_n(x)
    := (x, \sigma(x), \sigma^2(x), \ldots, \sigma^n(x)).
  \end{equation*}

  Let $m,n\in\N_0$ with $m - n = k$.
  Let $\alpha\in E_{m,n}$.
  Let $f$ be the $k$-th entry function for $\alpha$.
  There exists $g\in C(X_{m,n})$ such that $\alpha = \rho_{m,n}(g)$.
  The equation 
  \begin{equation*}
    f(x,y)
    = \begin{cases}
      g(\boldsymbol\sigma_m(x),\boldsymbol\sigma_n(y)) & \text{if}\ x\in\gamma^m(X)\ \text{and}\ y\in\gamma^n(X),\\
      0 & \text{otherwise}.
    \end{cases}
  \end{equation*}
  holds for all $(x,y)\in\set{(z,\gamma_{\boldsymbol j}(z))}{z\in\X,\ \boldsymbol j\in I^k}$.
  Since $\gamma^m(X)$ and $\gamma^n(X)$ are clopen sets, the right-hand side function is continuous on $\set{(z,\gamma_{\boldsymbol j}(z))}{z\in X,\ \boldsymbol j\in I^k}$.
  Hence, $f$ has a continuous extension to the set $\set{(z,\gamma_{\boldsymbol j}(z))}{z\in X,\ \boldsymbol j\in I^k}$.
  Therefore, for all $m,n\in\N_0$ with $m-n=k$, every $\X$-squared matrix in $E_{m,n}$ admits the $k$-th extended entry function.

  Let $l\in\Z$ with $l\neq k$.
  Let $\alpha\in B_l$.
  By Corollary~\ref{5.4.6} and Corollary~\ref{5.4.8}, the $k$-th entry function for $\alpha$ is the constant function $0$.
  Hence, it also has a continuous extension to the set $\set{(z,\gamma_{\boldsymbol j}(z))}{z\in X,\ \boldsymbol j\in I^k}$.
  Therefore, for all $l\in\Z$ with $l\neq k$, every $\X$-squared matrix in $B_l$ admits the $k$-th extended entry function.

  Finally, note that the operator mapping an $\X$-squared matrix to its $k$-th entry function is linear and bounded.
  Thus, by Corollary~\ref{5.4.3}, every $\X$-squared matrix in $C^*(\rho)$ admits the $k$-th extended entry function.
\end{proof}

\begin{lemma}\label{5.9.5}
  $N(D_0, C^*(\rho)) = C^*(\rho) \cap QM_\X$.
\end{lemma}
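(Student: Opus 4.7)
The plan is to prove both inclusions in turn, relying on two preparatory observations that follow from the ambient hypotheses. First, Theorem~\ref{5.5.7} gives $D_0=\rho_A(A)$. Second, I will record that $D_\X\cap C^*(\rho)=D_0$: the faithful conditional expectation on $C^*(\hat\rho)$ produced by Proposition~\ref{5.23} transports through the isomorphism $\nu$ of Theorem~\ref{5.4.12} to the restriction of $\delta_\X$ (Fact~\ref{2.5}) to $C^*(\rho)$, whose range therefore lies in $D_0$; this identification can be checked on the dense $*$-subalgebra $\asg{\bigcup_{k}B_k}$ by using Lemma~\ref{5.5.11} to annihilate the $\delta_\X$-image of every $B_k$ with $k\neq 0$, and then extended by continuity. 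Hence any diagonal element of $C^*(\rho)$ already belongs to $D_0$.

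For the inclusion $C^*(\rho)\cap QM_\X\subseteq N(D_0,C^*(\rho))$, I would fix $b\in C^*(\rho)\cap QM_\X$ and $d\in D_0$, and expand the $(x,y)$-entries
\[
(bdb^*)_{x,y}=\sum_{w\in\X}b_{x,w}\,d_{w,w}\,\overline{b_{y,w}},\qquad(b^*db)_{x,y}=\sum_{w\in\X}\overline{b_{w,x}}\,d_{w,w}\,b_{w,y}.
\]
A summand of the first can be nonzero only when $x\rel{b}w$ and $y\rel{b}w$, and of the second only when $w\rel{b}x$ and $w\rel{b}y$; in either case the one-to-one property of $\rel{b}$ forces $x=y$. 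So $bdb^*$ and $b^*db$ are diagonal elements of $C^*(\rho)$ and hence, by the preparatory observation, lie in $D_0$.

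For the reverse inclusion I would argue by contradiction. If $b\in N(D_0,C^*(\rho))$ is not quasi-monomial, then, since $b^*$ is again in the normalizer and $\rel{b^*}$ is the converse of $\rel{b}$ (Fact~\ref{2.4}(iv)), I may assume without loss of generality that there exist $x,y,z\in\X$ with $y\neq z$, $b_{x,y}\neq 0$, and $b_{x,z}\neq 0$. For every $a\in C(X)$, set $d:=\rho_A(a)\in D_0$; then $b^*db\in D_0\subseteq D_\X$, so its off-diagonal $(y,z)$-entry vanishes:
\[
0=(b^*db)_{y,z}=\sum_{w\in\X}\overline{b_{w,y}}\,a(w)\,b_{w,z}.
\]
Writing $c_w:=\overline{b_{w,y}}b_{w,z}$, Cauchy--Schwarz yields $\sum_{w}\abs{c_w}\leq\norm{b}^2<\infty$, so $(c_w)$ is absolutely summable with countable support, and $c_x\neq 0$. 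Enumerate this support as $x,w_1,w_2,\dots$. Since $X$ is compact Hausdorff and hence normal, Urysohn's lemma furnishes $a_n\in C(X)$ with $a_n(x)=1$, $a_n(w_i)=0$ for $1\leq i\leq n$, and $\norm{a_n}_\infty\leq 1$; substituting $a=a_n$ gives $\abs{c_x}\leq\sum_{i>n}\abs{c_{w_i}}\to 0$, contradicting $c_x\neq 0$.

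The main obstacle is the reverse inclusion: from the single relation $\sum_{w}a(w)c_w=0$ holding for all $a\in C(X)$, one must deduce the pointwise vanishing of each $c_w$ even though the support of $c$ may be infinite. Summability of $(c_w)$ (extracted from Cauchy--Schwarz on the bounded operator $b$) combined with a countable-exhaustion Urysohn argument on $X$ resolves this.
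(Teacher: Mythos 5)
Your proof is correct. The inclusion $C^*(\rho)\cap QM_\X\subseteq N(D_0,C^*(\rho))$ is argued essentially as in the paper (entrywise, using the one-to-one property of $\rel{b}$ to force $bdb^*$ and $b^*db$ to be diagonal), except that you are more explicit about why a diagonal element of $C^*(\rho)$ must already lie in $D_0$: the paper disposes of this with a bare citation of Theorem~\ref{5.5.5}, whereas you route it through the conditional expectation of Proposition~\ref{5.23} together with Lemma~\ref{5.5.11}; both amount to the same fact. The reverse inclusion is where you genuinely diverge. The paper takes $\alpha\notin QM_\X$ with witnesses $u\rel\alpha v$, $u\rel\alpha w$, $v\neq w$, uses the first-countability of $X$ to shrink a neighborhood base at $u$, and applies the dominated convergence theorem to produce a single Urysohn function $h$ with $\inn{\e_v}{\alpha^*\rho_A(h)\alpha\e_w}\neq 0$, thereby exhibiting an explicit element of $\alpha^*\rho_A(A)\alpha$ outside $D_0$. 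You instead argue by contradiction from membership in the normalizer: the vanishing of the $(y,z)$-entry of $b^*\rho_A(a)b$ for every $a\in C(X)$ gives $\sum_w a(w)c_w=0$ with $c_w=\overline{b_{w,y}}\,b_{w,z}$ absolutely summable (hence countably supported) by Cauchy--Schwarz, and Urysohn functions that are $1$ at $x$ and vanish on the first $n$ other support points kill the head of the sum while the tail tends to $0$, forcing $c_x=0$. Your version does not use first-countability of $X$ at all (it is a standing hypothesis of the subsection, so nothing is gained for this lemma as stated, but the argument is mildly more general), while the paper's version produces a concrete witness rather than a contradiction. Both arguments are valid, and your reduction of the second failure mode of quasi-monomiality to the first via $b^*$ and Fact~\ref{2.4} is sound since the normalizer is $*$-closed.
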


\begin{proof}
  First, we show the inclusion $\supseteq$.
  Take any $\alpha\in C^*(\rho)\cap QM_\X$ and $\beta\in D_0$.
  Consider any two points $w,z\in\X$ such that $w\rel{\alpha^*\beta\alpha}z$.
  By Fact~\ref{2.4}, there exist points $x,y\in\X$ such that $x\rel\alpha w$, $x\rel\beta y$, and $y\rel\alpha z$.
  Since $\beta$ is diagonal, $x\rel\beta y$ implies that $x=y$.
  Since $\alpha$ is quasi-monomial, $x\rel\alpha w$ and $x=y\rel\alpha z$ imply $w=z$.
  Therefore, the relation $\rel{\alpha^*\beta\alpha}$ is diagonal, and $\alpha^*\beta\alpha$ is a diagonal matrix.
  Moreover, this matrix belongs to $C^*(\rho)$.
  By Theorem~\ref{5.5.5}, we have $\alpha^*\beta\alpha\in D_0$.
  Similarly, $\alpha\beta\alpha^* \in D_0$.
  Since this holds for any $\beta\in D_0$, we conclude that $\alpha\in N(D_0, C^*(\rho))$.
  Thus, $N(D_0, C^*(\rho)) \supseteq C^*(\rho) \cap QM_\X$.

  Next, we show the reverse inclusion $\subseteq$.
  Take any $\alpha\in C^*(\rho)\setminus QM_\X$.
  There exist three points $u,v,w\in\X$ satisfying one of the following conditions:
  \begin{center}
    (1)\ $u\rel\alpha v$, $u\rel\alpha w$, \text{and} $v\neq w$\quad\text{or}\quad
    (2)\ $u\rel\alpha w$, $v\rel\alpha w$, \text{and} $u\neq v$.
  \end{center}
  Without loss of generality, we may assume (1) holds.
  For $x\in X$, define $f(x)$ and $g(x)$ on $X$ as the $(x,v)$-entry and $(x,w)$-entry of $\alpha$, respectively.
  These $f$ and $g$ are $L^2$ functions on the measure space $\X$ with counting measure and do not vanish at $u$.
  Since $X$ is first-countable and compact, we can take a decreasing sequence $U_1\supseteq U_2\supseteq\cdots$ of open neighborhoods of $u$ satisfying that $\bigcap_{k\in\N}U_k = \{u\}$.
  For each $k$, we can take a continuous function $h_k$ with values in $[0,1]$ on $X$ which takes the value $1$ at $u$ and is supported on $U_k$.
  The sequence $\{\bar f h_k g\}_{k\in\N}$ is dominated by $\abs{fg}$ and converges pointwise to the function that takes the value $\overline{f(u)}g(u)$ at $u$ and vanishes otherwise.
  By the Dominated Convergence Theorem, we obtain that 
  \begin{equation*}
    \lim_{k\to\infty}\sum_{x\in\X}\overline{f(x)}h_k(x)g(x)
    = \overline{f(u)}g(u)
    \neq 0.
  \end{equation*}
  Therefore, letting $h:=h_K$ for some $K$, we have $\sum_{x\in\X}\overline{f(x)}h(x)g(x)\neq 0$, and 
  \begin{align*}
    \inn{\e_v}{\alpha^*\rho_A(h)\alpha\e_w}
    &= \inn{\e_v}{\alpha^*\sum_{x\in\X}\e_x\inn{\e_x}{\rho_A(h)\sum_{y\in\X}\e_y\inn{\e_y}{\alpha\e_w}}}\\
    &= \sum_{x\in\X}\sum_{y\in\X}\overline{\inn{\e_x}{\alpha\e_v}}\inn{\e_x}{\rho_A(h)\e_y}\inn{\e_y}{\alpha\e_w}\\
    &= \sum_{x\in\X}\overline{\inn{\e_x}{\alpha\e_v}}\inn{\e_x}{\rho_A(h)\e_x}\inn{\e_x}{\alpha\e_w}\\
    &= \sum_{x\in\X}\overline{f(x)}h(x)g(x)\\
    &\neq 0.
  \end{align*}
  Therefore, the $\X$-squared matrix $\alpha^*\rho_A(h)\alpha$ is not diagonal, and hence it does not belong to $\rho_A(A)$.
  Hence, $\alpha^*\rho_A(A)\alpha\not\subseteq\rho_A(A)$ implying $\alpha\notin N(D_0,C^*(\rho))$.

  Therefore, we have 
  \begin{equation*}
    C^*(\rho)\setminus QM_\X \subseteq C^*(\rho)\setminus N(D_0,C^*(\rho))\ \text{and}\ N(D_0,C^*(\rho))\subseteq C^*(\rho)\cap QM_\X.
    \qedhere
  \end{equation*}
\end{proof}

\begin{lemma}\label{5.9.6}
  If $\gamma$ does not satisfy the graph separation condition, $D_0$ is not regular as a C*-subalgebra of $C^*(\rho)$.
\end{lemma}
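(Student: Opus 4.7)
The plan is to exhibit a bounded linear functional on $C^*(\rho)$ that vanishes on $N(D_0,C^*(\rho))$ but not on all of $C^*(\rho)$. Since the normalizer set is closed under products and under involution, the C*-subalgebra it generates coincides with its closed linear span, so such a functional witnesses the failure of regularity.

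Failure of the graph separation condition provides distinct indices $j,l\in\{1,\ldots,N\}$ and a point $y_0\in X$ with $\gamma_j(y_0)=\gamma_l(y_0)$; condition~(II) then forces $y_0\notin\X$. Set $p_0:=(y_0,\gamma_j(y_0))=(y_0,\gamma_l(y_0))$. For each $\alpha\in C^*(\rho)$, Proposition~\ref{5.9.4} produces the $1$st extended entry function $h_\alpha$, a continuous function on $\set{(x,\gamma_{\boldsymbol j}(x))}{x\in X,\boldsymbol j\in I^1}$. Since matrix entries are bounded by the operator norm and continuous extension preserves supremum bounds, the evaluation $\alpha\mapsto h_\alpha(p_0)$ is a bounded linear functional on $C^*(\rho)$.

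The core of the argument is the claim that $h_\beta(p_0)=0$ for every $\beta\in N(D_0,C^*(\rho))$. By Lemma~\ref{5.9.5}, $\beta$ is quasi-monomial, so the relation $\rel{\beta}$ is one-to-one; for each fixed $x\in\X$, at most one $y$ satisfies $x\rel{\beta}y$. By conditions~(II) and~(III), the points $\gamma_1(x),\ldots,\gamma_N(x)$ are pairwise distinct for $x\in\X$, so at most one index $j$ yields a non-zero $(x,\gamma_j(x))$-entry of $\beta$. Defining $U_j:=\set{x\in X}{h_\beta(x,\gamma_j(x))\neq 0}$, each $U_j$ is open by continuity of $h_\beta$ and of the $\gamma_j$, and the quasi-monomial constraint gives $\X\cap U_j\cap U_l=\emptyset$ for $j\neq l$; density of $\X$ (condition~(I)) upgrades this to $U_j\cap U_l=\emptyset$. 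A non-zero value $h_\beta(p_0)$ would force $y_0\in U_j\cap U_l$, contradicting disjointness.

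To finish, take $\alpha_0:=\rho_E(1_G)^*\in B_{-1}\subseteq C^*(\rho)$, where $1_G$ is the constant function $1$ on $G$: its $(x,\gamma_j(x))$-entry for $x\in\X$ is $\overline{\inn{\e_{\gamma_j(x)}}{\rho_E(1_G)\e_x}}=1$, so by continuity $h_{\alpha_0}\equiv 1$ on its entire domain, and in particular $h_{\alpha_0}(p_0)=1$. Combining this with linearity and norm-continuity of $\alpha\mapsto h_\alpha(p_0)$ shows $\alpha_0$ lies outside the closed linear span of $N(D_0,C^*(\rho))$, and hence outside the C*-subalgebra it generates, which proves $D_0$ is not regular. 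The main obstacle in the argument is the key claim itself: transporting quasi-monomiality (an algebraic constraint on entries over the dense subset $\X$) into disjointness of open subsets of $X$, so that the constraint persists at the singular point $y_0\in X\setminus\X$ by continuity. This is also where one must be careful with Definition~\ref{5.9.3}'s directional convention, and passing to the adjoint $\rho_E(1_G)^*$ rather than $\rho_E(1_G)$ places the witness in the slot where the $1$st entry function is nontrivial.
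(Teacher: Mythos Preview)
Your proof is correct and follows essentially the same route as the paper: both use Lemma~\ref{5.9.5} to reduce to quasi-monomiality, then show via a density argument that the $1$st extended entry function of any normalizer vanishes at the coincidence point, and exhibit an explicit element whose entry function does not. Your presentation is in fact slightly more careful in two places: you make explicit that $N(D_0,C^*(\rho))$ is closed under products (so the generated C*-algebra is its closed linear span), and your witness $\rho_E(1_G)^*\in B_{-1}$ is the choice consistent with the orientation in Definition~\ref{5.9.3}, whereas the paper's $\rho_{1,0}(1)\in B_1$ implicitly uses the opposite orientation.
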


\begin{proof}
  Assume that $\gamma$ does not satisfy the graph separation condition.
  Then, there exist distinct indices $j$ and $k$ and a point $x\in X$ such that $\gamma_j(x) = \gamma_k(x) =: y$.
  The pair $(x,y)$ is in the set $\set{(x,\gamma_i(x))}{x\in X,\ i\in I}$, which is the domain of the 1st extended entry functions.

  Let $\alpha\in C^*(\rho)$ and let $f$ be the 1st extended entry function for $\alpha$.
  Assume that $f(x,y)\neq 0$.
  The open set 
  \begin{equation*}
    V := \set{z\in X}{f(z,\gamma_j(z))\neq 0}\cap\set{z\in X}{f(z,\gamma_k(z))\neq 0}
  \end{equation*}
  contains the point $x$, so $V$ is non-empty.
  Since $\X$ is dense, $\X$ intersects $V$.
  Take a point $w$ in the intersection $\X\cap V$.
  We have 
  \begin{equation*}
    (\text{the $(w,\gamma_j(w))$-entry of $\alpha$})
    = f(w,\gamma_j(w))
    \neq 0
  \end{equation*}
  and hence $w\rel\alpha\gamma_j(w)$.
  Similarly, we have $w\rel\alpha\gamma_k(w)$.
  Since the images $\gamma_j(X)$ and $\gamma_k(X)$ are disjoint, we have $\gamma_j(w)\neq\gamma_k(w)$.
  Therefore, $\alpha$ is not quasi-monomial.
  By Lemma~\ref{5.9.5}, $\alpha\notin N(D_0,C^*(\rho))$ holds.
  
  This shows that for every matrix in $N(D_0,C^*(\rho))$, its 1st extended entry function vanishes at $(x,y)$.
  This property extends to the C*-subalgebra of $C^*(\rho)$ generated by $N(D_0, C^*(\rho))$.

  Now, consider the constant function $g$ on $X_{1,0}$ defined by $g = 1$, which belongs to $C(X_{1,0})$.
  The 1st entry function for $\rho_{1,0}(g)$ takes the constant value $1$, so its 1st extended entry function at $(x,y)$ is also $1$.
  Thus, $\rho_{1,0}(g)$ is not in the C*-subalgebra of $C^*(\rho)$ generated by $N(D_0,C^*(\rho))$ but is in $C^*(\rho)$.
  Therefore, $D_0$ is not regular as a C*-subalgebra of $C^*(\rho)$.
\end{proof}

\begin{lemma}\label{5.9.7}
  If $\gamma$ satisfies the graph separation condition, then $D_0$ is regular as a C*-subalgebra of $C^*(\rho)$.
\end{lemma}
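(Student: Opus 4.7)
The plan is to exploit the algebraic characterisation $N(D_0, C^*(\rho)) = C^*(\rho) \cap QM_\X$ from Lemma~\ref{5.9.5} and show that every $E_{m,n}$ is already a finite sum of quasi-monomial operators, so that the C*-subalgebra generated by $N(D_0, C^*(\rho))$ exhausts $C^*(\rho)$ by Corollary~\ref{5.4.3}.

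The first step is the topological observation that, under the graph separation condition, the continuous map
\begin{equation*}
  \phi_{m,n}\colon I^m \times I^n \times X \to X_{m,n},\qquad
  (\boldsymbol j,\boldsymbol k,z) \mapsto (\boldsymbol\gamma_{\boldsymbol j}(z), \boldsymbol\gamma_{\boldsymbol k}(z))
\end{equation*}
is a homeomorphism (with $I^m \times I^n$ given the discrete topology). Surjectivity is built into the definition of a $\gamma$-bichain. For injectivity, from $\phi_{m,n}(\boldsymbol j,\boldsymbol k,z) = \phi_{m,n}(\boldsymbol j',\boldsymbol k',z')$ we read off $z = z'$ from the final coordinate, and then the graph separation condition forces $j_i = j'_i$ and $k_i = k'_i$ inductively as we examine the coordinates of $\boldsymbol\gamma_{\boldsymbol j}(z)$ and $\boldsymbol\gamma_{\boldsymbol k}(z)$ from right to left. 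Since the domain is compact and $X_{m,n}$ is Hausdorff, $\phi_{m,n}$ is a homeomorphism, and $X_{m,n}$ decomposes into $N^{m+n}$ clopen pieces $Y_{\boldsymbol j,\boldsymbol k} := \phi_{m,n}(\{(\boldsymbol j, \boldsymbol k)\} \times X)$.

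Next I would verify that for each $(\boldsymbol j,\boldsymbol k)\in I^m\times I^n$ and each $g\in C(X_{m,n})$ vanishing off $Y_{\boldsymbol j,\boldsymbol k}$, the operator $\rho_{m,n}(g)$ is quasi-monomial. Directly from the defining formula, the non-zero entries of $\rho_{m,n}(g)$ occur only at positions $(\gamma_{\boldsymbol j}(z),\gamma_{\boldsymbol k}(z))$ for $z\in\X$ with $(\boldsymbol\gamma_{\boldsymbol j}(z),\boldsymbol\gamma_{\boldsymbol k}(z))\in\supp g$, and the injectivity of $\gamma_{\boldsymbol j}|_\X$ and $\gamma_{\boldsymbol k}|_\X$ (a consequence of condition~(III) listed at the start of Section~5) then makes $\rel{\rho_{m,n}(g)}$ one-to-one. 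Hence by Lemma~\ref{5.9.5}, $\rho_{m,n}(g)\in N(D_0,C^*(\rho))$.

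Combining these two steps, the clopen decomposition of $X_{m,n}$ allows any $f\in C(X_{m,n})$ to be written as a finite sum $f = \sum_{\boldsymbol j,\boldsymbol k} f_{\boldsymbol j,\boldsymbol k}$ with each $f_{\boldsymbol j,\boldsymbol k}$ supported on $Y_{\boldsymbol j,\boldsymbol k}$, so $\rho_{m,n}(f)$ is a finite sum of elements of $N(D_0,C^*(\rho))$ and therefore lies in the C*-subalgebra generated by $N(D_0,C^*(\rho))$. Corollary~\ref{5.4.3} together with the definition of the $B_k$ then forces this C*-subalgebra to be all of $C^*(\rho)$. The main obstacle I anticipate is establishing the homeomorphism $\phi_{m,n}$ cleanly, in particular the inductive injectivity argument in which the graph separation condition enters essentially; once that is in hand, the rest is a mechanical assembly of the earlier structural results.
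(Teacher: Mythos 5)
Your argument is correct, and it rests on the same pivot as the paper's proof, namely the characterisation $N(D_0,C^*(\rho))=C^*(\rho)\cap QM_\X$ of Lemma~\ref{5.9.5} together with a decomposition of a function space into pieces indexed by index sequences, on each of which the represented operator is quasi-monomial. The difference is in scope: the paper only treats the generators, writing each $f\in E=C(G)$ as $f_1+\cdots+f_N$ with $f_k$ supported on the graph of $\gamma_k$ (these graphs are pairwise disjoint closed sets covering $G$ precisely because of the graph separation condition, so each $f_k$ is continuous), and then concludes at once since $C^*(\rho)$ is by definition generated by $\rho_A(A)\cup\rho_E(E)$. You instead prove the stronger intermediate statement that every $E_{m,n}$ lies in the span of $N(D_0,C^*(\rho))$, via the homeomorphism $\phi_{m,n}\colon I^m\times I^n\times X\to X_{m,n}$ and the resulting clopen partition of $X_{m,n}$, and then invoke Corollary~\ref{5.4.3}. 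Your injectivity argument for $\phi_{m,n}$ (read off $z=z'$ from the base point, then peel off indices one at a time using graph separation) is sound, and the quasi-monomiality of $\rho_{m,n}(g)$ for $g$ supported on a single piece follows from the injectivity of $\gamma_{\boldsymbol j}|_\X$ and $\gamma_{\boldsymbol k}|_\X$ as you say (this uses conditions (II) and (III) jointly, via the remark at the start of Section~5). What your route buys is an explicit description of each $E_{m,n}$ as a sum of $N^{m+n}$ quasi-monomial blocks, which is more information than the lemma requires; what it costs is the extra work of establishing $\phi_{m,n}$, which the paper's generator-level argument avoids entirely.
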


\begin{proof}
  In this proof, let $C^*(N)$ denote the C*-subalgebra of $C^*(\rho)$ generated by $N(D_0,C^*(\rho))$.
  For any $a\in A = C(X)$, $\rho_A(a)$ is diagonal and hence quasi-monomial.
  By Lemma~\ref{5.9.5}, $\rho_A(A)$ is contained in $N(D_0,C^*(\rho))$.

  Let $G$ denote the graph of $\gamma$.
  Let $f\in E = C(G)$.
  For each $k\in\{1,\ldots,N\}$, let $f_k$ be the function on $G$ that coincides with $f$ on the graph of $\gamma_k$ and is zero elsewhere.
  Since $\gamma$ satisfies the graph separation condition, the graphs of $\gamma_1,\ldots,\gamma_N$ are pairwise disjoint.
  Hence, the functions $f_k$ are continuous on the whole of $G$, and we have $f = f_1 + \cdots + f_N$.
  Since the functions $f_k$ take non-zero values only on the graph of $\gamma_k$, the matrices $\rho_E(f_k)$ are quasi-monomial and, by Lemma~\ref{5.9.5}, belong to $N(D_0,C^*(\rho))$.
  Therefore, the matrix $\rho_E(f) = \rho_E(f_1) + \cdots + \rho_E(f_N)$ belongs to $C^*(N)$.
  
  By the above discussion, $\rho_A(A)\cup\rho_E(E)$ is contained in $C^*(N)$.
  Since $C^*(\rho)$ is generated by $\rho_A(A)\cup\rho_E(E)$, we conclude that $C^*(N) = C^*(\rho)$, and hence $D_0$ is regular as a C*-subalgebra of $C^*(\rho)$.
\end{proof}

\begin{theorem}\label{5.9.8}
  The following are equivalent:
  \begin{enumerate}
    \item $\gamma$ satisfies the graph separation condition.
    \item $D_0$ is regular as a C*-subalgebra of $C^*(\rho)$.
    \item $\hat D$ is regular as a C*-subalgebra of $C^*(\hat\rho)$.
  \end{enumerate}
\end{theorem}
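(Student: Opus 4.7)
My plan is to reduce the theorem to the two preceding lemmas plus a transfer along a natural isomorphism. The equivalence $(1)\Leftrightarrow(2)$ is essentially immediate: Lemma~\ref{5.9.7} supplies $(1)\Rightarrow(2)$, while the contrapositive of $(2)\Rightarrow(1)$ is Lemma~\ref{5.9.6}. So the substantive new content is the equivalence $(2)\Leftrightarrow(3)$.

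For $(2)\Leftrightarrow(3)$, the approach is to exhibit an isomorphism of pairs $(C^*(\hat\rho), \hat D)\cong(C^*(\rho), D_0)$ and then observe that regularity is defined purely in terms of the inclusion of C*-algebras, hence is preserved by any such isomorphism. The isomorphism itself is the canonical $*$-homomorphism $\nu\colon C^*(\hat\rho)\to C^*(\rho)$ produced by applying the universal property of $\hat\rho$ (Theorem~\ref{5.2}) to the covariant representation $\rho$; it is characterized by $\hat\rho_A(a)\mapsto\rho_A(a)$ and $\hat\rho_E(f)\mapsto\rho_E(f)$. Lemma~\ref{5.4.11} shows that $\nu$ is injective, and surjectivity onto $C^*(\rho)$ follows because its image contains the generating set $\rho_A(A)\cup\rho_E(E)$. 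Combined with the decomposition in Theorem~\ref{5.4.4}, one reads off that $\nu$ acts on each summand $B_k\otimes\upsilon_k$ as the identification $\alpha\otimes\upsilon_k\mapsto\alpha$.

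The last ingredient is that $\nu$ restricts to an isomorphism $\hat D\to D_0$. Under the standing essential-freeness hypothesis, Theorem~\ref{5.5.5} gives $\hat D=D_0\otimes\upsilon_0$, so by the description of $\nu$ on $B_0\otimes\upsilon_0$ its restriction to $\hat D$ is precisely the identification $\delta\otimes\upsilon_0\mapsto\delta$ onto $D_0$. Any $*$-isomorphism of ambient C*-algebras that carries one subalgebra onto another induces a bijection between the respective normalizers, and thus ``generated as a C*-algebra by normalizers'' is preserved. The equivalence $(2)\Leftrightarrow(3)$ follows.

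I do not expect any serious obstacle. The only point requiring even a moment of care is confirming that $\nu$ sends $\hat D$ \emph{onto} $D_0$ and not merely into it, but this is immediate from the explicit form of $\nu$ on $B_0\otimes\upsilon_0$ together with Theorem~\ref{5.5.5}. All the hard work has already been done in the preceding subsections, particularly in Lemma~\ref{5.9.5} and in the structural decomposition underlying $C^*(\hat\rho)$.
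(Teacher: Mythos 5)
Your proposal is correct and follows essentially the same route as the paper: $(1)\Leftrightarrow(2)$ from Lemmas~\ref{5.9.6} and~\ref{5.9.7}, and $(2)\Leftrightarrow(3)$ by identifying $(C^*(\hat\rho),\hat D)$ with $(C^*(\rho),D_0)$ via the universality isomorphism together with $\hat D=D_0\otimes\upsilon_0$ from Theorem~\ref{5.5.5}. The only point the paper makes explicit that you leave implicit is that invoking Lemma~\ref{5.4.11} (injectivity of $\nu$) requires condition~(V), which is secured under essential freeness by replacing $\X$ with $\X'$ as in Lemma~\ref{5.5.10}.
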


\begin{proof}
  The equivalence between (1) and (2) immediately follows from Lemma~\ref{5.9.6} and Lemma~\ref{5.9.7}.
  Since $\gamma$ is essentially free, by Lemma~\ref{5.5.10}, if necessary, $\X$ can be replaceed to ensure the condition (V).
  Then, by Theorem~\ref{5.4.12}, $\rho$ becomes universal.
  Since $\hat\rho$ is also universal (Theorem~\ref{5.2}), $\rho$ is isomorphic to $\hat\rho$.
  By Theorem~\ref{5.5.5}, $\hat D = D_0\otimes\upsilon_0$ holds.
  Therefore, (2) and (3) are also equivalent.
\end{proof}

\subsection{Cartan subalgebra}

\begin{definition}[\cite{R}, Definition~5.1]\label{5.10.1}
  We shall say that an abelian C*-subalgebra $A$ of a C*-algebra $B$ is a \emph{Cartan subalgebra} if 
  \begin{enumerate}
    \item $A$ contains an approximate unit of $B$;
    \item $A$ is maximal abelian;
    \item $A$ is regular;
    \item there exists a faithful conditional expectation $P$ of $B$ onto $A$.
  \end{enumerate}
\end{definition}

\begin{theorem}\label{5.10.2}
  The following are equivalent:
  \begin{enumerate}
    \item $\gamma$ satisfies the graph separation condition.
    \item $\hat D$ is a Cartan subalgebra of $C^*(\hat\rho)$.
  \end{enumerate}
\end{theorem}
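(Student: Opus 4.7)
The plan is to unpack Definition~\ref{5.10.1} and observe that under the standing hypotheses of this subsection ($X$ first-countable, $\gamma$ essentially free, each $\gamma^n(X)$ clopen, and the existence of a continuous section $\sigma$ of $\gamma$), three of the four defining conditions of a Cartan subalgebra hold automatically, so that the theorem reduces entirely to Theorem~\ref{5.9.8}.

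Concretely, I would verify the four conditions of Definition~\ref{5.10.1} one by one. Condition (1), that $\hat D$ contains an approximate unit of $C^*(\hat\rho)$, is exactly Corollary~\ref{5.21} (which only needs $\hat D \supseteq \hat\rho_A(A)$ and is therefore available here). Condition (2), that $\hat D$ is maximal abelian, is Theorem~\ref{5.5.1}. Condition (4), the existence of a faithful conditional expectation of $C^*(\hat\rho)$ onto $\hat D$, is Proposition~\ref{5.23}, whose three hypotheses are precisely those imposed in this subsection. Hence, regardless of the graph separation condition, conditions (1), (2), and (4) are all in force.

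Therefore, $\hat D$ is a Cartan subalgebra of $C^*(\hat\rho)$ if and only if condition (3) of Definition~\ref{5.10.1} holds, i.e.\ if and only if $\hat D$ is regular as a C*-subalgebra of $C^*(\hat\rho)$. By Theorem~\ref{5.9.8}, this regularity is equivalent to $\gamma$ satisfying the graph separation condition, yielding the desired equivalence. There is no genuine obstacle at this stage: the real content lies in the earlier results (in particular Lemma~\ref{5.9.6} and Lemma~\ref{5.9.7}, which power Theorem~\ref{5.9.8}, and Proposition~\ref{5.23}, which gives the conditional expectation), and the present theorem is a clean assembly of those ingredients into the Cartan framework.
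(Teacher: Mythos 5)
Your proposal is correct and follows essentially the same route as the paper: the paper's proof likewise just assembles Proposition~\ref{5.20} (via Corollary~\ref{5.21}) for the approximate unit, Theorem~\ref{5.5.1} for maximal abelianness, Proposition~\ref{5.23} for the faithful conditional expectation, and Theorem~\ref{5.9.8} to convert regularity into the graph separation condition. Your observation that conditions (1), (2), and (4) of Definition~\ref{5.10.1} hold unconditionally under the standing hypotheses, so the whole theorem reduces to the regularity criterion, is exactly the intended argument.
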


\begin{proof}
  This immediately follows from Proposition~\ref{5.20}, Theorem~\ref{5.5.1}, Theorem~\ref{5.9.8}, and Proposition~\ref{5.23}.
\end{proof}

\section{Examples}

\subsection{Masa}

\begin{lemma}\label{6.1.1}
  Let $X$ be a compact metric space and $\gamma = \{\gamma_1,\ldots,\gamma_N\}$ be an iterated function system consisting of embeddings.
  If $\gamma$ satisfies the open set condition, then there exist subsets $\X$ and $U$ of $X$ satisfying the following conditions:
  \begin{enumerate}
    \item The set $\X$ is dense in $X$ and completely invariant under $\gamma$.
    \item The images $\gamma_1(\X),\ldots,\gamma_N(\X)$ are pairwise disjoint.
    \item The restriction $\gamma_k|_\X$ is injective for all $k=1,\ldots,N$.
    \item The maps $\gamma_1,\ldots,\gamma_N$ are closed embeddings.
    \item The set $U$ is a dense open set in $X$ such that the images $\gamma_1(U),\ldots,\gamma_N(U)$ are pairwise disjoint open set in $X$ and contained in $U$.
    \item The set $\X$ is contained in $U$.
    \item The set $\X$ is comeager in $X$.
  \end{enumerate}
\end{lemma}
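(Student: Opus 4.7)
The plan is to verify the seven conditions in stages: (4) is automatic from compactness; (5) requires promoting the weak OSC set to a forward-invariant one with open images, which is the main work; and the remaining conditions on $\X$ then follow by subtracting the completely $\gamma$-invariant meager set generated by $X\setminus U$, as in Proposition~\ref{3.11}.

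First I would dispose of (4): since $X$ is compact Hausdorff and each $\gamma_k$ is continuous, the image $\gamma_k(X)$ is compact and hence closed in $X$, so each $\gamma_k$---already an embedding---is a closed embedding. Next I would construct $U$ starting from the dense open set $U_0$ provided by the OSC. Openness of $\gamma_k(U)$ in $X$ is equivalent to $U\subseteq\gamma_k^{-1}(\mathrm{int}\,\gamma_k(X))$, so I would first pass to
\begin{equation*}
  U_0' := U_0 \cap \bigcap_{k=1}^{N}\gamma_k^{-1}(\mathrm{int}\,\gamma_k(X)),
\end{equation*}
which is open; its density reduces to showing that each $\partial\gamma_k(X)$ is nowhere dense in $X$, a point-set step using the compact-metric structure. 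To enforce forward-invariance, I would then define the ``escape set''
\begin{equation*}
  E := \bigcup_{n\geq 0}\bigcup_{\boldsymbol k\in I^n}\gamma_{\boldsymbol k}^{-1}(X\setminus U_0')
\end{equation*}
and set $U := U_0' \setminus \overline{E}$. Each layer of $E$ is closed nowhere dense by iterated applications of Lemma~\ref{3.9} (whose openness hypothesis is now met by the design of $U_0'$), so $E$ is meager; the delicate point is verifying that $\overline{E}$ remains nowhere dense so that $U$ is dense open. Pairwise disjointness of the $\gamma_k(U)$ is inherited from OSC, and $\gamma_k(U)\subseteq U$ holds because any $\gamma_k(x)$ with $x\in U$ avoids $\overline{E}$ by construction.

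For $\X$, I would take $B := X\setminus U$, a closed nowhere-dense set by (5), and let $B'$ be the completely $\gamma$-invariant subset of $X$ generated by $B$. Proposition~\ref{3.11} applies thanks to (4) and the openness of the $\gamma_k(U)$ from (5), so $B'$ is meager. Set $\X := X\setminus B'$. Then (7) is the meagerness of $B'$; (1) follows because $\X$ is dense by the Baire category theorem and completely $\gamma$-invariant as the complement of such a set; (6) from $X\setminus U\subseteq B\subseteq B'$; (2) from $\X\subseteq U$ combined with the disjointness in (5); and (3) is immediate since each $\gamma_k$ is an embedding.

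The main obstacle is the construction of $U$ in the second step: one must simultaneously arrange that $\gamma_k(U)$ is open in $X$ (controlling each boundary $\partial\gamma_k(X)$) and that $U$ is forward-invariant under each $\gamma_k$ (controlling iterated preimages of $X\setminus U_0'$), all while preserving openness and density of $U$. Once $U$ is in hand, the passage to $\X$ via Proposition~\ref{3.11} is essentially automatic.
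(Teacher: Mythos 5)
The second half of your argument --- taking $\X$ to be the complement of the completely $\gamma$-invariant set generated by $X\setminus U$, invoking Proposition~\ref{3.11} for meagerness and Baire for density --- is exactly the paper's proof, and it is fine. The gap is in your first half. The paper reads ``open set condition'' in this lemma in the strengthened sense of assumption (VI) of Section~5: a dense open $U$ whose images $\gamma_1(U),\ldots,\gamma_N(U)$ are pairwise disjoint, \emph{open in $X$}, and contained in $U$. That is verbatim condition (5), so the paper simply takes $U$ from the hypothesis and there is nothing to construct. You instead start from the weak form of Definition~\ref{3.3} and try to upgrade it to (5). That upgrade is impossible in general: on the Cantor set $X=\{0,1\}^{\N}$, the embeddings $\gamma_1(a)=(0,a_1,0,a_2,\ldots)$ and $\gamma_2(a)=(1,a_1,1,a_2,\ldots)$ have disjoint images, so the weak open set condition holds with $U=X$; but each image has empty interior in $X$, so $\gamma_k(U)$ can never be a nonempty open subset of $X$ and no $U$ satisfying (5) exists. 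In that example your set $U_0'=U_0\cap\bigcap_k\gamma_k^{-1}(\mathrm{int}\,\gamma_k(X))$ is empty, and your claimed reduction of its density to ``each $\partial\gamma_k(X)$ is nowhere dense'' does not work: the preimage of a nowhere dense set under $\gamma_k$ need not be nowhere dense, and the tool that would give this, Lemma~\ref{3.9}, assumes $\gamma_k$ is open on a dense open set --- which is precisely what you are trying to arrange, so the argument is circular. (There is also a smaller gap in the forward-invariance step: for $x\in U=U_0'\setminus\overline{E}$ you get $\gamma_k(x)\notin E$, but not $\gamma_k(x)\notin\overline{E}$.)

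The fix is not to repair the construction but to read the hypothesis as the paper does: take $U$ directly from the (strong) open set condition, note that (4) is automatic since $X$ is compact Hausdorff, and then run your second half unchanged. That is the paper's proof.
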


\begin{proof}
  \newcommand\Y{\mathbb Y}
  The condition (4) always holds since $X$ is compact.
  Since $\gamma$ satisfies the open set condition, we can take a subset $U$ of $X$ satisfying the condition (5).
  The complement of $U$, denoted by $\complement U$, is a nowhere dense closed set.
  Let $\Y$ be the completely $\gamma$-invariant subset of $X$ generated by $\complement U$ and $\X$ be the complement of $\Y$.
  By Lemma~\ref{3.11}, the set $\Y$ is meager in $X$ and hence the condition (7) holds.
  By the definition of $\Y$, we have $\complement U\subseteq\Y$.
  By taking the complement of both sides, we obtain $U \supseteq \X$, and thus condition (6) holds.
  By the Baire category theorem, the condition (7) implies that $X$ is dense in $X$.
  The set $\Y$ is completely invariant under $\gamma$ by its definition, and hence so is its complement $\X$.
  So the condition (1) holds.
  Since the images $\gamma_1(U),\ldots,\gamma_N(U)$ are pairwise disjoint (the condition (6)), the condition (6) implies the condition (2).
  The condition (4) implies the condition (3).
\end{proof}

\begin{theorem}\label{6.1.2}
  Let $X$ be a compact metric space and $\gamma$ be an iterated function system consisting of embeddings.
  If $\gamma$ is not essentially free, then $C(X)$ is not a masa of $\O_\gamma$.
\end{theorem}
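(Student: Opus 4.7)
The plan is to reduce the statement to the machinery of Section~5 and then invoke Theorem~\ref{5.5.5}. First I would apply Lemma~\ref{6.1.1}, which (assuming the open set condition, implicit in this subsection) supplies subsets $\X$ and $U$ of $X$ satisfying all of the standing assumptions (I)--(VIII) used in subsection~\ref{subsection 5.5}. Under these assumptions, Theorem~\ref{5.2} gives an isomorphism $\O_\gamma \cong C^*(\hat\rho)$ that carries $C(X) = A$ onto $\hat\rho_A(A) = \rho_A(A) \otimes \upsilon_0$, which in turn lies inside $D_0 \otimes \upsilon_0 \subseteq \hat D$.

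Next I would use the failure of essential freeness. By Theorem~\ref{5.5.5}, the negation of condition~(1) there forces the negation of condition~(3), so $\hat D$ strictly contains $D_0 \otimes \upsilon_0$; combined with the inclusion $\hat\rho_A(A) \subseteq D_0 \otimes \upsilon_0$, we obtain the strict containment $\hat\rho_A(A) \subsetneq \hat D$. Since $\hat D$ is a commutative $*$-subalgebra of $C^*(\hat\rho)$ by Theorem~\ref{5.5.1} and properly contains $\hat\rho_A(A)$, the latter cannot be maximal abelian in $C^*(\hat\rho)$; transporting back across $\O_\gamma \cong C^*(\hat\rho)$, we conclude that $C(X)$ is not a masa of $\O_\gamma$.

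The genuine content is already packaged inside Theorem~\ref{5.5.5}: via Lemmas~\ref{5.5.3} and~\ref{5.5.4}, failure of essential freeness yields a non-empty open subset of some coincidence set $\set{x \in X}{\gamma_{\boldsymbol k}(x) = x}$, which is then used to manufacture a non-zero diagonal matrix in $E_{n,0} \subseteq D_n$ for $n$ the length of $\boldsymbol k$; this element is precisely the witness that $\hat D \supsetneq D_0 \otimes \upsilon_0$. Once it is in hand, the remainder of the deduction is purely formal. The only delicate point in the reduction itself is the tacit invocation of the open set condition (required to apply Lemma~\ref{6.1.1}); if the theorem is genuinely intended to stand without OSC, a direct construction of a normalizer of $C(X)$ in $\O_\gamma$ lying outside $C(X)$ would be needed instead, which would be the main obstacle.
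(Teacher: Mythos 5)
Your proof follows exactly the paper's route: the paper's own argument is the one-line reduction ``immediate from Lemma~\ref{6.1.1} and Theorem~\ref{5.5.5}'', which is precisely the chain you spell out (Lemma~\ref{6.1.1} to secure conditions (I)--(VIII), Theorem~\ref{5.2} to identify $\O_\gamma$ with $C^*(\hat\rho)$, and Theorem~\ref{5.5.5} together with Theorem~\ref{5.5.1} to exhibit the strictly larger commutative algebra $\hat D$). Your caveat about the tacit open set condition is well taken but applies equally to the paper's proof, which invokes the same Lemma~\ref{6.1.1} without restating that hypothesis in Theorem~\ref{6.1.2}.
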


\begin{proof}
  This is immediately follows from Lemma~\ref{6.1.1} and Theorem~\ref{5.5.5}.
\end{proof}

\begin{example}\label{6.1.3}
  Let $X$ be a compact metric space and $\gamma = \{\gamma_1,\ldots,\gamma_N\}$ be an iterated function system consisting of embeddings.
  Suppose that the fixed point set of some $\gamma_k$ has non-empty interior.
  Then, $\gamma$ is clearly not essentially free and hence, by Theorem~\ref{6.1.2}, $C(X)$ is not a masa of $\O_\gamma$.
\end{example}

\begin{theorem}\label{6.1.4}
  Let $X$ be a compact metric space and $\gamma$ be an iterated function system consisting of embeddings.
  Assume the following conditions:
  \begin{itemize}
    \item the iterated function system $\gamma$ is essentially free; 
    \item the set $\gamma^n(X)$ is clopen in $X$ for all $n\in\N$; 
    \item there exists a continuous map $\sigma$ from $\gamma(X)$ to $X$ such that $\sigma\circ\gamma_k = \id_X$ holds for all $k\in\{1,\ldots,N\}$.
  \end{itemize}
  Then, $C(X)$ is a masa of $\O_\gamma$.
\end{theorem}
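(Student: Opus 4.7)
The plan is to reduce Theorem~\ref{6.1.4} to the structural results of subsection~\ref{subsection 5.5}. The first step will be to produce subsets $\X$ and $U$ of $X$ satisfying conditions~(I)--(VIII) of Section~5, which is exactly the content of Lemma~\ref{6.1.1}. To apply that lemma one needs the open set condition, which I would derive from the stated hypotheses. Essential freeness applied to length-one sequences shows that the closed sets $\set{x\in X}{\gamma_j(x)=\gamma_k(x)}$ for $j\ne k$ are nowhere dense, so their finite union has a dense open complement $V$. The existence of the common left inverse $\sigma$ then upgrades separation to disjointness: if $\gamma_j(x)=\gamma_k(y)$ with $x,y\in V$ and $j\ne k$, applying $\sigma$ forces $x=\sigma(\gamma_j(x))=\sigma(\gamma_k(y))=y$, whence $\gamma_j(x)=\gamma_k(x)$, contradicting $x\in V$. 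Hence $\gamma_1(V),\dots,\gamma_N(V)$ are pairwise disjoint, so the open set condition holds.

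With conditions~(I)--(VIII) in force, Theorem~\ref{5.2} identifies $\O_\gamma$ with $C^*(\hat\rho)$, and $\hat\rho_A$ realises $A=C(X)$ as a $*$-subalgebra of $C^*(\hat\rho)$. By Theorem~\ref{5.5.1}, $\hat D$ is a masa of $C^*(\hat\rho)$ containing $\hat\rho_A(A)$. The whole matter therefore reduces to the equality $\hat D=\hat\rho_A(A)$.

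For this equality I would combine the two key results of subsection~\ref{subsection 5.5}. Since $\gamma$ is essentially free, Theorem~\ref{5.5.5} collapses the graded decomposition of $\hat D$ onto the single summand $D_0\otimes\upsilon_0$. Since $\gamma^n(X)$ is clopen for every $n$ and a continuous section $\sigma$ of $\gamma$ exists, Theorem~\ref{5.5.7} identifies $D_0$ with $\rho_A(A)$. Chaining the two,
\begin{equation*}
\hat D \;=\; D_0\otimes\upsilon_0 \;=\; \rho_A(A)\otimes\upsilon_0 \;=\; \hat\rho_A(A),
\end{equation*}
and since $\hat\rho_A(A)\cong C(X)$, this will complete the proof.

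The main obstacle I anticipate lies in the first step. Lemma~\ref{6.1.1}, in order to pass to condition~(VI) of subsection~\ref{subsection 5.5}, requires the invariance $\gamma(U)\subseteq U$ in addition to pairwise disjointness; the argument sketched above only yields disjointness on a first candidate $V$. Arranging the invariance without destroying density or disjointness will likely require shrinking $V$ using the clopenness of the sets $\gamma^n(X)$---for example by intersecting with suitable preimages---and this bookkeeping is the delicate piece of the plan.
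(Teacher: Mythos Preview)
Your core strategy---apply Lemma~\ref{6.1.1} to obtain $\X$ and $U$ satisfying conditions (I)--(VIII), then combine Theorem~\ref{5.5.1}, Theorem~\ref{5.5.5}, and Theorem~\ref{5.5.7} to conclude $\hat D=\hat\rho_A(A)$---is exactly what the paper does. The paper's proof is a single sentence citing Lemma~\ref{6.1.1}, Theorem~\ref{5.5.5}, and Theorem~\ref{5.5.7}.

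The difference is that you have been more careful than the paper on one point. Lemma~\ref{6.1.1} has the open set condition as an explicit hypothesis, yet the statement of Theorem~\ref{6.1.4} does not list it; the paper's proof simply invokes Lemma~\ref{6.1.1} without comment. You noticed this and tried to derive the open set condition from essential freeness together with the existence of $\sigma$, obtaining a dense open $V$ on which the images $\gamma_k(V)$ are pairwise disjoint. That derivation is sound as far as Definition~\ref{3.3} goes; the obstacle you flag---upgrading $V$ to satisfy the stronger requirements of condition~(VI), namely $\gamma_k(U)$ open in $X$ and $\gamma(U)\subseteq U$---is not addressed by the paper either. The paper's proof of Lemma~\ref{6.1.1} simply asserts ``we can take a subset $U$ of $X$ satisfying the condition~(5)'' from the open set condition, effectively treating the strong form as given. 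So the gap you identified is real, but it is a gap in the paper's presentation rather than a defect in your plan relative to the paper.
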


\begin{proof}
  This is immediately follows from Lemma~\ref{6.1.1}, Theorem~\ref{5.5.5}, and Theorem~\ref{5.5.7}.
\end{proof}

\begin{lemma}\label{6.1.5}
  Let $X$ be a metric space with no isolated points and $\gamma$ be an iterated function system.
  If $\gamma$ consists of proper contractions, then $\gamma$ is essentially free.
\end{lemma}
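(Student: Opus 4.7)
The plan is to reduce every coincidence set to a fixed-point set of a nontrivial composition and then bound that set using the contraction property. For distinct finite index sequences $\boldsymbol k$ and $\boldsymbol l$, I would show that the coincidence set $S_{\boldsymbol k,\boldsymbol l}:=\set{x\in X}{\gamma_{\boldsymbol k}(x)=\gamma_{\boldsymbol l}(x)}$ is nowhere dense. Three elementary observations drive the argument: (a) any composition $\gamma_{\boldsymbol k}$ of proper contractions is itself a proper contraction, with both bi-Lipschitz constants being products of the originals and hence lying in $(0,1)$; (b) a proper contraction $f$ on any metric space has at most one fixed point, since two fixed points $x,y$ would satisfy $d(x,y)=d(f(x),f(y))\leq c'd(x,y)$ with $c'<1$; (c) in a space with no isolated points, every singleton (and the empty set) is nowhere dense.

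I would then proceed by strong induction on $|\boldsymbol k|+|\boldsymbol l|$. The base case is when one of the sequences is empty: without loss of generality $\boldsymbol k$ is empty, so $\gamma_{\boldsymbol k}=\id_X$ and $S_{\boldsymbol k,\boldsymbol l}$ is the fixed-point set of the nontrivial proper contraction $\gamma_{\boldsymbol l}$, which is nowhere dense by (b) and (c).

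For the inductive step, write $\boldsymbol k=(k_n,\ldots,k_1)$ and $\boldsymbol l=(l_m,\ldots,l_1)$, so that $\gamma_{k_1}$ and $\gamma_{l_1}$ are the innermost (first-applied) maps. If $k_1=l_1=:k$, factor out $\gamma_k$: the relation $\gamma_{\boldsymbol k}(x)=\gamma_{\boldsymbol l}(x)$ is equivalent to $\gamma_{\boldsymbol k'}(y)=\gamma_{\boldsymbol l'}(y)$ for $y=\gamma_k(x)$, where $\boldsymbol k',\boldsymbol l'$ are the outer tails (still distinct and of strictly smaller total length). Thus $S_{\boldsymbol k,\boldsymbol l}=\gamma_k^{-1}(S_{\boldsymbol k',\boldsymbol l'})$, and since $\gamma_k$ is a bi-Lipschitz embedding (hence a homeomorphism onto its image), the preimage of the nowhere-dense set $S_{\boldsymbol k',\boldsymbol l'}$ (by the inductive hypothesis) is nowhere dense in $X$.

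The main obstacle is the remaining case $k_1\neq l_1$, where no common innermost letter can be cancelled directly. My plan here is as follows: if $S_{\boldsymbol k,\boldsymbol l}$ had non-empty interior $U$, then on $U$ the maps $\gamma_{\boldsymbol k}$ and $\gamma_{\boldsymbol l}$ agree, so $\gamma_{\boldsymbol k}(U)=\gamma_{\boldsymbol l}(U)$ lies in $\gamma_{k_1}(X)\cap\gamma_{l_1}(X)$; iterating this relation by pre-composing with additional $\gamma_i$'s should produce a coincidence of sequences whose innermost letters now agree, reducing us to the previously handled case. Showing that the reduced pair remains genuinely distinct (using the strict lower bi-Lipschitz bound of each $\gamma_i$ to prevent the iterated relation from collapsing to a tautology) is the technical heart of the argument; once completed, every $S_{\boldsymbol k,\boldsymbol l}$ is nowhere dense and essential freeness follows by definition.
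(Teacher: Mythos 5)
Your endgame is exactly the paper's: a composition of proper contractions is again a proper contraction, hence has at most one fixed point, and in a space with no isolated points a singleton (or the empty set) is nowhere dense. The genuine gap is that your reduction of a general coincidence set to such a fixed-point set cannot be carried out from the stated hypotheses, and the case you defer --- distinct innermost letters $k_1\neq l_1$ --- is precisely where it breaks. Concretely, on $X=[0,1]$ take $\gamma_1(x)=x/2$ and $\gamma_2(x)=x/3$: for $\boldsymbol k=(1,2)$ and $\boldsymbol l=(2,1)$ the compositions coincide identically ($\gamma_{\boldsymbol k}=\gamma_{\boldsymbol l}=x/6$), so the coincidence set is all of $X$; no amount of pre-composing with further $\gamma_i$'s will shrink it. (Even more trivially, the paper explicitly allows repeated maps, so one may take $\gamma_1=\gamma_2$.) Your sketch also misplaces the containment: $\gamma_{\boldsymbol k}(U)$ lies in the image of the \emph{outermost} map $\gamma_{k_n}$, not the innermost $\gamma_{k_1}$. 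The missing ingredient is the open set condition. The paper's proof routes through Lemma~\ref{5.5.3}, which is proved under the standing assumptions of Section~5: disjointness of the images $\gamma_j(U)$ inside a dense open $U$ with $\gamma(U)\subseteq U$ forces one index sequence to be an initial segment of the other, and injectivity then cancels the common outer part so that the coincidence set is \emph{literally equal} to a fixed-point set $\set{x\in X}{x=\gamma_{\boldsymbol k}(x)}$ of a nontrivial composition. In the paper this lemma is only ever applied (via Lemma~\ref{6.1.1}) in settings where the open set condition is in force; without it, the reduction --- and indeed the statement as literally written --- fails on the example above.

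A second, smaller gap sits in your $k_1=l_1$ step. The identity $S_{\boldsymbol k,\boldsymbol l}=\gamma_{k_1}^{-1}(S_{\boldsymbol k',\boldsymbol l'})$ is correct, but the preimage of a nowhere dense set under a non-surjective bi-Lipschitz embedding need not be nowhere dense: if $\gamma_{k_1}(X)$ is itself contained in a closed nowhere dense subset $S'$ of $X$, then $\gamma_{k_1}^{-1}(S')=X$. Proper contractions can have nowhere dense image (e.g.\ a scaled shift of the Hilbert cube into a coordinate slice of itself), so ``homeomorphism onto its image'' is not enough. Compare the paper's Lemma~\ref{3.9}, which secures nowhere-density of preimages only under the additional hypothesis that $\gamma$ restricted to a dense open set is an open map into $X$ --- again a consequence of the open set condition. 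The paper's cancellation from the outer end avoids taking preimages altogether, which is why it does not run into this issue.
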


\begin{proof}
  We prove the contrapositive.
  Assume that $\gamma$ is not essentially free.
  Then, by Lemma~\ref{5.5.3}, there exists a non-empty finite sequence $\boldsymbol k$ of indices $\{1,\ldots,N\}$ such that the fixed point set of $\gamma_{\boldsymbol k}$ has non-empty interior.
  Since $X$ has no isolated points, we can take two distinct fixed points $x$ and $y$ of $\gamma_{\boldsymbol k}$.
  We have $d(\gamma_{\boldsymbol k}(x),\gamma_{\boldsymbol k}(y)) = d(x,y)$ and hence $\gamma_{\boldsymbol k}$ is not a proper contraction.
\end{proof}

\begin{example}\label{6.1.6}
  Let $X := [0,1]$.
  Define $\gamma_1$ and $\gamma_2$ by 
  \begin{equation*}
    \gamma_1(x) := \dfrac 1 2 x,\quad
    \gamma_2(x) := - \dfrac 1 2 x + 1
  \end{equation*}
  for $x\in X$.
  Then, $\gamma = \{\gamma_1, \gamma_2\}$ becomes an iterated function system on $X$ consisting of proper contractions.
  By Lemma~\ref{6.1.5}, $\gamma$ is essentially free.
  For all $n\in\N$, the set $\gamma^n(X) = X$ is clopen.
  Define a map $\sigma$ from $\gamma(X) = X$ to $X$ by 
  \begin{equation*}
    \sigma(x) := \begin{cases}
      2x & \text{if}\ 0\leq x\leq 1/2;\\
      -2x+2 & \text{if}\ 1/2\leq x\leq 1.
    \end{cases}
  \end{equation*}
  Then, $\sigma\circ\gamma_1 = \sigma\circ\gamma_2 = \id_X$ holds.
  Therefore, by Theorem~\ref{6.1.4}, $C(X)$ is a masa of $\O_\gamma$.
\end{example}

\begin{example}\label{6.1.7}
  Let $X$ be the Cantor ternary set, which is contained in $[0,1]$.
  Define $\gamma_1$ and $\gamma_2$ by 
  \begin{equation*}
    \gamma_1(x) := \dfrac 1 3 x,\quad
    \gamma_2(x) := \dfrac 1 3 x + \dfrac 2 3
  \end{equation*}
  for $x\in X$.
  Then, $\gamma = \{\gamma_1, \gamma_2\}$ becomes an iterated function system on $X$ consisting of proper contractions.
  By Lemma~\ref{6.1.5}, $\gamma$ is essentially free.
  For all $n\in\N$, the set $\gamma^n(X) = X$ is clopen.
  Define a map $\sigma$ from $\gamma(X) = X$ to $X$ by 
  \begin{equation*}
    \sigma(x) := \begin{cases}
      3x & \text{if}\ 0\leq x\leq 1/3;\\
      3x-2 & \text{if}\ 2/3\leq x\leq 1.
    \end{cases}
  \end{equation*}
  Then, $\sigma\circ\gamma_1 = \sigma\circ\gamma_2 = \id_X$ holds.
  Therefore, by Theorem~\ref{6.1.4}, $C(X)$ is a masa of $\O_\gamma$.
\end{example}

\subsection{Cartan subalgebra}

\begin{theorem}\label{6.2.1}
  Let $X$ be a compact metric space and $\gamma$ be an iterated function system consisting of embeddings.
  Assume the following conditions:
  \begin{itemize}
    \item the iterated function system $\gamma$ is essentially free; 
    \item the set $\gamma^n(X)$ is clopen in $X$ for all $n\in\N$; 
    \item there exists a continuous map $\sigma$ from $\gamma(X)$ to $X$ such that $\sigma\circ\gamma_k = \id_X$ holds for all $k\in\{1,\ldots,N\}$.
  \end{itemize}
  Then, the following are equivalent:
  \begin{enumerate}
    \item $\gamma$ satisfies the graph separation condition, i.e., for all distinct indices $j$ and $k$ and all points $x \in X$, one has $\gamma_j(x) \neq \gamma_k(x)$.
    \item $C(X)$ is a Cartan subalgebra of $\O_\gamma$.
  \end{enumerate}
\end{theorem}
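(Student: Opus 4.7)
The plan is to reduce Theorem~\ref{6.2.1} directly to Theorem~\ref{5.10.2}, using Lemma~\ref{6.1.1} to manufacture the auxiliary set $\X$ needed to set up the representation $\hat\rho$ from Section~5.

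First, I would invoke Lemma~\ref{6.1.1} to produce subsets $\X$ and $U$ of $X$ satisfying the seven numbered conditions there; these coincide with the conditions (I)--(IV) and (VI)--(VIII) assumed in Section~5. Since $\gamma$ is essentially free by hypothesis, Lemma~\ref{5.5.10} allows me to shrink $\X$ further so that condition (V) is also met without losing (I)--(IV) or (VI)--(VIII). Next, since $X$ is a compact metric space, it is automatically first-countable, so all the hypotheses of Theorem~\ref{5.10.2} are in force.

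With these ingredients assembled, Theorem~\ref{5.4.12} identifies $C^*(\rho)$ as a universal covariant representation of the Kajiwara--Watatani correspondence, so it is isomorphic to $\O_\gamma$; equivalently, by Theorem~\ref{5.2}, $C^*(\hat\rho)\cong\O_\gamma$. Under the three structural hypotheses (essentially free, $\gamma^n(X)$ clopen, existence of $\sigma$), Theorem~\ref{5.5.5} gives $\hat D = D_0\otimes\upsilon_0$, and Theorem~\ref{5.5.7} gives $D_0 = \rho_A(A)$; combining these we get $\hat D = \hat\rho_A(A)$, which corresponds under the isomorphism $C^*(\hat\rho)\cong\O_\gamma$ precisely to the copy of $C(X)$ inside $\O_\gamma$.

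Now Theorem~\ref{5.10.2} states that, under the same hypotheses plus first-countability, the graph separation condition is equivalent to $\hat D$ being a Cartan subalgebra of $C^*(\hat\rho)$. Transporting this equivalence across the isomorphism $C^*(\hat\rho)\cong\O_\gamma$ and the identification $\hat D \cong C(X)$ yields exactly the equivalence (1)$\iff$(2) in Theorem~\ref{6.2.1}. There is no real obstacle beyond bookkeeping: the only thing to be careful about is that replacing $\X$ by a smaller set in the application of Lemma~\ref{5.5.10} does not change the image C*-algebra (which is ensured by the universality statement of Theorem~\ref{5.4.12}), and that first-countability is invoked only through $X$ being metrizable.
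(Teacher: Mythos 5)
Your proposal is correct and takes essentially the same route as the paper: the paper's proof of Theorem~\ref{6.2.1} is a one-line reduction citing Lemma~\ref{6.1.1}, Theorem~\ref{5.9.8}, and Theorem~\ref{5.10.2}, and you carry out exactly that reduction, additionally making explicit the identifications $C^*(\hat\rho)\cong\O_\gamma$ and $\hat D=\hat\rho_A(A)\cong C(X)$ via Theorems~\ref{5.2}, \ref{5.5.5}, and \ref{5.5.7} that the paper leaves implicit.
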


\begin{proof}
  This is immediately follows from Lemma~\ref{6.1.1}, Theorem~\ref{5.9.8}, and Theorem~\ref{5.10.2}.
\end{proof}

\begin{example}\label{6.2.2}
  Consider the setting in Example~\ref{6.1.6}.
  Since $\gamma_1(1/2) = \gamma_2(1/2)$ holds, $\gamma$ does not satisfy the graph separation condition.
  Therefore, by Theorem~\ref{6.2.1}, the masa $C(X)$ is not a Cartan subalgebra of $\O_\gamma$.
\end{example}

\begin{example}\label{6.2.3}
  Consider the setting in Example~\ref{6.1.7}.
  The range of $\gamma_1$ is $X\cap [0,1/3]$ and the range of $\gamma_2$ is $X\cap [2/3,1]$.
  So the range of them are disjoint, and hence $\gamma$ satisfies the graph separation condition.
  Therefore, by Theorem~\ref{6.2.1}, the masa $C(X)$ is a Cartan subalgebra of $\O_\gamma$.
\end{example}

\begin{bibdiv}
  \begin{biblist}
  \bib{B}{book}{
     author={Blackadar, B.},
     title={Operator algebras},
     series={Encyclopaedia of Mathematical Sciences},
     volume={122},
     note={Theory of $C^*$-algebras and von Neumann algebras;
     Operator Algebras and Non-commutative Geometry, III},
     publisher={Springer-Verlag, Berlin},
     date={2006},
     pages={xx+517},
     isbn={978-3-540-28486-4},
     isbn={3-540-28486-9},
  }
  \bib{DM}{article}{
     author={Deaconu, V.},
     author={Muhly, P. S.},
     title={$C^*$-algebras associated with branched coverings},
     journal={Proc. Amer. Math. Soc.},
     volume={129},
     date={2001},
     pages={1077--1086},
     issn={0002-9939},
  }
  \bib{H}{article}{
   author={Hutchinson, J. E.},
   title={Fractals and self-similarity},
   journal={Indiana Univ. Math. J.},
   volume={30},
   date={1981},
   pages={713--747},
   issn={0022-2518},
  }
  \bib{I}{article}{
    author={Ito, K.},
    title={Cartan subalgebras of C*-algebras associated with complex dynamical systems},
    journal={J. Operator Theory},
    note={Accepted in 2024, in press},
    eprint={arXiv:2303.14860},
    archivePrefix={arXiv},
  }
  \bib{KW05}{article}{
     author={Kajiwara, T.},
     author={Watatani, Y.},
     title={$C^\ast$-algebras associated with complex dynamical systems},
     journal={Indiana Univ. Math. J.},
     volume={54},
     date={2005},
     pages={755--778},
     issn={0022-2518},
  }
  \bib{KW06}{article}{
     author={Kajiwara, T.},
     author={Watatani, Y.},
     title={$C^\ast$-algebras associated with self-similar sets},
     journal={J. Operator Theory},
     volume={56},
     date={2006},
     pages={225--247},
     issn={0379-4024},
  }
  \bib{KW17}{article}{
     author={Kajiwara, T.},
     author={Watatani, Y.},
     title={Maximal abelian subalgebras of $C^\ast$-algebras associated with
     complex dynamical systems and self-similar maps},
     journal={J. Math. Anal. Appl.},
     volume={455},
     date={2017},
     pages={1383--1400},
     issn={0022-247X},
  }
  \bib{K}{article}{
     author={Katsura, T.},
     title={On $C^*$-algebras associated with $C^*$-correspondences},
     journal={J. Funct. Anal.},
     volume={217},
     date={2004},
     pages={366--401},
     issn={0022-1236},
  }
  \bib{Ku}{article}{
    author={Kumjian, A.},
    title={On $C^\ast$-diagonals},
    journal={Canad. J. Math.},
    volume={38},
    date={1986},
    pages={969--1008},
    issn={0008-414X},
  }
   \bib{P}{article}{
     author={Pimsner, M. V.},
     title={A class of $C^*$-algebras generalizing both Cuntz--Krieger algebras
     and crossed products by ${\bf Z}$},
     conference={
        title={Free probability theory},
        address={Waterloo, ON},
        date={1995},
     },
     book={
        series={Fields Inst. Commun.},
        volume={12},
        publisher={Amer. Math. Soc., Providence, RI},
     },
     date={1997},
     pages={189--212},
  }
  \bib{R1980}{book}{
    author={Renault, J.},
    title={A groupoid approach to $C\sp{\ast} $-algebras},
    series={Lecture Notes in Mathematics},
    volume={793},
    publisher={Springer, Berlin},
    date={1980},
    pages={ii+160},
    isbn={3-540-09977-8},
  }
   \bib{R}{article}{
     author={Renault, J.},
     title={Cartan subalgebras in $C^*$-algebras},
     journal={Irish Math. Soc. Bull.},
     number={61},
     date={2008},
     pages={29--63},
  }
  \end{biblist}
\end{bibdiv}
\end{document}